\DeclareMathAlphabet{\mathpzc}{OT1}{pzc}{m}{it}
\def\R{\mathbb{R}}
\def\C{\mathbb{C}}
\def\N{\mathbb{N}}
\def\Z{\mathbb{Z}}
\def\dsp{\displaystyle}
\def\u{\mathpzc{u}}
\newcommand{\md}{{\mathrm{ d}}}
\newcommand{\me}{{\mathrm{ e}}}
\newcommand{\bbA}{\mathbb{A}}
\newcommand{\bbP}{\mathbb{P}}
\newcommand{\bbM}{\mathbb{M}}
\newcommand{\bbF}{\mathbb{F}}
\newcommand{\bbS}{\mathbb{S}}
\newcommand{\bbV}{\mathbb{V}}
\newcommand{\bbT}{\mathbb{T}}
\newtheorem{hypo}{Hypoth\`ese}[section]
\newtheorem{defi}[hypo]{Definition}
\newtheorem{theo}[hypo]{Theorem}
\newtheorem{lema}[hypo]{Lemma}
\newtheorem{coro}[hypo]{Corollary}
\newtheorem{prop}[hypo]{Proposition}
\newtheorem{rem}[hypo]{Remark}
\begin{document}

\title{Mathematical models for dispersive electromagnetic waves: an overview}
\author{Maxence Cassier$^{a}$, Patrick Joly$^{b}$ and Maryna Kachanovska$^{b}$ \\  \\
{\footnotesize $^a$ Department of Mathematics of the University of Utah, Salt Lake City, UT, 84112, United States}\\ 
{\footnotesize $^b$ ENSTA / POEMS$^1$, 32 Boulevard Victor, 75015 Paris, France}\\ 
{\footnotesize (cassier@math.utah.edu, patrick.joly@inria.fr, maryna.kachanovska@ensta.fr)}}
\footnotetext[1]{POEMS (Propagation d'Ondes: Etude Math\'ematique et Simulation) is a mixed research team (UMR 7231) between CNRS (Centre National de la Recherche Scientifique), ENSTA ParisTech (Ecole Nationale Sup\'erieure de Techniques Avanc\'ees) and INRIA (Institut National de Recherche en Informatique et en Automatique).}
\footnotetext[2]{The first two authors gratefully acknowledge the funding by a pubic grant as a part of the ANR Project `Metamath', reference ANR-11-MONU-0016. The third author gratefully acknowledges the support  by a public grant as part of the Investissement d'avenir project, reference ANR-11-LABX-0056-LMH,	LabEx LMH.}

\date{}


	\maketitle
\begin{abstract}	
In this work, we investigate mathematical models for electromagnetic wave
propagation in dispersive isotropic media. We emphasize the link between
physical requirements and mathematical properties of the models. A
particular attention is devoted to the notion of non-dissipativity and
passivity. We consider successively the case of so-called local media and
general passive media. The models are studied through
energy techniques, spectral theory and dispersion analysis of plane waves.
For making the article self-contained, we provide in appendix some useful
mathematical background.
\end{abstract}
	
{\noindent \bf Keywords}: Maxwell's equations in dispersive media, Herglotz functions, passivity and
dissipativity, Lorentz materials, energy and
dispersion analysis, spectral theory.

\section{Introduction, motivation} \label{model}
The theory of wave propagation in dispersive media, and more specifically negative index materials in electromagnetism, 
had known recently a regain of interest with the appearance of electromagnetic metamaterials. Their  theoretical behaviour had been, much before their experimental realization, predicted in the pioneering article of Veselago \cite{veselago}. Since 
the beginning of the century, several works \cite{smith2004metamaterials}, \cite{cui2010metamaterials}, \cite{brien2002photonic} have shown a practical realisability of metamaterials, with the help of a periodic assembly of small resonators whose effective macroscopic behaviour corresponds to
a negative index (acoustic metamaterials with similar effects can also be produced \cite{cummer_acoustic}). Their existence opened new perspectives of application for physicists, in particular in optics and photonic crystals,  related to new physical phenomena such as backward propagating waves, negative refraction \cite{veselago} or plasmonic surface waves \cite{Mai-07} which are used for creating perfect lenses \cite{pendry2000negative}, in superlensing \cite{Mil-05} or cloaking \cite{Mil-06}. On the other hand the study of the corresponding 
mathematical models raised new exciting  questions for mathematicians (see \cite{JLiReview} for a recent review), in particular numerical analysts \cite{JLiBook}, \cite{ziolkowski2001wave}, \cite{JLiLorentz}.\\[12pt]
Writing this paper has been decided at a EPSRC Workshop held in Durham on Mathematical and Computational Aspects of Maxwell's Equations in July 2016, where the first two authors gave oral presentations about the mathematics of metamaterials, one of 
the main topics of the Workshop. During the past three years, the authors have been working, in collaboration or independently, 
on wave propagation problems involving dispersive electromagnetic materials, and, more specifically, negative index materials. For instance, in \cite{cas-14-thes}, \cite{cas-haz-Jol-16}, \cite{cas-haz-Jol-vin-17}, we studied a transmission problem 
between a negative index and the vacuum and more especially the large time behaviour of the solution of the evolution problem with a time harmonic source. In \cite{ValEliPatr}, \cite{PMLCANUM}, \cite{PMLPLASMAS} we addressed the question of the construction and analysis of stable Perfectly Matched Layers (PML's) for dispersive Maxwell's equations, for the time domain numerical simulation purpose. Finally, in \cite{cas-mil-16}, we address the question of broadband passive cloaking, in other 
words, whether it possible to construct an electromagnetic passive cloak that cloaks an object 
over a whole frequency band. We answer negatively to this question in the so-called quasistatic
regime and provide quantitative limitations on the cloaking effect 
over a finite frequency range.\\[12pt]
When working on this subject we have encountered two main difficulties. The first one is the absence of 
a work that would provide a unified, rigorous presentation and analysis of the existing mathematical models,
despite the fact that many related publications can be found in a broad range of fields
including applied and theoretical physics \cite{Jack-75}, \cite{landau}, electric circuit theory and pure 
and applied mathematics  \cite{Akhiezer}, \cite{Kato}, \cite{Don-00}. The second difficulty lies in the fact that, because of the abundance of the specialized literature it is not clear which statements are proven and which are simply commonly admitted. Thus, 
in the present work, we would like to partially fill these gaps.
\\[12pt]
Properly speaking, this article is not a research paper. It has to be considered more as a review paper in which we try 
to gather the results from the literature that we found the most useful for applied mathematicians,  provide 
an original presentation of these results and propose some new ideas (which, to our knowledge, have not occurred in the existing literature). We tried to keep the presentation rigorous, even though sometimes, for the sake of readability, we sacrificed formalism. Most proofs are detailed and only use elementary tools (and those that do not are postponed to appendix). In this way, the article is self-contained and accessible to readers (physicists, engineers) who
are not mathematicians. We hope that it can be seen as a useful toolbox for any scientist starting to study the subject, 
especially for applied mathematicians and numerical analysts. We are happy to dedicate this work to Peter Monk, who has been a major contributor of the numerical analysis of Maxwell's equations \cite{Monk}, on the occasion of his 60$^{th}$ birthday.\\[12pt]
We conclude this introduction by a brief outline of the rest of the paper. In Section \ref{sec-Models} we formulate properties of the electric permittivity and magnetic permeability, studying them from mathematics and physics based points of view. In particular, we concentrate on the mathematical description of the so-called passivity property (Section \ref{sec-passive}) and discuss the relationship between its physical and mathematical interpretation in Section \ref{sec-equivalence}. 
In Section \ref{sec-local}, we address the case when the permittivity and permeability are rational fractions (or 'local' materials, the name will be explained later). In the time domain they give rise to the Maxwell's equations coupled with ODEs. The results of this section include:
the mathematical characterization of local passive materials (Section \ref{sec-defi}),
the equivalence of passivity and well-posedness for a class of models (Section \ref{sec-hom}),
a characterization of forward and backward propagating waves based on the  analysis of the dispersion relation (Section \ref{sec-dispersionanalysis}).
Finally, Section \ref{Generalpassive} is dedicated to the extension of the analysis of Section \ref{sec-local} to general passive media.
\section{Mathematical models for dispersive electromagnetic waves} \label{sec-Models}
\subsection{Maxwell's equations in dispersive media:  introduction} \label{sec-presentation}
Maxwell's equations relate the space variations of the electric and magnetic fields
${\bf E}({\bf x},t)$ and ${\bf H}({\bf x},t)$ (where ${\bf x} \in \R^3$ denotes the space variable and $t>0$ is the time) to the time variations of the corresponding electric and magnetic inductions ${\bf D}({\bf x},t)$ and ${\bf B}({\bf x},t)$:
\begin{equation} \label{Maxwell}
	\partial_t {\bf B} + {\bf rot} \, {\bf E} = 0, \quad \partial_t {\bf D} - {\bf rot} \, {\bf H} = 0, \quad {\bf x} \in \R^3, \quad t >0.
	\end{equation}
These equations need to be completed by so-called {\bf constitutive laws} that characterize the material in which electromagnetic waves propagate by relating the electric (or magnetic) field and the corresponding induction. In this paper, we shall restrict ourselves to materials which are {\bf local in space} (i.e. the induction at a given point only depends on the corresponding field) and {\bf linear} (this dependence is linear).\\[12pt]
\noindent In standard {\bf dielectric media}, it is common to assume that the relationship is also local in time (typically the electric induction ${\bf D}$ at a given point only depends on the magnetic field ${\bf E}$). If, moreover, one assumes that the medium is {\bf isotropic} (roughly speaking, the relationship between ${\bf D}$ and ${\bf E}$ does not see the orientation of the fields), it is natural to suggest that the fields are proportional
\begin{equation} \label{Dielectric}
{\bf D}({\bf x},t) = \varepsilon({\bf x}) \, {\bf E}({\bf x},t) , \quad  {\bf B}({\bf x},t) = \mu({\bf x}) \, {\bf H}({\bf x},t),
	\end{equation}
where at any point ${\bf x}$,  $\varepsilon({\bf x}) $ and $\mu({\bf x})$ are positive numbers called respectively the electric permittivity and the magnetic permeability of the material at a point ${\bf x}$. The fact that they may depend of ${\bf x}$ characterizes the possible {\bf heterogeneity} of the material. In the vacuum, these coefficients are of course independent of ${\bf x}$:
$
 \varepsilon({\bf x}) = \varepsilon_0 \approx (36\pi)^{-1} \, 10^{-9} Fm^{-1}, \mu({\bf x}) = \mu_0 = 4\pi \, 10^{-7} Hm^{-1}.
$
In the matter, the law (\ref{Dielectric}) cannot be true and can be seen only as an approximation (often accurate). It appears that simple proportionality laws can be valid only in the vacuum, otherwise this would violate some physical principles (\cite{landau}).
In order to be consistent with such physical principles, one needs to abandon the idea that the constitutive laws are local in time and to accept e.g. that ${\bf D}({\bf x},t)$ depends
on the history of the values of ${\bf E}$ between $0$ and $t$, i. e. \begin{equation} \label{causal_law}{\bf D}({\bf x},t) = F\big(x,t \, ; \big\{{\bf E}({\bf x},\tau), 0 \leq \tau \leq t \big\} \big).\end{equation}
The above obeys a fundamental physical principle: the causality principle. Adding the \textbf{time invariance principle}, i.e. that the material behaves the same way whatever the time one observes it, one infers that the function $F$ is also independent of time:
$F(x,t \, ;\cdot) = F(x \, ;\cdot)$.\\[12pt]
To translate the above in more mathematical terms, it is useful to go to the {\bf frequency domain}. Let us remind the definition of the {\bf Fourier-Laplace} transform and some of its properties. \\[12pt]
Let $u(t)$ be a (measurable) complex-valued, locally bounded and causal ($u(t) = 0 \mbox{ for } t < 0 \}$) function of time, which
we suppose to be {\bf exponentially bounded} for large $t$ (for simplicity). More precisely, given $\alpha \geq 0$ we introduce the class of functions which we shall denote in the following as $u \in PB_\alpha (\R^+)$ with
\begin{equation} \label{defPB}
PB_\alpha (\R^+) = \{ u(t) : \R^+ \rightarrow \C \; / \; \exists \; (C, p) \in \R^+ \times \N \mbox{ such that } \u(t) \leq C \, e^{\alpha t} \, (1 + t^p) \}.
	\end{equation} 
For $\alpha = 0$, one recovers the class $PB(\R^+) \equiv PB_0 (\R^+)$ of {\bf polynomially bounded} functions. The Fourier-Laplace transform $\widehat{u}(\omega)$ of $u$ is the function defined in the complex half space (see e.g. \cite{DautrayLions})
\begin{equation} \label{defC+}
	\C_\alpha^+ = \{ \omega \in \C \;  / \; {\cal I}m \, \omega > \alpha \}, \quad (\mbox{where } \C_0^+ \mbox{ will be denoted by } \C^+ \mbox{ when }  \alpha = 0)
	\end{equation}
by the following integral formula (we use here the convention which is usual for physicists)
\begin{equation} \label{defFLT}
	\forall \; \omega \in \C^+, \quad  \widehat{u}(\omega) = \frac{1}{\sqrt{2\pi}} \int_0^{+\infty}
	u(t) \; e^{i \omega t} \; dt.
\end{equation}
Note that, with this convention, as soon as $u$ and $\partial_t$ belong to $PB_\alpha (\R^+)$, we have
\begin{equation} \label{propFLT}
	\forall \; \omega \in \C^+, \quad  \widehat{\partial_t u}(\omega) = - i \omega \, \widehat{u}(\omega) + u(0), \quad \forall \; \omega \in \C_\alpha^+,
\end{equation}
which reduces to $\widehat{\partial_t u}(\omega) = - i \omega \, \widehat{u}(\omega)$ when $u(0)=0$.\\[12pt]
This transform is related to the usual Fourier transform $u(t) \rightarrow {\cal F}u(\omega)$ (where $t$ and $\omega$ are here real) by
\begin{equation} \label{Link_F-FLT}
\forall \; \eta > \alpha, \quad \forall \; \omega \in \R, \quad \widehat{u}(\omega + i\eta) = {\cal F}\big(u \, e^{-\eta t})(\omega)
\end{equation}
which proves in particular that (this is Plancherel's theorem)
\begin{equation} \label{FLT_L2}
	\forall \; \eta > \alpha, \quad \omega \in \R \mapsto \widehat{u}(\omega + i\eta) \in L^2(\R) \quad \mbox{and} \quad \int_{-\infty}^{+\infty} |u(\omega + i\eta)|^2 \; d\omega = \int_0^{+\infty} |u(t)|^2 \, e^{-2\eta t} \; dt.
\end{equation}
On the other hand, one easily sees that 
\begin{equation} \label{Analyticity}
\forall u \in PB_\alpha(\R^+),  \quad \omega \mapsto \widehat{u}(\omega) \mbox{ is analytic in } \C_\alpha^+.
\end{equation}
One can expect that $\widehat{u}(\omega)$ can be extended as an analytic function in a domain of the complex plane that contains the half-space $\C_\alpha^+$. We shall use the same notation $\widehat{u}(\omega)$ for the function defined by (\ref{defFLT}) and its analytic extension. In the following $\omega$ will be referred to as the (possibly complex) {\bf frequency}.\\[12pt]
The half-plane $\C^+$ in invariant under the transformation $\omega \rightarrow - \, \overline{\omega}$, which corresponds to the symmetry with respect to the imaginary axis. Laplace-Fourier transforms of {\bf real-valued} functions have a particular property with respect 
to this transformation:
\begin{equation} \label{FLTrealfields}
u(t) \in \R, \quad \forall \; t\geq 0, \quad \Longleftrightarrow \quad	\forall \; \omega \in \C^+, \quad  \widehat{u}(- \, \overline{\omega}) = \overline{\widehat{u}(\omega)}
\end{equation}
In the sequel, we shall assume that all the functions of time that are used in this article (for instance, one of the components of the electric and magnetic field at a given point), belong to some $PB_\alpha(\R+)$.\\[12pt]
Dispersive (isotropic) electromagnetic materials are most often defined as materials in which the proportionality laws of the form (\ref{Dielectric}) hold true \textbf{in the frequency domain}. Namely, they are satisfied by the Laplace-Fourier transforms of the fields, rather than by the fields themselves. In this case there is no reason to require that $\varepsilon$ and $\mu$ are real and independent of the frequency. That is why a dispersive isotropic medium will be defined as obeying constitutive laws of the form
\begin{equation} \label{Dispersive}
\widehat{\bf D}({\bf x}, \omega) = \varepsilon({\bf x}, \omega) \, \widehat{\bf E}({\bf x},\omega) , \quad  \widehat{\bf B}({\bf x},\omega) = \mu({\bf x}, \omega) \, \widehat{\bf H}({\bf x},\omega).
\end{equation}
where for each ${\bf x}$, $\omega \in \C^+ \mapsto \varepsilon({\bf x}, \omega)$ (the {\bf permittivity}) and $\omega \in \C^+ \mapsto \mu({\bf x}, \omega)$ (the {\bf permeability}) are non-trivial functions of the frequency that describe the {\bf dispersivity} of the medium. For {\bf non-dispersive} materials these functions are real positive and constant, i.e. (\ref{Dielectric}) holds. Of course, these functions satisfy some particular properties imposed by physical or mathematical reasons, as we show later.
\begin{rem} \label{rem_dielectric}
Non-dispersive constitutive laws like (\ref{Dielectric}) are commonly used in many applications, as presented in e.g. \cite{Monk}. Even though they cannot be rigorously true for physical reasons, they can be considered as a very good approximation as soon as $\varepsilon, \; \mu$ are real and constant over a broad range of frequencies and one excites the medium with a temporal source whose frequency content, or spectrum, is "mainly contained" in this range of frequencies. In such a case, the medium behaves as a non-dispersive one.
\end{rem}
\noindent {\bf Causality principle.} To ensure the causality of ${\bf D}(x,t)$ (or ${\bf B}(x,t)$) provided that ${\bf E}(x,t)$ (or ${\bf H}(x,t)$) is causal, it is natural to impose 
$$
{\bf (CP)} \qquad \omega \mapsto \varepsilon({\bf x}, \omega) \quad \mbox{and} \quad \omega \mapsto \mu({\bf x}, \omega) \quad \mbox{are analytic in } \C_\alpha^+, \; \mbox{for some } \alpha \geq 0.
$$
{\bf Reality principle.} A second requirement is that if ${\bf D}(x,t)$ (or ${\bf B}(x,t)$)
is real then ${\bf E}(x,t)$ (or ${\bf H}(x,t)$) is real too. According to (\ref{Dispersive}) and (\ref{FLTrealfields})
$$
{\bf (RP)} \qquad \forall \; \omega \in \C^+, \quad  {\varepsilon}({\bf x}, - \, \overline{\omega}) = \overline{{\varepsilon}({\bf x},\omega)}, \quad  {\mu}({\bf x}, - \, \overline{\omega}) = \overline{{\mu}({\bf x},\omega)}.
$$
{\bf High frequency principle.} A fundamental property from the physical point of view is that, at high frequency, any material "behaves as the vacuum". Mathematically, this amounts to requiring that
$$
{\bf (HF)} \qquad \forall \; \eta > 0, \mbox{ if } \; {\cal I}m \, \omega \geq \eta > 0, \quad \lim_{|\omega| \rightarrow + \infty} \varepsilon({\bf x}, \omega) =\varepsilon_0, \quad 
\lim_{|\omega| \rightarrow + \infty} \mu({\bf x}, \omega) =\mu_0.
$$
This means that the material is "{\bf less and less dispersive}" at high frequencies. In fact, the only non-dispersive medium is the vacuum (see however remark \ref{rem_dielectric}). This condition is not only a physical requirement: it also plays a role in the {\bf well-posedness} of  Maxwell's equations in {\bf local media} (see remark \ref{HFcondition}).\\[12pt]
From the mathematical point of view, assuming that the causality principle ${\bf(CP})$ is satisfied,  ${\bf (HF)}$ implies that fields related by one of the constitutive laws (\ref{Dispersive}) have the same {\bf time regularity}, more precisely,
$$
t \rightarrow {\bf E}({\bf x},t) \in H^s_{loc}(\R^+), s \geq 0 \quad \Longrightarrow \quad t \rightarrow {\bf D}({\bf x},t) \in H^s_{loc}(\R^+).
$$
Indeed, according to (\ref{Link_F-FLT}), for $\eta > 0$,  the operator $\big( t \mapsto e^{-\eta t} \, {\bf E}({\bf x},t)\big) \rightarrow \big( t \mapsto e^{-\eta t} \,{\bf D}({\bf x},t)\big)$ corresponds in the Fourier domain to the multiplication by $\varepsilon({\bf x}, \omega + i\eta), \;\omega \in \R$. From the analyticity property ${\bf (CP)}$, we infer
that $ \omega \in \R \rightarrow \varepsilon({\bf x}, \omega + i\eta)$ is a continuous function which has, because of ${\bf (HF)}$, a finite limit at infinity. Therefore this function is bounded and it is easy to conclude. 
\\[12pt]
A particular example of a material satisfying ${\bf (CP)}$ (with $\alpha = 0$), ${\bf (RP)}$ and ${\bf (HF)}$ is the case where there exists, for any ${\bf x} \in \R^3$, two causal real functions $t \mapsto \chi_e({\bf x}, t)$ and $t \mapsto \chi_m({\bf x},t)$ in $L^1(\R^+)$ such that :
\begin{equation} \label{exampleL1}
\varepsilon({\bf x}, \omega) = \varepsilon_0 \, \big(1 + \widehat{\chi}_e({\bf x}, \omega)\big), \quad \mu({\bf x}, \omega) = \mu_0 \, \big(1 + \widehat{\chi}_m({\bf x}, \omega)\big),
\end{equation}
where, by Riemann-Lebesgue's theorem, $\widehat{\chi}_e({\bf x}, \omega)$ and $\widehat{\chi}_m({\bf x}, \omega)$ extend to the closed half-space ${\overline{\C^+}}$ to a continuous function that tends to $0$ when $|\omega| \rightarrow + \infty$.
In this case, using the properties of the Fourier-Laplace transform with respect to convolution, the constitutive laws are expressed as follows:
\begin{equation} \label{Dispersive_time}
	\begin{array}{ll}
\dsp {\bf D}({\bf x}, t) = \varepsilon_0 \, \Big( {\bf E}({\bf x}, t) + \int_0^t \chi_e({\bf x}, \tau) \; {\bf E}({\bf x}, t-\tau) \; d \tau \Big),\\[12pt]
\dsp {\bf B}({\bf x}, t) = \mu_0 \, \Big( {\bf H}({\bf x}, t) + \int_0^t \chi_m({\bf x}, \tau) \; {\bf H}({\bf x}, t-\tau) \; d \tau \Big).
\end{array}
\end{equation}
\subsection{Passive materials} \label{sec-passive}
In this section, the case $\alpha = 0$ plays a particular role, since here we are interested in situations where the electric and magnetic fields and corresponding inductions are polynomially bounded in time. In such a medium, dispersive
 Maxwell's equations are {\bf stable} in the sense that there exists no mechanism of exponential blow-up (think for instance of the Cauchy problem). Thus,
  according to ${\bf (CP)}$, $\omega \mapsto \varepsilon({\bf x}, \omega)$ and $\omega \mapsto \mu({\bf x}, \omega)$ are analytic in $\C^+$. A particular subclass of materials
   satisfying this property are {\bf passive} materials. Their mathematical definition requires the introduction of the notion of {\bf Herglotz} function.
\begin{defi} {\bf (Herglotz function)} A Herglotz function is a complex-valued function  $f(\omega) : \C^+ \rightarrow \C$, analytic in $\C^+$ and whose image is included in the closure of $\C^+$, i.e. 
\begin{equation} \label{propHerglotz}
	{\cal I}m \, \omega > 0 \quad \Longrightarrow \quad {\cal I}m \, f(\omega) \geq 0 
	\end{equation}
	\end{defi}
\noindent Let us formulate and prove some of their elementary properties that 
will be of use later.
\begin{lema} \label{lemHerglotz} Let $f$ be a non-constant Herglotz function. Then the following properties hold:
	\begin{itemize}
\item[(i)] ${\cal I}m \, \omega > 0 \quad \Longrightarrow \quad {\cal I}m \, f(\omega) > 0 ,$	
\item[(ii)] $g(\omega) = - \, f(\omega)^{-1}$ is a Herglotz function, too.
\end{itemize}
Moreover, assuming that $f$ extends meromorphically to a neighborhood of $\omega_0\in \R$, 
\begin{itemize}
\item[(iii)] Any real zero $\omega_0$ of $f(\omega)$ is simple and $f'(\omega_0)$ is real and positive, 
\item[(iv)] Any real pole of $f(\omega)$ is simple and the corresponding residue ${\cal R}es (f, \omega_0)$ is negative.
				\end{itemize}
\end{lema}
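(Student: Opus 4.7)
The plan is to treat the four claims in order, each building on the previous ones.

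For (i), I would invoke the open mapping theorem: since $f$ is analytic and non-constant on the connected open set $\C^+$, its image $f(\C^+)$ is open in $\C$. This image is contained in $\overline{\C^+}$, but no real point is interior to $\overline{\C^+}$; hence in fact $f(\C^+) \subset \C^+$, i.e. ${\cal I}m\, f(\omega) > 0$ for every $\omega \in \C^+$. An equivalent route is to apply the maximum principle to the harmonic function $-\,{\cal I}m\, f$, which cannot attain its maximum value $0$ at an interior point of $\C^+$ without being constant. Claim (ii) is then essentially algebraic: by (i), $f$ does not vanish on $\C^+$, so $g = -f^{-1}$ is well defined and analytic there. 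Writing $f = a + ib$ with $b = {\cal I}m\, f > 0$, one line of computation gives
\begin{equation*}
{\cal I}m\, g \; = \; {\cal I}m \left( - \, \frac{\overline{f}}{|f|^2}\right) \; = \; \frac{b}{a^2+b^2} \; > \; 0 .
\end{equation*}

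Claim (iii) is the genuinely delicate step. Given a real zero $\omega_0$ of the meromorphic extension, write the local Taylor expansion $f(\omega) = c_k (\omega - \omega_0)^k + O\bigl((\omega - \omega_0)^{k+1}\bigr)$ with $c_k \neq 0$ and $k \geq 1$. Parametrize $\omega = \omega_0 + r\, e^{i\theta}$ with $r > 0$ small and $\theta \in (0, \pi)$; the leading term gives
\begin{equation*}
{\cal I}m\, f(\omega) \; = \; |c_k|\, r^k \, \sin\bigl(k\theta + \arg c_k\bigr) \; + \; o(r^k) .
\end{equation*}
By (i), the leading coefficient must be nonnegative for all admissible $\theta$, so $\sin(k\theta + \arg c_k) \geq 0$ on the whole interval $\theta \in (0, \pi)$. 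Since $k\theta + \arg c_k$ sweeps an arc of length $k\pi$, this forces $k\pi \leq \pi$, i.e. $k = 1$; and then the requirement $\theta + \arg c_1 \in [0,\pi]$ modulo $2\pi$ for every $\theta \in (0,\pi)$ pins $\arg c_1 = 0$, so $c_1 > 0$. Since $c_1 = f'(\omega_0)$, the zero is simple and $f'(\omega_0) > 0$.

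Claim (iv) then reduces to (iii) via (ii): $g = -f^{-1}$ is Herglotz and has a zero at $\omega_0$, and it is meromorphic in a neighborhood of $\omega_0$. Applying (iii) to $g$ yields a simple zero with $g'(\omega_0) > 0$. Writing the local Laurent expansion $f(\omega) = R/(\omega - \omega_0) + h(\omega)$ with $h$ analytic and $R = {\cal R}es(f, \omega_0)$, a direct inversion gives $g(\omega) = -(\omega - \omega_0)/R + O\bigl((\omega - \omega_0)^2\bigr)$, hence $g'(\omega_0) = -1/R$. The positivity of $g'(\omega_0)$ immediately yields $R < 0$. The main obstacle is the angular analysis in (iii): once this is settled, (i), (ii) and (iv) are either immediate from complex-analytic generalities or a routine algebraic manipulation.
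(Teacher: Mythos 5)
Your proposal is correct and follows essentially the same route as the paper: the angular analysis of the leading term of the local expansion in (iii), forcing $k=1$ and $\arg c_1 = 0$, and the reduction of (iv) to (iii) via $g=-f^{-1}$ with $g'(\omega_0)=-{\cal R}es(f,\omega_0)^{-1}$ are exactly the paper's arguments, as is the computation ${\cal I}m\, g = {\cal I}m\, f/|f|^2$ for (ii). The only variation is in (i), where you invoke the open mapping theorem (equivalently the maximum principle for $-\,{\cal I}m\, f$) while the paper runs the same local-expansion argument around an interior point where $f$ takes a real value; both are valid and everything else is identical.
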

\begin{proof} (i) Let $\omega_0 \in \C^+$ be such that $f(\omega_0) \in \R$. Since $f$ is analytic and non-constant,  there exists $n \in \N^*$ and $a  = r \, e^{i \phi} \neq 0$
such that $f(\omega) - f(\omega_0) \sim a \, (\omega - \omega_0)^n,$ when $\omega \rightarrow \omega_0$. Take $\omega = \omega_0 + 
 \rho \; e^{i \theta}$, with $\theta \in [0,2\pi]$ and $ 0 < \rho < {\cal I}m \; \omega_0$ so that $\omega \in \C^+$. Then $f(\omega) - f(\omega_0) 
 \sim  r \,  \rho^n \; e^{i (n\theta + \phi)}$ when  $\rho \rightarrow 0$ implies
	$$
{\cal I}m \, f(\omega) =  r\, \rho^n \; \sin (n\theta + \phi)+ O(\rho^{n+1}), \quad \rho \rightarrow 0.
	$$
Since $n\theta + \phi$ describes $[\phi, \phi + 2n\pi]$, ${\cal I}m \, f(\omega)$ would take negative values for $\rho$ small enough which contradicts the fact that $f$ is a Herglotz function. \\[12pt]
(ii) By (i), $g(\omega)=-f(\omega)^{-1}$ is well-defined and $\dsp g(\omega) = - {\overline{f(\omega)}} \, {|f(\omega)|^{-2}} $ shows that $g$ is Herglotz too.\\[12pt]
(iii) If $\omega_0$ is a real zero of $f$ of multiplicity $n \geq 1$, then $f(\omega) - f(\omega_0) \sim a \, (\omega - \omega_0)^n $ when $\omega \rightarrow \omega_0$ with $a  = r \; e^{i \theta} \neq 0$. Let $\omega = \omega_0 + \rho \; e^{i \theta}$ with $\rho>0$ and $\theta \in \, ]0,\pi[$, so that $\omega \in \C^+$ again. We have
$$
{\cal I}m \, f(\omega) =  r\,\rho^n \; \sin (n\theta + \phi)+ O(\rho^2).
$$
Since $(n\theta + \phi)$ describes $]\phi, \phi + n\pi[$, if $n \geq 2$, ${\cal I}m \, f(\omega)$ would take negative values which contradicts the Herglotz nature of $f$. For $n=1$, $\theta + \phi$ describes $]\phi, \phi + \pi[$. If $\phi$ belonged to $]0, \pi[$
the intersection $]\phi, \phi + \pi[ \, \cap \, ]\pi,  2\pi[$ would be non-empty and again, ${\cal I}m \, f(\omega)$ would take negative values for $\rho$ small enough.\\[12pt]
(iv) If $\omega_0$ is a real pole of $f$, it is a zero of $g = - \, f^{-1}$. To conclude, it suffices to combine (ii) and (iii) and the fact that
$g'(\omega_0) = - {\cal R}es (f, \omega_0)^{-1}$.
\end{proof}
\noindent Then, {\bf passive} materials are defined \textbf{mathematically} as follows.
\begin{defi} {\bf (Passive material)} \label{defpassive} A dispersive electromagnetic material as defined in section \ref{sec-presentation} is said to be passive if and only if, for each ${\bf x} \in \R^3$
\begin{equation} \label{defPassive}
\omega \mapsto \omega \; \varepsilon({\bf x}, \omega) \quad \mbox{and} \quad \omega \mapsto \omega \; \mu({\bf x}, \omega) \quad \mbox{are Herglotz functions}.
	\end{equation}
	\end{defi}
\noindent From lemma \ref{lemHerglotz} (i) and (ii), one sees that, for a passive material, the relationships $\big( t \mapsto  {\bf E}({\bf x},t)\big) \rightarrow \big( t \mapsto {\bf D}({\bf x},t)\big)$ and $\big( t \mapsto  {\bf H}({\bf x},t)\big) \rightarrow \big( t \mapsto {\bf B}({\bf x},t)\big)$ can be inverted. The mathematical definition of passivity is related to a \textbf{physical} notion of passivity, which is linked to energy.
\begin{defi} {\bf (Physical passivity)} 
\noindent  Defining the electromagnetic energy as in the vacuum, i.e.
\begin{equation} \label{defEnergy}
{\cal E}(t) :=  \frac{1}{2}\int_{\R^3} \big( \varepsilon_0 \, |{\bf E}|^2  + \mu_0 \, |{\bf H}|^2 \, \big)({\bf x}, t)\; d{\bf x}, \quad t >0,
\end{equation}
we shall say that a material is {\bf physically passive} if, when ${\bf E}$, ${\bf H}$, ${\bf D}$ and ${\bf B}$ are causal fields solving (\ref{Maxwell}) in the absence of source terms (however, with non-vanishing initial conditions) for $t\geq 0$ and are related by (\ref{Dispersive}), the corresponding electromagnetic energy does not increase between $0$ and $T$ for any $T \geq 0$, namely,
\begin{equation} \label{propEnergy}
{\cal E}(T) \leq {\cal E}(0).
\end{equation}
\end{defi}
\begin{rem} \label{decroissance} The property (\ref{propEnergy}) does not imply that ${\cal E}(t)$ is a decreasing function of time. Indeed, since (\ref{propEnergy}) is supposed to hold only for causal fields, the "initial time" $t=0$ cannot be replaced by any other "initial time" $t_0$. This will be made more precise in section \ref{sec_EnergyLorentz}.
\end{rem}
\noindent For further investigation of (\ref{propEnergy}), let us define the electric polarization ${\bf P}$ and the magnetization ${\bf M}$:
\begin{equation} \label{defPM}
{\bf D}({\bf x}, t) = \varepsilon_0 \, {\bf E}({\bf x}, t) + {\bf P}({\bf x}, t), \quad {\bf B}({\bf x}, t) = \mu_0 \,  {\bf H}({\bf x}, t)  + \,{\bf M}({\bf x}, t).	
		\end{equation}
Notice that, in physics, one defines the magnetization $\mathbf{M}$ by  
	${\bf B}= \mu_0 \, \left({\bf H} + \,{\bf M}\right)$. Then, Maxwell's equations can be rewritten as
\begin{equation} \label{Maxwell2}
	\varepsilon_0 \, \partial_t {\bf E} + {\bf rot} \, {\bf H} +  \partial_t {\bf P}= 0, \quad  \mu_0 \, \partial_t {\bf H} - {\bf rot} \, {\bf E}  + \partial_t {\bf M} = 0, \quad {\bf x} \in \R^3, t >0.
	\end{equation}
Defining, like in (\ref{exampleL1}),
\begin{equation} \label{epschi}
\varepsilon({\bf x}, \omega) =  \varepsilon_0 \, \big( 1 + \widehat{\chi}_e({\bf x}, \omega)\big),  \quad  \varepsilon({\bf x}, \omega)  =  \mu_0 \, \big( 1 + \widehat{\chi}_m({\bf x}, \omega)\big),
\end{equation}
the constitutive laws (\ref{Dispersive}) can  be rewritten as follows:
\begin{equation} \label{Dispersive2}
\widehat{\bf P}({\bf x}, \omega) =  \varepsilon_0 \, \widehat{\chi}_e({\bf x}, \omega) \, \widehat{\bf E}({\bf x},\omega) , \quad  \widehat{\bf M}({\bf x},\omega) =  \mu_0 \, \widehat{\chi}_m({\bf x}, \omega) \, \widehat{\bf H}({\bf x},\omega).
\end{equation}
One easily deduces from (\ref{Maxwell2}) that
 \begin{equation} \label{idEnergy}
\dsp \frac{d}{dt} \, {\cal E}(t) + \int_{\R^3} \big(  \partial_t {\bf P} \cdot {\bf E} +  \partial_t {\bf M} \cdot {\bf H} \big)({\bf x}, t) \,  d{\bf x}  = 0.
\end{equation}
Thus, for any $T > 0$,
\begin{equation} \label{varEnergy}
{\cal E}(T) - {\cal E}(0) + \int_{\R^3} \Big[ \int_0^T \big(  \partial_t {\bf P} \cdot {\bf E} +  \partial_t {\bf M} \cdot {\bf H} \big)({\bf x}, t) \, dt \Big] \; d{\bf x}  = 0.
\end{equation}
\begin{theo} \label{Thmpassivity}
A passive material in the sense of definition (\ref{defpassive}) is physically passive. 
\end{theo}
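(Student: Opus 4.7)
The strategy is to combine the integrated energy identity (\ref{varEnergy}) with a Fourier--Laplace computation that transports the Herglotz positivity of $\omega \mapsto \omega\,\varepsilon({\bf x},\omega)$ and $\omega \mapsto \omega\,\mu({\bf x},\omega)$ into a time-domain inequality. By (\ref{varEnergy}), the theorem reduces to proving
$$
I(T) \, := \, \int_{\R^3}\int_0^T \big(\partial_t {\bf P}\cdot{\bf E} + \partial_t {\bf M}\cdot{\bf H}\big)({\bf x},t)\,dt\,d{\bf x} \, \geq \, 0,
$$
and since the two contributions are symmetric, I concentrate on $\int_0^T \partial_t {\bf P}\cdot {\bf E}\,dt$, pointwise in ${\bf x}$.

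The obstacle to applying Plancherel directly is that it naturally yields an inequality on the whole half-line $\R^+$, whereas we need one on the finite interval $[0,T]$. I would bypass this by a \emph{causal truncation}: set ${\bf E}_T({\bf x},t) = {\bf E}({\bf x},t)\,\mathbbm{1}_{[0,T]}(t)$ and let ${\bf P}_T$ be the causal polarization associated to ${\bf E}_T$ through the constitutive law, i.e.\ $\widehat{{\bf P}_T}({\bf x},\omega) = (\varepsilon({\bf x},\omega) - \varepsilon_0)\,\widehat{{\bf E}_T}({\bf x},\omega)$ (in the $L^1$ case (\ref{Dispersive_time}) simply $\varepsilon_0\,\chi_e *_t {\bf E}_T$). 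Causality of the kernel forces ${\bf P}_T = {\bf P}$ on $[0,T]$, while ${\bf E}_T$ vanishes beyond $T$, so
$$
\int_0^T \partial_t {\bf P}\cdot{\bf E}\,dt \, = \, \int_0^{+\infty} \partial_t {\bf P}_T\cdot {\bf E}_T\,dt.
$$
Applying Plancherel (\ref{FLT_L2}) with damping $e^{-2\eta t}$ ($\eta>0$), using ${\bf P}_T({\bf x},0)=0$ and (\ref{propFLT}) to get $\widehat{\partial_t{\bf P}_T}(z) = -i\,z\,(\varepsilon({\bf x},z)-\varepsilon_0)\,\widehat{{\bf E}_T}(z)$ for $z = \omega + i\eta$, and taking real parts (the left-hand side is real-valued), one finds
$$
\int_0^{+\infty} \partial_t {\bf P}_T\cdot{\bf E}_T\,e^{-2\eta t}\,dt \, = \, \int_{\R} {\cal I}m\big(z\,\varepsilon({\bf x},z)\big)\,|\widehat{{\bf E}_T}({\bf x},z)|^2\,d\omega \, - \, \eta\,\varepsilon_0 \int_0^{+\infty} |{\bf E}_T|^2\, e^{-2\eta t}\,dt.
$$
The passivity hypothesis makes $z\,\varepsilon({\bf x},z)$ Herglotz, so the first integral is non-negative, while the second term vanishes as $\eta\to 0^+$ because ${\bf E}_T$ is bounded and compactly supported in time. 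Letting $\eta\to 0^+$ yields $\int_0^T \partial_t {\bf P}\cdot{\bf E}\,dt\geq 0$; the parallel argument with ${\bf M}$, ${\bf H}$ and $\omega\,\mu({\bf x},\omega)$ gives the magnetic part, and integrating in ${\bf x}$ delivers $I(T)\geq 0$, hence ${\cal E}(T)\leq {\cal E}(0)$.

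The delicate point is the construction and regularity of the truncated polarization ${\bf P}_T$ (and ${\bf M}_T$) when $\varepsilon({\bf x},\cdot)-\varepsilon_0$ does not correspond to an $L^1$ convolution kernel -- as happens e.g.\ for the rational resonant permittivities studied in Section~\ref{sec-local} -- because ${\bf P}_T$ must then be defined through Fourier--Laplace inversion in a distributional sense, and one has to justify the integrability of $\partial_t {\bf P}_T \cdot {\bf E}_T$ and the passage $\eta\to 0^+$. A convenient tactic would be to prove the theorem first under the $L^1$ kernel hypothesis (\ref{exampleL1}) (or for the local/rational class of Section~\ref{sec-local}, where a direct ODE energy computation is available) and then extend to general passive media by an approximation argument, the Herglotz-plus-Plancherel identity displayed above being the common kernel of every variant.
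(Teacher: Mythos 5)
Your proposal is correct and follows essentially the same route as the paper's proof: the causal truncation ${\bf E}_T = \mathbbm{1}_{[0,T]}{\bf E}$ with its associated ${\bf P}_T$, the damped Plancherel identity producing the ${\cal I}m\big(\omega\,\varepsilon({\bf x},\omega)\big)$ term plus the $-\eta\,\varepsilon_0$ correction, the Herglotz positivity, and the limit $\eta\to 0^+$ are exactly the steps used there. Your closing caveat about the distributional definition of ${\bf P}_T$ for non-$L^1$ kernels is a legitimate point of rigor that the paper's proof passes over in silence.
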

\begin{proof} 
Let ${\bf E}_T({\bf x}, t) := \mathbbm{1}_T(t) \; {\bf E}({\bf x}, t)$, where $\mathbbm{1}_T(t)$ is the indicator function of the interval $[0,T]$, and ${\bf P}_T ({\bf x}, t)$ the corresponding induction field via (\ref{Dispersive2}), i. e.
 \begin{equation}\label{relation} \widehat{{\bf P}_T}({\bf x}, \omega) =  \varepsilon_0 \, \
 \widehat{\chi}_e({\bf x}, \omega) \, \widehat{{\bf E}_T}({\bf x},\omega).
 \end{equation}
By causality, ${\bf P}_T ({\bf x}, t) = {\bf P} ({\bf x}, t)$ for any $t \leq T$. Let $\eta > 0$. Then
$$
\int_0^T  \partial_t {\bf P} \cdot {\bf E} \; e^{-2 \eta t} \; dt \equiv \int_0^{+\infty} \partial_t {\bf P}_T \cdot {\bf E}_T \, e^{-2 \eta t} \; dt = - \, \int_{-\infty+i\eta}^{+\infty+ i\eta} \; 
i \omega \, \widehat{{\bf P}_T}(\omega ) \cdot \overline{\widehat{{\bf E}_T}(\omega ) } \; d\omega
$$
where we used (\ref{propFLT}) and Plancherel's theorem. Thus, using (\ref{relation}) and $\varepsilon_0 \, \widehat{\chi}_e({\bf x}, \omega )= \varepsilon({\bf x}, \omega ) - \varepsilon_0$, see \ref{epschi},
$$
	\begin{array}{ll}
\dsp  \int_0^T  \partial_t {\bf P} \cdot {\bf E} \; e^{-2 \eta t} \; dt & = \dsp - \int_{-\infty+i\eta}^{+\infty+ i\eta} \; 
i\omega \, \varepsilon_0 \, \widehat{\chi}_e({\bf x}, \omega ) \, |\widehat{{\bf E}_T}(\omega)|^2 \; d\omega,\\[12pt]
&\dsp = - \int_{-\infty+i\eta}^{+\infty+ i\eta} \; 
i\omega \, \varepsilon({\bf x}, \omega ) \, |\widehat{\bf E}_T(\omega)|^2 \; d\omega
+ \int_{-\infty+i\eta}^{+\infty+ i\eta} \; 
i\omega \, \varepsilon_0 \, |\widehat{\bf E}_T(\omega)|^2 \; d\omega.
\end{array} 
$$
Since ${\bf P}$ and ${\bf E}$ are real,  taking the real part of the above and using $ - {\cal R}e \, (iz) = {\cal I}m \, z$, we get
\begin{equation}\label{int1}
 \int_0^T  \partial_t {\bf P} \cdot {\bf E} \, e^{-2 \eta t} \; dt =
\int_{-\infty+i\eta}^{+\infty+ i\eta} \; 
{\cal I}m \big(\omega \, \varepsilon({\bf x}, \omega )\big) \, |\widehat{\bf E}_T(\omega)|^2 \; d\omega	- \eta \, \varepsilon_0 \, \int_{-\infty+i\eta}^{+\infty+ i\eta} \;|\widehat{\bf E}_T(\omega)|^2 \; d\omega,
\end{equation}	
or, equivalently, 
\begin{equation}\label{int1bis}
\int_0^T  \partial_t {\bf P} \cdot {\bf E} \, e^{-2 \eta t} \; dt =
\int_{-\infty+i\eta}^{+\infty+ i\eta} \; 
{\cal I}m \big(\omega \, \varepsilon({\bf x}, \omega )\big) \, |\widehat{\bf E}_T(\omega)|^2 \; d\omega	- \eta \, \varepsilon_0 \, \int_0^T  |{\bf E}|^ 2 \, e^{-2 \eta t} \; dt.
\end{equation}	
Since by passivity  ${\cal I}m \big(\omega \, \varepsilon({\bf x}, \omega )\big) > 0$ for ${\cal I}m \,\omega = \eta >0$, we have 
\begin{equation}\label{int2}
\int_0^T  \partial_t {\bf P} \cdot {\bf E} \, e^{-2 \eta t} \; dt \geq	- \, \eta \, \varepsilon_0 \, \int_0^T  |{\bf E}|^ 2 \, e^{-2 \eta t} \; dt.
\end{equation}	
Taking the limit of the above inequality when $\eta$ tends to 0, we get
\begin{equation}\label{int3}
\int_0^T  \partial_t {\bf P} \cdot {\bf E} \; dt \; \geq \; 0.
\end{equation}
In the same way, we have $\dsp\int_0^T  \partial_t {\bf M} \cdot {\bf H}  \; dt \; \geq \; 0$, and conclude with the help of  (\ref{varEnergy}). 
\end{proof}
\subsection{On the equivalence between the different notions of passivity} \label{sec-equivalence}
It is natural to wonder whether the reciprocal of theorem \ref{Thmpassivity}, namely 
"any physically passive material is passive in the sense of definition (\ref{defpassive})", is true. Such a property seems to be commonly or implicitly admitted in the literature. However, it is far from obvious, as this is mentioned in \cite{Cessenat} for instance. 
Note that, as a consequence of (\ref{varEnergy}), the definition of physical passivity is equivalent to assuming that
\begin{equation} \label{Physicalpassivity}
 \int_0^T  \partial_t {\bf P} \cdot {\bf E} \; dt +  \int_0^T  \partial_t {\bf M} \cdot {\bf H}  \; dt \; \geq \; 0.
\end{equation}
for vector fields  ${\bf E}$, ${\bf H}$, ${\bf P}$ and ${\bf M}$ related by (\ref{Dispersive2}) and also by Maxwell's equations (\ref{Maxwell2}). \\[12pt]
Let us introduce a third notion of passivity, clearly stronger than physical passivity:
\begin{defi} {\bf (Strong physical passivity)}
A material is {\bf strongly} passive if and only if for any causal fields ${\bf E}$, ${\bf H}$, ${\bf P}$ and ${\bf M}$ related by (\ref{Dispersive2}) (but not necessarily solving (\ref{Maxwell2})), it holds
\begin{equation} \label{Strongpassivity}
\dsp \forall \; T > 0, \quad  \int_0^T  \partial_t {\bf P} \cdot {\bf E} \; dt  \geq 0 \quad \mbox{and} \quad  \int_0^T  \partial_t {\bf M} \cdot {\bf H}  \; dt \; \geq \; 0.
\end{equation}
\end{defi}
\noindent The {\bf strong physical passivity} property is the one that is most often used in the literature (see \cite{Cessenat}, \cite{Welters}). The proof of theorem \ref{Thmpassivity} shows in fact that {\bf passivity} implies {\bf strong physical passivity}. The converse is also true under additional assumptions. 
To demonstrate this, we will need a density lemma.
\begin{lema} \label{density} Let $L^2(\R^+)$ denote the subspace of causal functions of $L^2(\R)$ and $L^2_c(\R^+)$ the subspace of $L^2(\R^+)$ 
	functions with compact support. Any function of the form $|\widehat{f}|^2$ with $f \in L^2(\R)$ (in other words any non-negative integrable function) is the limit in $L^1(\R)$ of some sequence  $|\widehat{f}_n|^2$  with $f_n \in L^2_c(\R^+)$. 
	\end{lema}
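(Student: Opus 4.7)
The plan exploits a simple but crucial observation: the modulus $|\widehat{g}|$ of the Fourier transform of a function $g\in L^2(\R)$ is invariant under time translation, since for any $\tau\in\R$ one has $\widehat{g(\cdot-\tau)}(\omega)=e^{i\omega\tau}\,\widehat{g}(\omega)$ for $\omega\in\R$. Consequently, it is enough to approximate $f$ by compactly supported $L^2$-functions on $\R$; the causality constraint can then be enforced for free by a suitable translation of the support into the positive half-line.

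Accordingly, I would proceed in three steps. First, I would truncate: for $R>0$, set $\tilde{f}_R:=f\,\mathbbm{1}_{[-R,R]}\in L^2(\R)$, which has compact support and satisfies $\|f-\tilde{f}_R\|_{L^2(\R)}\to 0$ as $R\to+\infty$ by dominated convergence. Second, I would translate: define $f_R(t):=\tilde{f}_R(t-R)$, whose support is contained in $[0,2R]\subset\R^+$, so that $f_R\in L^2_c(\R^+)$; the identity $\widehat{f_R}(\omega)=e^{i\omega R}\,\widehat{\tilde{f}_R}(\omega)$ yields $|\widehat{f_R}(\omega)|=|\widehat{\tilde{f}_R}(\omega)|$ for every $\omega\in\R$.

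Third, I would estimate the $L^1$-distance. Using the factorization $a^2-b^2=(a-b)(a+b)$ applied pointwise to $a=|\widehat{f}|$ and $b=|\widehat{\tilde{f}_R}|$, together with the reverse triangle inequality $\bigl||\widehat{f}|-|\widehat{\tilde{f}_R}|\bigr|\le|\widehat{f}-\widehat{\tilde{f}_R}|$ and the Cauchy--Schwarz inequality, I get
\begin{equation*}
\bigl\||\widehat{f}|^2-|\widehat{f_R}|^2\bigr\|_{L^1(\R)}\;\le\;\|\widehat{f}-\widehat{\tilde{f}_R}\|_{L^2(\R)}\,\bigl(\|\widehat{f}\|_{L^2(\R)}+\|\widehat{\tilde{f}_R}\|_{L^2(\R)}\bigr).
\end{equation*}
Plancherel's theorem then turns this into $\|f-\tilde{f}_R\|_{L^2(\R)}\bigl(\|f\|_{L^2(\R)}+\|\tilde{f}_R\|_{L^2(\R)}\bigr)$, which tends to $0$ as $R\to+\infty$ by Step~1. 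Choosing any sequence $R_n\to+\infty$ and setting $f_n:=f_{R_n}$ yields the desired approximating sequence in $L^2_c(\R^+)$.

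There is no genuine obstacle in this argument. The only clever ingredient is the realization that causality of the approximants need not fight against the truncation: because $|\widehat{\cdot}|$ is blind to translations in time, one can always shift a compactly supported real-line approximant into $\R^+$ without altering $|\widehat{\cdot}|^2$. Everything else reduces to a standard density statement in $L^2$ combined with an elementary $L^2$-to-$L^1$ estimate.
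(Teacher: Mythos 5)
Your proof is correct and follows essentially the same route as the paper's: approximate $f$ in $L^2(\R)$ by compactly supported functions, translate the support into $\R^+$ using the invariance of $|\widehat{\cdot}|$ under time shifts, and conclude with the factorization $a^2-b^2=(a-b)(a+b)$, the reverse triangle inequality, Cauchy--Schwarz and Plancherel. The only cosmetic difference is that you build the approximants explicitly by truncation $f\,\mathbbm{1}_{[-R,R]}$ where the paper invokes the density of $L^2_c(\R)$ in $L^2(\R)$ abstractly.
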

\begin{proof} Let $L^2_c(\R)$ the dense subspace of $L^2(\R)$ of
	compactly supported functions. By density, there exists $(f_n)_{n=1}^{\infty} \subset L^2_c(\R)$ such that $f_n \rightarrow f \in L^2(\R)$. 
	Thus $\widehat{f}_n \rightarrow \widehat{f} \in L^2(\R)$. By construction, $\mbox{supp }f_n \subset [-T_n/2, T_n/2]$ so that
	$f_n^* (t) = f_n(t - T_n/2)$ has support in $[0, T_n]$ and thus belongs to $L^2_c(\R^+)$. Moreover,
	$$
	\widehat{f}^*_n (\omega) = e^{i\frac{\omega \, T_n}{2}} \, \widehat{f}_n (\omega) \quad \mbox{so that} \quad |\widehat{f}^*_n (\omega)| = |\widehat{f}_n (\omega)|.
	$$
Let us prove that $|\widehat{f}^*_n|^2$	converges to $|\widehat{f}|^2$ in $L^1(\R)$ which will conclude the proof. We write
$$
\int_\R \big| \,|\widehat{f}^*_n|^2 - |\widehat{f}|^2 \big|  \; d\omega \leq \big\| \; |\widehat{f}_n| - |\widehat{f}| \; \|_{L^2(\R)}  \; \big\| \; |\widehat{f}_n| + |\widehat{f}| \; \|_{L^2(\R)}  \leq C \; \big\| \; |\widehat{f}_n| - |\widehat{f}| \; \|_{L^2(\R)}.
$$
We finish the proof using the second triangular inequality and Plancherel's theorem:
$$
\int_\R \big| \,|\widehat{f}_n|^2 - |\widehat{f}|^2 \big|  \; d\omega \leq C \; \big\| \widehat{f}_n - \widehat{f}\|_{L^2(\R)} =  C \; \big\|{f}_n -{f}\|_{L^2(\R)}.
$$
\end{proof}
\begin{theo} \label{equivalence} 
	Assume that for each ${\bf x} \in \R^3$, ${\cal I}m \big(\omega \, \varepsilon({\bf x}, \omega )\big)$ and ${\cal I}m \big(\omega \, \mu({\bf x}, \omega )\big)$ are bounded functions of $\omega$. Let $\widehat{\chi}_e({\bf x}, \omega)$ and $\widehat{\chi}_m({\bf x}, \omega)$ be Fourier-Laplace functions of $L^1$ causal functions $t \mapsto \chi_e({\bf x}, t)$ and $t \mapsto \chi_m({\bf x},t)$. Assume furthermore that
\begin{equation} \label{hyposup}
	\lim_{|\omega|\rightarrow +\infty} \omega \, \big(\varepsilon({\bf x}, \omega ) - \varepsilon_0 \big)= 0,
	 \quad \lim_{|\omega|\rightarrow +\infty} \omega \, \big(\mu({\bf x}, \omega ) - \mu_0 \big)= 0,\quad \mbox{ for } \omega \in \, \C^{+}.
\end{equation}
Then, the strong passivity assumption (\ref{Strongpassivity}) implies passivity.
\end{theo}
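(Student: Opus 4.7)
The plan is to show ${\cal I}m\,(\omega\,\varepsilon({\bf x},\omega))\geq 0$ on $\C^+$ for each fixed ${\bf x}$ (the argument for $\mu$ being identical); abbreviate $\varepsilon(\omega):=\varepsilon({\bf x},\omega)$ and $\chi_e(t):=\chi_e({\bf x},t)$. I would proceed in three stages: (i) recast strong passivity as an integral positivity on the real line via Plancherel, (ii) use the density Lemma \ref{density} to extract pointwise positivity of ${\cal I}m\,(\omega\,\varepsilon(\omega))$ on $\R$, (iii) lift this inequality from $\R$ to $\C^+$ using the decay hypothesis (\ref{hyposup}) together with a half-plane maximum principle.

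For stage (i), I would revisit the computation in the proof of Theorem \ref{Thmpassivity} in the limit $\eta=0$. Pick any real $E\in L^2(\R^+)$ with support in $[0,T_0]$ and set $P:=\varepsilon_0\,\chi_e*E$. Since $\chi_e\in L^1$, $P\in L^2(\R)$ is causal; by (\ref{hyposup}) and Riemann--Lebesgue, $\omega\,\widehat{\chi}_e(\omega)=\omega(\varepsilon(\omega)-\varepsilon_0)/\varepsilon_0$ is continuous and bounded on $\R$, so $\partial_t P\in L^2(\R)$ with Fourier transform $-i\omega\,\widehat{P}$. Because $E$ vanishes for $t>T_0$, strong passivity gives $\int_\R \partial_t P\cdot E\,dt\geq 0$, and Plancherel combined with $\varepsilon_0\widehat{\chi}_e=\varepsilon-\varepsilon_0$ and extraction of real parts (as in (\ref{int1bis}) with $\eta=0$) yields
$$
\int_\R {\cal I}m\,(\omega\,\varepsilon(\omega))\,|\widehat{E}(\omega)|^2\,d\omega \;\geq\; 0 \qquad \mbox{for every real } E \in L^2_c(\R^+).
$$

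For stage (ii), observe that (RP) makes ${\cal I}m\,(\omega\,\varepsilon(\omega))$ even on $\R$, and $|\widehat{E}|^2$ is automatically even whenever $E$ is real. Lemma \ref{density}, applied to real-valued approximants (if $\phi\geq 0$ is even integrable, take $f:={\cal F}^{-1}\sqrt{\phi}$, which is real, and then use that the approximation in the lemma preserves real-valuedness when $f$ is real), produces real $E_n\in L^2_c(\R^+)$ with $|\widehat{E_n}|^2\to\phi$ in $L^1$ for every even nonnegative $\phi\in L^1(\R)$. Since ${\cal I}m\,(\omega\,\varepsilon(\omega))$ is bounded by hypothesis, I can pass to the limit and get $\int_\R {\cal I}m\,(\omega\,\varepsilon)\,\phi\,d\omega\geq 0$ for every such $\phi$, hence ${\cal I}m\,(\omega\,\varepsilon(\omega))\geq 0$ a.e.\ on $\R$; continuity of $\widehat{\chi}_e$ on $\overline{\C^+}$ upgrades this to pointwise everywhere on $\R$.

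Stage (iii) is the true obstacle. Set $G(\omega):=\omega\,\varepsilon(\omega)-\omega\,\varepsilon_0$. Then $G$ is analytic in $\C^+$, continuous on $\overline{\C^+}$, satisfies ${\cal I}m\,G\geq 0$ on $\R$ by stage (ii), and tends to $0$ as $|\omega|\to\infty$ in $\overline{\C^+}$ by (\ref{hyposup}). Applying the maximum principle to the harmonic function $-{\cal I}m\,G$ on half-disks $B_R\cap\C^+$ (where the boundary values are $\leq 0$ on the real segment and uniformly $\to 0$ on the upper semicircle as $R\to\infty$) forces ${\cal I}m\,G\geq 0$ throughout $\C^+$. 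Since ${\cal I}m\,(\omega\,\varepsilon_0)=\varepsilon_0\,{\cal I}m\,\omega\geq 0$ on $\C^+$, we conclude ${\cal I}m\,(\omega\,\varepsilon(\omega))\geq 0$ on $\C^+$, i.e.\ $\omega\,\varepsilon(\omega)$ is Herglotz. The decay assumption (\ref{hyposup}) is indispensable here: without it, nonnegativity of the imaginary part on $\R$ does not propagate to $\C^+$ (witness $G(\omega)=i\omega^2$, which has imaginary part $\omega^2\geq 0$ on $\R$ but $\omega_r^2-\omega_i^2$ on $\C^+$), and the passage from boundary to interior collapses.
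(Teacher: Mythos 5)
Your proposal is correct and follows essentially the same route as the paper's proof: Plancherel converts strong passivity into the positivity of $\int_{\R}{\cal I}m\big(\omega\,\varepsilon(\omega)\big)\,|\widehat{E}(\omega)|^2\,d\omega$, the density lemma \ref{density} upgrades this to pointwise nonnegativity of ${\cal I}m\big(\omega\,\varepsilon(\omega)\big)$ on $\R$, and the harmonic-function minimum principle on half-disks combined with (\ref{hyposup}) propagates the sign to $\C^+$ (your $G$ is exactly the paper's $u_{\bf x}$). The only differences are cosmetic: you work directly at $\eta=0$ instead of letting $\eta\to 0$ in (\ref{int1bis}), and you are somewhat more careful than the paper about restricting to real fields and hence to even test functions in the density step.
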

\begin{proof} Notice that in (\ref{Strongpassivity}), $({\bf E},{\bf P})$ and $({\bf H},{\bf M})$ are not connected by Maxwell's equations. Hence, it suffices to show that (\ref{Strongpassivity}) for $({\bf E},{\bf P})$  implies ${\cal I}m \, \omega \, \varepsilon({\bf x}, \omega ) \geq 0$ in $\C^+$ (the proof is identical for $\mu$ instead of $\varepsilon$). We start from identity 
	 (\ref{int1bis}).
	 Since $t \mapsto \chi_e({\bf x}, t)$ belongs to $L^1(\R^+)$, the function $\omega \mapsto \widehat{\chi}_e({\bf x}, \omega)$ extends continuously to the real axis. Thus, this also holds for the function $\omega \mapsto \varepsilon({\bf x}, \omega )$, which is thus continuous and bounded (thanks to (\ref{hyposup})) along the real axis. Thanks to these properties, using Lebesgue's dominated convergence theorem, we can pass to the limit 
in (\ref{int1bis}) when $\eta$ tends to 0 to obtain
$$
\, \int_0^T  \partial_t {\bf P} \cdot {\bf E}  \; dt =
\int_{-\infty}^{+\infty} \; 
{\cal I}m \big(\omega \, \varepsilon({\bf x}, \omega )\big) \, |{\widehat{\bf E}_T(\omega)}|^2 \; d\omega	\; \geq \; 0.
$$
This being true for any $T$ and any ${\bf E} \in L^2(\R)$, using the density lemma \ref{density}, we get
$$
\int_{-\infty}^{+\infty} \; 
{\cal I}m \big(\omega \, \varepsilon({\bf x}, \omega )\big) \, g(\omega) \; d\omega \geq 0, \quad \forall \; g \in L^1(\R) \mbox{ such that } g \geq 0
$$
from which we immediately infer that
$$
\forall \; \omega \in \R, \quad {\cal I}m \big(\omega \, \varepsilon({\bf x}, \omega )\big) \geq 0.
$$
To extend this positivity result to the half-space $\C^+$, 
let us set, for any $R > 0$, 
$\Omega_R = \{ \omega \in \C^+ \l / \; |\omega| < R \},$
that we identify to an open set of $\R^2$. Let
$$u_{\bf x}(x,y) = {\cal I}m \big((x+iy) \, (\varepsilon({\bf x}, x+iy )-\varepsilon_0)\big), \quad (x,y) \in \R^2_+:= \R \times \R^+_*.$$ 
By analyticity of $\varepsilon({\bf x}, \omega )$ in $\C^+$, $u_{\bf x}$ is harmonic in $\R^2_+$ so that, in $\overline{\Omega}_R$, the minimum of $u(x,y)$ is attained on $\partial \Omega_R := [-R,R] \cup \Gamma_R, \; \Gamma_R = \big\{ R \, e^{i \theta}, \theta \in (0,\pi) \big\}$. Since $u_{\bf x}$ is non-negative on the real axis, we get
$$
\min_{(x,y) \, \in \, \overline{\Omega}_R} u_{\bf x}(x,y) \geq \min\big( 0, \min_{(x,y) \, \in \, \Gamma_R} u_{\bf x}(x,y)\big).
$$
On the other hand, due to (\ref{hyposup}), $\|u_{\bf x}\|_{L^\infty(\Gamma_R)} \rightarrow 0$ when $R \rightarrow + \infty$. Thus, for any $\delta > 0$ (arbitrarily small), there exists $R_\delta > 0$, with $R_\delta \rightarrow + \infty$ when $\delta \rightarrow 0$ such that
$
\sup_{(x,y) \, \in \, \Gamma_{R_\delta}} |u_{\bf x}(x,y)| < \delta, \quad \mbox{thus }  \min_{(x,y) \, \in \, \overline{\Omega}_{R_{\delta}}} u_{\bf x}(x,y) \geq - \, \delta,
$
and one easily concludes by making $\delta$ tend to 0 that $u_{\bf x}(x,y)\geq 0$ for all $(x,y)\in \R \times \R^+_*$ which implies that $\operatorname{Im}(\omega \, \varepsilon({\bf x},\omega))\geq \varepsilon_0 \operatorname{Im}\omega>0$ for all $\omega\in \C^{+}$.
\end{proof}
\noindent 
\begin{rem} The result of theorem \ref{equivalence} is likely to be valid under much weaker assumptions (removing in particular the $L^1$ assumption for $\chi_e$ or $\chi_m$), as  stated in the book \cite{Zemanian} and used e.g. in \cite{sumrules}, \cite{Welters}. 
	\end{rem}
	
\section{Local dispersive materials} \label{sec-local}
\subsection{Definition} \label{sec-defi}
We shall say that a dispersive material is {\bf local} if and only if 
$$
{\bf (LM)} \qquad \omega \mapsto  \varepsilon({\bf x}, \omega) \quad \mbox{and} \quad \omega \mapsto \mu({\bf x}, \omega) \quad \mbox{are (irreducible) rational fractions}.
$$
The term {\bf local} can be misleading since it
does not mean that the constitutive laws are local in time: \textbf{memory effects} are present a priori. However, they are of particular form, as  it will be explained in detail later.
\begin{defi} \label{defAdmissible}{\bf (Admissible local materials)} We will call local materials {\bf admissible} if and only if they are compatible with the conditions ${\bf (CP)}$, ${\bf (RP)}$ and ${\bf (HF)}$. The reader can easily verify that
$$
{\bf (ALM)} \quad  
\left\{
\begin{array}{l}
\dsp	\varepsilon({\bf x}, \omega) =  \varepsilon_0 \Big( 1 + \frac{P_e({\bf x}, -i \omega)}{Q_e({\bf x}, -i \omega)} \Big), \quad \mu({\bf x}, \omega) =  \mu_0 \Big( 1 + \frac{P_m({\bf x}, -i \omega)}{Q_m({\bf x}, -i \omega)} \Big), \mbox{ where }\\[12pt]
P_e({\bf x}, \cdot) , Q_e({\bf x}, \cdot) , P_m({\bf x}, \cdot) , Q_m({\bf x}, \cdot)  \mbox{ are polynomials with real coefficients }\\[12pt]
\mbox{that satisfy } d^oP_e({\bf x}, \cdot) < M_e := d^oQ_e({\bf x}, \cdot), d^oP_m({\bf x}, \cdot) < M_m := d^oQ_m({\bf x}, \cdot).
	\end{array} \right.
$$
\end{defi}

\begin{rem} For simplicity, we consider only the case where $M_e$ and $M_m$ do not depend on ${\bf x}$. 
	
\end{rem}
\noindent In the above framework, the relationship (\ref{Dispersive}) can be rewritten in terms of {\bf ordinary differential equations} (ODEs) in time, introducing the polarization ${\bf P}$ and magnetization ${\bf M}$ as in (\ref{defPM}). More precisely,
\begin{equation} \label{Dispersivelocal}
\left\{	\begin{array}{ll}
{\bf D}({\bf x}, t) = \varepsilon_0 \, {\bf E}({\bf x}, t) + {\bf P}({\bf x}, t) , \quad {\bf B}({\bf x}, t) = \mu_0 \,  {\bf H}({\bf x}, t)  + {\bf M}({\bf x}, t) , & (a)\\[12pt]
Q_e({\bf x}, \partial_t) \; {\bf P} = \varepsilon_0 P_e({\bf x}, \partial_t) \; {\bf E}, \quad Q_m({\bf x}, \partial_t) \; {\bf M} =\mu_0 P_m({\bf x}, \partial_t) \; {\bf H}, & (b)
\end{array} \right.
		\end{equation}
where (\ref{Dispersivelocal}(b)) is completed with 
properly chosen initial conditions compatible with (\ref{Dispersive}).
The above justifies the term {\bf local}, since differential operators 
are local in time (they 'see' only the behaviour of a function around a given time). 
Using the theory of linear ODEs, 
(\ref{Dispersivelocal}) can be expressed in the form (\ref{Dispersive_time}), where $t \mapsto \chi_e({\bf x}, t)$ and  $t \mapsto \chi_m({\bf x}, t)$ are linear combinations of exponentials, possibly multiplied by polynomials (the exponential rates are the poles of $\varepsilon({\bf x}, \cdot), \; \mu({\bf x}, \cdot)$ and the polynomial degrees are the multiplicities of these poles).
Notice that $t \mapsto \chi_e({\bf x}, t)$ and $t \mapsto \chi_m({\bf x}, t)$ do not necessarily belong to $L^1(\R^+)$ !\\[12pt]
In the following, we shall pay a particular attention to so-called {\bf lossless media} defined as follows.
\begin{defi} \label{defLossless}{\bf (Lossless local medium)} An admissible local medium is said to be {\bf lossless} if and only if the functions
$\omega \mapsto \varepsilon({\bf x}, \omega)$ and $\omega \mapsto \mu({\bf x}, \omega)$ are {\bf real-valued} along the {\bf real axis} (outside poles of course).
\end{defi}
\noindent Lossless local media are characterized by the following theorem.
\begin{theo} \label{ThmLossless} An admissible local material is lossless if and only if  $\varepsilon({\bf x}, \omega)$ and $\mu({\bf x}, \omega)$ are even in $\omega$, i.e.  the polynomials $P_e({\bf x}, \cdot)$, $Q_e({\bf x}, \cdot)$, $P_m({\bf x}, \cdot)$ and $Q_m({\bf x}, \cdot)$, are even in $\omega$.
	\end{theo}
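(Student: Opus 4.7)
The proof splits into two parts: first, relate losslessness to the parity of $\varepsilon$ and $\mu$ as functions of $\omega$; second, translate this parity into the parity of the defining polynomials. Since the arguments for $\varepsilon$ and $\mu$ are identical, I focus on $\varepsilon$.

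\textbf{Step 1 (losslessness $\Leftrightarrow$ parity of $\varepsilon$).} Because $P_e$ and $Q_e$ have real coefficients, the function $\omega \mapsto \varepsilon({\bf x},\omega)$ automatically satisfies the reality principle ${\bf (RP)}$, $\varepsilon({\bf x}, -\overline{\omega}) = \overline{\varepsilon({\bf x},\omega)}$, as one checks by direct substitution in ${\bf (ALM)}$. Specialized to real $\omega$, this reads $\varepsilon({\bf x}, -\omega) = \overline{\varepsilon({\bf x},\omega)}$. If the material is lossless, $\varepsilon({\bf x},\omega) \in \R$ on the real axis (outside the poles), hence $\varepsilon({\bf x},-\omega) = \varepsilon({\bf x},\omega)$; this equality between two rational fractions then extends to all $\omega \in \C$, so $\varepsilon({\bf x},\cdot)$ is even. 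Conversely, if $\varepsilon({\bf x},\cdot)$ is even, then for real $\omega$ one combines $\varepsilon({\bf x},-\omega) = \varepsilon({\bf x},\omega)$ with ${\bf (RP)}$ to obtain $\overline{\varepsilon({\bf x},\omega)} = \varepsilon({\bf x},\omega)$, i.e., losslessness.

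\textbf{Step 2 (parity of $\varepsilon$ $\Leftrightarrow$ parity of $P_e, Q_e$).} The "if" direction is immediate, as a ratio of even polynomials in $X = -i\omega$ is an even function of $\omega$. For the converse, observing that $\omega \mapsto -\omega$ corresponds to $X \mapsto -X$, evenness of $\varepsilon$ in $\omega$ is equivalent to evenness of $R(X) := P_e({\bf x},X)/Q_e({\bf x},X)$ in $X$, i.e. to the polynomial identity $P_e(X)\,Q_e(-X) = P_e(-X)\,Q_e(X)$. Using irreducibility (part of ${\bf (LM)}$), $P_e$ and $Q_e$ are coprime in $\R[X]$, so $Q_e(X)$ divides $Q_e(-X)$; since both have the same degree $n$, comparing leading coefficients yields $Q_e(-X) = (-1)^n\, Q_e(X)$, and substituting back into the identity gives $P_e(-X) = (-1)^n\, P_e(X)$.

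\textbf{Main obstacle.} The only delicate point is ruling out the case $n$ odd. In that case both $P_e$ and $Q_e$ would be odd polynomials in $X$, in particular $P_e(0) = Q_e(0) = 0$, so $X$ would be a common factor of $P_e$ and $Q_e$ — contradicting their coprimality. Hence $n$ is necessarily even, and both $P_e$ and $Q_e$ are even polynomials. The identical reasoning applied to $P_m$ and $Q_m$ concludes the proof.
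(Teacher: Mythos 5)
Your proof is correct, and Step 1 is essentially the paper's own argument: for real $\omega$ one writes $\varepsilon({\bf x},-\omega)=\varepsilon({\bf x},-\overline{\omega})=\overline{\varepsilon({\bf x},\omega)}=\varepsilon({\bf x},\omega)$ using ${\bf (RP)}$ and losslessness, then extends by rationality (the paper only spells out this forward direction; your converse is the same computation read backwards). Your Step 2 goes beyond the paper, which asserts the passage from ``$\varepsilon$ even'' to ``$P_e,Q_e$ even'' with an ``i.e.'' and no proof; your coprimality argument $Q_e(X)\mid Q_e(-X)$, together with ruling out odd degree via the common zero at $X=0$, is exactly the missing justification and is sound.
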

\begin{proof}
Let us give the proof for $\varepsilon$. Let $\omega \in \R$ be such that $\omega$ and $-\omega$ are not poles of $\omega \mapsto \varepsilon({\bf x}, \omega)$. Using first the fact that $\omega\in\mathbb{R}$, the reality principle {\bf (RP)} and finally the fact that $\varepsilon({\bf x}, \omega)$ is real, we deduce 
$$
\varepsilon({\bf x}, -\omega) =  \varepsilon({\bf x}, -\overline{\omega}) = \overline{\varepsilon({\bf x}, \omega)} = \varepsilon({\bf x}, \omega).
$$
Since $\omega \mapsto \varepsilon({\bf x}, \omega)$ is rational, the above implies that $\omega \mapsto \varepsilon({\bf x}, \omega)$ is even on its domain of definition.
	\end{proof}
\noindent Satisfying ${\bf (ALM)}$ does not however guarantee the well-posedness of the evolution 
problem corresponding to (\ref{Maxwell}, \ref{Dispersivelocal}). To further investigate this question, as well as other problems such as wave dispersion, it is useful to look at the case of homogeneous local dispersive media. This is the subject of the next sections.\\[12pt]
{\bf Common examples of dispersive models.}
\begin{itemize}
\item {\bf Conductive media}. This is an example of dissipative (not lossless) medium. It corresponds to the case where $\mathbf{B}=\mu_0\mathbf{H}$ and  $\partial_t{\bf D} = \varepsilon_0 \, \partial_t{\bf E} + \sigma({\bf x}) \, {\bf E}$, where $\sigma({\bf x}) \geq 0$ is the conductivity, i.e. 
\begin{equation} \label{epsmuconductive}
\varepsilon({\bf x}, \omega) = 	\varepsilon_0 - \, \frac{\sigma({\bf x})}{i\omega}, \qquad \mu({\bf x}, \omega)=\mu_0.
	\end{equation}
\item {\bf Lorentz and Drude media}. For these media, the permittivity and permeability read
\begin{equation} \label{epsmuLorentz}
\varepsilon({\bf x}, \omega) = 	\varepsilon_0 \, \Big( 1 + \frac{\Omega_e({\bf x})^2}{\omega_e({\bf x})^2 -\omega^2} \Big), \quad
\mu({\bf x}, \omega) = 	\mu_0 \, \Big( 1 + \frac{\Omega_m({\bf x})^2}{\omega_m({\bf x})^2 -\omega^2} \Big),
\end{equation}
where $(\Omega_e({\bf x}), \omega_e({\bf x}), \Omega_m({\bf x}), \omega_m({\bf x}))$ are coefficients that characterize the medium. 
The reader will easily check that this medium is admissible and lossless. We shall see in section \ref{Generalpassive} that a natural generalization of (\ref{epsmuLorentz}) leads to a quite general class of materials, representative of all passive materials.\\[12pt]
In the case where the so-called resonance frequencies $\omega_e({\bf x})$ and $\omega_m({\bf x})$ vanish, one obtains the Drude material, which is (in some sense) the simplest dispersive lossless material. For it, 
\begin{equation} \label{epsmuDrude}
\varepsilon({\bf x}, \omega) = 	\varepsilon_0 \, \Big( 1- \frac{\Omega_e({\bf x})^2}{\omega^2} \Big), \quad
\mu({\bf x}, \omega) = 	\mu_0 \, \Big( 1- \frac{\Omega_m({\bf x})^2}{\omega^2} \Big).
\end{equation}
Finally, a lossy version of Lorentz material corresponds to the following constitutive laws  :
\begin{equation} \label{epsmulossyLorentz}
\left\{	\begin{array}{l}
\dsp \varepsilon({\bf x}, \omega) = 	\varepsilon_0 \, \Big( 1 + \frac{\Omega_e({\bf x})^2}{\omega_e({\bf x})^2 - i \, \alpha_e({\bf x}) \, \omega -\omega^2} \Big),
 \\[12pt]
\dsp \mu({\bf x}, \omega) = 	\mu_0 \, \Big( 1 + \frac{\Omega_m({\bf x})^2}{\omega_m({\bf x})^2 - i \, \alpha_m({\bf x}) \, \omega -\omega^2} \Big),
\end{array} \right.
\end{equation}
where the coefficients $\alpha_e({\bf x}) \geq 0$ and $\alpha_m({\bf x}) \geq 0$ play a role similar to the conductivity in (\ref{epsmuconductive}). In this case the poles of $\varepsilon(\mathbf{x}, .)$ and $\mu(\mathbf{x}, .)$ belong to the lower half-space $\mathbb{C}\setminus \mathbb{C}^+$.
\end{itemize}
\subsection{Homogeneous media} \label{sec-hom}
Let us consider now homogeneous local dispersive media occupying the whole space $\R^3$. Since $\varepsilon$ and $\mu$ do not depend on ${\bf x}$, the electromagnetic field is governed by the following system 
of evolution equations
\begin{equation} \label{MaxwellDispersivelocalhomogene}
\left\{	\begin{array}{lll}
	\varepsilon_0 \, \partial_t {\bf E} + {\bf rot} \, {\bf H} + \varepsilon_0 \, \partial_t {\bf P}= 0, \quad \mu_0 \partial_t {\bf H} - {\bf rot} \, {\bf E}  + \mu_0 \,  \partial_t {\bf M} = 0, & \quad {\bf x} \in \R^3, t >0, & (a)\\[12pt]
Q_e(\partial_t) \; {\bf P} =  \varepsilon_0 \,P_e(\partial_t) \; {\bf E}, \quad Q_m(\partial_t) \; {\bf M} = \mu_0 \,P_m( \partial_t) \; {\bf H},& \quad {\bf x} \in \R^3, t >0,& (b)
\end{array} \right.
		\end{equation}
where the polynomials $P_e , Q_e , P_m , Q_m$ have the properties explained in ${\bf (ALM)}$. Our main purpose is to
study the Cauchy problem, when (\ref{MaxwellDispersivelocalhomogene}) is completed by initial conditions
\begin{equation} \label{IC}
	{\bf E}({\bf x}, 0) = {\bf E}_0({\bf x}), \quad {\bf H}({\bf x}, 0) = {\bf H}_0({\bf x}), \quad({\bf E}_0,{\bf H}_0) \in L^2(\R^3)^3 \times L^2(\R^3)^3.
	\end{equation}
We are interested in the $L^2$-well-posedness, i.e. existence and uniqueness of a solution satisfying
\begin{equation} \label{regWPL2}
({\bf E}, {\bf H})	\in C^0(\R^+; L^2(\R^3)^3) \times  C^0(\R^+; L^2(\R^3)^3).
	\end{equation}
For what follows, it will be useful to introduce the notion of {\bf equivalent models}.
\subsubsection{Equivalent and non-degenerate models} \label{sec-equivalent}~\\[-20pt]
	\begin{defi} \label{defEM} {\bf (Equivalent models)} Two local dispersive models $(\varepsilon, \mu)$ and $({\varepsilon}^*, {\mu}^*)$ are said to be equivalent if and only if $
 \varepsilon(\omega) \mu(\omega) = {\varepsilon}^*(\omega){\mu}^*(\omega) \quad (\mbox{as rational fractions in } \omega).$
\end{defi}
\noindent The interest of this notion lies in the following result. 
\begin{theo} If the Cauchy problem associated to $(\varepsilon, \mu)$ is well posed, the Cauchy problem associated to any equivalent model $({\varepsilon}^*, {\mu}^*)$ is well posed too. In other words, to prove the well-posedness of the Cauchy problem for a given medium, it suffices to prove the well-posedness for any medium equivalent to it.  
	\end{theo}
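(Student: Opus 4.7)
The plan is to pass to the frequency domain via the Fourier--Laplace transform, eliminate one of the unknowns, and observe that the resulting second-order equation only sees $\varepsilon$ and $\mu$ through their product. Applying the Fourier--Laplace transform in time to (\ref{MaxwellDispersivelocalhomogene}) and using the auxiliary relations (\ref{Dispersivelocal}b) to eliminate $\widehat{\bf P}$ and $\widehat{\bf M}$ in terms of $\widehat{\bf E}, \widehat{\bf H}$ (modulo linear contributions coming from the initial data of the ODEs) yields
$$
-i\omega\,\varepsilon(\omega)\,\widehat{\bf E} + {\bf rot}\,\widehat{\bf H} = F_E(\omega), \qquad -i\omega\,\mu(\omega)\,\widehat{\bf H} - {\bf rot}\,\widehat{\bf E} = F_H(\omega),
$$
where $F_E, F_H$ are linear in the initial data for $({\bf E}, {\bf H}, {\bf P}, {\bf M})$. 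Eliminating $\widehat{\bf H}$ produces
$$
L(\omega)\,\widehat{\bf E} := \bigl({\bf rot}\,{\bf rot} - \omega^2\,\varepsilon(\omega)\mu(\omega)\bigr)\widehat{\bf E} = G(\omega),
$$
whose principal operator depends on $\varepsilon, \mu$ only through their product and is therefore identical for any two equivalent models.

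Next, I would argue that $L^2$-well-posedness in the sense of (\ref{regWPL2}) is characterised, via a Paley--Wiener type argument and the Fourier--Laplace identity (\ref{FLT_L2}), by uniform resolvent bounds on $L(\omega)^{-1}$ (acting between appropriate Sobolev spaces) for $\omega$ in a half-plane $\C^+_\alpha$, together with the analogous bounds on the transfer maps that reconstruct $\widehat{\bf H}$ from $\widehat{\bf E}$ and the data. The bounds on $L(\omega)^{-1}$ being identical for equivalent models, the solvability of the second-order equation transfers immediately from $(\varepsilon,\mu)$ to $(\varepsilon^*,\mu^*)$.

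The main technical obstacle is the recovery of ${\bf H}^*$ and the careful matching of initial data. The equivalent model carries, in general, a different number of auxiliary ODE degrees of freedom (since the pairs $(P_e,Q_e)$ and $(P_m,Q_m)$ differ even when $\varepsilon\mu=\varepsilon^*\mu^*$), so the notion of ``same Cauchy problem'' must be interpreted carefully: one starts from the prescribed $({\bf E}_0^*, {\bf H}_0^*) \in L^2(\R^3)^3 \times L^2(\R^3)^3$ and equips the polarization/magnetization ODEs of the starred model with natural (e.g.\ zero) initial data. Once $\widehat{\bf E}^*$ is produced, one recovers $\widehat{\bf H}^* = (-i\omega\mu^*(\omega))^{-1}\bigl({\bf rot}\,\widehat{\bf E}^* + F_H^*(\omega)\bigr)$, and its $L^2$ character follows from the fact that, by the high-frequency principle ${\bf (HF)}$ applied to $\mu^*$, the multiplier $(\omega\mu^*(\omega))^{-1}$ is uniformly bounded for $\omega \in \C^+_\alpha$ with $\alpha$ large enough; in particular the $L^2(\R^3)^3$ bound on ${\bf rot}\,\widehat{\bf E}^*$ transfers to $\widehat{\bf H}^*$. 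Continuity in time with values in $L^2$ for $({\bf E}^*, {\bf H}^*)$ then follows by standard inverse Fourier--Laplace arguments, completing the transfer of well-posedness.
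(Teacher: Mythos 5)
Your overall intuition --- that in the frequency domain the problem only ``sees'' the product $\varepsilon\mu$ through the second-order operator ${\bf rot}\,{\bf rot}-\omega^2\varepsilon(\omega)\mu(\omega)$ --- is sound (it is exactly why equivalent media share the same dispersion relation, cf.\ remark \ref{remequivdispersion}), but the reduction you build on it has a genuine gap. Well-posedness is \emph{not} only a statement about $L(\omega)^{-1}$: the right-hand side of the eliminated equation is $G(\omega)=-i\omega\,\mu(\omega)F_E-{\bf rot}\,F_H$, and the maps reconstructing $\widehat{\bf H}$ (and $\widehat{\bf P},\widehat{\bf M}$, which enter the estimate (\ref{L2estimate})) involve $\varepsilon$ and $\mu$ \emph{separately}, not through their product. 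So ``the solvability of the second-order equation transfers immediately'' is not justified: $\widehat{\bf E}$ and $\widehat{\bf E}^*$ solve the same equation but with different right-hand sides $G\neq G^*$, and you never relate them. Likewise, the recovery of $\widehat{\bf H}^*$ invokes ``the $L^2$ bound on ${\bf rot}\,\widehat{\bf E}^*$'', but an $L^2\to L^2$ resolvent bound on $L(\omega)^{-1}$ does not control ${\bf rot}\,\widehat{\bf E}^*$ in $L^2$, and such control does not follow from ${\bf (HF)}$ alone --- if it did, combined with the boundedness of $(\omega\mu^*(\omega))^{-1}$ on a half-plane, your argument would prove well-posedness while making essentially no use of the hypothesis on $(\varepsilon,\mu)$. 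Finally, the claimed equivalence between well-posedness and uniform resolvent bounds (you need both directions: necessity to exploit the hypothesis, sufficiency to conclude) is asserted via ``a Paley--Wiener type argument'' but never established.

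The paper avoids all of this with a single algebraic device that is missing from your proposal: since $\varepsilon\mu=\varepsilon^*\mu^*$, there is a scalar rational ``gauge'' $\nu$ with $\varepsilon^*=\nu\,\varepsilon$ and $\mu=\nu^{-1}\mu^*$; writing the starred system in the frequency domain and substituting $\widehat{\bf E}:=\nu\,\widehat{\bf E}^*$ turns it \emph{exactly} into the first-order Cauchy problem for $(\varepsilon,\mu)$ in the unknowns $(\widehat{\bf E},\widehat{\bf H}^*)$ with the same initial data, so the hypothesis applies verbatim; one then only has to undo the purely temporal multiplier $\nu$, which is bounded on a half-plane $\C_\alpha^+$ for $\alpha$ large because $\nu$ is rational and tends to $1$ at infinity by ${\bf (HF)}$. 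If you want to salvage your route, you must make explicit the relation between $G$ and $G^*$ and between the two reconstruction maps --- and that is precisely what multiplication by $\nu$ accomplishes in one stroke.
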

\begin{proof} 
	Let $(\varepsilon^*, \, \mu^*)$ be a local dispersive media equivalent to $(\varepsilon, \mu)$. Let $\nu$ be a rational fraction such that 
	\begin{align*}
	\varepsilon^*=\nu\, \varepsilon, \qquad \mu=\nu^{-1}\,\mu^*.
	\end{align*}
	We assume that the Cauchy problem associated to $(\varepsilon, \mu)$ is well-posed, and wish to prove the well-posedness of the model $(\varepsilon^*, \; \mu^*)$. By linearity, it suffices to study the well-posedness for the Cauchy data of the form $(\mathbf{E}_0^*, 0)$, or $(0, \mathbf{H}_0^*)$. Let us consider the first case. 
We have, with obvious notations, 
	\begin{align*}
	\widehat{\mathbf{D}}^*=\varepsilon^*\widehat{\mathbf{E}}^*, \qquad \widehat{\mathbf{B}}^*=\mu^*\widehat{\mathbf{H}}^*.
	\end{align*}
  In particular, the Maxwell system (\ref{Maxwell}) in the medium $(\varepsilon^*,  \mu^*)$ with the initial data $(\mathbf{E}_0^*, 0)$ in the frequency domain reads (apply Laplace-Fourier transform and use (\ref{propFLT})) 
$$ 
  -i\omega \, \widehat{\mathbf{D}}^*-\mathbf{E}_0^*-{\bf rot} \, \widehat{\mathbf{H}}^*=0,\qquad -i\omega \, \widehat{\mathbf{B}}^*+{\bf rot} \,\widehat{\mathbf{E}}^*=0,
$$
which can be rewritten as follows, since $\nu$ is independent of the space variable, 
$$
-i\omega \, \varepsilon \, \nu \, \widehat{\mathbf{E}}^*-\mathbf{E}_0^*-{\bf rot} \, \widehat{\mathbf{H}}^*=0,\qquad  -i\omega\mu \, \widehat{\mathbf{H}}^*+{\bf rot} \,( \nu\widehat{\mathbf{E}}^*)=0.
$$
Defining $\widehat{\mathbf{E}}:=\nu\, \widehat{\mathbf{E}}^*$ and setting the initial data  $\mathbf{E}_0:=\mathbf{E}_0^*$, we obtain the following system:
$$
-i\omega \varepsilon \widehat{\mathbf{E}}-\mathbf{E}_0-{\bf rot} \, \widehat{\mathbf{H}}^*=0,\qquad  -i\omega\mu\hat{\mathbf{H}}^*+{\bf rot} \, \widehat{\mathbf{E}}=0.
$$
In the time domain, the above is reduced to the Cauchy problem for the local dispersive media $(\varepsilon, \; \mu)$ with respect to the unknowns $\widehat{\mathbf{E}}$ and $\widehat{\mathbf{H}}^*$. 
	\end{proof}
\noindent Thanks to the above property, we can restrict ourselves to the following {\bf non-degeneracy} property.
	\begin{defi} \label{defPolesSZeroes}{\bf (Non-degenerate local dispersive models)} A local dispersive model $(\varepsilon, \mu)$ is called {\bf non-degenerate } if and only if
$
\omega^2 \, \varepsilon(\omega) \mu(\omega)  \; \mbox{is an irreducible rational fraction},
$
or, equivalently, denoting by ${\cal P}_e$ (resp.  ${\cal P}_m$) the set of poles of $\varepsilon$ (resp. $\mu$) and  by ${\cal Z}_e$ (resp.   ${\cal Z}_m$) the set of zeros of $\omega \, \varepsilon$ (resp. $\omega \, \mu$), 
$$
{\cal P}_e \cap {\cal Z}_m = \emptyset, \quad {\cal P}_m \cap {\cal Z}_e = \emptyset.
$$
\end{defi}
\noindent From now on, we study only non-degenerate models. This is not restrictive due to the following result.
\begin{lema}
	Any local dispersive media is equivalent (definition \ref{defEM}) to a non-degenerate model.
	\end{lema}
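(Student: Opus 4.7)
The plan is a direct construction: given $(\varepsilon,\mu)$ in irreducible form $\varepsilon = A_e/B_e$ and $\mu = A_m/B_m$ (with $A_e,B_e$ coprime and $A_m,B_m$ coprime real polynomials), I would isolate the ``cross'' common factors responsible for degeneracy and cancel them by an equivalence transformation. Concretely, set
\[
R_1 := \gcd(A_e,B_m), \qquad R_2 := \gcd(A_m,B_e),
\]
and write $A_e = R_1 \tilde A_e$, $B_m = R_1 \tilde B_m$, $A_m = R_2 \tilde A_m$, $B_e = R_2 \tilde B_e$. Then define the rational fraction $\nu := R_1/R_2$ and the candidate equivalent model
\[
\varepsilon^* := \nu^{-1}\varepsilon = \frac{\tilde A_e}{\tilde B_e}, \qquad \mu^* := \nu\,\mu = \frac{\tilde A_m}{\tilde B_m}.
\]

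The first step is to check equivalence in the sense of Definition~\ref{defEM}: a direct multiplication shows
\[
\varepsilon^*\mu^* = \frac{\tilde A_e \tilde A_m}{\tilde B_e \tilde B_m} = \frac{R_1 R_2\, \tilde A_e \tilde A_m}{R_1 R_2\, \tilde B_e \tilde B_m} = \frac{A_e A_m}{B_e B_m} = \varepsilon\,\mu,
\]
so $(\varepsilon^*,\mu^*)$ is equivalent to $(\varepsilon,\mu)$. The second step verifies non-degeneracy. Since $\gcd(A_e,B_e)=1$, the quotients $\tilde A_e$ and $\tilde B_e$ are still coprime (and similarly $\tilde A_m,\tilde B_m$), so $\varepsilon^*$ and $\mu^*$ are already in irreducible form. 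Moreover, by the standard property $\gcd(x/g,y/g)=1$ when $g=\gcd(x,y)$, one obtains $\gcd(\tilde A_e,\tilde B_m)=1$ and $\gcd(\tilde A_m,\tilde B_e)=1$. Combining these four coprimality statements yields $\gcd(\tilde A_e\tilde A_m,\tilde B_e\tilde B_m)=1$, which, together with taking into account the $\omega^2$ factor, gives the criterion of Definition~\ref{defPolesSZeroes}.

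The main technical point in the proof is precisely this last coprimality assertion: one must rule out any \emph{new} common roots between $\tilde A_e\tilde A_m$ and $\tilde B_e\tilde B_m$ created by multiplying the surviving polynomials together. The argument is that any common root would have to be either a common zero of $\tilde A_e$ and $\tilde B_m$ (excluded by construction of $R_1$), of $\tilde A_m$ and $\tilde B_e$ (excluded by construction of $R_2$), of $\tilde A_e$ and $\tilde B_e$, or of $\tilde A_m$ and $\tilde B_m$ (both excluded by the original irreducibility of $\varepsilon$ and $\mu$). The slightly delicate point, which I would treat carefully at the end, is the role of $\omega=0$ in the factor $\omega^2$ of the non-degeneracy condition: if either $\varepsilon$ or $\mu$ has a pole at the origin while the other does not compensate it with a matching zero, one may need to absorb an additional monomial factor into $\nu$. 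Apart from this bookkeeping at $\omega=0$, the construction is essentially a one-line manipulation once the two cross-gcd's $R_1$ and $R_2$ have been identified.
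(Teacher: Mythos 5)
The paper states this lemma without proof, so there is nothing to compare your argument against; I assess it on its own terms. The cross-gcd step is correct and settles the matter at every point $\omega_0\neq 0$: the four coprimality statements do combine to give $\gcd(\tilde A_e\tilde A_m,\tilde B_e\tilde B_m)=1$, and the equivalence $\varepsilon^*\mu^*=\varepsilon\mu$ is immediate. The genuine gap is precisely the point you defer as "bookkeeping at $\omega=0$" -- and it is not bookkeeping, it is where essentially all the content of the lemma lives. Concretely, for the paper's own example $\varepsilon(\omega)=\varepsilon_0(1-\Omega_e^2/\omega^2)(1-\Omega_m^2/\omega^2)$, $\mu(\omega)=\mu_0$ (section \ref{sec-nondissipassive}), you get $R_1=R_2=1$, so your construction returns the original model unchanged; yet that model is degenerate, since $0\in{\cal P}_e\cap{\cal Z}_m$ (the origin is a pole of $\varepsilon$ and a zero of $\omega\mu$). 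The whole reduction there is carried by transferring a factor of $\omega^{2}$ (or of $\omega^2/(\omega^2-\Omega_m^2)$, which yields the Drude model) through $\nu$, and your two cross-gcds cannot see this because $\mu$ is constant.

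Worse, the monomial transfer you allude to cannot always succeed. Non-degeneracy at the origin in the sense of definition \ref{defPolesSZeroes} forces $\varepsilon^*$ and $\mu^*$ to have a pole at $0$ either both or neither, so $\mathrm{ord}_0(\varepsilon^*)+\mathrm{ord}_0(\mu^*)$ must be $\geq 0$ or $\leq -2$; but this sum equals $\mathrm{ord}_0(\varepsilon\mu)$, an invariant of the equivalence class. For the conductive medium (\ref{epsmuconductive}), $\varepsilon\mu$ has a simple pole at the origin, so \emph{no} equivalent non-degenerate model exists and the lemma, read literally against the ${\cal P}\cap{\cal Z}$ formulation of definition \ref{defPolesSZeroes}, fails. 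To complete your proof you must therefore (a) actually carry out the redistribution of powers of $\omega$ at the origin rather than gesture at it, and (b) either restrict the statement to media for which $\mathrm{ord}_0(\varepsilon\mu)$ is not a negative odd integer (automatic for lossless media, whose $\varepsilon$ and $\mu$ are even), or make explicit which reading of "non-degenerate" lets the $\omega^2$ prefactor absorb such odd-order poles. Away from the origin, your argument is complete and correct.
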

\subsubsection{Plane waves. Well-posedness and stability.} 
\label{sec-WP}
To study  (\ref{MaxwellDispersivelocalhomogene}), let us concentrate on particular solutions ({\bf plane-wave} solutions) of (\ref{MaxwellDispersivelocalhomogene})(a,b) in  the form
\begin{equation} \label{planewaves}
\left\{	\begin{array}{ll}
{\bf E}(x,t) = \mathbb{E} \; \exp i ({\bf k} \cdot {\bf x} - \omega \; t) , \quad 
{\bf H}(x,t) = \mathbb{H} \; \exp i ({\bf k} \cdot {\bf x} - \omega \; t) \\[12pt]
{\bf P}(x,t) = \mathbb{P} \; \exp i ({\bf k} \cdot {\bf x} - \omega \; t) , \quad 
{\bf M}(x,t) = \mathbb{M} \; \exp i ({\bf k} \cdot {\bf x} - \omega \; t) \\[12pt]
{\bf k} \in \R^3, \quad \omega \in \C, \quad \big(\mathbb{E}, \mathbb{H}, \mathbb{P}, \mathbb{M} \big) \in\C^3 \times \C^3 \times \C^3\times \C^3.
\end{array} \right.
		\end{equation}
When $\omega = \omega_R + i \, \omega_I$, we can rewrite the plane wave solution (\ref{planewaves}) as follows (for the electric field)
\begin{equation} \label{planewave2}
{\bf E}(x,t) = \mathbb{E} \; \exp i ({\bf k} \cdot {\bf x} -\omega_R \; t )  \; e^{\omega_I t}. 
\end{equation}
It corresponds to a wave propagating in the direction of the wave vector ${\bf k}$ at the phase velocity $\omega_R / |{\bf k}|$ with an amplitude which varies in time proportionally to $e^{\omega_I \, t}$.\\[12pt]
By definition, when $\omega_I = 0$, the wave is called {\bf purely propagative}, when $\omega_I<0$ the wave is {\bf evanescent} in time, and when $\omega_I > 0$, the wave in {\bf unstable}.
\\[12pt]
In view of the time domain analysis of (\ref{MaxwellDispersivelocalhomogene}) as an evolution problem for $({\bf E}, {\bf H}, {\bf P}, {\bf M})$ in the space $L^2(\R^3)^4$, the correct point of view for looking at plane waves is to consider the {\bf wave vector} ${\bf k} \in \R^3$ as a given parameter and to look for the  related (complex) frequencies $\omega$ and corresponding amplitude vectors $\big(\mathbb{E}, \mathbb{H}, \mathbb{P}, \mathbb{M} \big)$. This approach is 
validated a posteriori by the use of the Fourier transform in space, the wave vector ${\bf k}$ being the {\bf dual variable} of the space variable ${\bf x}$.
Substituting (\ref{planewaves}) into (\ref{MaxwellDispersivelocalhomogene})(a,b) leads to
$$
\left\{	\begin{array}{lll}
{\bf k} \times \mathbb{H} = \omega \, \left(\varepsilon_0\,\mathbb{E} + \mathbb{P}\right), \quad 
{\bf k} \times \mathbb{E} = - \omega \, \left(\mu_0 \,\mathbb{H} +\mathbb{M}\right)\\[12pt]
Q_e(-i\omega) \; \mathbb{P} =\varepsilon_0 P_e(-i\omega) \; \mathbb{E}, \quad  Q_m(-i\omega) \; \mathbb{M} =\mu_0 P_m(-i\omega) \; \mathbb{H},
\end{array} \right.
$$
We can separate the solutions into two families:\\[12pt]
{\bf Purely magnetic or electric static modes}. These are solutions associated with $\omega \in {\cal P}_e: = \{\mbox{poles of } \varepsilon \equiv \mbox{zeros of } Q_e \}$ or $\omega \in {\cal P}_m:= \{\mbox{poles of } \mu \equiv \mbox{zeros of } Q_m \}$. We call these mode static because $\omega$ is independent of ${\bf k}$. In this case we have:
\begin{itemize}
	\item for $\omega \in {\cal P}_e$, for each ${\bf k}$, a three dimensional space of \textbf{amplitude vectors} corresponding to 
$$
(Magnetic~modes) \quad \mathbb{E} = 0, \quad \mathbb{P} = \omega^{-1} \, {\bf k} \times \mathbb{H}, \quad 
\mathbb{M} = - \mu_0\, \mathbb{H} , \quad \mathbb{H} \in \C^3.
$$
	\item for $\omega \in {\cal P}_m$, for each ${\bf k}$, a three-dimensional space of \textbf{amplitude vectors} corresponding to 
$$
(Electric~modes) \quad \mathbb{H} = 0, \quad \mathbb{M} = - \omega^{-1} \, {\bf k} \times \mathbb{E}, \quad 
\mathbb{P} = -\varepsilon_0 \, \mathbb{E} , \quad \mathbb{E} \in \C^3.
$$ 
\end{itemize}
{\bf Maxwell modes}. When  $\omega \notin {\cal P}_e \cup {\cal P}_m$, one can first eliminate $\mathbb{P}$ and $\mathbb{M}$ to 
obtain
\begin{equation}\label{eq.relmode}
\mathbb{P} = \big(\varepsilon(\omega) - \varepsilon_0 \big) \, \mathbb{E}, \ \;  \mathbb{M} = \big(\mu(\omega) - \mu_0 \big)\ \mathbb{H}, \; \ {\bf k} \times \mathbb{H} = \omega \, \varepsilon(\omega) \, \; \mathbb{E} \mbox{ and } {\bf k} \times \mathbb{E} = - \; \omega \, \mu(\omega) \, \mathbb{H}.
\end{equation}
From \eqref{eq.relmode}, we obtain the eigenvalue problem $
- \; {\bf k} \times ({\bf k} \times \mathbb{E}) = \omega^2 \, \varepsilon(\omega) \, \mu(\omega) \, \mathbb{E}
$
which we can solve to get
\begin{itemize}
\item[(i)] Either $\omega^2 \varepsilon(\omega) \, \mu(\omega) \, =0$ (curl-free static modes) and we have three subcases:
\begin{enumerate}
\item  if $\omega \varepsilon(\omega)=0$ and  $\omega \mu(\omega)\neq 0$, then ${\bf k} \times \mathbb{E}=0$  and $\mathbb{H} =0$  (1D space of solutions);
\item  if $\omega \varepsilon(\omega)\neq 0$ and  $\omega \mu(\omega)=0$, then ${\bf k} \times \mathbb{H}=0$ and $\mathbb{E} =0$  (1D space of solutions);
\item  if $\omega \varepsilon(\omega)= 0$ and  $\omega \mu(\omega)=0$, then ${\bf k} \times  \mathbb{E} =0$ and ${\bf k} \times \mathbb{H}=0$  (2D space of solutions).
\end{enumerate}
\item [(ii)] Either $\omega^2 \varepsilon(\omega) \, \mu(\omega) \neq 0$, one gets from \label{eq.EVP} and \eqref{eq.relmode} that ${\bf k} \cdot \mathbb{E} = 0$ and and $\mathbb{H} =-(\omega \mu(\omega))^{-1}{\bf k} \times \mathbb{E}$ (eigenspace of dimension $2$)
for $\omega$ and ${\bf k}$ being linked by the dispersion relation
\begin{equation} \label{relationdispersion}
\omega^2 \, \varepsilon(\omega) \, \mu(\omega)= |{\bf k}|^2 .
\end{equation}
\end{itemize}
\begin{rem} \label{remequivdispersion} Two equivalent media, in the sense of definition \ref{defEM}, have the same dispersion relation.
	\end{rem}
\begin{rem} \label{rempropdispersion}The dispersion equation (\ref{relationdispersion}) can be seen as a polynomial equation
in  $\omega$ with degree $N = M_e + M_m + 2$ (where we recall that $M_e$ and $M_m$ are the respective degrees of the polynomials $Q_e$ and $Q_m$, see definition \ref{defAdmissible})
whose coefficients are affine functions  in $|{\bf k}|^2$ and whose higher order term is independent of $|{\bf k}|$. As a consequence, this equation admits $N$ 
branches of solutions
\begin{equation} \label{branches}
|{\bf k}| \rightarrow \omega_j\big(|{\bf k}| \big), \quad 1 \leq j \leq N,
\end{equation}
where each function $|{\bf k}| \rightarrow \omega_j\big(|{\bf k}| \big)$ is continuous and piecewise analytic. Moreover,
it is known \cite{Kato} that the loss of analyticity can occur only at a values of $|{\bf k}|$ for which  $\omega_j\big(|{\bf k}| \big)$ is not a simple root of (\ref{relationdispersion}). 
\end{rem}
Let us study the $L^2-$well-posedness of (\ref{MaxwellDispersivelocalhomogene}) , i.e. let us look for solutions of (\ref{MaxwellDispersivelocalhomogene}) such that
\begin{equation} \label{regsolutions}
({\bf E}, {\bf H},{\bf P},{\bf M}) \in C^0(\R^+; L^2(\R^3))^4
\end{equation}
for given initial fields ${\bf E}_0 \equiv {\bf E}(\cdot,0) \in L^2(\R^3)^3$ and ${\bf H}_0 \equiv {\bf H}(\cdot,0) \in L^2(\R^3)^3$. 
\begin{defi} (Well-posedness and stability) \label{WPS} 
The problem (\ref{MaxwellDispersivelocalhomogene}) is {\bf well posed} if there exists a unique solution 
	satisfying (\ref{regsolutions}) and, for some $C(t) \geq 0,$
\begin{equation} \label{L2estimate}
\|{\bf E}(\cdot,t)\|_{L^2} + \|{\bf H}(\cdot,t)\|_{L^2} +\|{\bf P}(\cdot,t)\|_{L^2} +\|{\bf M}(\cdot,t)\|_{L^2} \leq C(t) \; \big( \, \|{\bf E}_0\|_{L^2} + \|{\bf H}_0\|_{L^2} \, \big)
\end{equation}
Otherwise, the problem is said {\bf strongly ill posed}.
If, in addition, $C(t) = C \; (1 + t^p)$ for some $C > 0$ and $p \in \mathbb{N}$,
which prevents any exponential blow-up, then the problem is said to be {\bf stable}.
\end{defi}	
\noindent Using Fourier analysis (in particular Plancherel's theorem), see e. g. \cite{kreiss}, it is not difficult to establish the
\begin{lema} \label{lemaWPS}
	The problem (\ref{MaxwellDispersivelocalhomogene}) is {\bf well posed} if and only if there exists $M \leq 0$ such that
\begin{equation} \label{condWP}
\forall \; 1 \leq j \leq N, \quad {\cal I}m \, \omega_j\big(|{\bf k}|\big) \leq M.
\end{equation}
The problem (\ref{MaxwellDispersivelocalhomogene}) is {\bf stable} if and only if 
\begin{equation} \label{condS}
{\cal P}_e \cup {\cal P}_m \subset \C \setminus \C^+ \quad \mbox{and} \quad \forall \; 1 \leq j \leq N, \quad {\cal I}m \, \omega_j\big(|{\bf k}|\big) \leq 0.
\end{equation}	
\end{lema}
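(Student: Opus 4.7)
The plan is to exploit the translation invariance of (\ref{MaxwellDispersivelocalhomogene}) by taking the Fourier transform in the space variable ${\bf x}$, reducing the PDE to a parametrized family of linear constant-coefficient ODE systems in $t$, indexed by ${\bf k} \in \R^3$. For each ${\bf k}$, the characteristic frequencies of that ODE system are precisely the complex numbers identified in the plane-wave analysis preceding the lemma: the static poles $\omega \in {\cal P}_e \cup {\cal P}_m$ (independent of ${\bf k}$) and the Maxwell frequencies $\omega_j(|{\bf k}|)$, $1 \leq j \leq N$. Consequently the solution operator $S({\bf k},t)$ sending the initial Fourier data $\big(\widehat{\bf E}_0({\bf k}), \widehat{\bf H}_0({\bf k})\big)$ to $\big(\widehat{\bf E},\widehat{\bf H},\widehat{\bf P},\widehat{\bf M}\big)({\bf k},t)$ is a matrix whose entries are finite linear combinations of $t^q \, e^{-i \omega_\ell({\bf k}) t}$, the powers $t^q$ accounting for the multiplicities of the characteristic roots.

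By Plancherel's theorem, the $L^2$ bound (\ref{L2estimate}) is equivalent to a pointwise estimate
$$
\sup_{{\bf k} \, \in \, \R^3} \|S({\bf k},t)\| \leq C(t),
$$
and the leading behaviour of $\|S({\bf k},t)\|$ in $t$ is governed by $\max_\ell \, e^{{\cal I}m \, \omega_\ell({\bf k}) \, t}$. The static poles contribute a ${\bf k}$-independent exponential factor and hence cannot generate unbounded growth as $|{\bf k}| \to \infty$, so only the Maxwell branches can endanger well-posedness. In the direct implication, if $\sup_{{\bf k},j} \, {\cal I}m \, \omega_j(|{\bf k}|) = +\infty$, one builds initial data whose Fourier support is concentrated near a problematic value of $|{\bf k}|$ and verifies that the corresponding solution violates any $L^2$ bound, thus forcing (\ref{condWP}). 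For the converse, the uniform bound ${\cal I}m \, \omega_j(|{\bf k}|) \leq M$ combined with the fact that (\ref{relationdispersion}) is a polynomial equation in $\omega$ of fixed degree $N$ (see Remark \ref{rempropdispersion}) yields $\|S({\bf k},t)\| \leq C \,(1+t^p)\, e^{Mt}$ uniformly in ${\bf k}$, for some $p \in \N$.

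The stability statement is then a refinement of the same argument: polynomial-in-$t$ growth of $C(t)$ is equivalent to the absence of any exponential factor with positive rate, i.e. ${\cal I}m \, \omega_\ell \leq 0$ for \emph{every} characteristic frequency. This must hold both for the static poles (whence ${\cal P}_e \cup {\cal P}_m \subset \C \setminus \C^+$) and for each Maxwell branch (whence ${\cal I}m \, \omega_j(|{\bf k}|) \leq 0$), yielding (\ref{condS}). Sufficiency in this case uses the standard polynomial bound on matrix exponentials in Jordan form, applied pointwise in ${\bf k}$.

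The main obstacle is controlling the polynomial-in-$t$ factors uniformly in ${\bf k}$: the multiplicities of the roots $\omega_j(|{\bf k}|)$ may jump at exceptional values of $|{\bf k}|$ (precisely where the analyticity of the branches fails, cf.\ Remark \ref{rempropdispersion}), but the fixed degree $N$ of (\ref{relationdispersion}) is a universal bound on these multiplicities, which caps the exponent $p$ in the polynomial prefactor. A secondary technical point is the merely piecewise analyticity of the branches $\omega_j(|{\bf k}|)$; this is harmless since only continuity in ${\bf k}$ and the uniform bound $N$ on the number of branches are needed to extract the estimates.
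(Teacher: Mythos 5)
Your overall strategy --- spatial Fourier transform, reduction to a ${\bf k}$-parametrized family of constant-coefficient ODE systems whose characteristic frequencies are the static poles and the branches $\omega_j(|{\bf k}|)$, and Plancherel to convert (\ref{L2estimate}) into a uniform-in-${\bf k}$ bound on the solution matrices $S({\bf k},t)$ --- is exactly the route the paper intends (it gives no proof, only the pointer to Fourier analysis, Plancherel and the reference on initial-value problems), and your necessity argument by concentrating the Fourier data near a bad wave vector is sound.

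The gap is in the sufficiency direction, at the sentence claiming that the behaviour of $\|S({\bf k},t)\|$ is ``governed by $\max_\ell e^{{\cal I}m\,\omega_\ell({\bf k})t}$''. For a general family $A({\bf k})=A_0+|{\bf k}|\,A_1(\hat{\bf k})$, a uniform eigenvalue bound ${\cal I}m\,\omega_\ell({\bf k})\le M$ does \emph{not} imply $\|e^{tA({\bf k})}\|\le C(1+t^p)\,e^{Mt}$ with $C,p$ independent of ${\bf k}$: the standard counterexample $\partial_t u=\partial_x v$, $\partial_t v=0$ has a symbol whose eigenvalues all vanish, yet $\|e^{tA(k)}\|\sim |k|\,t$, so that problem is well posed only with loss of one derivative. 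The obstruction is not the power of $t$ (which, as you say, is capped by the fixed degree of the dispersion relation) but the ${\bf k}$-dependence of the constants: representing $e^{tA({\bf k})}$ by Hermite interpolation at the roots produces matrix factors $\prod_i\big(A({\bf k})-\omega_i I\big)$ of size $O(|{\bf k}|^{N-1})$, and near-coalescent branches make individual spectral projectors blow up. To close the argument one must invoke the specific structure of (\ref{MaxwellDispersivelocalhomogene}): on compact sets of ${\bf k}$ the continuity of $({\bf k},t)\mapsto S({\bf k},t)$ already yields a finite supremum, while for $|{\bf k}|$ large one needs a uniformly bounded block-diagonalization of $A({\bf k})$ (its principal part $|{\bf k}|A_1$ is the vacuum Maxwell symbol, hence uniformly symmetrizable, and by the asymptotics used in the proof of theorem \ref{thmWPCauchy} the spectrum splits into the two separated branches $\pm c_0|{\bf k}|+O(1)$ and bounded clusters near the static poles), or alternatively a direct resolvent estimate of Kreiss type. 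Without some such input, condition (\ref{condWP}) is only the necessary (Petrowsky-type) condition and does not yet yield (\ref{L2estimate}).
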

	\begin{rem} \label{remzeroes} Looking at (\ref{relationdispersion}) when  ${\bf k} \rightarrow 0$ shows that, for stable media, ${\cal Z}_e \cup {\cal Z}_m \subset \C \setminus \C^+$ too.\end{rem}
\noindent Thus, {\bf strongly ill posed models} admit unstable plane waves whose rate of exponential blow-up can be arbitrarily large, while for {\bf unstable models} this rate must be uniformly bounded. 
\begin{theo} \label{thmWPCauchy}
For any local admissible material, the problem (\ref{MaxwellDispersivelocalhomogene}) is {\bf well posed}.
	\end{theo}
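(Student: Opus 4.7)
By Lemma \ref{lemaWPS}, it suffices to exhibit a constant $M$ such that ${\cal I}m\,\omega_j(|{\bf k}|) \leq M$ for all $j = 1, \ldots, N$ and $|{\bf k}| \geq 0$. Setting $s := |{\bf k}|^2$, I first rewrite (\ref{relationdispersion}) as the polynomial equation
\begin{equation*}
F(\omega, s) := \omega^2 \, \varepsilon_0 \mu_0 \, (Q_e + P_e)(-i\omega) \, (Q_m + P_m)(-i\omega) \; - \; s \, Q_e(-i\omega) Q_m(-i\omega) = 0.
\end{equation*}
Thanks to the strict inequalities $d^o P_e < M_e$ and $d^o P_m < M_m$ in ${\bf (ALM)}$, $F(\cdot, s)$ has degree exactly $N = M_e + M_m + 2$ in $\omega$ with a nonzero leading coefficient independent of $s$; hence it admits exactly $N$ complex roots (counted with multiplicity) for every $s \geq 0$, namely the branches $\omega_j(s)$.

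The strategy is then to show that $M := \sup_{s \geq 0} \max_j {\cal I}m\,\omega_j(s) < +\infty$ by bounding the function $s \mapsto \max_j {\cal I}m\,\omega_j(s)$ both on compact intervals and asymptotically. Continuity of this function follows from the classical continuous dependence of the multiset of roots of a polynomial on its coefficients in the Hausdorff metric, which here depend affinely on $s$; thus boundedness on any $[0, S_0]$ is automatic.

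The asymptotic analysis as $s \to +\infty$ is the main step. The $N$ branches split into those that stay bounded and those that diverge. For a bounded branch, any finite accumulation point $\omega_\star$ must be a pole of the rational function $\omega \mapsto \omega^2 \varepsilon(\omega)\mu(\omega)$, since the left-hand side of (\ref{relationdispersion}) tends to $+\infty$ along the branch; such poles form a finite subset of $\C$ independent of $s$ (the roots of $Q_e(-i\omega)Q_m(-i\omega)$), so ${\cal I}m\,\omega_j(s)$ stays bounded there. For a diverging branch, using $\varepsilon(\omega)\mu(\omega) = \varepsilon_0\mu_0 + O(1/|\omega|)$ as $|\omega| \to \infty$ (a direct consequence of the strict degree inequalities in ${\bf (ALM)}$), the ansatz $\omega_j(s) = \pm\sqrt{s/(\varepsilon_0\mu_0)}\,(1 + \delta_j(s))$ substituted into (\ref{relationdispersion}) forces $\delta_j(s) = O(1/\sqrt{s})$, so $\omega_j(s) = \pm\sqrt{s/(\varepsilon_0\mu_0)} + O(1)$ and in particular ${\cal I}m\,\omega_j(s)$ is bounded; a degree count shows there are precisely two such diverging branches, the remaining $M_e + M_m$ accounting for the bounded family.

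Combining the two regimes gives the uniform bound $M < +\infty$, and Lemma \ref{lemaWPS} yields well-posedness. The principal technical point is the asymptotic splitting as $s \to +\infty$, which I would justify by a scaling argument on $F$: either $\omega$ is kept of order one and (\ref{relationdispersion}) degenerates in the limit $s \to +\infty$ to $Q_e(-i\omega)Q_m(-i\omega) = 0$, or one rescales $\omega = \sqrt{s}\,\tilde\omega$ and takes the limit to recover the vacuum relation $\tilde\omega^2 = 1/(\varepsilon_0\mu_0)$, these two scales exhausting all $N$ branches by degree count.
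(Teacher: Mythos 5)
Your proof is correct and follows essentially the same route as the paper's: boundedness of ${\cal I}m\,\omega_j$ on compact sets by continuity, then an asymptotic splitting as $|{\bf k}|\to+\infty$ into two vacuum-like branches $\pm c_0|{\bf k}|+O(1)$ (with $c_0=1/\sqrt{\varepsilon_0\mu_0}$) and $M_e+M_m$ branches accumulating on the finite pole set ${\cal P}_e\cup{\cal P}_m$. Your version merely makes explicit the polynomial form of the dispersion relation, the degree count, and the rescaling $\omega=\sqrt{s}\,\tilde\omega$, which the paper leaves to the reader's inspection.
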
 
\begin{proof}
	\noindent By continuity, ${\cal I}m \, \omega_j\big(|{\bf k}|\big)$ can blow up only 
	when $|{\bf k}| \rightarrow + \infty$. Inspecting (\ref{relationdispersion}), one sees that,
	\begin{itemize}
		\item Either $\omega_j\big(|{\bf k}|\big) = \pm \, c_0 \; |{\bf k}| + O(1) \; (|{\bf k}| \rightarrow + \infty)$, and thus ${\cal I}m \, \omega_j\big(|{\bf k}|\big)$ remain bounded,
	\item Either $\dsp \lim_{|{\bf k}| \rightarrow + \infty} \omega_j\big(|{\bf k}|\big)$ exists and belongs to ${\cal P}_e \cup {\cal P}_m$, so that ${\cal I}m \, \omega_j\big(|{\bf k}|\big)$ is bounded too.
	\end{itemize}
This proves well-posedness. 
	\end{proof}
	\begin{rem} \label{HFcondition}
		One sees here the mathematical importance of condition ${\bf (HF)}$. Assume for instance that
		$$
\varepsilon(\omega) \, \mu(\omega) \sim C_\infty \; \omega^q \quad (|\omega| \rightarrow + \infty) , \quad q \in \N^*, \quad C_\infty = \rho_\infty \, e^{i \theta_\infty}, \quad \rho_\infty > 0, \; \theta_\infty \in [0, 2 \pi[.	$$
In this case , (\ref{relationdispersion}) would admit $q+1$ solutions $\omega_\ell^\infty\big(|{\bf k}|\big)$ satisfying
$$
\omega_\ell^\infty\big(|{\bf k}|\big) \sim \rho_\infty \; \exp {i\Big(\frac{\theta_\infty + \ell \, \pi}{q+2} \Big)} \; |{\bf k}|^{\frac{2}{q+2}}
\quad ( |{\bf k}| \rightarrow + \infty)
$$
so that, at least for one of them, the imaginary part of $\omega_\ell^\infty\big(|{\bf k}|\big)$ would tend to $+ \infty$ !
		\end{rem}
\subsubsection{Non-dissipative media. Definition and first results.}\label{sec-nondissi}
Non-dissipative media are a particular sub-class of stable media.
\begin{defi}{\bf (Non-dissipative media)}. We shall say that a local medium is {\bf non-dissipative} if and only if all plane waves in such a medium are purely propagative. In other words, a medium is non-dissipative if and only if all solutions of the dispersion relation (\ref{relationdispersion}) are {\bf real}.\end{defi}
\noindent Let us establish a connection with the notion of {\bf lossless} medium (see definition \ref{defLossless}).
Let us set
\begin{equation} \label{defF}
{\cal F}(\omega): =  \omega^2 \, \varepsilon(\omega) \, \mu(\omega),  \quad \mbox{(so that (\ref{relationdispersion}) $\Longleftrightarrow {\cal F}(\omega) = |{\bf k}|^2$)}
\end{equation}
\begin{lema} \label{Petitlemme} A rational function $R(\omega)$ with real poles and zeros, for which $R(- \overline{\omega}) = \overline{R(\omega)}$, is even.
\end{lema}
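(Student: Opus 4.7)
The plan is to use the real poles and zeros to factor $R$ and then reduce the functional equation to a polynomial identity. Since every pole and every zero of $R$ is real, write $R(\omega) = c \, P(\omega)/Q(\omega)$ with $c \in \C^\ast$ and $P, Q$ monic coprime polynomials whose roots are exactly the zeros and poles of $R$; monicity together with real roots forces $P$ and $Q$ to have real coefficients. Evaluating the hypothesis on the real axis, where $-\overline{\omega} = -\omega$ and $P(\omega), Q(\omega) \in \R$, the identity $R(-\overline{\omega}) = \overline{R(\omega)}$ becomes $c\, P(-\omega)/Q(-\omega) = \overline{c}\, P(\omega)/Q(\omega)$; clearing denominators produces the polynomial identity
\[
c \, P(-\omega)\, Q(\omega) \;=\; \overline{c}\, P(\omega)\, Q(-\omega).
\]

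The next step exploits $\gcd(P,Q) = 1$: since $P(\omega)$ divides the left-hand side but is coprime to $Q(\omega)$, it must divide $P(-\omega)$, and a comparison of degrees and leading coefficients forces $P(-\omega) = (-1)^{\deg P} P(\omega)$; analogously $Q(-\omega) = (-1)^{\deg Q} Q(\omega)$. Substituting these relations back into the polynomial identity yields $\overline{c}/c = (-1)^{\deg P - \deg Q} \in \{\pm 1\}$, so $c$ is either real or purely imaginary, and moreover
\[
R(-\omega) \;=\; (-1)^{\deg P - \deg Q}\, R(\omega).
\]

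The main difficulty is that the argument a priori allows $R$ to be either even or odd. The odd branch corresponds to $c$ purely imaginary with $\deg P - \deg Q$ odd, in which case $R$ would take purely imaginary values along the entire real axis. In the intended application $R = {\cal F}(\omega) = \omega^2 \varepsilon(\omega)\mu(\omega)$, the high-frequency asymptotics $R(\omega) \sim \varepsilon_0\mu_0\,\omega^2$ (real-valued and even on $\R$) rules this alternative out, so only the even case survives and the lemma follows; more generally, the conclusion holds as soon as one knows that $R$ takes a real nonzero value at some point of the real axis.
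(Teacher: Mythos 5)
Your proof is correct and, importantly, more careful than the paper's. The paper argues geometrically: the relation $R(-\overline{\omega}) = \overline{R(\omega)}$ maps poles to poles and zeros to zeros, so the (real) poles and zeros are symmetric about the origin, and it immediately writes $R(\omega) = A\,\prod(\omega^2 - z_k^2)/\prod(\omega^2-p_j^2)$ with $A \in \R$ and stops there. Your route --- factoring $R = c\,P/Q$ with $P,Q$ monic, real and coprime, converting the functional equation into the polynomial identity $c\,P(-\omega)Q(\omega) = \overline{c}\,P(\omega)Q(-\omega)$, and using coprimality together with a comparison of leading coefficients --- arrives at the same place but exposes exactly what the paper's shortcut hides: a zero or pole at the origin of odd multiplicity (equivalently, $c$ purely imaginary with $\deg P - \deg Q$ odd) is compatible with all the stated hypotheses and makes $R$ odd rather than even. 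The example $R(\omega)=i\omega$ shows the lemma is literally false as stated, so your closing observation should be read not as a patch to your own argument but as a needed correction to the lemma (and to the paper's proof, whose product formula silently assumes even multiplicity at the origin and $A\in\R$). Your proposed extra hypothesis --- that $R$ takes a real nonzero value somewhere on the real axis --- is the right one and is indeed available where the lemma is used: it is invoked in corollary \ref{coroND-Lossless} for $\varepsilon(\omega)$ and $\mu(\omega)$ separately (not for ${\cal F}$ as you wrote, though the same reasoning applies), and \textbf{(HF)} gives $\varepsilon(\omega)\to\varepsilon_0>0$ and $\mu(\omega)\to\mu_0>0$ along the real axis, which rules out the odd branch since an odd $R$ of your form is purely imaginary on all of $\R$.
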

\begin{proof} The property $R(- \overline{\omega}) = \overline{R(\omega)}$ implies that if $\omega$ is a pole (resp. a zero) of
	$R(\omega)$, $- \overline{\omega}$ is a pole (resp. a zero) too. Since the poles (resp. the zeros) are in addition real, they are symmetrically distributed with respect to the origin. In other words we can write (with obvious notation)
	$$
R(\omega) = A \; \frac{(\omega^2 - z_1^2) \cdots (\omega^2 - z_{N_z}^2)}{(\omega^2 - p_1^2) \cdots (\omega^2 - p_{N_p}^2)} \quad ( \mbox{with } A \in \R).
	$$
This finishes the proof.
\end{proof}
\begin{lema} \label{ND-Lossless} If a non-degenerate local medium is non-dissipative, the poles and the zeros of  ${\cal F}(\omega)$ are all real, and their multiplicity is less or equal to 2. In particular, $\omega=0$ is not a zero of $\mu(\omega)$ or $\varepsilon(\omega)$. 
	\end{lema}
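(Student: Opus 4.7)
The plan is to exploit the fact that the $N$ branches $|{\bf k}|\mapsto \omega_j(|{\bf k}|)$ from Remark~\ref{rempropdispersion} are continuous on $\R^+$ and, under the non-dissipativity hypothesis, real-valued. Writing ${\cal F}=P/Q$ in irreducible form (guaranteed by non-degeneracy, with $\deg P=N$ and $\deg Q=N-2$), the dispersion relation becomes the polynomial equation $P(\omega)-|{\bf k}|^2\, Q(\omega)=0$, whose roots for fixed $|{\bf k}|$ are exactly the values $\omega_j(|{\bf k}|)$.

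I would first show that every pole $\omega_0$ of ${\cal F}$ is real. Following the dichotomy in the proof of Theorem~\ref{thmWPCauchy}, each branch either behaves as $\pm c_0\,|{\bf k}|+O(1)$ at infinity or converges to an element of ${\cal P}_e\cup {\cal P}_m$. A standard Rouch\'e argument on $P-|{\bf k}|^2 Q$ shows that each pole of ${\cal F}$ of multiplicity $m$ attracts exactly $m$ branches as $|{\bf k}|\to+\infty$; since these branches are real, so is their common limit $\omega_0$. Dually, at $|{\bf k}|=0$ the equation reduces to $P(\omega)=0$, whose roots are the zeros of ${\cal F}$; each such zero is attained as some $\omega_j(0)\in \R$, so every zero of ${\cal F}$ is also real.

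To bound the multiplicities, I would use a Puiseux expansion. Near a real pole $\omega_0$ of multiplicity $m$, one has ${\cal F}(\omega)=C\,(\omega-\omega_0)^{-m}(1+o(1))$ with $C\in \R\setminus\{0\}$ (reality of $C$ follows from (RP) combined with $\omega_0\in \R$). Solving ${\cal F}(\omega)=|{\bf k}|^2$ for $|{\bf k}|$ large produces $m$ clustering roots
\begin{equation*}
\omega \;=\; \omega_0 + \big(C/|{\bf k}|^2\big)^{1/m}\, e^{2\pi i k/m} + o\big(|{\bf k}|^{-2/m}\big),\qquad k=0,\ldots,m-1.
\end{equation*}
For all of these to be real, the $m$ equispaced angles $\arg(C)/m + 2\pi k/m$ must lie in $\pi\Z$ modulo $2\pi$, which forces $m\leq 2$. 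The same argument applied near a real zero of ${\cal F}$ of multiplicity $n$, with $|{\bf k}|\to 0$ so that $(\omega-\omega_0)^n\sim |{\bf k}|^2/C'$, yields $n\leq 2$.

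The statement about $\omega=0$ then follows by contradiction. Suppose $\varepsilon$ had a zero of multiplicity $p\geq 1$ at the origin (the case of $\mu$ is identical). Then $0\in {\cal Z}_e$, and non-degeneracy (Definition~\ref{defPolesSZeroes}) forces $0\notin {\cal P}_m$, i.e.\ $\mu$ is regular at $0$. Consequently, near $\omega=0$, ${\cal F}(\omega)\sim \mu(0)\,A\,\omega^{2+p}$ for some $A\neq 0$, so $0$ would be a zero of ${\cal F}$ of multiplicity at least $3$, contradicting the bound just established. The main delicate point is the Puiseux-type expansion and the counting of $m$ branches near a pole of ${\cal F}$; although classical, it deserves explicit justification (e.g.\ via Weierstrass preparation applied to $P(\omega)-sQ(\omega)$ near $(\omega_0,\infty)$ with $s=|{\bf k}|^2$), and this is what makes the angle argument above rigorous.
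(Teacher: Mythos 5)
Your proposal is correct and follows essentially the same route as the paper: the paper's proof likewise expands ${\cal F}(\omega)\sim A_*(\omega-\omega^*)^{\pm m}$ near a zero (resp.\ pole) of ${\cal F}$, extracts the $m$ clustering branches of the dispersion relation as $|{\bf k}|\to 0$ (resp.\ $|{\bf k}|\to+\infty$), and uses the realness of all branches to conclude simultaneously that $\omega^*\in\R$, that $m\le 2$, and that the leading coefficient is real; your non-degeneracy argument for the statement about $\omega=0$ is exactly the intended one, which the paper leaves implicit. One caveat: your parenthetical claim that $C\in\R$ follows from the reality principle together with $\omega_0\in\R$ is not correct --- {\bf (RP)} is a symmetry with respect to the imaginary axis ($\varepsilon(-\overline\omega)=\overline{\varepsilon(\omega)}$), which relates the local behaviour at $\omega_0$ to that at $-\omega_0$ and does not force $C$ to be real (cf.\ conductive media, where $\omega\varepsilon(\omega)=\varepsilon_0\omega+i\sigma$); fortunately your equispaced-angle argument needs no such input and in fact yields $C\in\R$ as a by-product, exactly as in the paper.
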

\begin{proof}  Let $\omega^*$ be a zero of $\omega^2\,\varepsilon \, \mu$ of multiplicity  $m > 0$. For some non-zero $A_*$, we can write
	$$
\omega^2 \, \varepsilon(\omega) \, \mu(\omega) \sim A_* \; (\omega - \omega^*)^{m}	\text{ in the vicinity of } \omega^*.
	$$ 
Rewriting the dispersion equation (\ref{relationdispersion}) as $A_* \; (\omega - \omega^*)^{m} \big( 1 + O(\omega - \omega^*)\big) = |{\bf k}|^2$, one deduces (using the implicit function theorem) that for $|{\bf k}|\rightarrow 0$,  (\ref{relationdispersion}) admits $m$ branches of solutions $\omega_\ell(|{\bf k}|), 1 \leq \ell \leq m$:
$$
\omega_\ell(|{\bf k}|) = \omega^* + A_*  \; |{\bf k}|^{\frac{2}{m}}\; \exp {i \, \Big(\frac{\ell \, \pi}{m}}\Big) \; \big(1 + o(1) \big), \quad 1 \leq \ell \leq m
$$
Then, writing that $\omega_\ell(|{\bf k}|) \in \R$ shows that $\omega^* \in \R$ and $m \leq 2$ (as well as $A_* \in \R$).\\[12pt] 
In the same way, let $\omega^*$ be a pole of $\omega^2\,\varepsilon \, \mu$  of multiplicity  $m > 0$.  For some non-zero  $A_*$, we can write
	$$
\omega^2 \, \varepsilon(\omega) \, \mu(\omega) \sim A_*\; (\omega - \omega^*)^{-m}	
	$$ 
Rewriting the dispersion equation (\ref{relationdispersion}) as $A_* \; (\omega - \omega^*)^{-m} \big( 1 + O(\omega - \omega^*)\big) = |{\bf k}|^2$,  one deduces (using the implicit function theorem) that  for $|{\bf k}|\rightarrow +\infty$, (\ref{relationdispersion}) admits $m$ branches of solutions $\omega_\ell(|{\bf k}|), 1 \leq \ell \leq m$:
$$
\omega_\ell(|{\bf k}|) = \omega^* + A_*  \; |{\bf k}|^{-\frac{2}{m}}\; \exp {i \, \Big(\frac{\ell \, \pi}{m}}\Big) \; \Big(1 + o\big(1\big) \Big), \quad 1 \leq \ell \leq m.
$$
Again, writing that $\omega_\ell(|{\bf k}|) \in \R$ shows that $\omega^* \in \R$ and $m \leq 2$ (and $A_* \in \R$).	\end{proof}
\begin{coro} \label{coroND-Lossless} If a non-degenerate local medium is non-dissipative, the functions $\varepsilon(\omega)$ and $\mu(\omega)$ are even and their poles are real and their zeros are real. Moreover,
	\begin{itemize} 
		\item {(i}) The multiplicity of each zero or each non-zero pole is at most 2,
and equals 1 if such a pole or zero is shared by $\varepsilon(\omega)$ and $\mu(\omega)$. 
\item{(ii)} The multiplicity of $\,0\,$ as a pole of $\varepsilon(\omega)$ or $\mu(\omega)$ is at most $4$. 
\end{itemize}
As a consequence, $\varepsilon$ and $\mu$ are necessarily of the form 
\begin{equation} \label{NDformgene} 
\left\{ \begin{array}{l}
\dsp \varepsilon(\omega) = \varepsilon_0 \; \Big( 1 + \sum_{\ell = 0}^{N_e} \frac{a_{e,\ell}}{\omega_{e,\ell}^2 -\omega^2} + \sum_{\ell = 0}^{N_e} \frac{b_{e,\ell}}{(\omega_{e,\ell}^2 -\omega^2\big)^2}\Big), \quad (a_{e,\ell},b_{e,\ell}) \in \R^2, \quad 0 \leq \ell \leq N_e, \\[12pt] 
\dsp  \mu(\omega) = \mu_0 \; \Big( 1 + \sum_{\ell = 1}^{N_m} \frac{a_{m,\ell}}{\omega_{e,\ell}^2 -\omega^2} + \sum_{\ell = 1}^{N_m} \frac{b_{m,\ell}}{\big(\omega_{m,\ell}^2 -\omega^2\big)^2}\Big), \quad (a_{e,\ell},b_{e,\ell}) \in \R^2, \quad 0 \leq \ell \leq N_m.
  \end{array} \right.
\end{equation}
with $0 \leq \omega_{p,e}^0 < \cdots < \omega_{p,e}^{N_e}, \; 0 \leq \omega_{p,m}^0 <  \cdots < \omega_{p,m}^{N_m} \; $ and $b_{e,\ell} = b_{m,\ell'} = 0$ if $\omega_{e,\ell} = \omega_{m, \ell'}$.\\[12pt] In particular, the medium is lossless in the sense of definition \ref{defLossless}.\end{coro}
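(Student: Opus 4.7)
The plan is to derive the corollary from Lemma \ref{ND-Lossless} (which controls the poles and zeros of ${\cal F}(\omega) = \omega^2\varepsilon(\omega)\mu(\omega)$), combined with the non-degeneracy assumption, the reality principle \textbf{(RP)}, and Lemma \ref{Petitlemme}.

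First, I would transfer the conclusions of Lemma \ref{ND-Lossless} from ${\cal F}$ to $\varepsilon$ and $\mu$ individually. The non-degeneracy condition ${\cal P}_e \cap {\cal Z}_m = \emptyset = {\cal P}_m \cap {\cal Z}_e$ prevents any cancellation when forming the product $\varepsilon\mu$, so every non-zero pole (resp.\ non-zero zero) of $\varepsilon$ appears in ${\cal F}$ with the same multiplicity, up to a possible contribution from $\mu$. In particular, the non-zero poles and zeros of $\varepsilon$ and $\mu$ are real. Combining this with \textbf{(RP)}, which gives $\varepsilon(-\overline{\omega}) = \overline{\varepsilon(\omega)}$ (and analogously for $\mu$), Lemma \ref{Petitlemme} applies directly and yields that $\varepsilon$ and $\mu$ are even rational functions.

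Next, for the multiplicity bounds, I would argue case by case. For an unshared non-zero pole or zero, the multiplicity in ${\cal F}$ coincides with that in $\varepsilon$ (or $\mu$), so Lemma \ref{ND-Lossless} gives the bound $2$. For a shared non-zero pole $\omega_0$ of $\varepsilon$ and $\mu$ with multiplicities $m_e$ and $m_m$, $\omega_0$ is a pole of ${\cal F}$ of multiplicity $m_e+m_m \leq 2$, forcing $m_e = m_m = 1$; the same reasoning applies to shared zeros. For $\omega = 0$, the factor $\omega^2$ in ${\cal F}$ reduces the pole order by $2$: if $\varepsilon$ and $\mu$ have poles of orders $p_e,p_m \geq 0$ at $0$, then ${\cal F}$ has order $p_e+p_m-2$ there (when positive), giving $p_e+p_m\leq 4$, and in particular each is bounded by $4$.

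Finally, to arrive at the explicit expansion (\ref{NDformgene}), I would use the admissibility condition $d^oP_e < d^oQ_e$ (which guarantees $\varepsilon(\omega)-\varepsilon_0 \to 0$ at infinity) to perform a partial fraction decomposition of $\varepsilon-\varepsilon_0$ over its real poles. Evenness forces the coefficients at $\pm \omega_{e,\ell}$ to pair up so that each simple pair collapses into a single term $a_{e,\ell}(\omega_{e,\ell}^2-\omega^2)^{-1}$ and each double pair into $b_{e,\ell}(\omega_{e,\ell}^2-\omega^2)^{-2}$, with real coefficients (thanks to \textbf{(RP)}). The pole at $\omega=0$ is accommodated by allowing $\omega_{e,0}=0$, and the exclusion $b_{e,\ell}=b_{m,\ell'}=0$ at shared frequencies is exactly the shared-multiplicity constraint from item (i). The last assertion (losslessness) is then a one-line consequence: combining evenness $\varepsilon(\omega)=\varepsilon(-\omega)$ with \textbf{(RP)} at real $\omega$ gives $\varepsilon(\omega)=\overline{\varepsilon(\omega)}\in\R$.

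The main obstacle, such as it is, is the careful bookkeeping of multiplicities when a given frequency is simultaneously a pole (or zero) of $\varepsilon$ and $\mu$, or when it coincides with $0$; once that is untangled, everything else is routine use of evenness and partial fractions.
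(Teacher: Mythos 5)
Your proposal is correct and follows essentially the same route as the paper's own proof: transfer the reality and multiplicity information from ${\cal F}$ to $\varepsilon$ and $\mu$ via non-degeneracy (no pole--zero cancellation in the product), invoke Lemma \ref{Petitlemme} with \textbf{(RP)} for evenness, track multiplicities additively for shared poles/zeros and with the $-2$ shift from the factor $\omega^2$ at the origin, and finish with a partial fraction expansion using \textbf{(HF)}. The multiplicity bookkeeping you flag as the main obstacle is handled exactly as in the paper, so there is nothing to add.
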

\begin{proof}
Because of the non-degeneracy assumption, each zero of $\varepsilon(\omega)$ or $\mu(\omega)$ is a zero of ${\cal F}(\omega)$. The same is true for the poles of $\varepsilon(\omega)$ or $\mu(\omega)$ which are different from 0. Thus, by lemma \ref{ND-Lossless}, all poles and zeros of $ \varepsilon(\omega)$ and $\mu(\omega)$ are real and lemma \ref{Petitlemme} ensures that $\varepsilon(\omega)$ and $\mu(\omega)$ are even. If $\omega^*$ is a zero of multiplicity $m_\varepsilon$  of $\varepsilon(\omega)$ and a zero of multiplicity $m_\mu$  of $\mu(\omega)$, first of all, by lemma \ref{ND-Lossless}, $\omega_*\neq 0$. Thus, its multiplicity as a zero of ${\cal F}(\omega)$ is $m_\varepsilon + m_\mu$, and lemma \ref{ND-Lossless} yields $m_\varepsilon + m_\mu \leq 2$, which shows property (i) for the zeros. The same reasoning applies to non-zero poles, which completes the proof of property (i).  If $0$ is a pole of multiplicity $m_\varepsilon$  of $\varepsilon(\omega)$ and a pole of multiplicity $m_\mu$  of $\mu(\omega)$, its multiplicity as a pole of ${\cal F}(\omega)$ is $m_\varepsilon + m_\mu - 2$.
Then (ii) follows from lemma \ref{ND-Lossless} again.
Taking into account {\bf (HF)}, formulas (\ref{NDformgene}) are obtained by the usual partial fraction expansion, the reality of the coefficients follow from the reality principle {\bf (RP)}, and the last condition is obtained from (i), (ii). \end{proof}

\subsubsection{Non-dissipative passive local materials.} \label{sec-nondissipassive}
The reciprocal of lemma \ref{ND-Lossless}, namely that any lossless material is non-dissipative, is not true. Consider
$$
\varepsilon(\omega) = 	\varepsilon_0 \, \Big( 1 + \frac{\Omega_e^2}{\omega^2} \Big), \quad
\mu(\omega) = 	\mu_0 .
$$ 
The dispersion relation (\ref{relationdispersion}) reads $\omega^2 = |{\bf k}| - \Omega_e^2$, and hence $\omega\notin \mathbb{R}$
for $|{\bf k}| < \Omega_e$. 
However, the reciprocal of Lemma \ref{ND-Lossless} holds true for passive materials.
\begin{lema} \label{lem_LL_ND} Any lossless local passive material is non-dissipative.
	\end{lema}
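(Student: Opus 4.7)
The plan is to leverage the passivity hypothesis directly on the dispersion relation; the lossless assumption turns out to play no role (beyond what the reality principle already supplies). Set $f(\omega) := \omega\,\varepsilon(\omega)$ and $g(\omega) := \omega\,\mu(\omega)$. By Definition \ref{defpassive} both $f$ and $g$ are Herglotz functions, and one may assume that they are non-constant (otherwise the corresponding constitutive law reduces to that of the vacuum and the claim is trivial). Lemma \ref{lemHerglotz}(i) then yields, for every $\omega \in \C^+$,
\begin{equation*}
{\cal I}m\, f(\omega) > 0 \quad\text{and}\quad {\cal I}m\, g(\omega) > 0,
\end{equation*}
and in particular $f(\omega)\neq 0$ and $g(\omega)\neq 0$ throughout $\C^+$.

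First I would show that the dispersion relation $f(\omega)\,g(\omega) = |{\bf k}|^2$ has no root $\omega_* \in \C^+$. Indeed, for such a candidate $\omega_*$, we have $\arg f(\omega_*) \in (0,\pi)$ and $\arg g(\omega_*) \in (0,\pi)$, hence
\begin{equation*}
\arg\bigl(f(\omega_*)\, g(\omega_*)\bigr) \equiv \arg f(\omega_*) + \arg g(\omega_*) \in (0, 2\pi) \pmod{2\pi}.
\end{equation*}
Since this sum is strictly contained in $(0, 2\pi)$, it is never congruent to $0$ modulo $2\pi$, so $f(\omega_*)\,g(\omega_*)$ cannot be a positive real number. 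The borderline case $|{\bf k}|^2 = 0$ is also ruled out since $f(\omega_*)\,g(\omega_*) \neq 0$. Hence $f(\omega_*)\,g(\omega_*)$ cannot equal the non-negative real $|{\bf k}|^2$.

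Next I would exclude roots in the lower half-plane $\{\omega \in \C \,:\, {\cal I}m\,\omega < 0\}$. By the reality principle ${\bf (RP)}$, the rational fractions $\varepsilon$ and $\mu$ have real coefficients, so after clearing denominators the dispersion relation (\ref{relationdispersion}) becomes a polynomial equation in $\omega$ with real coefficients. Its complex roots therefore come in conjugate pairs: any root $\omega_*$ with ${\cal I}m\,\omega_* < 0$ would be accompanied by $\overline{\omega_*} \in \C^+$ as a root, contradicting the previous paragraph. All roots must therefore lie on the real axis, which is precisely the non-dissipativity property.

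The only delicate point in this argument is the short angular observation that the product of two Herglotz values at a common $\omega_* \in \C^+$ has argument strictly in $(0,2\pi)$ modulo $2\pi$, and hence cannot be a non-negative real. Everything else is routine; in particular no appeal to the explicit structural form (\ref{NDformgene}) of lossless $\varepsilon$ and $\mu$ is needed. It is worth noting, as a byproduct, that passivity alone (together with ${\bf (RP)}$, built into the definition of an admissible material) already forces non-dissipativity, so the lossless hypothesis in the statement is not sharp.
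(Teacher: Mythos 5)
Your first step is correct, and it is essentially the paper's own argument in polar form: for $\omega_*\in\C^+$ both $\omega_*\varepsilon(\omega_*)$ and $\omega_*\mu(\omega_*)$ lie in the open upper half-plane, so their product has argument strictly inside $(0,2\pi)$ and cannot equal the non-negative real $|{\bf k}|^2$ (the paper reaches the same contradiction by separating real and imaginary parts of (\ref{relationdispersion})). The gap is in your second step. The reality principle ${\bf (RP)}$ states $\varepsilon(-\overline{\omega})=\overline{\varepsilon(\omega)}$: as the form ${\bf (ALM)}$ makes explicit, $\varepsilon$ and $\mu$ are rational fractions with real coefficients in the variable $-i\omega$, \emph{not} in $\omega$. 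After clearing denominators, the dispersion relation is therefore a polynomial with real coefficients in $-i\omega$, and its roots are paired under $\omega\mapsto-\overline{\omega}$, i.e.\ reflection across the \emph{imaginary} axis. That symmetry preserves ${\cal I}m\,\omega$, so it sends a lower half-plane root to another lower half-plane root and cannot transport it into $\C^+$. The claim is in fact false without the lossless hypothesis: the conductive medium (\ref{epsmuconductive}) has $\omega\,\varepsilon(\omega)=\varepsilon_0\omega+i\sigma$, which is Herglotz, so the medium is passive, yet the dispersion relation $\varepsilon_0\mu_0\,\omega^2+i\mu_0\sigma\,\omega=|{\bf k}|^2$ has both of its roots with ${\cal I}m\,\omega<0$ for every ${\bf k}\neq 0$; the lossy Lorentz model (\ref{epsmulossyLorentz}) behaves the same way. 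Hence your concluding remark that passivity alone (plus ${\bf (RP)}$) forces non-dissipativity is incorrect, and the lossless hypothesis cannot be dropped.

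The repair is precisely where the paper uses losslessness: by Theorem \ref{ThmLossless}, a lossless admissible material has even $\varepsilon$ and $\mu$, so ${\cal F}(\omega)=\omega^2\varepsilon(\omega)\mu(\omega)$ is even and $-\omega$ solves (\ref{relationdispersion}) whenever $\omega$ does. Since ${\cal I}m(-\omega)>0$ when ${\cal I}m\,\omega<0$, this legitimately reduces the lower half-plane case to the upper half-plane case you already settled, and the proof then closes exactly as you wrote it.
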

\begin{proof}
	Assume that (\ref{relationdispersion}) admits for some ${\bf k} \neq 0$, a non-real root $\omega$. Since $- \omega$ is a solution  too (cf. Theorem \ref{ThmLossless}), we can assume that ${\cal I}m \, \omega > 0$. Taking the real and imaginary part of (\ref{relationdispersion}), we thus get
	$$
	\begin{array}{l}
(a) \quad		{\cal R}e \big(\omega \, \varepsilon(\omega)\big) \, {\cal R}e \big(\omega \, \mu(\omega)\big) = |{\bf k}|^2 + {\cal I}m \big(\omega \, \varepsilon(\omega)\big) \, {\cal I}m\big(\omega \, \mu(\omega)\big) \\[12pt]
(b) \quad		{\cal R}e \big(\omega \, \varepsilon(\omega)\big) \, {\cal I}m \big(\omega \, \mu(\omega)\big) + {\cal I}m \big(\omega \, \varepsilon(\omega)\big) \, {\cal R}e \big(\omega \, \mu(\omega)\big) = 0
		\end{array}
		$$
Since $\omega \, \varepsilon(\omega)$ and $\omega \, \mu(\omega)$ are Herglotz functions (passivity), we deduce from (a) that ${\cal R}e \big(\omega \, \varepsilon(\omega)\big)$ and ${\cal R}e \big(\omega \, \mu(\omega)\big)$ are of the same sign while (b) says that they are of opposite signs. This is a contradiction.
	\end{proof}
\begin{theo}  \label{GenLorentz} [Representation of local lossless passive materials] The electric permittivity and magnetic permeability $\big(\varepsilon(\omega), \mu(\omega) \big)$  associated to a lossless passive local material are necessarily of the form (we speak of {\bf generalized Lorentz materials})
	\begin{equation} \label{Lorentzlaws}
\varepsilon(\omega)	= \varepsilon_0 \; \Big( 1 + \sum_{\ell = 1}^{N_e} \frac{\Omega_{e,\ell}^2}{\omega_{e,\ell}^2 -\omega^2}\Big), \quad \mu(\omega)	= \mu_0 \; \Big( 1 + \sum_{\ell = 1}^{N_m} \frac{\Omega_{m,\ell}^2}{\omega_{m,\ell}^2 -\omega^2}\Big).
		\end{equation}
		where the $\omega_{e,\ell}$'s and $\omega_{m,\ell}$'s are real, and the $\Omega_{e,\ell}$'s and $\Omega_{m,\ell}$'s are positive real numbers. Reciprocally, a medium associated with (\ref{Lorentzlaws}) is necessarily passive and lossless.
		\end{theo}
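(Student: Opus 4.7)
The plan is to combine three ingredients already established: Lemma~\ref{lem_LL_ND}, which turns the lossless passive hypothesis into non-dissipativity; Corollary~\ref{coroND-Lossless}, which constrains the pole/zero structure and forces evenness of $\varepsilon$ and $\mu$; and Lemma~\ref{lemHerglotz}(iv), which, via passivity (Herglotz nature of $\omega\,\varepsilon(\omega)$ and $\omega\,\mu(\omega)$), forces every real pole of these two functions to be simple with negative residue. Both implications of the theorem fall out from these tools.

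For the direct implication I first apply Lemma~\ref{lem_LL_ND} and Corollary~\ref{coroND-Lossless} to write $\varepsilon$ (and analogously $\mu$) in the form (\ref{NDformgene}): an even rational fraction with real poles located at $\pm\omega_{e,\ell}$ of multiplicity at most $2$, plus possibly a pole at $0$ of multiplicity at most $4$. For a nonzero pole $\omega_{e,\ell}$, multiplication by $\omega$ preserves the order, so Lemma~\ref{lemHerglotz}(iv) forces this order to be $1$, killing the coefficient $b_{e,\ell}$ in (\ref{NDformgene}). A direct residue computation then gives
\[
{\cal R}es\bigl(\omega\,\varepsilon,\omega_{e,\ell}\bigr) = -\,\varepsilon_0\,\frac{a_{e,\ell}}{2},
\]
and the negativity of this residue forces $a_{e,\ell}>0$; writing $a_{e,\ell}=\Omega_{e,\ell}^{2}$ yields the desired term of (\ref{Lorentzlaws}).

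The pole at $\omega=0$ needs a separate bookkeeping step. Evenness of $\varepsilon$ (Theorem~\ref{ThmLossless}) forces any pole at $0$ to have even order, and Corollary~\ref{coroND-Lossless}(ii) caps that order at $4$. An order-$4$ pole at $0$ would give $\omega\,\varepsilon$ a pole of order $3$ there, contradicting Lemma~\ref{lemHerglotz}(iv); so the admissible orders at $0$ are $\{0,2\}$. In the order-$2$ case, writing the principal part as $-\varepsilon_0\,\Omega_{e,0}^{2}/\omega^{2}$, the residue of $\omega\,\varepsilon$ at $0$ equals $-\varepsilon_0\,\Omega_{e,0}^{2}$, and the Herglotz condition forces $\Omega_{e,0}^{2}>0$; this is exactly an $\omega_{e,\ell}=0$ summand of (\ref{Lorentzlaws}). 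The identical argument applied to $\mu$ closes the direct implication.

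For the converse, losslessness of (\ref{Lorentzlaws}) is immediate since each summand is real on the real axis outside of its poles. To establish passivity I use the partial fraction
\[
\frac{\omega}{\omega_{e,\ell}^{2}-\omega^{2}} = -\frac{1}{2}\left(\frac{1}{\omega-\omega_{e,\ell}} + \frac{1}{\omega+\omega_{e,\ell}}\right),
\]
together with the elementary observation that $\omega\mapsto -(\omega-\omega_{0})^{-1}$ is Herglotz for any $\omega_{0}\in\R$, since ${\cal I}m\bigl(-(\omega-\omega_{0})^{-1}\bigr) = {\cal I}m\,\omega/|\omega-\omega_{0}|^{2}$. Multiplying by the positive scalar $\varepsilon_{0}\,\Omega_{e,\ell}^{2}/2$ preserves this property, and summing with the trivially Herglotz $\omega\mapsto\varepsilon_{0}\,\omega$ shows that $\omega\,\varepsilon(\omega)$ is Herglotz; the same reasoning applies to $\omega\,\mu(\omega)$. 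I expect the main technical subtlety to lie precisely in the bookkeeping at $\omega=0$, where evenness, the multiplicity cap of Corollary~\ref{coroND-Lossless}(ii), and the simple-pole constraint of Lemma~\ref{lemHerglotz}(iv) must be combined carefully to reduce admissible orders to $\{0,2\}$ and identify the Drude-type term that survives.
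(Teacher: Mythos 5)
Your proof is correct and follows essentially the same route as the paper's: reduce to non-dissipativity via Lemma~\ref{lem_LL_ND}, invoke Corollary~\ref{coroND-Lossless} to get the form (\ref{NDformgene}), then use the Herglotz nature of $\omega\,\varepsilon$ and $\omega\,\mu$ (Lemma~\ref{lemHerglotz}) to kill the double-pole coefficients and force $a_{e,\ell}>0$ from the sign of the residues, with the converse checked by verifying the Herglotz property of $\omega\,\varepsilon(\omega)$ directly (you decompose into elementary Herglotz summands $-(\omega\mp\omega_{e,\ell})^{-1}$ where the paper computes ${\cal I}m\big(\omega\,\varepsilon(\omega)\big)$ explicitly --- both work). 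Your more careful bookkeeping at $\omega=0$ (ruling out order $4$ and identifying the surviving Drude-type term) and your residue ${\cal R}es\big(\omega\,\varepsilon,\omega_{e,\ell}\big)=-\varepsilon_0\,a_{e,\ell}/2$, which carries the correct sign where the paper's displayed formula has a sign slip but draws the same conclusion, are welcome refinements rather than a different argument.
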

\begin{proof} By lemma \ref {lem_LL_ND}, we know that the medium is non-dissipative. 
	Thus, by corollary \ref{coroND-Lossless}, $\big(\varepsilon(\omega), \mu(\omega) \big)$ are of the form (\ref{NDformgene}). Moreover, since $ \omega \, \varepsilon(\omega)$ and $ \omega \, \mu(\omega)$ are Herglotz functions (cf. the definition \ref{defpassive} of passive media), 
we know by lemma \ref{lemHerglotz} that their real poles (i.e. their poles since all of them are real) are simple: in other words, except maybe for 
$\omega = 0$, the poles of $\varepsilon(\omega)$ and $\mu(\omega)$ are simple. Thus, the coefficients $b_{e,\ell}$ and 
$b_{m,\ell}$ appearing in (\ref{NDformgene}) vanish. Finally, using (\ref{NDformgene}), one can compute explicitly the residue of $\omega \varepsilon(\omega)$ and $\omega \mu(\omega)$ at each of their poles :
$$
{\cal R}es \big( \omega \, \varepsilon(\omega), \pm \, \omega_{e,\ell} \big) = \varepsilon_0 \;  \frac{a_{e,\ell}}{2} , \quad
{\cal R}es \big( \omega \, \mu(\omega), \pm \, \omega_{m,\ell} \big) = \mu_0 \;  \frac{a_{m,\ell}}{2},
$$
which shows, by lemma \ref{lemHerglotz},  that $a_{e,\ell}$ and $a_{m,\ell}$ are positive numbers, i. e. 
$a_{e,\ell} = \Omega_{e,\ell}^2$ and $a_{m,\ell} = \Omega_{m,\ell}^2$.\\[12pt]
Reciprocally, to prove the passivity of generalized Lorentz materials, we compute
$$
\omega \, \varepsilon(\omega)	= \varepsilon_0 \; \Big( \omega  + \sum_{\ell = 1}^{N_e} \Omega_{e,\ell}^2 \,  \frac{\omega}{\omega_{e,\ell}^2 -\omega^2}\Big) \equiv \varepsilon_0 \; \Big( \omega  + \sum_{\ell = 0}^{N_e} \; \Omega_{e,\ell}^2 \, \frac{\omega \, \omega_{e,\ell}^2 - \overline{\omega} \, |\omega|^2 }{|\,\omega_{e,\ell}^2 -\omega^2|^2}\Big)
$$
so that
$$
{\cal I}m \big(\omega \, \varepsilon(\omega))	= \varepsilon_0 \; ({\cal I}m \, \omega) \; \Big( 1 + \sum_{\ell = 0}^{N_e} \; \Omega_{e,\ell}^2 \, \frac{ \omega_{e,\ell}^2 + |\omega|^2 }{|\, \omega_{e,\ell}^2 -\omega^2|^2}\Big),
$$
which proves that $\omega \, \varepsilon(\omega)$ is a Herglotz function. The same holds for $\omega \, \mu(\omega)$.
\end{proof}
\begin{figure}[h] 
\centerline{
\includegraphics[height=4cm]{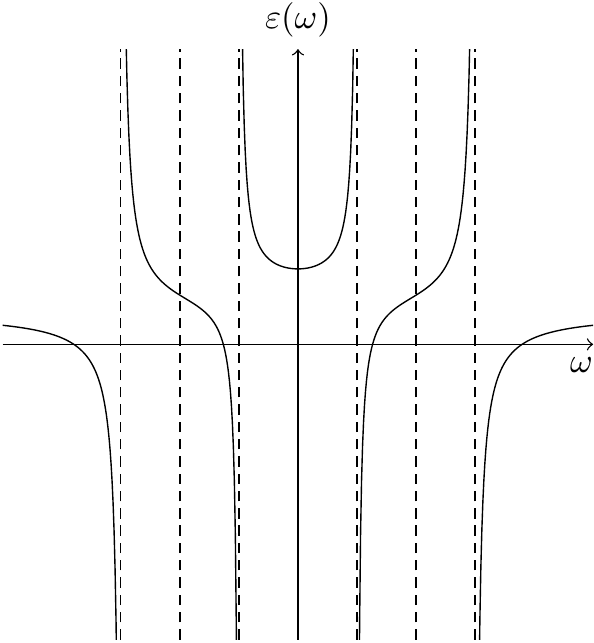} \hspace*{2cm} \includegraphics[height=4cm]{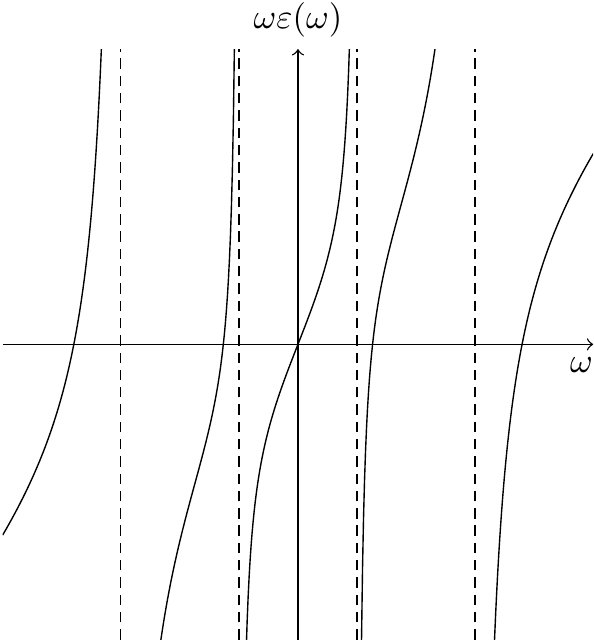}}
\caption{Left: a typical graph of $\varepsilon(\omega)$ when $0$ is not a pole. Right: illustration of the growing property}
\label{Fig1}
\end{figure}
\begin{rem} \label{positivity0}
For any Lorentz material, if $0$ is not a pole of $\varepsilon$ (resp. $\mu$),  $\varepsilon(0) \geq \varepsilon_0$ (resp. $\mu(0) \geq \mu_0$).
	\end{rem}
\noindent One can wonder whether a non-dissipative local material is necessarily passive. This is not the case as it can be 
guessed from the fact that the non-dissipativity is linked to a property of the product $\varepsilon \, \mu$ while passivity relies on a property for each of the functions $\varepsilon$ and $\mu$. Let us consider
the following example
$$
\varepsilon(\omega) = \varepsilon_0 \; \Big( 1  - \frac{\Omega_e^2}{\omega^2}\Big) \, \Big( 1  - \frac{\Omega_m^2}{\omega^2}\Big), \quad \mu(\omega) =   \mu_0.
$$
The corresponding medium is equivalent, in the sense of the definition \ref{defEM}, to the Drude medium
$$
\varepsilon(\omega) = \varepsilon_0 \; \Big( 1  - \frac{\Omega_e^2}{\omega^2}\Big) , \quad \mu(\omega) =  \mu_0 \; \Big( 1  - \frac{\Omega_m^2}{\omega^2}\Big).
$$
and is thus non-dissipative (it has the same dispersion relation). However this medium is not passive since
$$
\omega \, \varepsilon(\omega) \sim \frac{\Omega_e^2 \, \Omega_m^2}{\omega^3} \mbox{ when } \omega \rightarrow 0
$$
so that for $\omega = \rho \, e^{i \theta}, \; \theta\in \; ]0, \pi[ \, $, $\rho\rightarrow 0$, $\omega \, \varepsilon(\omega) \sim \rho \, e^{- 3i \theta}$, which lies in $\C^-$ when  $\theta \in \;  ] \pi/3, 2\pi/3 [ \, .$\\[12pt]
Nevertheless we have the following result, a proof of which is given in Appendix.
\begin{theo} \label{thm.equivpassive} A non-dissipative local material is necessarily  equivalent to a passive material (thus to a generalized Lorentz medium).
	\end{theo}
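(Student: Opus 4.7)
My plan is to build an equivalent generalized Lorentz pair $(\varepsilon^*,\mu^*)$ by first factorizing the product $\varepsilon\mu$ into elementary Lorentz factors and then distributing those factors between $\varepsilon^*$ and $\mu^*$. By the lemma preceding the theorem I may assume $(\varepsilon,\mu)$ is non-degenerate, so by Corollary \ref{coroND-Lossless} both functions are even in $\omega$, have real poles and zeros, and $\varepsilon\mu$ is a rational function of $\omega^2$ tending to $\varepsilon_0\mu_0$ at infinity.

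The first step is to establish a factorization of the form
\begin{equation*}
\varepsilon(\omega)\mu(\omega)/(\varepsilon_0\mu_0) = \prod_j L_j(\omega), \quad L_j(\omega) = 1 + \frac{\Omega_j^2}{\omega_j^2 - \omega^2},
\end{equation*}
with $\omega_j^2 \geq 0$ and $\Omega_j^2 > 0$. Existence amounts to pairing the (real non-negative) $\omega^2$-poles $\{\omega_j^2\}$ and $\omega^2$-zeros $\{\alpha_j^2\}$ of $\varepsilon\mu$, counted with multiplicity and equal in number by the high-frequency asymptotics, in such a way that $\Omega_j^2 := \alpha_j^2 - \omega_j^2 > 0$ for each pair. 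I would extract this pairing from non-dissipativity by introducing $s = \omega^2$ and writing the dispersion relation as $s P(s) = t$, with $P(s) = \varepsilon(\sqrt{s})\mu(\sqrt{s})/(\varepsilon_0\mu_0)$. Non-dissipativity then states that all $s$-roots of $sP(s) = t$ lie in $[0,+\infty)$ for every $t \geq 0$, a Stieltjes-type positivity condition. A continuous-deformation / argument-principle analysis of how these $s$-roots move as $t$ varies from $0$ to $+\infty$ yields the required interlacing between the poles and zeros of $P$ on $[0,+\infty)$, hence the desired pairing.

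The second step is to partition the Lorentz factors: choose $S_1 \sqcup S_2$ of the index set and set $\varepsilon^*/\varepsilon_0 := \prod_{j\in S_1} L_j$, $\mu^*/\mu_0 := \prod_{j \in S_2} L_j$. A natural choice is dictated by the pole structure of the original $(\varepsilon,\mu)$: Lorentz factors centered at poles of $\varepsilon$ alone go to $\varepsilon^*$, those centered at poles of $\mu$ alone go to $\mu^*$, while each double pole of $\varepsilon\mu$ carried by only one of the two functions (corresponding to two Lorentz factors at the same $\omega_j^2$) is split across the pair, the two Lorentz weights $\Omega_1^2,\Omega_2^2$ being the non-negative real roots of the quadratic $X^2 - aX + b = 0$ built from the local partial-fraction coefficients $(a,b)$ of $\varepsilon\mu$ at that pole. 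A residue computation, combined with the interlacing from step 1, checks that each of $\varepsilon^*,\mu^*$ has positive coefficients in its partial-fraction expansion and is thus of the form (\ref{Lorentzlaws}). Theorem \ref{GenLorentz} then yields the passivity of $(\varepsilon^*,\mu^*)$, and $\varepsilon^*\mu^* = \varepsilon\mu$ holds by construction.

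The main obstacle is the first step: rigorously extracting the Stieltjes-type interlacing from the bare non-dissipativity condition ``all $\omega$-roots are real for every $|\mathbf{k}|^2 \geq 0$''. A secondary technical point is the bookkeeping at $\omega = 0$, where by Corollary \ref{coroND-Lossless}(ii) the total multiplicity in $\varepsilon\mu$ can reach $4$ and where contributions from $\varepsilon$ and $\mu$ may be intertwined; the reality and non-negativity of the roots of the quadratic $X^2 - aX + b$ at this pole must there be verified directly from a local expansion of the dispersion relation as in the proof of Lemma \ref{ND-Lossless}.
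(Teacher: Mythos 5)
Your overall strategy --- extract interlacing information on the poles and zeros of $\varepsilon\mu$ from non-dissipativity, then redistribute pole--zero pairs between $\varepsilon^*$ and $\mu^*$ --- is the paper's strategy, but your execution of the second step contains a genuine error. Positivity of each individual weight $\Omega_j^2=\alpha_j^2-\omega_j^2$ does \emph{not} propagate to a product of Lorentz factors: $\prod_{j\in S}L_j$ is of the generalized Lorentz form (\ref{Lorentzlaws}) only if the poles and zeros \emph{within that product} fully interlace as $p<z<p'<z'<\cdots$, and your ``ownership'' rule does not guarantee this. Concretely, take
\[
\varepsilon(\omega)=\varepsilon_0\,\frac{(3-\omega^2)(4-\omega^2)}{(1-\omega^2)(2-\omega^2)},\qquad \mu(\omega)=\mu_0 .
\]
This medium is admissible, non-degenerate and non-dissipative: with $s=\omega^2$ the dispersion relation is the cubic $s(3-s)(4-s)=t\,(1-s)(2-s)$, which for every $t\geq 0$ has one root in each of $(0,1)$, $(2,3)$ and $(4,+\infty)$, hence three non-negative real roots. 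Both poles ($s=1$ and $s=2$) are simple and belong to $\varepsilon$ alone, so your rule assigns both Lorentz factors to $\varepsilon^*$ and leaves $\mu^*=\mu_0$, returning the original pair; yet $\varepsilon/\varepsilon_0=1+\tfrac{6}{1-\omega^2}-\tfrac{2}{2-\omega^2}$ has a negative partial-fraction coefficient and is not passive (lemma \ref{lemHerglotz}(iv)). The obstruction is that the combined ordering here is $p_1<p_2<z_1<z_2$ rather than $p_1<z_1<p_2<z_2$, so the two factors \emph{must} be split between the two functions. The paper's proof is built around exactly this: it establishes the two-chain interlacing $p_1<z_1<p_3<z_3<\cdots$ and $p_2<z_2<p_4<z_4<\cdots$ (lemma \ref{interlacing}) and assigns the odd-indexed chain to one function and the even-indexed chain to the other, so that each function's own poles and zeros interlace and the residue computation then yields positive coefficients.

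The same example shows that what you ask of step 1 is too weak: a bijective pairing with $\alpha_j^2>\omega_j^2$ exists there (pair $(1,3)$ and $(2,4)$), yet the product of the two resulting Lorentz factors is not generalized Lorentz. And step 1 itself --- which you rightly identify as the main obstacle --- is left as a sketch. The paper's mechanism is the following: non-dissipativity means ${\cal F}=\omega^2\varepsilon\mu$ omits $\R^+$ on $\C^+$, so a continuous determination of $\operatorname{Arg}{\cal F}$ with values in $(0,2\pi)$ exists there; the argument principle on thin rectangles $[a,b]\times[-\delta,\delta]$, together with ${\cal F}(\overline\omega)=\overline{{\cal F}(\omega)}$, gives $|n_P(a,b)-n_Z(a,b)|\leq 2$ for every real interval (lemma \ref{estimnpmnz}), and this single inequality applied to symmetric intervals $(-z_m-\eta,z_m+\eta)$ forces the two-chain interlacing. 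Your ``deformation of the roots of $sP(s)=t$'' idea may well be workable, but as written it restates the difficulty rather than resolving it, and even if completed it must deliver the stronger two-chain statement, not merely the pairing you asked for.
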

\noindent In a local passive non-dissipative material, Maxwell's equations can be rewritten, modulo the introduction of appropriate auxiliary unknowns, as the coupling of standard Maxwell's equations in the vacuum with a system of linear second order ODE's  (harmonic oscillators). The precise result is the following:
\begin{theo}\label{PDEmodel}  A PDE-system associated to dispersive Maxwell's equations in the media (\ref{Lorentzlaws}) reads :
\begin{equation} \label{Lorentzsystem}
\left\{	\begin{array}{lll}
\dsp	\varepsilon_0 \, \partial_t {\bf E} + {\bf rot} \, {\bf H} + \varepsilon_0 \, \sum_{\ell = 1}^{N_e}\Omega_{e, \ell}^2  \partial_t \bbP_\ell= 0, & \dsp \mu_0 \, \partial_t {\bf H} - {\bf rot} \, {\bf E}  + \mu_0 \,  \sum_{\ell = 1}^{N_m} \Omega_{m, \ell}^2\partial_t \bbM_\ell = 0,  \\[18pt]
\partial_t^2 \bbP_\ell + \omega_{e, \ell}^2 \,  \bbP_\ell = \,  {\bf E}, &  \partial_t^2 \bbM_\ell + \omega_{m, \ell}^2 \, \bbM_\ell =  \,  {\bf H}.
\end{array} \right.
	\end{equation}
~\\[-4pt]with  $\bbP_\ell ({\bf x},0) = \partial_t\bbP_\ell ({\bf x},0)=0, \, 0 \leq \ell \leq N_e \, , $ and $\bbM_\ell ({\bf x},0) = \partial_t\bbM_\ell ({\bf x},0)=0, \, 0 \leq \ell \leq N_m \, $.
		\end{theo}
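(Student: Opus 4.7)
The plan is to construct the auxiliary unknowns $\bbP_\ell, \bbM_\ell$ via a partial-fraction decomposition of $\varepsilon, \mu$, and then verify that Maxwell's equations (\ref{Maxwell2}) combined with the Lorentz constitutive laws (\ref{Lorentzlaws}) reduce exactly to (\ref{Lorentzsystem}). To this end I would work in the frequency domain, where the computations are purely algebraic, and then transfer back to the time domain using the fact that causal fields with trivial past satisfy $\widehat{\partial_t u}(\omega) = -i\omega\,\widehat{u}(\omega)$ via (\ref{propFLT}).

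Concretely, first I would apply the Laplace-Fourier transform to (\ref{Maxwell2}) and recall that $\widehat{\bf P} = \varepsilon_0\,\widehat{\chi}_e\,\widehat{\bf E}$, $\widehat{\bf M} = \mu_0\,\widehat{\chi}_m\,\widehat{\bf H}$ with
\[
\widehat{\chi}_e(\omega) = \sum_{\ell=1}^{N_e} \frac{\Omega_{e,\ell}^2}{\omega_{e,\ell}^2 - \omega^2}, \qquad \widehat{\chi}_m(\omega) = \sum_{\ell=1}^{N_m} \frac{\Omega_{m,\ell}^2}{\omega_{m,\ell}^2 - \omega^2},
\]
which is the natural partial-fraction form suggested by (\ref{Lorentzlaws}). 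Next, for each $\ell$ I introduce the auxiliary fields by
\[
\widehat{\bbP}_\ell({\bf x},\omega) := \frac{\widehat{\bf E}({\bf x},\omega)}{\omega_{e,\ell}^2 - \omega^2}, \qquad \widehat{\bbM}_\ell({\bf x},\omega) := \frac{\widehat{\bf H}({\bf x},\omega)}{\omega_{m,\ell}^2 - \omega^2}.
\]
Multiplying through by the denominator and applying (\ref{propFLT}) twice (with vanishing traces at $t=0$), these identities translate respectively into the harmonic-oscillator equations $\partial_t^2 \bbP_\ell + \omega_{e,\ell}^2\bbP_\ell = {\bf E}$ and $\partial_t^2 \bbM_\ell + \omega_{m,\ell}^2\bbM_\ell = {\bf H}$, together with the stated zero initial conditions.

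Finally, I would substitute the resulting decomposition $\widehat{\bf P} = \varepsilon_0 \sum_\ell \Omega_{e,\ell}^2\,\widehat{\bbP}_\ell$ and the analogous identity for $\widehat{\bf M}$ back into the frequency-domain Maxwell equations. The first two equations of (\ref{Lorentzsystem}) then follow by applying $-i\omega$ (i.e.\ $\partial_t$) to $\widehat{\bf P}$ and $\widehat{\bf M}$ and inverting the Laplace-Fourier transform. Uniqueness of the correspondence goes the other way: starting from a solution of (\ref{Lorentzsystem}) with zero data for the auxiliary fields, one reads off the same frequency-domain identities, which recovers the dispersive Maxwell system with the Lorentz constitutive laws.

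There is no real obstacle here: the only point requiring care is the bookkeeping of the initial conditions for $\bbP_\ell$ and $\bbM_\ell$, which must be chosen as stated precisely so that the $-i\omega$'s in (\ref{propFLT}) do not generate spurious boundary terms; one may either impose them by convention (as the theorem does), or argue by causality, since $\bbP_\ell$ and $\bbM_\ell$ are obtained by convolution of ${\bf E}$ and ${\bf H}$ with causal Green's functions of the harmonic oscillator and are therefore automatically zero at $t=0$ together with their first time derivative.
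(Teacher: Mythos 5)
Your proposal is correct and follows essentially the same route as the paper: defining the auxiliary fields in the frequency domain by $(\omega_{e,\ell}^2-\omega^2)\,\widehat{\bbP}_\ell=\widehat{\bf E}$ (and similarly for $\bbM_\ell$), decomposing ${\bf P}=\varepsilon_0\sum_\ell \Omega_{e,\ell}^2\,\bbP_\ell$ via the partial fractions of (\ref{Lorentzlaws}), and translating back to the time domain. Your additional remarks on the bookkeeping of initial conditions and the causality argument simply make explicit what the paper leaves as ``obvious to conclude.''
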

		\begin{proof} With (\ref{Lorentzlaws}), we define the polarization ${\bf P}$ and the magnetization ${\bf M}$ (see (\ref{defPM})) as
\begin{equation} \label{defPellMell}
{\bf P} = \varepsilon_0 \, \sum_{\ell = 0}^{N_e} \; \Omega_{e,\ell}^2 \, \bbP_\ell, \quad {\bf M} = \mu_0 \, \sum_{\ell = 0}^{N_m} \;  \Omega_{m,\ell}^2 \, \bbM_\ell,
\end{equation}
where the Fourier-Laplace transforms of the $\bbP_\ell$'s and the $\bbM_\ell$'s satisfy
$$
\big({\omega_{e,\ell}^2 -\omega^2}\big) \, \widehat\bbP_\ell =  \widehat{\bf E}, \quad \big({\omega_{m,\ell}^2 -\omega^2}\big) \, \widehat\bbM_\ell =  \widehat{\bf H},
$$
It is then obvious to conclude.
\end{proof}

\noindent We finish this section with a characteristic property of local passive materials: the {\bf growing property}.
\begin{theo}\label{thm.growing} Any passive local material satisfies the growing property :
	\begin{equation} \label{growingproperty}
		\forall \; \omega \in \R \setminus \mathcal{P}_e, \quad \frac{d}{d\omega}(\omega \, \varepsilon) (\omega) > 0, \quad 
		\forall \; \omega \in \R \setminus \mathcal{P}_m, \quad \frac{d}{d\omega}(\omega \, \mu) (\omega) > 0.
		\end{equation}
Reciprocally, if a (non-degenerate) non-dissipative local material satisfies (\ref{growingproperty}), it is passive. 
\end{theo}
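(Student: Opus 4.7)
For the forward implication, I would use Theorem \ref{GenLorentz}: a lossless passive local material admits the generalized Lorentz representation (\ref{Lorentzlaws}) (the ``$>0$'' in the growing property is tacitly understood as strict real positivity, i.e.\ the statement concerns non-dissipative passive materials, for which, by Lemma \ref{lem_LL_ND} and Theorem \ref{ThmLossless}, $\omega\,\varepsilon$ and $\omega\,\mu$ are real on $\R\setminus(\mathcal{P}_e\cup\mathcal{P}_m)$). Differentiating
\[
\omega\,\varepsilon(\omega) = \varepsilon_0\,\omega + \varepsilon_0\sum_{\ell=1}^{N_e}\frac{\Omega_{e,\ell}^2\,\omega}{\omega_{e,\ell}^2-\omega^2}
\]
term by term yields
\[
\frac{d}{d\omega}\bigl(\omega\,\varepsilon\bigr)(\omega) = \varepsilon_0 + \varepsilon_0\sum_{\ell=1}^{N_e}\frac{\Omega_{e,\ell}^2\,(\omega_{e,\ell}^2+\omega^2)}{(\omega_{e,\ell}^2-\omega^2)^2},
\]
a sum of strictly positive summands for all $\omega\in\R\setminus\mathcal{P}_e$. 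The same computation applies to $\mu$.

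For the reciprocal, assume $(\varepsilon,\mu)$ is a non-degenerate non-dissipative local material satisfying the growing property. By Corollary \ref{coroND-Lossless}, both $\varepsilon$ and $\mu$ are of the general form (\ref{NDformgene}), with real coefficients $a_{e,\ell}, b_{e,\ell}, a_{m,\ell}, b_{m,\ell}$ and real poles of multiplicity at most two. My goal is to show that the growing property forces $b_{e,\ell}=b_{m,\ell}=0$ for every $\ell$ and $a_{e,\ell},a_{m,\ell}>0$, thereby placing $(\varepsilon,\mu)$ in generalized Lorentz form (\ref{Lorentzlaws}); passivity then follows from the reverse direction of Theorem \ref{GenLorentz}. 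At each nonzero pole $\omega_{e,\ell}>0$ with $b_{e,\ell}\neq 0$, the function $\omega\,\varepsilon$ has a double pole and its derivative behaves like $-2C/(\omega-\omega_{e,\ell})^3$ with $C$ proportional to $b_{e,\ell}$, whose sign flips across the pole --- contradicting $(\omega\varepsilon)'>0$ on both sides. Hence $b_{e,\ell}=0$, the pole becomes simple, and then $(\omega\,\varepsilon)'(\omega)\sim\varepsilon_0\,a_{e,\ell}/\bigl(2(\omega-\omega_{e,\ell})^2\bigr)$ near $\omega_{e,\ell}$ gives $a_{e,\ell}>0$.

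The main obstacle is the possible pole at $\omega=0$, i.e.\ $\omega_{e,0}=0$. If $b_{e,0}\neq 0$, then $\omega\,\varepsilon$ acquires a pole of order $3$ at $0$ and $(\omega\,\varepsilon)'(\omega)\sim -3\varepsilon_0\,b_{e,0}/\omega^4$, which has the \emph{same} sign on both sides of $0$; growing alone therefore only constrains $b_{e,0}\le 0$ and does not rule out $b_{e,0}<0$. Here I exploit non-dissipativity of the full medium. The dispersion relation $\omega^2\,\varepsilon(\omega)\,\mu(\omega)=|\mathbf{k}|^2$, once cleared of denominators, becomes a polynomial equation in $y=\omega^2$; analyzing its coefficients (the leading behavior of $\omega^2\,\varepsilon\,\mu$ as $\omega\to 0$ being driven by the principal parts of $\varepsilon$ and $\mu$ at $0$), one checks that $b_{e,0}<0$ forces a negative real root $y$ for all sufficiently large $|\mathbf{k}|$, hence a purely imaginary $\omega$ --- contradicting non-dissipativity. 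Thus $b_{e,0}=0$, and symmetrically $b_{m,0}=0$; a residue analysis of $\omega\,\varepsilon$ at $0$ then gives $a_{e,0}>0$ (and analogously for $\mu$), completing the reduction to the Lorentz form (\ref{Lorentzlaws}).
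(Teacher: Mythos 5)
Your forward direction coincides with the paper's: both differentiate the generalized Lorentz representation (\ref{Lorentzlaws}) term by term (and both tacitly restrict to the lossless case, as you note). For the reciprocal you take a genuinely different route. The paper does not start from the form (\ref{NDformgene}); it argues globally: $(\omega\varepsilon)'>0$ forces all zeros of $\omega\varepsilon$ to be simple and excludes local extrema, hence zeros and poles of $\omega\varepsilon$ interlace on $\R$; since $\varepsilon\to\varepsilon_0$ at infinity, the number of poles of $\omega\varepsilon$ counted with multiplicity is one less than the number of zeros, and the interlacing then forces \emph{every} pole of $\omega\varepsilon$ --- including a possible one at $\omega=0$ --- to be simple. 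The partial fraction expansion and the sign of $(\omega\varepsilon)'$ near each pole then give $a_{e,\ell}>0$, exactly as in your last step. Your local analysis at each nonzero pole (double pole $\Rightarrow$ the derivative changes sign across it; simple pole $\Rightarrow$ the residue must be negative) is correct and reaches the same conclusion, but, as you rightly observe, it degenerates at $\omega=0$: a term $b_{e,0}/\omega^4$ gives $(\omega\varepsilon)'\sim -3\varepsilon_0 b_{e,0}/\omega^4$, of constant sign, so the growing property alone only yields $b_{e,0}\le 0$. This is exactly the case the paper's counting argument absorbs for free, and it is the one delicate point of your proof: your patch via the dispersion relation is sound in spirit but only sketched (it requires a small case analysis on the behaviour of $\mu$ at $0$). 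Note that the patch is in fact unnecessary: non-degeneracy gives $0\in\mathcal{P}_e\Rightarrow 0\notin\mathcal{Z}_m$, so $\mu$ must have a pole at $0$ of even order $m_\mu\ge 2$, while lemma \ref{ND-Lossless} bounds the order of $0$ as a pole of $\mathcal{F}$ by $2$, i.e. $m_\varepsilon+m_\mu\le 4$; hence $m_\varepsilon\le 2$ and $b_{e,0}=0$ directly. With that observation your reduction to (\ref{Lorentzlaws}) and the appeal to the converse part of theorem \ref{GenLorentz} complete the argument; what the paper's route buys is a single uniform treatment of all poles, what yours buys is a more explicit, coefficient-by-coefficient reading of the constraint $(\omega\varepsilon)'>0$.
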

\begin{proof}
For the direct statement, we can use formula (\ref{Lorentzlaws}) and directly compute 
$$
\frac{d}{d\omega} \big(\omega \, \varepsilon \big) (\omega)	= \varepsilon_0 \; \Big( 1 + \sum_{\ell = 1}^{N_e} \; \Omega_{e,\ell}^2 \; \frac{d}{d\omega}\Big(\frac{\omega}{\omega_{e,\ell}^2 -\omega^2}\Big) \Big) \equiv \varepsilon_0 \; \Big( 1 + \sum_{\ell = 1}^{N_e} \; \Omega_{e,\ell}^2 \; \frac{\omega_{e,\ell}^2 + \omega^2}{\big(\omega_{e,\ell}^2 -\omega^2\big)^2}\Big)
$$
For the reciprocal statement, assume now that $(\omega\varepsilon(\omega))'>0$ for $\omega \notin {\cal P}_e$. This immediately implies that all zeros of $\omega\varepsilon(\omega)$ are simple and that $\omega\varepsilon(\omega)$ does not admit local minima or maxima. As a consequence, between two consecutive zeros of $\omega\varepsilon(\omega)$ there is one pole of $\omega\varepsilon(\omega)$
and between two consecutive poles there is one zero. Therefore, zeros and poles of $\omega\varepsilon(\omega)$ interlace along the real axis. Since $\varepsilon(\omega)\rightarrow \varepsilon_0$ as $\omega\rightarrow \infty$, the number of poles (counted with multiplicities) of $\omega\varepsilon(\omega)$ is smaller than the number of zeros of $\omega\varepsilon(\omega)$ by one. 
This, together with the fact that $\omega\varepsilon(\omega)$ has only simple zeros, implies that all the poles of $\omega\varepsilon(\omega)$ are simple too. So are the poles of $\varepsilon(\omega)$, with a possible exception of $\omega=0$, which can be a double pole (it cannot be a simple pole since $\varepsilon(\omega)$ is an even function). As $\varepsilon(\omega)$ is even,
real on the real axis and $\varepsilon(\omega)\rightarrow \varepsilon_0$ as $\omega\rightarrow \infty$, it admits the following partial fraction expansion
\begin{align*}
\varepsilon(\omega)=\varepsilon_0 \; \Big( 1+\sum\limits_{\ell=0}^{N_e}\frac{a_{e,\ell}}{\omega^2_{\ell}-\omega^2}\Big), \qquad \omega_{\ell}\in\mathbb{R}, \; a_{e,\ell}\in\mathbb{R}, \; \ell=0,\ldots,N_e.
\end{align*}
It remains to show that $a_{e,\ell}>0$  ( i. e. $a_{e,\ell} = \Omega_{e,\ell}^2$ ) for all $\ell=0,\ldots, N_e$. For this we compute explicitly 
\begin{align*}
(\omega\varepsilon(\omega))'=\varepsilon_0  \; \Big( 1+\sum\limits_{\ell=0}^{N_e} \frac{a_{e,\ell}(\omega^2+\omega_{\ell}^2)}{(\omega_{\ell}^2-\omega^2)^2}\Big).
\end{align*}
In the vicinity of $\omega=\omega_{\ell}$ the above expression is of the same sign as $a_{e,\ell}$ (this remains true for $\omega_{\ell}=0$), therefore $a_{e,\ell}>0$ for all $\ell=0,\ldots, n$.
\end{proof}
\subsection{Dispersion analysis of non-dissipative materials} \label{sec-dispersionanalysis}
\subsubsection{Introduction} \label{sec_introdispersion}
In this section we will study the solutions of the dispersion relation (\ref{relationdispersion}), seen again as an equation for $\omega$, ${\bf k}$ being a parameter. According to theorem \ref{thm.equivpassive} and remark \ref{remequivdispersion}, we can restrict ourselves to passive media, or, with theorem \ref{GenLorentz}, to generalized Lorentz materials associated with (\ref{Lorentzlaws}). In this case, for fixed ${\bf k}$, (\ref{relationdispersion}) is an equation of degree $2N_e + 2 N_m + 2$ in $\omega$ (more precisely of degree $N := N_e + N_m + 1$ in $\omega^2$). \\[12pt]
For the simplicity of the exposition and to avoid the treatment of multiple cases, we shall limit our discussion to the particular case where $0$ is not a pole of $\varepsilon$ nor $\mu$ (this is not restrictive, see remark \ref{othercases}):
\begin{equation} \label{hypononzero}
0 \notin {\cal P} := {\cal P}_e \cup {\cal P}_m.
	\end{equation}
Let us introduce a function  which will play a privileged role in the forthcoming analysis, namely
\begin{equation} \label{defD}
{\cal D}(\omega): = {\cal F}'(\omega)= (\omega \, \varepsilon)'(\omega) \; \big(\omega \, \mu(\omega)\big) + (\omega \, \mu)'(\omega) \, \big(\omega \, \varepsilon(\omega)\big) , \quad \omega \notin {\cal P}.
\end{equation}
In what follows, we shall refer repeatedly to the following technical lemma:
\begin{lema}\label{lemmetechnique} At any point $\omega \in \R \setminus {\cal P}$ where ${\cal F}(\omega) > 0$, ${\cal D}(\omega) \neq 0$ and has the same sign as $\omega \, \varepsilon(\omega) > 0$ (or $\omega \, \mu(\omega) > 0$). Let $I$ be an interval which does not intersect $\cal P$. If ${\cal F}$ is positive in $I$, ${\cal F}(\omega)$ is strictly monotonous in $I$: strictly increasing if $\omega \, \varepsilon(\omega) > 0$ in $I$, strictly decreasing if $\omega \, \varepsilon(\omega) < 0$ in $I$. As a consequence,  if ${\cal F}$ admits a (strict) local extremum at $\omega = \omega_{ext}$, ${\cal F}(\omega_{ext}) \leq 0$ and the number of points inside $I$ at which ${\cal F}$ changes its sign is at most equal to 2. 
	\end{lema}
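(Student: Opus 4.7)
The proof rests almost entirely on combining the growing property (Theorem~\ref{thm.growing}) with the elementary algebraic observation that, where $\mathcal{F}(\omega)=\omega^{2}\,\varepsilon(\omega)\mu(\omega)>0$, the two factors $\omega\,\varepsilon(\omega)$ and $\omega\,\mu(\omega)$ must be real, nonzero and of the same sign. Indeed, by Theorem~\ref{thm.equivpassive} and Theorem~\ref{GenLorentz} we may assume we are dealing with a generalized Lorentz material, so $\varepsilon$ and $\mu$ are real on $\R\setminus\mathcal{P}$, and $\mathcal{F}(\omega)>0$ forces $\omega\,\varepsilon(\omega)\cdot\omega\,\mu(\omega)>0$.

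The first step is thus to write, for $\omega\in\R\setminus\mathcal{P}$ with $\mathcal{F}(\omega)>0$,
\begin{equation*}
\mathcal{D}(\omega)=(\omega\,\varepsilon)'(\omega)\,\bigl(\omega\,\mu(\omega)\bigr)+(\omega\,\mu)'(\omega)\,\bigl(\omega\,\varepsilon(\omega)\bigr),
\end{equation*}
and to observe that by the growing property the two derivative factors are strictly positive, while the two multiplicative factors $\omega\,\mu(\omega)$ and $\omega\,\varepsilon(\omega)$ share a common nonzero sign. Hence $\mathcal{D}(\omega)$ is a sum of two nonzero terms of the same sign as $\omega\,\varepsilon(\omega)$, so $\mathcal{D}(\omega)\neq 0$ and $\operatorname{sign}\mathcal{D}(\omega)=\operatorname{sign}(\omega\,\varepsilon(\omega))=\operatorname{sign}(\omega\,\mu(\omega))$.

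Next, for the monotonicity on an interval $I\subset\R\setminus\mathcal{P}$ on which $\mathcal{F}>0$: since $\omega\mapsto\omega\,\varepsilon(\omega)$ is continuous and nowhere zero on $I$ (its vanishing would force $\mathcal{F}=0$), its sign is constant on $I$. The previous step then shows that $\mathcal{D}=\mathcal{F}'$ has constant nonzero sign on $I$, matching the sign of $\omega\,\varepsilon(\omega)$ throughout $I$, which yields strict monotonicity in the announced direction.

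Finally, the two consequences are essentially bookkeeping. If $\mathcal{F}$ had a strict local extremum at $\omega_{\rm ext}\notin\mathcal{P}$ with $\mathcal{F}(\omega_{\rm ext})>0$, continuity would give a neighbourhood of $\omega_{\rm ext}$ contained in $I$ on which $\mathcal{F}>0$, contradicting the strict monotonicity just established; hence $\mathcal{F}(\omega_{\rm ext})\leq 0$. For the sign changes in $I$, I would argue by contradiction: if $\mathcal{F}$ changed sign at (at least) three points of $I$, then the maximal open sub-intervals of $I$ on which $\mathcal{F}>0$ would necessarily include at least one sub-interval $(a,b)\subset I$ with both endpoints $a,b$ interior to $I$ and hence $\mathcal{F}(a)=\mathcal{F}(b)=0$ by continuity — impossible, since $\mathcal{F}$ is strictly monotonous on $(a,b)$. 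No step is really a serious obstacle; the one to be careful about is the last combinatorial count, making sure that "three sign changes" genuinely forces a positive sub-interval with both endpoints in $I$ regardless of whether the overall pattern begins with $+$ or $-$.
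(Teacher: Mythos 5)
Your proof is correct and follows essentially the same route as the paper's: the sign of $\mathcal{D}(\omega)$ is read off from the growing property together with the fact that $\mathcal{F}(\omega)>0$ forces $\omega\,\varepsilon(\omega)$ and $\omega\,\mu(\omega)$ to be real, nonzero and of a common sign, and the two consequences then follow by contradiction with strict monotonicity. The only cosmetic difference is in the last step, where the paper phrases the contradiction as "a local maximum at a point where $\mathcal{F}$ is positive" while you exhibit an internal positive subinterval whose endpoints are zeros of $\mathcal{F}$ — these are the same obstruction.
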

\begin{proof} If ${\cal F}(\omega) > 0$ , $\omega \, \mu(\omega)$ and $\omega \, \varepsilon(\omega)$
are non-zero real numbers with the same sign. Due to (\ref{growingproperty}) and (\ref{defD}),  ${\cal D}(\omega)= {\cal F}'(\omega)$ has the same sign that $\omega \, \mu(\omega)$ and $\omega \, \varepsilon(\omega)$.\\[12pt]
If ${\cal F}$ attained a local extremum at a point $\omega_{ext}$ where ${\cal F}(\omega_{ext}) > 0$, there would exist a neighborhood of $\omega_{ext}$ in which ${\cal F}$ would be positive and non-monotonous, which would contradict the first part of the lemma. If ${\cal F}$ had three changes of sign inside $I$, it would attain a local maximum at a point where it is positive (see figure \ref{Fig1}), which would contradict the previous result.
\end{proof}
\begin{figure}[h] 
\centerline{
\includegraphics[width=10cm]{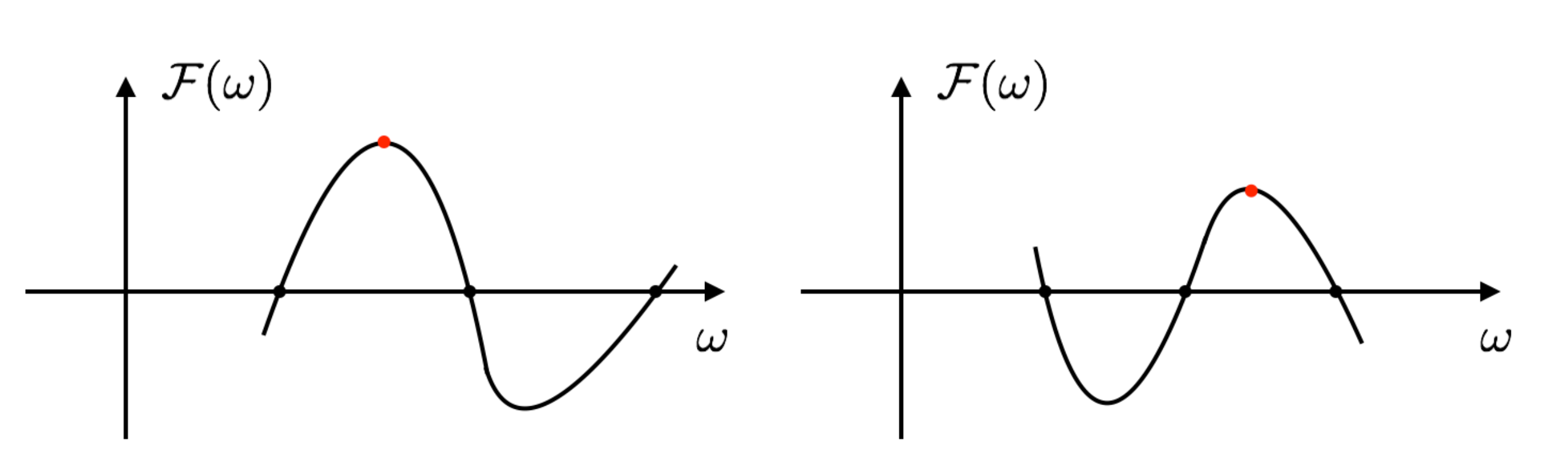}}
\caption{Impossibility of three sign changes: the 'bad' maxima are indicated by the red points}
\label{Fig1}
\end{figure}

\begin{rem}\label{rem.zero}
If $I$ is an interval which does not intersect $\cal P$ then by lemma \ref{propF}, $\cal F$ can admit at most two zeros (counted with their multiplicity) inside $I$.
\end{rem}

\subsubsection{General results. Spectral bands and gaps.} \label{sec-genemodes}
\begin{lema}\label{simplicity}
Assume (\ref{hypononzero}). All solutions $\omega$ of (\ref{relationdispersion}) for ${\bf k} \neq 0$ are simple and non-zero.
\end{lema}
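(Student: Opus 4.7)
The plan is to argue by direct analysis of the equation $\mathcal{F}(\omega) = |\mathbf{k}|^2$ where $\mathcal{F}(\omega) = \omega^2 \varepsilon(\omega)\mu(\omega)$, exploiting the growing property encoded in Lemma \ref{lemmetechnique} together with the non-dissipativity hypothesis that restricts all solutions to the real axis.

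First, I would dispose of $\omega = 0$. Under assumption (\ref{hypononzero}), both $\varepsilon(0)$ and $\mu(0)$ are finite, so $\mathcal{F}(0) = 0 \cdot \varepsilon(0)\mu(0) = 0$. Since $\mathbf{k} \neq 0$, we have $|\mathbf{k}|^2 > 0$, so $\omega = 0$ cannot satisfy (\ref{relationdispersion}). Next, I would rule out $\omega \in \mathcal{P}$: since we are working with a generalized Lorentz material (Theorem \ref{GenLorentz}) which is in particular non-degenerate, every $\omega^* \in \mathcal{P}_e$ (resp. $\mathcal{P}_m$) is a pole of $\varepsilon$ (resp. $\mu$) which is not cancelled by a zero of $\mu$ (resp. $\varepsilon$); hence $\mathcal{F}$ has a genuine pole at $\omega^*$, and in particular cannot equal the finite value $|\mathbf{k}|^2$. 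Together with non-dissipativity (all roots of (\ref{relationdispersion}) are real), this means every solution $\omega$ lies in $\mathbb{R} \setminus (\mathcal{P} \cup \{0\})$.

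The second step is the simplicity claim. A solution $\omega$ is simple as a root of (\ref{relationdispersion}) if and only if $\omega$ is a simple zero of the meromorphic function $\omega \mapsto \mathcal{F}(\omega) - |\mathbf{k}|^2$, i.e.\ if and only if $\mathcal{F}'(\omega) \neq 0$. But at such an $\omega$, we have $\mathcal{F}(\omega) = |\mathbf{k}|^2 > 0$, and $\omega \notin \mathcal{P}$, so Lemma \ref{lemmetechnique} applies and tells us $\mathcal{D}(\omega) = \mathcal{F}'(\omega) \neq 0$. This gives simplicity.

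I do not anticipate a genuine obstacle; the argument is essentially a bookkeeping exercise combining (i) the definition of non-dissipativity (reality of roots), (ii) the non-degeneracy built into generalized Lorentz materials to prevent a pole of $\varepsilon\mu$ being cancelled, and (iii) the strict monotonicity statement of Lemma \ref{lemmetechnique} in regions where $\mathcal{F}$ is positive. The only small subtlety to state clearly is that we may legitimately restrict to generalized Lorentz materials thanks to Theorem \ref{thm.equivpassive} and Remark \ref{remequivdispersion}, so that the growing property (\ref{growingproperty}) — on which Lemma \ref{lemmetechnique} rests — is indeed at our disposal.
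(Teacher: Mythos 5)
Your proof is correct and follows essentially the same route as the paper: exclude $\omega=0$ because (\ref{hypononzero}) forces $\mathcal{F}(0)=0\neq|\mathbf{k}|^2$, and obtain simplicity from Lemma \ref{lemmetechnique}, which gives $\mathcal{D}(\omega)=\mathcal{F}'(\omega)\neq 0$ wherever $\mathcal{F}(\omega)=|\mathbf{k}|^2>0$. The extra bookkeeping you supply (ruling out $\omega\in\mathcal{P}$ via non-degeneracy, and justifying the reduction to generalized Lorentz materials) is consistent with the paper's setup and merely makes explicit what the paper leaves implicit.
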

\begin{proof} The fact that $\omega \neq 0$ follows from (\ref{hypononzero}) which implies ${\cal F} = 0$. The fact that it is a simple root, i. e. that ${\cal D}(\omega) \neq 0$, is a consequence of lemma \ref{lemmetechnique} since ${\cal F}(\omega)= {|{\bf k}|^2}$ implies ${\cal F}(\omega)>0$ . 
\end{proof}
\begin{coro} \label{smoothness} The set $\Omega\big({\bf k})$ of the solutions of the dispersion relation can be labeled as follows
\begin{equation} \label{repOmegak}
\Omega\big({\bf k}) = \{ \pm \; \omega_\ell\big(|{\bf k}|\big), 0 \leq \ell \leq N + 1 \}, \quad 0 < \omega_0\big(|{\bf k}|\big) < \omega_1\big(|{\bf k}|\big) < \cdots < \omega_{N+1}\big(|{\bf k}|\big).	
\end{equation} 
Moreover, each function $|{\bf k}| \mapsto \omega_\ell\big(|{\bf k}|\big)$ is analytic and strictly monotonous. 
\end{coro}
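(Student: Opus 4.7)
The strategy is to read all three assertions (labeling, analyticity, strict monotonicity) off the single identity $\mathcal{F}(\omega_\ell(|\mathbf{k}|)) = |\mathbf{k}|^2$ together with Lemmas \ref{lemmetechnique} and \ref{simplicity}. First I would observe that, by Theorem \ref{GenLorentz}, $\varepsilon$ and $\mu$ are even, so $\mathcal{F}(\omega) = \omega^2\varepsilon(\omega)\mu(\omega)$ is even and the dispersion equation $\mathcal{F}(\omega) = |\mathbf{k}|^2$ is invariant under $\omega \mapsto -\omega$. Clearing denominators, the equation is polynomial in $\omega$ of a degree that is independent of $\mathbf{k}$, so the total number of solutions (counted with multiplicity) is constant. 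By Lemma \ref{simplicity}, for every $\mathbf{k}\neq 0$ every solution is simple and non-zero; combined with evenness this produces a fixed number of pairs $\pm\omega$, and ordering the positive ones by magnitude gives the labeling $0<\omega_0(|\mathbf{k}|)<\omega_1(|\mathbf{k}|)<\cdots$, with the indexing range determined by the degree count.

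Next I would establish analyticity via the holomorphic implicit function theorem applied to $G(\omega,k):=\mathcal{F}(\omega)-k^2$ at each point $(\omega_\ell(|\mathbf{k}|), |\mathbf{k}|)$ with $|\mathbf{k}|>0$. The needed non-vanishing $\partial_\omega G = \mathcal{D}(\omega_\ell) \neq 0$ holds because on a root one has $\mathcal{F}(\omega_\ell)=|\mathbf{k}|^2>0$, and Lemma \ref{lemmetechnique} then forces $\mathcal{D}(\omega_\ell)\neq 0$ with a definite sign (that of $\omega_\ell\varepsilon(\omega_\ell)$). Since simplicity holds throughout $|\mathbf{k}|>0$, local analyticity extends along each branch, consistently with Remark \ref{rempropdispersion}. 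The fact that the branches cannot collide (hence the labeling is globally consistent) is immediate from simplicity: two branches crossing would produce a double root at the crossing value of $|\mathbf{k}|$, contradicting Lemma \ref{simplicity}.

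Finally, for strict monotonicity I would differentiate the relation $\mathcal{F}(\omega_\ell(|\mathbf{k}|))=|\mathbf{k}|^2$ to get
\begin{equation*}
\mathcal{D}\big(\omega_\ell(|\mathbf{k}|)\big)\,\frac{d\omega_\ell}{d|\mathbf{k}|} = 2|\mathbf{k}|.
\end{equation*}
By Lemma \ref{lemmetechnique}, $\mathcal{D}(\omega_\ell)$ has the same sign as $\omega_\ell\varepsilon(\omega_\ell)$, and this sign is locked along the branch for $|\mathbf{k}|>0$: a change of sign of $\omega_\ell\varepsilon(\omega_\ell)$ (or $\omega_\ell\mu(\omega_\ell)$) would force $\mathcal{F}(\omega_\ell)$ through $0$ or $\infty$, which cannot occur since $\mathcal{F}(\omega_\ell)=|\mathbf{k}|^2\in(0,\infty)$ and by hypothesis \eqref{hypononzero} no branch passes through $\omega=0$ either. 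Hence $d\omega_\ell/d|\mathbf{k}|$ has constant sign on each branch, which is strict monotonicity. The main obstacle that I expect is purely bookkeeping: justifying that the value indexing and the count of branches match the statement (in particular handling the behaviour at $|\mathbf{k}|=0$ where several branches may emanate from a common zero of $\mathcal{F}$ of multiplicity two, as allowed by Lemma \ref{ND-Lossless}); this should be settled by a careful case analysis of the zeros and poles of $\mathcal{F}$ together with the local expansions from the proof of Lemma \ref{ND-Lossless}.
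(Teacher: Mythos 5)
Your proposal is correct and follows essentially the same route as the paper: the labeling comes from Lemma \ref{simplicity} plus evenness, analyticity from the implicit function theorem (with $\mathcal{D}\neq 0$ supplied by Lemma \ref{lemmetechnique}), and strict monotonicity from differentiating $\mathcal{F}(\omega_\ell(|\mathbf{k}|))=|\mathbf{k}|^2$ to get $\mathcal{D}(\omega_\ell)\,\omega_\ell'=2|\mathbf{k}|\neq 0$. The extra discussion of the sign of $\mathcal{D}$ being locked along a branch is a harmless elaboration of the paper's shorter observation that $\omega_\ell'$ never vanishes.
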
  
\begin{proof}
(\ref{repOmegak}) immediately follows from 	lemma \ref{simplicity} together with the evenness of $\varepsilon$ and $\mu$. The analytic smoothness then follows from the simplicity of the solutions $\omega_\ell\big(|{\bf k}|\big)$ at any $|{\bf k}| > 0$ (use for instance the implicit function theorem). To prove the strict monotonicity of it suffices to notice that
$$
{\cal F} \big( \, \omega_\ell\big(|{\bf k}| \big) \, \big)  =  |{\bf k}|^2 \quad
\Longrightarrow \quad {\cal D} \big( \, \omega_\ell\big(|{\bf k}| \big) \, \big)\, \omega_\ell'\big(|{\bf k}|\big) = 2 \, |{\bf k}|
$$
which implies in particular that $\omega_\ell'\big(|{\bf k}|\big) \neq 0$ (as well as ${\cal D} \big( \, \omega_\ell\big(|{\bf k}| \big) \, \big) \neq 0$) for any $|{\bf k}| \neq 0$. 
\end{proof}
\noindent In what follows we shall define the {\bf spectrum of the medium} as the closure of the set of {\bf propagative frequencies}, i.e. as the closure of the set of frequencies at which there exists a propagative 
plane wave, in other words,
\begin{equation} \label{spectrum}
	{\cal S} = \mbox{closure } \{ \omega \in \R \setminus {\cal P} \; / \; {\cal F}(\omega) \geq 0 \}
\end{equation}
that can be rewritten as a finite union of closed intervals, called \textbf{spectral bands}: 
{\begin{equation} \label{eq:bands}
	{\cal S} = \bigcup_{\ell = 0}^{N+1} \pm \; {\cal B}_\ell, \quad  {\cal B}_\ell := \mbox{closure } \big\{ \; \omega_\ell\big(|{\bf k}| \big), |{\bf k}| \in [0, +\infty[ \; \big\}.
\end{equation}
The term spectrum is justified: in section \ref{Schrodinger} $\cal S$ appears as a spectrum of a certain self-adjoint operator.
\begin{lema} \label{lemma:bands} Two distinct spectral bands cannot overlap: the intersection of two bands is either empty or reduced to one of their extremities. Furthermore, the function $\omega_{N+1}\big(|{\bf k}| \big)$ is strictly increasing and the corresponding  band  $B_{N+1}$ is of the form $[z^*, +\infty[$ where $z^*$ is the largest positive zero of ${\cal F}(\omega)$.
\end{lema}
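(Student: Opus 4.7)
The plan is to exploit the strict pointwise ordering $\omega_\ell(|{\bf k}|) < \omega_{\ell+1}(|{\bf k}|)$ from Corollary \ref{smoothness}, together with the scalar identity ${\cal F}(\omega_\ell(|{\bf k}|)) = |{\bf k}|^2$. Since each $\omega_\ell$ is continuous and strictly monotonous on $(0,+\infty)$, every band $B_\ell$ is automatically a closed interval, so the work reduces to preventing essential overlap between distinct bands and to identifying the endpoints of $B_{N+1}$.

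For the non-overlap statement, I would show that the interiors of $B_\ell$ and $B_m$ are disjoint whenever $\ell \neq m$. If $\omega_0$ lay in both interiors, one could write $\omega_0 = \omega_\ell(k_1) = \omega_m(k_2)$ for some $k_1, k_2 > 0$; applying ${\cal F}$ then forces $k_1^2 = {\cal F}(\omega_0) = k_2^2$, so $k_1 = k_2$, and $\omega_\ell(k_1) = \omega_m(k_1)$ contradicts the strict ordering of Corollary \ref{smoothness}. Since two closed intervals with disjoint interiors share at most one common endpoint, the first claim follows.

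For the last band, I would first prove strict monotonicity of $\omega_{N+1}$. Differentiating ${\cal F}(\omega_{N+1}(|{\bf k}|)) = |{\bf k}|^2$ gives ${\cal D}(\omega_{N+1}(|{\bf k}|)) \, \omega_{N+1}'(|{\bf k}|) = 2 |{\bf k}|$, and Lemma \ref{lemmetechnique} guarantees ${\cal D} \neq 0$ wherever ${\cal F} > 0$. To pin down its sign, I would note that ${\cal F}(\omega) \to +\infty$ as $\omega \to +\infty$ forces $\omega_{N+1}(|{\bf k}|) \to +\infty$ as $|{\bf k}| \to +\infty$ (it is the largest of the positive roots), and then ${\bf (HF)}$ makes $\omega \, \varepsilon(\omega) > 0$ on a neighborhood of $+\infty$; propagating this sign along the trajectory by continuity, where $\omega \, \varepsilon(\omega)$ cannot vanish since ${\cal F}(\omega_{N+1}(|{\bf k}|)) > 0$, yields ${\cal D}(\omega_{N+1}(|{\bf k}|)) > 0$, hence strict increase.

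The remaining and most delicate point, which I anticipate as the main obstacle, is to identify $\omega^* := \lim_{|{\bf k}| \to 0^+} \omega_{N+1}(|{\bf k}|)$ with $z^*$. Continuity of ${\cal F}$ at $\omega^*$ gives ${\cal F}(\omega^*) = 0$, so $\omega^* \leq z^*$. To rule out $\omega^* < z^*$, I would first establish that ${\cal F}$ is strictly positive on some right-neighborhood $(z^*, z^* + \delta)$: Lemma \ref{lemmetechnique} forbids interior local extrema of ${\cal F}$ at points where ${\cal F} > 0$, and the alternative scenarios compatible with $z^*$ being the largest positive zero and ${\cal F}(\omega) \to +\infty$ at infinity (for instance ${\cal F}$ negative immediately to the right of $z^*$ with a pole further on) all force either an additional zero of ${\cal F}$ beyond $z^*$ or a strictly monotonous function with identical finite limits, both contradictions. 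Granting this, for $|{\bf k}|$ small enough the equation ${\cal F}(\omega) = |{\bf k}|^2$ admits a positive root in $(z^*, z^*+\delta)$, strictly greater than $\omega_{N+1}(|{\bf k}|)$ itself (which is close to $\omega^* < z^*$), contradicting its maximality. This gives $\omega^* = z^*$, and together with $\omega_{N+1}(|{\bf k}|) \to +\infty$ and strict monotonicity, one concludes $B_{N+1} = [z^*, +\infty[$.
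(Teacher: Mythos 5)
Your proof is correct, and the first half coincides with the paper's: a common interior value $\omega_0=\omega_\ell(k_1)=\omega_m(k_2)$ forces $k_1^2={\cal F}(\omega_0)=k_2^2$ and then contradicts the strict ordering of corollary \ref{smoothness}. For the last band, however, you take a genuinely different and heavier route. The paper's argument is: the image of the continuous, strictly monotonous function $\omega_{N+1}$ is an interval; since ${\cal F}(\omega)\sim c_0^2\,\omega^2$ at infinity this interval is unbounded above, so monotonicity forces $\omega_{N+1}$ to be increasing and the band to be $[\,\omega_{N+1}(0),+\infty[$; then ${\cal F}(\omega_{N+1}(0))=0$ and ${\cal F}>0$ on the open half-line are read off directly from the dispersion relation, so the identification of the left endpoint with the \emph{largest} positive zero of ${\cal F}$ comes out as a conclusion, essentially for free. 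You instead take $z^*$ as given, prove strict increase by propagating the sign of ${\cal D}$ along the trajectory via lemma \ref{lemmetechnique} and ${\bf (HF)}$ (valid, though the paper gets this from monotonicity plus unboundedness alone), and then need the auxiliary fact that ${\cal F}>0$ on a right-neighbourhood of $z^*$ before squeezing $\lim_{|{\bf k}|\to 0^+}\omega_{N+1}(|{\bf k}|)$ onto $z^*$. That auxiliary fact is true and your sketch (no zeros beyond $z^*$, no positive local extrema by lemma \ref{lemmetechnique}, ${\cal F}\to+\infty$) can be completed — it is essentially the case analysis that the paper defers to the interval $I_{N_d}$ in section \ref{sec-dispersioncurves} — but it is the one point of your write-up that is only gestured at, and it is exactly what the paper's ``the image is an unbounded interval'' observation lets one bypass. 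Both arguments are sound; the paper's buys brevity, yours makes explicit the sign of the group velocity on the last band and the positivity of ${\cal F}$ beyond its last zero.
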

\begin{proof} First assume that there exists $m \neq \ell$ such that ${\cal B}_\ell$  and ${\cal B}_m$ overlap. Then, there would exist
	 non-zero ${\bf k}$ and ${\bf k}'$ such that  $\omega_\ell \big(|{\bf k}| \big) = \omega_m\big(|{\bf k}'| \big) \equiv \omega \in \R^+$.
	  Since ${\cal F}\big( \, \omega_\ell\big(|{\bf k}| \big)\, \big) = |{\bf k}|^2$ and ${\cal F}\big( \, \omega_m\big(|{\bf k}'| \big)\, \big) = |{\bf k}'|^2$, we deduce $|{\bf k}| = |{\bf k}'|$, which is impossible since $\ell \neq m$ (cf. (\ref{repOmegak})). \\[12pt]
The fact that ${\cal F}(\omega) \sim c_0^2 \, \omega^2$ when $\omega$ tends to $+ \infty$ shows that the image of the function $\omega_{N+1}\big(|{\bf k}| \big)$ is an interval of the form $[z^*, +\infty[$. By a contradiction argument, this shows that 
$\omega_{N+1}\big(|{\bf k}| \big)$ is strictly increasing. Therefore, $z^* = \omega_{N+1}(0)$ which implies ${\cal F}(z^*)$ = 0 and ${\cal F}(\omega) > 0 $ for $\omega > z^*$.
\end{proof} 
\noindent The set ${\cal G}$ of \textbf{non-propagative frequencies} is the open subset of $\R$ defined as:
\begin{equation} \label{spectrum}
{\cal G} = \R \setminus {\cal S}.
\end{equation}
From lemma \ref{lemma:bands}, we deduce that ${\cal G}$ is a finite union of open bounded intervals, called {\bf spectral gaps}.
\subsubsection{Description of dispersion curves} \label{sec-dispersioncurves}
To go further in the description of the spectral bands and dispersion curves $|{\bf k}| \rightarrow \omega_\ell\big(|{\bf k}| \big)$, it is useful to rename the positive poles of $\cal F$ as follows (double poles are not repeated)
$$
{\cal P}^+ := {\cal P} \cap \R_+^* = \{ 0 < p_1 < p_2 < \cdots < p_{N_d}\}, 
$$
and to introduce the disjoint intervals 
$$
I_0 = [0, p_1),  \; I_1 = (p_1, p_2),\ldots, \; I_q = (p_q, p_{q+1}), \cdots, \; I_{N_d} = (p_{N_d}, + \infty)
$$
then we will look for the solutions $\omega$ belonging to every $I_q, q = 0, \cdots, N_d$. We provide below a detailed explanation of our statements and in figures \ref{FigS1} - \ref{FigS2}, the illustrations to the explanations.\\[12pt]
{\bf Resolution of (\ref{relationdispersion}) in $I_0$}.\\[12pt]
Let us remind that $I_0 = [0, p_1)$.  Note that ${\cal F}(\omega) \sim \varepsilon(0) \, \mu(0) \, \omega^2$  with $\varepsilon(0) \, \mu(0) > 0$ (see remark \ref{positivity0}). Therefore, ${\cal F}(\omega)$ is increasing for small $\omega$. Since it cannot have a positive local maximum inside $I_0$ (cf lemma \ref{lemmetechnique}),  ${\cal F}(\omega)$ is increasing in $I_1$ (see figure \ref{FigS0}). Thus the whole interval $[0,p_1]$ coincides with the first spectral band ${\cal B}_0$, the function $\omega_0\big(|{\bf k}| \big)$ is strictly increasing and satisfies
$$
\omega_0\big(0 \big) = 0, \quad \dsp \lim_{|{\bf k}| \rightarrow + \infty} \omega_0\big(|{\bf k}| \big) = p_{1}.
$$
	\begin{figure}[h] 
	\centerline{
	\includegraphics[height=4cm]{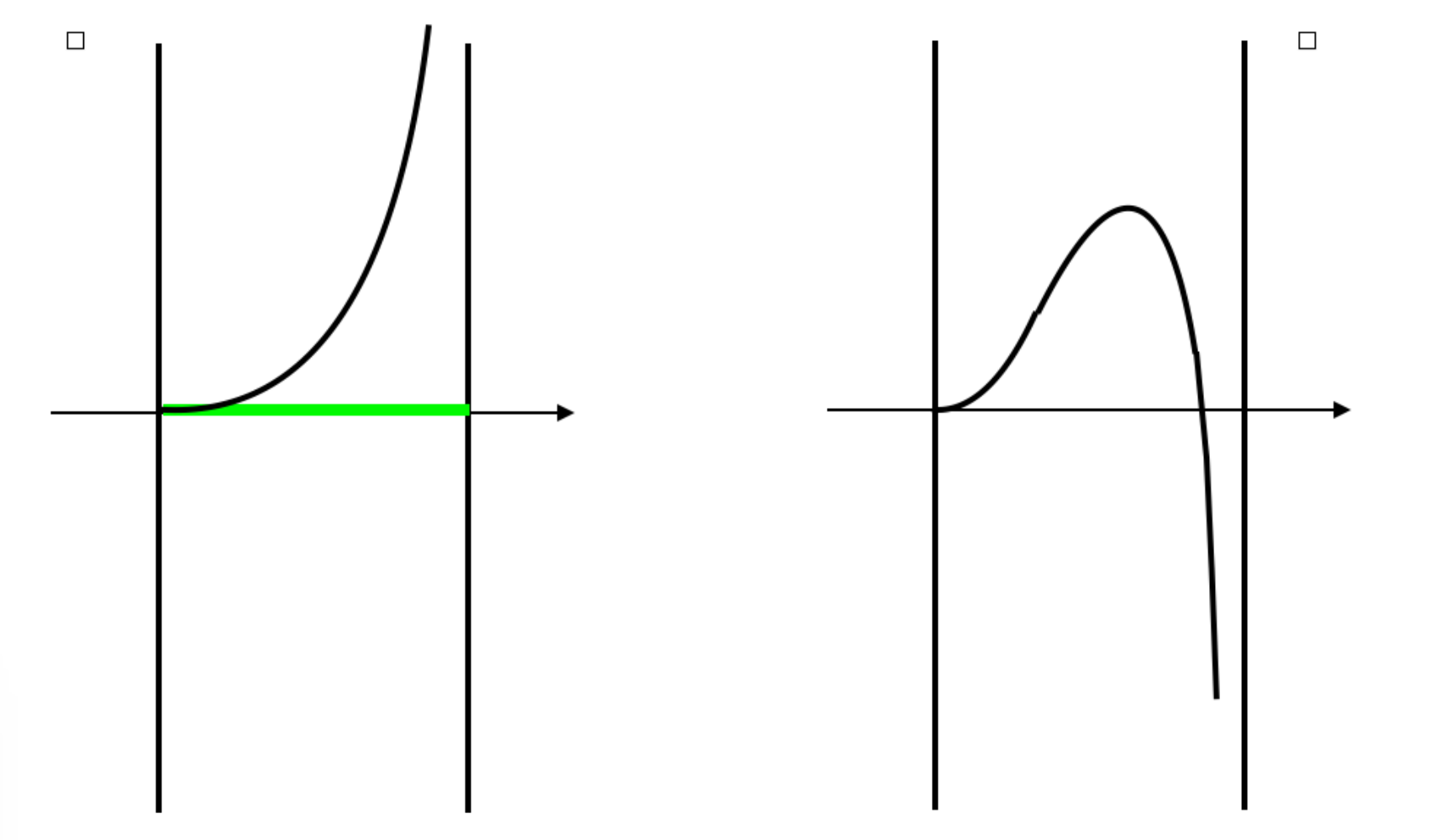}}
	\caption{Left: possible graph for $\omega \rightarrow {\cal F}(\omega)$ inside $I_1$ (a full spectral band, in green).
	Right: impossible scenario.}
	\label{FigS0}
	\end{figure}
{\bf Resolution of (\ref{relationdispersion}) in $I_q$ when $q \in \{1, \cdots, N_d-1\}$}.\\[12pt]
Let us write $\{1, \cdots, N_d-1\} = Q_- \cup Q_0 \cup Q_+$ where the disjoints sets $Q_- ,Q_0,Q_+$ are defined by 
$$
\left\{\begin{array}{l}
\dsp	q \in Q_- \quad \Longleftrightarrow \quad \lim_{\omega \rightarrow p_q^+} {\cal F}(\omega) = \lim_{\omega \rightarrow p_{q+1}^-} {\cal F}(\omega) =  - \infty \\[12pt]
\dsp	q \in Q_+ \quad \Longleftrightarrow \quad \lim_{\omega \rightarrow p_q^+} {\cal F}(\omega) = \lim_{\omega \rightarrow p_{q+1}^-} {\cal F}(\omega) = + \infty \\[12pt]
\dsp	q \in Q_0 \quad \Longleftrightarrow \quad \lim_{\omega \rightarrow p_q^+} {\cal F}(\omega) =\pm \, \infty \mbox{ and }  \lim_{\omega \rightarrow p_{q+1}^-} {\cal F}(\omega) = \mp \, \infty
	\end{array} \right.
	$$
Let us then distinguish three different cases:
\begin{itemize}
	\item[(i)] $ q \in Q_-$. In that case, we claim that (\ref{relationdispersion}) has no solution for ${\bf k} \neq 0$, in other words, that $I_q \subset {\cal G}$ or that $I_q\cap \cal S$ is a singleton. Indeed, by lemma \ref{lemmetechnique}, the maximum value  of ${\cal F}$ inside $I_q$ is non-positive (figure \ref{FigS01} (b)). Moreover, if it is zero, it corresponds to a double zero of $\cal{F}$, see remark \ref{rem.zero}. 
	\item[(ii)] $ q \in Q_0$. The number of sign changes of ${\cal F}$ inside $I_q$ is necessarily odd, hence equals to 1 by lemma \ref{lemmetechnique}. Thus, there exists a single (simple) zero $z_q \in I_q$ of ${\cal F}$ such that one of the following holds: \\[12pt]
	(ii.1) Either ${\cal F}$ is negative in $]p_q, z_q[$ and positive in $]z_q, p_{q+1}[$. This means that $]p_q, z_q[ \, \subset {\cal G}$ and, using lemma \ref{lemmetechnique}, that ${\cal F}$ is a strictly increasing bijection from $[z_q, p_{q+1})$ onto $[0, + \infty)$ (figure \ref{FigS01} (c)). Thus, inside $I_q$, equation (\ref{relationdispersion}) admits a unique branch of solutions $\omega = \omega_{\ell_q}\big(|{\bf k}| \big)$, defined from the inverse of the above bijection. This solution satisfies:
		$$
|{\bf k}| \longrightarrow \omega_{\ell_q}\big(|{\bf k}| \big) \mbox{ is strictly increasing,} \quad 	\omega_{\ell_q}\big(0 \big) = z_q, \quad \lim_{|{\bf k}| \rightarrow + \infty} \omega_{\ell_q}\big(|{\bf k}| \big) = p_{q+1}.
		$$
(ii.2) Either ${\cal F}$ is positive in $]p_q, z_q[$ and negative in $]z_q, p_{q+1}[$ (see figure \ref{Fig1} (c)). This means that $]z_q, p_{q+1}[ \, \subset {\cal G}$ and, using lemma \ref{lemmetechnique}, that ${\cal F}$ is a strictly decreasing bijection from $(p_q, z_q]$ onto $[0, + \infty)$ (figure \ref{FigS01}(d)). Hence, inside $I_q$, (\ref{relationdispersion}) admits a unique branch of solutions $\omega = \omega_{\ell_q}$, such that:
		$$
|{\bf k}| \longrightarrow \omega_{\ell_q}\big(|{\bf k}| \big) \mbox{ is strictly decreasing,} \quad 	\omega_{\ell_q}\big(0 \big) = z_q, \quad \lim_{|{\bf k}| \rightarrow + \infty} \omega_{\ell_q}\big(|{\bf k}| \big) = p_{q}.
		$$
In each of the above cases, the interval $I_q$ contains only one spectral band ${\cal B}_{\ell_q} \equiv [p_q, z_q]$ or $[z_q, p_{q+1}]$.
	\item[(iii)] $ q \in Q_+$. By lemma \ref{lemmetechnique}, the minimum value ${\cal F}_*(q)$ of ${\cal F}$ inside $I_q$ is non-positive (see figure \ref{FigS02}(a)). \\[12pt]
Assume that ${\cal F}_*(q) < 0$. By lemma \ref{lemmetechnique} again, ${\cal F}$ changes sign twice inside $I_q$ (figure \ref{FigS01}(a)): there exist two (simple) zeros $\{z_q^-, z_q^+\}$ of ${\cal F}$ such that $p_q < z_q^- < z_q^+ < p_{q+1}$ such that ${\cal F}$ is negative in $(z_q^-, z_q^+)$ (in other words $(z_q^-, z_q^+)$ is a particular band gap), ${\cal F}$ is a strictly decreasing bijection from $(p_q, z_q^-]$ onto $[0, + \infty)$ and ${\cal F}$ is a strictly increasing bijection from $[z_q^+, p_{q+1})$ onto $[0, + \infty)$. Thus, inside $I_q$, (\ref{relationdispersion}) admits two branches of solutions, $\omega = \omega_{\ell_q}$ and $\omega = \omega_{\ell_q+1}$ such that
		$$
		\begin{array}{lll}
\dsp |{\bf k}| \longrightarrow \omega_{\ell_q}\big(|{\bf k}| \big) \mbox{ is strictly decreasing,} & \quad 	\omega_{\ell_q}\big(0 \big) = z_q^-, & \quad \dsp \lim_{|{\bf k}| \rightarrow + \infty} \omega_{\ell_q}\big(|{\bf k}| \big) = p_{q}, \\[12pt]
\dsp |{\bf k}| \longrightarrow \omega_{\ell_q+1}\big(|{\bf k}| \big) \mbox{ is strictly increasing,} & \quad \omega_{\ell_q}\big(0 \big) = z_q^+, & \quad \dsp \lim_{|{\bf k}| \rightarrow + \infty} \omega_{\ell_q}\big(|{\bf k}| \big) = p_{q+1},
\end{array}		$$	
If ${\cal F}_*(q) = 0$, we are in a limit situation when $z_q^-= z_q^+ \equiv z_q $ is a double zero of ${\cal F}$. The situation is similar to the previous case, but there is no spectral gap inside $I_q$. In this interval, (\ref{relationdispersion}) still admits two branches of solution 
$\omega_{\ell_q}$ (decreasing) $\omega = \omega_{\ell_q+1}$ (increasing). Moreover, ${\cal B}_{\ell_q}\cap {\cal B}_{\ell_q+1}=\{z_q\}$.
	\end{itemize}
	\begin{figure}[h] 
	\centerline{
	\includegraphics[height=4cm]{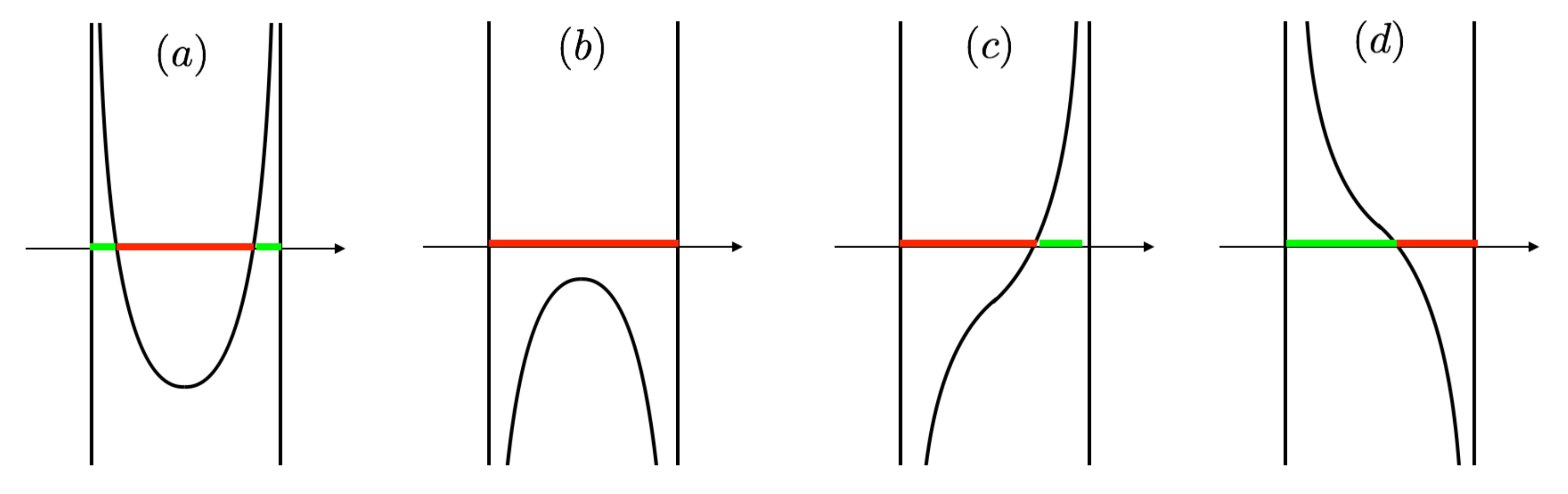}}
	\caption{Possible graphs for $\omega \rightarrow {\cal F}(\omega)$ inside $I_\ell, 1 \leq \ell \le N_d$.
	Red segments are band gaps, green segments are spectral bands.}
	\label{FigS01}
	\end{figure}
	\begin{figure}[h] 
	\centerline{
	\includegraphics[height=4cm]{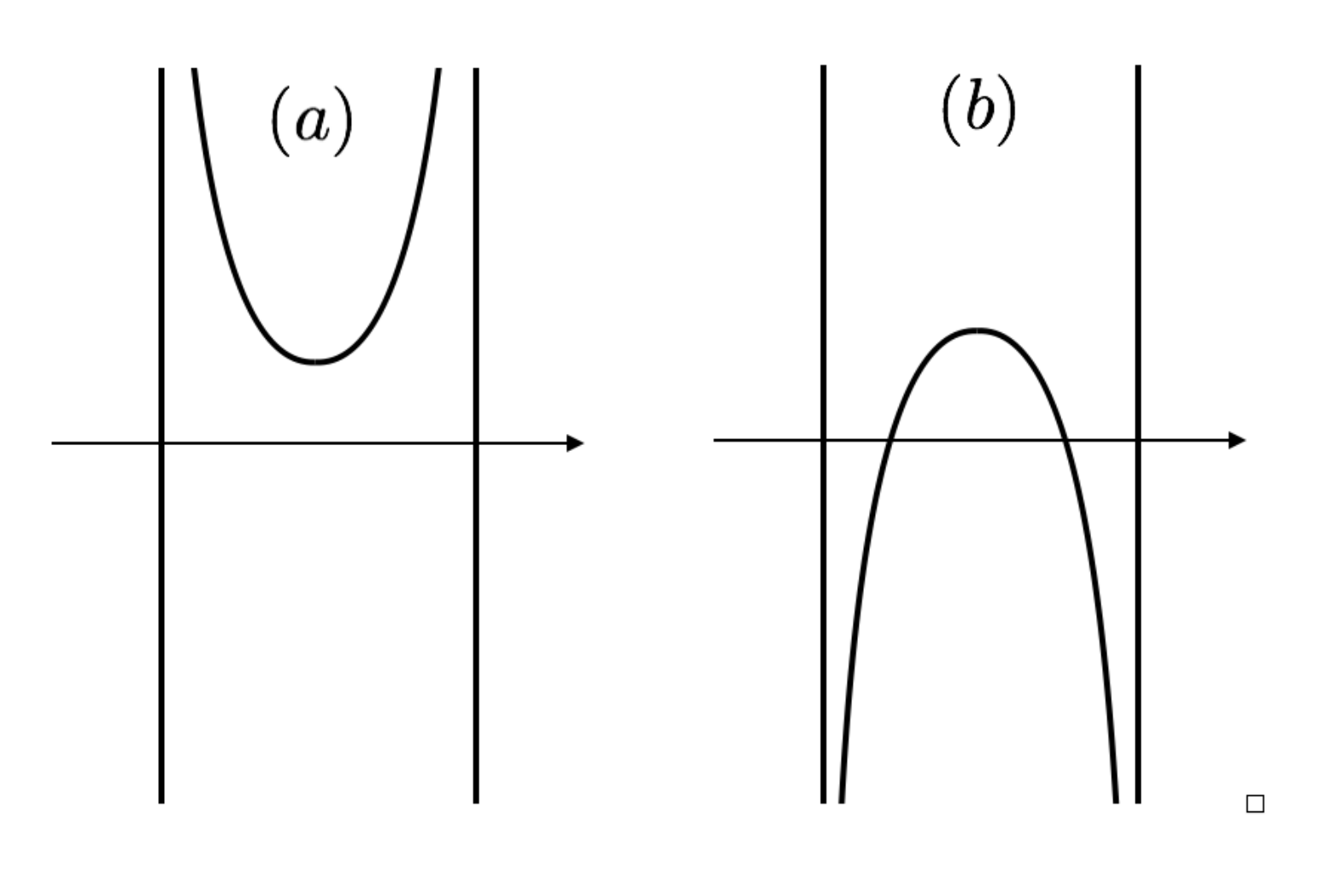}}
	\caption{Some impossible graphs for $\omega \rightarrow {\cal F}(\omega)$ inside $I_\ell, 1 \leq \ell \le N_d$.
	}
	\label{FigS02}
	\end{figure}
{\bf Resolution of (\ref{relationdispersion}) in $I_{N_d}$}.\\[12pt]
This case was partially treated by lemma \ref{lemma:bands}. The only scenarios for ${\cal F}(\omega)$ are the following:
\begin{itemize}
\item[(i)] Either $\dsp \lim_{\omega \rightarrow p_{N_d}^+} {\cal F}(\omega) = - \infty$. In this case,  ${\cal F}(\omega)$ changes sign exactly once inside $I_{N_d}$ (figure \ref{FigS1}(a)).\\[4pt] Thus, there exists a single  zero $z_{N_d} > p_{N_d}$ such that ${\cal F}(\omega)$ is negative in $(p_{N_d},z_{N_d})$ (this interval is a spectral gap) and positive in $(z_{N_d}, + \infty)$. Hence, ${\cal F}(\omega)$ is a strictly increasing bijection  from $(z_{N_d}, + \infty)$ onto $[0, +\infty)$. The corresponding  branch of solutions of (\ref{relationdispersion}), $\omega_{N+1}\big(|{\bf k}| \big)$, satisfies
$$
|{\bf k}| \longrightarrow \omega_{N+1}\big(|{\bf k}| \big) \mbox{ is strictly increasing,} \quad 	\omega_{N+1}\big(0 \big) = z_{N_d} , \quad  \omega_{N+1}\big(|{\bf k}| \big) \sim c_0 \, |{\bf k}|, \; (|{\bf k}| \rightarrow + \infty).
$$
\item[(ii)] Either $\dsp \lim_{\omega \rightarrow p_{N_d}^{+}} {\cal F}(\omega) = + \infty$. We are then in a situation similar to the case $q \in Q^+$ (figure \ref{FigS1}(b)). 
Let us denote ${\cal F}_*(N_d) \leq 0 $ the minimum value of ${\cal F}$ in $I_{N_d}$. 
\\[12pt] If ${\cal F}_*(N_d) < 0$, there exists two (simple) zeros $\{z_{N_d}^-, z_{N_d}^+\}$ of ${\cal F}$ with $p_{N_d} < z_{N_d}^- < z_{N_d}^+$ such that ${\cal F}$ is negative in $(z_{N_d}^-, z_{N_d}^+)$ (i.e. $(z_q^-, z_q^+)$ is a band gap), ${\cal F}$ is a strictly decreasing bijection from $(p_{N_d}, z_{N_d}^-]$ onto $[0, + \infty)$ and a strictly increasing bijection from $[z_{N_d}^+, + \infty )$ onto $[0, + \infty)$. Hence, inside $I_{N_d}$, (\ref{relationdispersion}) admits  two branches of solutions (the last two ones), $\omega_{N}$ and $\omega_{N+1}$ such that
		$$
		\begin{array}{lll}
\dsp |{\bf k}| \longrightarrow \omega_{N}\big(|{\bf k}| \big) \mbox{ is strictly decreasing,}  \quad 	\omega_{N}\big(0 \big) = z_{N_d}^-,  \quad \dsp \lim_{|{\bf k}| \rightarrow + \infty} \omega_{N} \big(|{\bf k}| \big) = p_{N_d}, \\[12pt]
\dsp |{\bf k}| \longrightarrow \omega_{N+1}\big(|{\bf k}| \big) \mbox{ is strictly increasing,}  \quad \omega_{N+1}\big(0 \big) = z_{N_d} , \quad  \omega_{N+1}\big(|{\bf k}| \big) \sim c_0 \, |{\bf k}| \mbox{ at } \infty ,
\end{array}		
$$
If ${\cal F}_*(N_d) = 0$, we are in a limit situation where $z_{N_d}^-, z_{N_d}^+ \equiv z_ {N_d}$ is a double zero of ${\cal F}$. The situation is similar to the previous case, but there is no spectral gap inside $I_q$:  ${\cal B}_{N}\cap{\cal B}_{N+1}=\{z_ {N_d}\}$.
	\begin{figure}[h] 
	\centerline{
	\includegraphics[height=4cm]{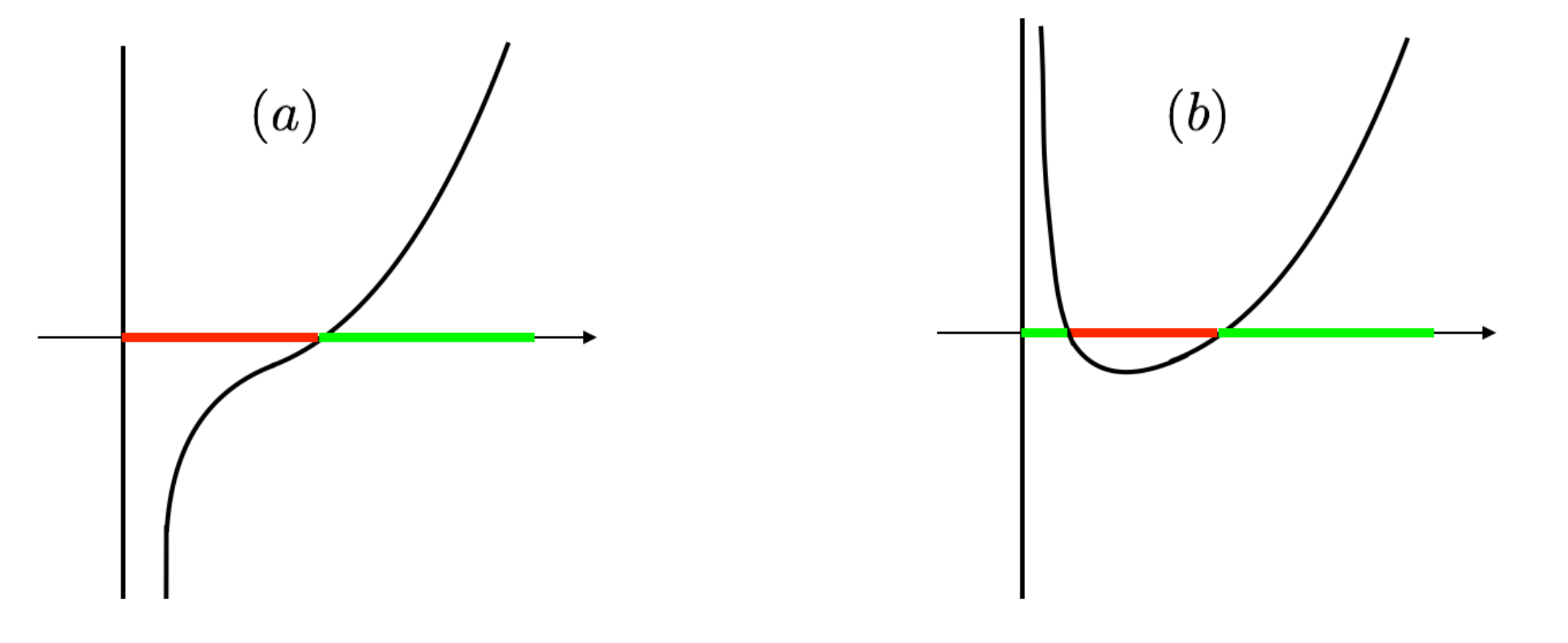}}
	\caption{Possible graphs for $\omega \rightarrow {\cal F}(\omega)$ inside $I_{N_d}$.
	Red segments are band gaps, green segments are spectral bands.}
	\label{FigS1}
	\end{figure}
	\begin{figure}[h] 
	\centerline{
	\includegraphics[height=4cm]{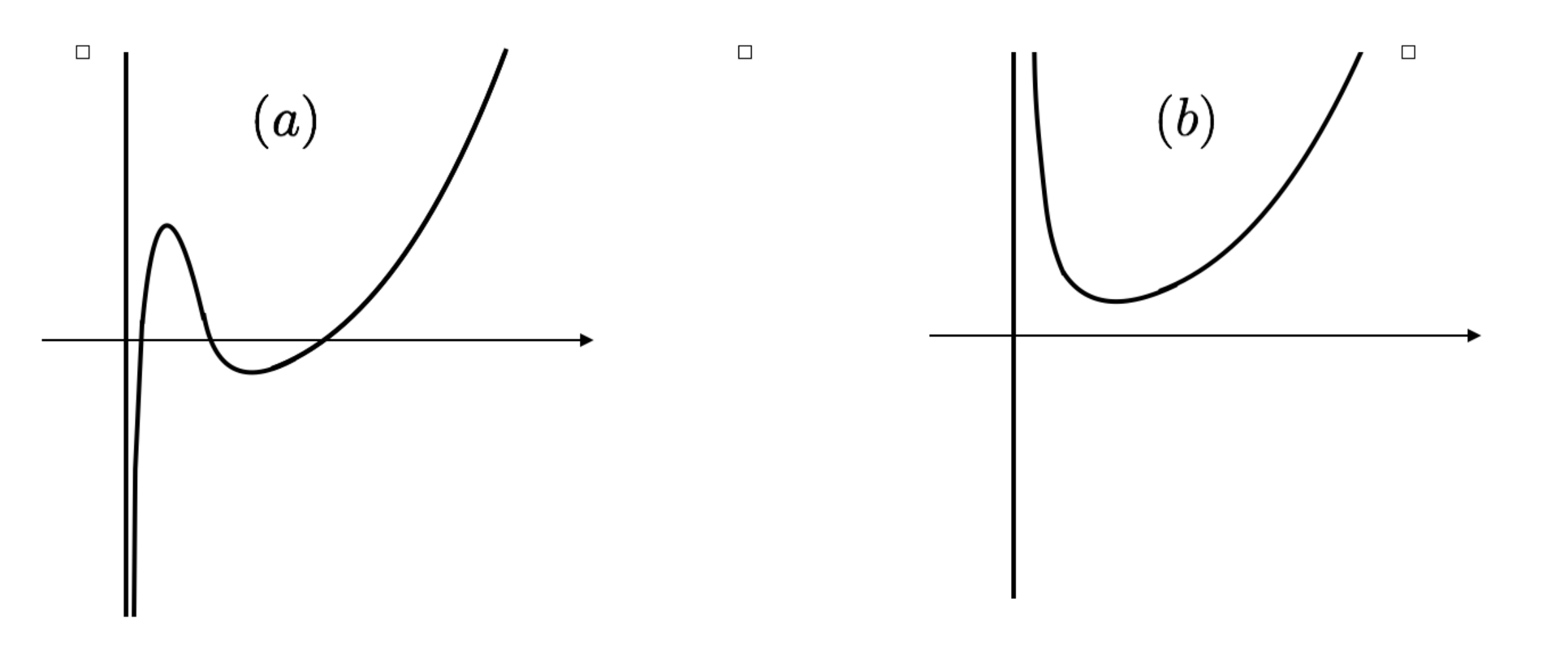}}
	\caption{Some impossible graphs for $\omega \rightarrow {\cal F}(\omega)$ inside $I_{N_d}$.
	}
	\label{FigS2}
	\end{figure}
\end{itemize}
\begin{rem} \label{othercases} There is a clear similarity between the spectral analysis of dispersive local materials and
	the spectral analysis of periodic media \cite{Kuchment}, especially in the 1D case \cite{Eastham}. The main difference is that, in the latter case, there is a countable infinity of spectral bands (and in most cases, a countable infinity of spectral bands) and that these
bands systematically alternate as positive / negative / positive / $\cdots$
	\end{rem}
\begin{rem} \label{othercases} If (\ref{hypononzero}) is not satisfied, it is easy to verify that most of the above results remain true. The only change concerns the first spectral band: the lower bound of ${\cal B}_0$ can be positive (when only one of the functions $\varepsilon$ or $\mu$ admits $0$ as a pole) and the first mode $\omega_0$ does not need to be increasing (cf. Drude media).
	\end{rem}
\subsubsection{Forward and backward modes. Negative index.} \label{sec-Negative}
According to the previous section, the modes $ \pm\, \omega_\ell({\bf k})$ of a non-dissipative local material can be split into two categories:
\begin{itemize} 
	 \item the {\bf forward} (or {\bf direct}) modes for which  $\omega'_\ell\big(|{\bf k}|\big) \, \omega_\ell\big(|{\bf k}|\big) > 0$ for any $|{\bf k}| >0$, i. e. for which the {\bf phase velocity} and  the {\bf group velocity} have the same sign, 
	\item the {\bf backward} (or {\bf inverse}) modes for which  $\omega'_\ell\big(|{\bf k}|\big) \, \omega_\ell\big(|{\bf k}|\big) < 0$ for any $|{\bf k}| >0$, i. e. for which the {\bf phase velocity} and  the {\bf group velocity} have opposite signs. 
\end{itemize}
\begin{rem} \label{remgroup} For 3D linear wave propagation, the phase and group velocities associated with a family of (propagative) plane waves obeying a dispersion relation $\omega = \omega({\bf k})$ (where $\omega(\cdot)$ is a smooth real-valued function in $\R^3$), the phase and group velocities are defined as vector fields, namely, $\omega({\bf k}) \, {\bf k} / |{\bf k}|^2$ and $\nabla_{\mathbf{k}} \omega({\bf k})$. For isotropic media (studied in this work), $\omega(\cdot)$ is a function of $|{\bf k}|$, and the phase and group velocities are thus proportional to ${\bf k}$. Thus phase and group velocity can be viewed as a scalar quantities.
	\end{rem}
{\bf Example.} Let us consider the following Lorentz model
\begin{equation} \label{Lorentz1}
	\varepsilon(\omega) = \frac{\omega^2 - 16}{\omega^2 - 1}, \quad  \mu(\omega) = \frac{\omega^2 - 25}{\omega^2 - 4},
	\end{equation}
In figure \ref{FigBands} we show the corresponding modes (computed numerically). There are 3 modes, corresponding to 3 spectral bands and 2 band gaps. The 1st and 3rd modes are forward, the 2nd one is backward.
	\begin{figure}[h] 
	\centerline{
	\includegraphics[height=4cm]{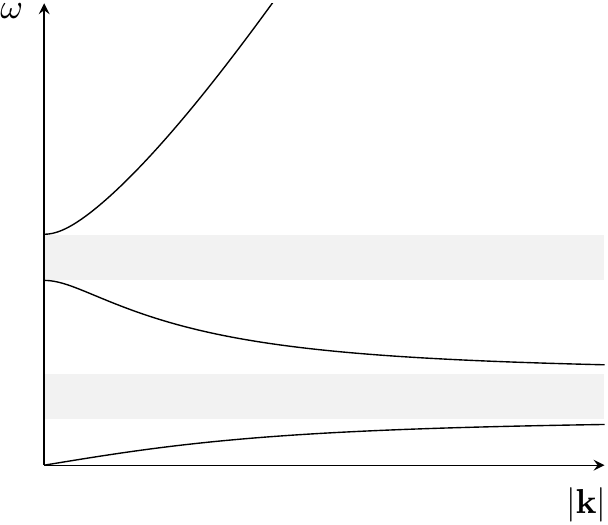}}
	\caption{Dispersion curves for the model (\ref{Lorentz1}). Band gaps are in grey.
	}
	\label{FigBands}
	\end{figure}
~\\[12pt]
In the following we shall denote by ${\cal I}_f$ the set of indices (always non-empty) $\ell \in \{0, \cdots, N+1\}$ corresponding to forward modes $\pm \, \omega_\ell$ and by ${\cal I}_f$ the set of indices $\ell \in \{0, \cdots, N+1\}$ corresponding to forward modes. We can split accordingly the spectrum ${\cal S}$ of the material as
$$
{\cal S} = {\cal S}_f\cup {\cal S}_b, \quad  {\cal S}_f= \bigcup_{j \in {\cal I}_f} {\cal B}_\ell, \quad  {\cal S}_b= \bigcup_{\ell \in {\cal I}_b} {\cal B}_\ell,
$$
where ${\cal S}_n$ is by definition the set of {\bf forward  frequencies} and ${\cal S}_b$ is by definition the set of {\bf backward frequencies}. The following result gives a simple characterization of the two sets.
\begin{theo} 
For a non-dissipative local material one has the characterization
\begin{equation} \label{charND}	
 {\cal S}_f= \mbox{closure}\{ \omega \in \R \setminus \mathcal{P} \; / \; \omega \, {\cal D}(\omega) > 0 \}, \qquad  {\cal S}_b = \mbox{closure}\{ \omega \in \R \setminus \mathcal{P} \; / \; \omega \, {\cal D}(\omega) < 0 \}.
	\end{equation}
If, moreover, the material is passive (which is always true up to equivalence), then
\begin{equation} \label{charNDbis}	
\left\{ \begin{array}{l}
{\cal S}_f = \mbox{closure}\{ \omega \in \R \setminus \mathcal{P}\; / \; \varepsilon(\omega) > 0 \mbox{ and } \mu(\omega) > 0 \}, \\[12pt] {\cal S}_b = \mbox{closure}\{ \omega \in \R \setminus \mathcal{P} \; / \; \varepsilon(\omega) < 0 \mbox{ and } \mu(\omega) < 0 \}.
\end{array} \right.
\end{equation}
\end{theo}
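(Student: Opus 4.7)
The plan hinges on a single implicit-differentiation identity. Differentiating the dispersion relation $\mathcal{F}(\omega_\ell(|\mathbf{k}|)) = |\mathbf{k}|^2$ with respect to $|\mathbf{k}|$ (exactly as in the proof of corollary \ref{smoothness}) gives
\[
\mathcal{D}(\omega_\ell(|\mathbf{k}|)) \, \omega_\ell'(|\mathbf{k}|) \;=\; 2\,|\mathbf{k}|.
\]
For $|\mathbf{k}|>0$, the two factors on the left must share their sign; multiplying by $\omega_\ell(|\mathbf{k}|)$ converts this into the pointwise equivalence
\[
\omega_\ell(|\mathbf{k}|)\,\omega_\ell'(|\mathbf{k}|) > 0 \;\;\Longleftrightarrow\;\; \omega_\ell(|\mathbf{k}|)\,\mathcal{D}(\omega_\ell(|\mathbf{k}|)) > 0,
\]
which translates the forward/backward dichotomy into a sign condition on $\omega\,\mathcal{D}(\omega)$ along each branch.

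To deduce (\ref{charND}), I use that by corollary \ref{coroND-Lossless}, $\varepsilon$ and $\mu$ are even, so $\mathcal{F}$ is even, $\mathcal{D}$ is odd, and $\omega\,\mathcal{D}(\omega)$ is even; the symmetric modes $\pm\omega_\ell(|\mathbf{k}|)$ therefore always belong to the same category. The union $\bigcup_{\ell\in\mathcal{I}_f}\bigcup_{|\mathbf{k}|>0}\{\pm\omega_\ell(|\mathbf{k}|)\}$ then coincides with the subset of $\mathcal{S}\setminus\mathcal{P}$ on which $\omega\,\mathcal{D}(\omega)>0$, whose closure is $\mathcal{S}_f$ by definition; the case of $\mathcal{S}_b$ is identical.

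To obtain (\ref{charNDbis}), passivity enters through lemma \ref{lemmetechnique}: at any point where $\mathcal{F}(\omega)>0$, $\mathcal{D}(\omega)\neq 0$ and shares the common sign of $\omega\,\varepsilon(\omega)$ and $\omega\,\mu(\omega)$. Consequently
\[
\omega\,\mathcal{D}(\omega) > 0 \;\Longleftrightarrow\; \omega^{2}\,\varepsilon(\omega) > 0 \;\Longleftrightarrow\; \varepsilon(\omega) > 0,
\]
and then $\mathcal{F}(\omega)>0$ automatically forces $\mu(\omega)>0$. Conversely, $\varepsilon>0$ and $\mu>0$ imply $\mathcal{F}>0$, placing $\omega$ in a spectral band; combined with (\ref{charND}) this yields (\ref{charNDbis}) for $\mathcal{S}_f$, and the analogous argument with reversed signs gives the backward identity.

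The main obstacle I anticipate is notational rather than substantial: in (\ref{charND}) the set $\{\omega\in\R\setminus\mathcal{P}:\omega\,\mathcal{D}(\omega)>0\}$ should be understood as implicitly restricted to $\omega\in\mathcal{S}$, since $\omega\,\mathcal{D}(\omega)$ can remain positive in a spectral gap (where $\mathcal{F}<0$ without $\mathcal{F}'$ necessarily changing sign in the gap). This subtlety evaporates in (\ref{charNDbis}), as $\varepsilon\,\mu>0$ already forces $\mathcal{F}>0$ and hence confinement to the spectrum.
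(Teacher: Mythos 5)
Your proof is correct and follows essentially the same route as the paper: the identity ${\cal D}\big(\omega_\ell(|{\bf k}|)\big)\,\omega_\ell'(|{\bf k}|)=2|{\bf k}|$ from the proof of corollary \ref{smoothness} gives (\ref{charND}), and lemma \ref{lemmetechnique} (i.e.\ the growing property) converts the sign of $\omega\,{\cal D}(\omega)$ into the common sign of $\varepsilon$ and $\mu$ for (\ref{charNDbis}). Your closing remark that the set in (\ref{charND}) must implicitly be intersected with ${\cal S}$ (since $\omega\,{\cal D}(\omega)$ can be positive inside a gap where ${\cal F}<0$ but increasing) is a legitimate precision that the paper leaves tacit.
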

\begin{proof} The first characterization follows from the following formula for the group velocity associated with the mode $\omega_\ell\big(|{\bf k}|\big)$ (see the proof of corollary \ref{smoothness} ):
	\begin{equation} \label{groupvelocity}
		\big[ \, \omega_\ell\big(|{\bf k}| \big) \, {\cal D} \big( \, \omega_\ell\big(|{\bf k}| \big) \, \big)\big] \, \big[ \, \omega_\ell\big(|{\bf k}| \big) \, \omega_\ell'\big(|{\bf k}|\big)\big]  = 2 \, |{\bf k}| \, \omega_\ell\big(|{\bf k}| \big)^2.
	\end{equation}
The second part of the theorem follows from the observation (already done in the proof of lemma \ref{lemmetechnique}) that, because of the growing property for passive materials, 
$\omega \, {\cal D} (\omega)$ appears as a linear combination of $\omega^2 \, \varepsilon(\omega)$ and $\omega^2 \, \mu(\omega)$ with positive coefficients (see (\ref{defD})). Since inside ${\cal S}$, $\omega \,\varepsilon(\omega)$ and $\omega \, \mu(\omega)$ 
have the same sign, 
the sign of $\omega {\cal D} (\omega)$ corresponds to the common sign of $\varepsilon(\omega)$ and $ \mu(\omega)$. \end{proof}
\begin{rem} \label{remNegative} The second characterization is the one that is often used in the literature for defining backward frequencies or backward modes. However, rigorously speaking, it is valid only for passive materials. \end{rem}
\noindent Whereas forward modes always exist (the last band is always forward, see lemma \ref{lemma:bands}), backward modes may or may not exist (see for instance figure \ref{Fig5}). This justifies the following definition.
\begin{defi} ({\bf Negative index material}) A negative index material is a non-dissipative local material in which there exist backward modes, i.e. for which the set ${\cal I}_b$ (or, equivalently, the set ${\cal S}_b$) is non-empty.
\end{defi}
	\begin{figure}[h]
	\centerline{
	\includegraphics[height=4cm]{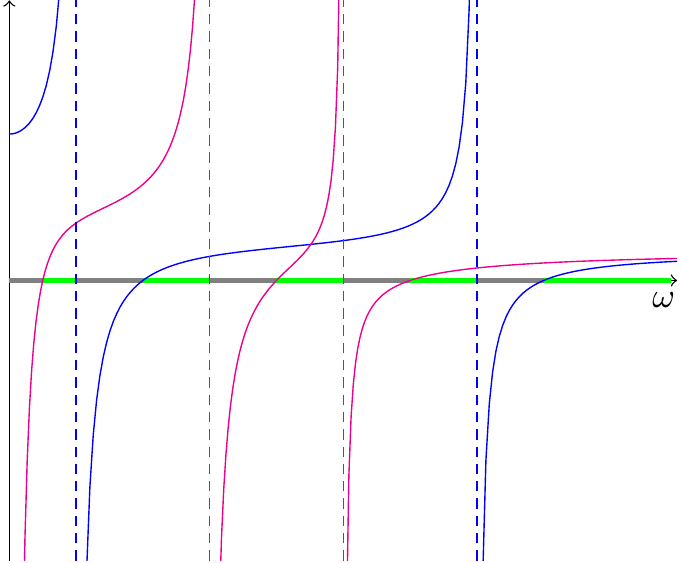} \hspace*{2cm}\includegraphics[height=4cm]{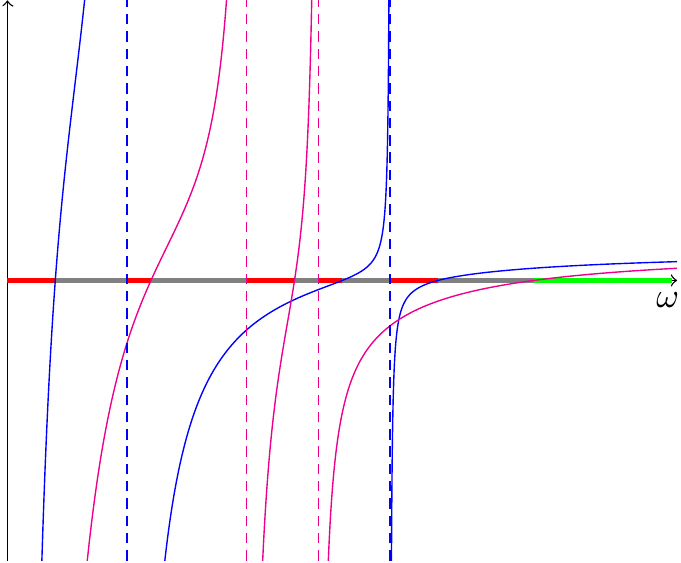}}
	\caption{Graphs of $\varepsilon(\omega)$ (blue) and $\mu(\omega)$ (magenta) for two different generalized Lorentz models with $N_e = N_m =2$. On the horizontal axis, grey segments represent band gaps, red segments backward bands and forward segments positive bands. For the left picture, all modes (there are 6 of them) are forward. For the right picture, all modes, except the last one, are backward.}
	\label{Fig5}
	\end{figure}
\subsection{Energy analysis of local passive materials.} \label{sec_EnergyLorentz}
\noindent The well-posedness and stability of (\ref{Lorentzsystem})  can be recovered with the help of energy techniques (which presents the advantage to be generalizable to variable coefficients).
\begin{theo} \label{thm.Energy} Any sufficiently smooth solution of  (\ref{Lorentzsystem}) satisfies the energy identity $\dsp \frac{d}{dt} \, {\cal E}_{tot}(t) = 0$, where
	\begin{equation} \label{energyidentity}
\left\{	\begin{array}{lll}
\dsp {\cal E}_{tot}(t) :=  {\cal E}(t) + \sum_{\ell = 0}^{N_e} {\cal E}_{e, \ell}(t) + \sum_{\ell = 0}^{N_m} {\cal E}_{m, \ell}(t), \quad \dsp {\cal E}(t) :=  \frac{1}{2}\int_{\R^3} \big( \varepsilon_0 \, |{\bf E}|^2  + \mu_0 \, |{\bf H}|^2 \, \big) \; d{\bf x}, \\[12pt]
\dsp  
{\cal E}_{e, \ell}(t) :=  \frac{\varepsilon_0}{2} \,\sum_{\ell = 0}^{N_e}  \; \int_{\R^3}  \Omega_{e,\ell}^2 \, \big(|\partial_t \bbP_\ell|^2  + \omega_{e, \ell}^2 \, |\bbP_\ell|^2 \big) \; d{\bf x}, \\[12pt] 
\dsp {\cal E}_{m, \ell}(t) :=   \frac{\mu_0}{2} \,\sum_{\ell = 0}^{N_m}  \; \int_{\R^3}  \Omega_{m,\ell}^2 \, \big(|\partial_t \bbM_\ell|^2  + \omega_{m, \ell}^2 \, |\bbM_\ell|^2 \big) \; d{\bf x}.
\end{array} \right.
	\end{equation}
		\end{theo}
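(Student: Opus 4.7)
The plan is a standard energy computation: multiply each equation of the system by a well-chosen test function, integrate over $\R^3$, and recombine the resulting identities so that all cross terms cancel. Since the statement concerns \emph{sufficiently smooth} solutions, all formal manipulations (integration by parts, differentiation under the integral) will be legitimate, and I will not dwell on regularity issues.

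First, I would take the $L^2(\R^3)$ inner product of the two Maxwell equations in (\ref{Lorentzsystem}) with $\bf E$ and $\bf H$ respectively and add the results. The key observation is that the cross terms involving the curl cancel: after integration by parts (assuming fields decay at infinity, which is implicit in the required smoothness),
\[
\int_{\R^3} {\bf rot}\,{\bf H} \cdot {\bf E}\, d{\bf x} - \int_{\R^3} {\bf rot}\,{\bf E} \cdot {\bf H}\, d{\bf x} = 0.
\]
The remaining identity then reads
\[
\frac{d}{dt}\,{\cal E}(t) + \varepsilon_0 \sum_{\ell=0}^{N_e} \Omega_{e,\ell}^2 \int_{\R^3} \partial_t \bbP_\ell \cdot {\bf E}\, d{\bf x} + \mu_0 \sum_{\ell=0}^{N_m} \Omega_{m,\ell}^2 \int_{\R^3} \partial_t \bbM_\ell \cdot {\bf H}\, d{\bf x} = 0.
\]

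Second, I would exploit the ODE structure of the auxiliary unknowns. Taking the inner product of the oscillator equation $\partial_t^2 \bbP_\ell + \omega_{e,\ell}^2 \bbP_\ell = {\bf E}$ with $\partial_t \bbP_\ell$ yields pointwise the classical identity
\[
\frac{1}{2}\,\frac{d}{dt}\bigl( |\partial_t \bbP_\ell|^2 + \omega_{e,\ell}^2\,|\bbP_\ell|^2 \bigr) = {\bf E}\cdot \partial_t \bbP_\ell.
\]
Multiplying by $\varepsilon_0 \Omega_{e,\ell}^2$, integrating over $\R^3$, and summing over $\ell$ yields $\dsp\frac{d}{dt}\sum_\ell {\cal E}_{e,\ell}(t) = \varepsilon_0 \sum_\ell \Omega_{e,\ell}^2 \int_{\R^3} {\bf E}\cdot\partial_t \bbP_\ell\, d{\bf x}$, and the analogous computation for $\bbM_\ell$ gives $\dsp\frac{d}{dt}\sum_\ell {\cal E}_{m,\ell}(t) = \mu_0 \sum_\ell \Omega_{m,\ell}^2 \int_{\R^3} {\bf H}\cdot\partial_t \bbM_\ell\, d{\bf x}$.

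Finally, adding these three identities produces exactly the required cancellation between the polarization/magnetization coupling terms of the Maxwell block and the source terms of the ODE blocks, leaving $\dfrac{d}{dt}\,{\cal E}_{tot}(t)=0$. There is no genuine obstacle: the whole argument is mechanical once one recognizes that each harmonic oscillator in (\ref{Lorentzsystem}) carries its own natural quadratic conserved quantity, and that the coupling with Maxwell's equations is antisymmetric (it appears with opposite signs in the two blocks). The only minor care is bookkeeping: the factors $\varepsilon_0\Omega_{e,\ell}^2$ and $\mu_0\Omega_{m,\ell}^2$ in the definition of ${\cal E}_{e,\ell}$ and ${\cal E}_{m,\ell}$ are chosen precisely so that these weights match those appearing in the Maxwell equations via (\ref{defPellMell}), thereby ensuring the cancellation.
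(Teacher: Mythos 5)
Your proposal is correct and follows essentially the same route as the paper: the authors invoke the previously established identity (\ref{idEnergy}) for the Maxwell block (which is exactly your first step) and then, as you do, use the harmonic-oscillator equations together with the weights in (\ref{defPellMell}) to recognize $\int_{\R^3}\partial_t{\bf P}\cdot{\bf E}\,d{\bf x}$ and $\int_{\R^3}\partial_t{\bf M}\cdot{\bf H}\,d{\bf x}$ as exact time derivatives of $\sum_\ell{\cal E}_{e,\ell}$ and $\sum_\ell{\cal E}_{m,\ell}$. No gap; the only difference is that you re-derive (\ref{idEnergy}) rather than citing it.
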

\begin{proof} Using (\ref{Lorentzsystem}) (second line, first equation), we compute that
$$
	\int_{\R^3}  \partial_t {\bf P} \cdot {\bf E} \; d{\bf x} = \varepsilon_0 \, \sum_{\ell = 0}^{N_e} \int_{\R^3}  \Omega_{e,\ell}^2 \, \partial_t \bbP_\ell \cdot {\bf E} \; d{\bf x} =  \frac{\varepsilon_0}{2} \,\sum_{\ell = 0}^{N_e}  \; \frac{d}{dt} \int_{\R^3}  \Omega_{e,\ell}^2 \, \big(|\partial_t \bbP_\ell|^2  
+ \omega_{e, \ell}^2 \, |\bbP_\ell|^2 \big) \; d{\bf x}.
$$	
In the same way, we have $\int_{\R^3}  \partial_t {\bf M} \cdot {\bf H} \; d{\bf x} =\frac{d}{dt} \sum_{\ell = 0}^{N_m} {\cal E}_{m, \ell}(t)$. 
To conclude it suffices to substitute the above two equalities in (\ref{idEnergy}).
	\end{proof}
	\begin{rem} \label{remPassivityEnergy} The above theorem also permits us to recover the physical passivity of Lorentz media since
$$ {\cal E}(t) \leq {\cal E}(t) + \sum_{\ell = 0}^{N_e} {\cal E}_{e, \ell}(t) + \sum_{\ell = 0}^{N_m} {\cal E}_{m, \ell}(t) =  {\cal E}(0) + \sum_{\ell = 0}^{N_e} {\cal E}_{e, \ell}(0) + \sum_{\ell = 0}^{N_m} {\cal E}_{m, \ell}(0) = {\cal E}(0), 
		$$
the last equality resulting from the zero-initial conditions for the $\bbP_\ell$'s and $\bbM_\ell$'s.\end{rem}
\noindent As announced in remark \ref{decroissance}, ${\cal E}(t)$ is not a decreasing function of time in general. Let us consider the case of a Drude model (\ref{epsmuDrude}) with 
$\xi_m = 0$, i. e. $\mu=\mu_0$. Assume for simplicity that ${\bf H}_0 = 0$ and that ${\bf E}_0 \in L^ 2(\R^3)$ with $\mbox{div}\,{\bf E}_0 = 0$, so that at each time $t \geq 0$, $\mbox{div}\, {\bf E}(\cdot,t) = 0$. Then one easily checks that the electric field satisfies the (vectorial) Klein-Gordon equation  
\begin{align*}
\partial_{tt}{\bf E}- c_0^2 \, \Delta {\bf E}+\xi_e^2 \,{\bf E}=0, \quad {\bf E}(\cdot,0) = {\bf E}_0, \partial_t{\bf E}(\cdot,0) = 0.
\end{align*}
Using the Fourier transform in space ( ${\bf E}({\bf x},t) \rightarrow \widehat{\bf E}({\bf k},t) \; $ and ${\bf H}({\bf x},t) \rightarrow \widehat{\bf H}({\bf k},t) \; $), we obtain 
\begin{align*}
\widehat{\bf E}({\bf k},t)=\widehat{\bf E}_0({\bf k}) \; \cos\xi_e({\bf k})t, \quad \widehat{\bf H}({\bf k},t)=i \; \mu_0^{-1} \; \big({\bf k} \times \widehat{\bf E}_0({\bf k})\big) \; \frac{\sin\xi_e({\bf k}) t}{\xi_e({\bf k})},\quad  \xi_e({\bf k}):= \big(\xi_e^2 + c_0^ 2 \, |{\bf k}|^ 2 \big)^{\frac{1}{2}}.
\end{align*}
Using Plancherel's theorem, $\mu_0^{-1} = \varepsilon_0 \, c_0^2$ and $|{\bf k} \times \widehat{\bf E}_0({\bf k})| = |{\bf k}| \, |\widehat{\bf E}_0({\bf k})|$ (since ${\bf k} \cdot \widehat{\bf E}_0({\bf k}) = 0$), we get after some manipulations
\begin{align*}
\mathcal{E}(t)= \mathcal{E}(0) - \frac{\varepsilon_0 \, \xi_e^2}{2} \int_{\R} |\widehat{\bf E}_0({\bf k})|^2 \; \frac{(\sin\xi_e({\bf k}) t)^2}{\xi_e({\bf k})^2}  \; d\bf k.
\end{align*}
Thus, as soon as $\xi_e > 0$, one has the strict inequality $\mathcal{E}(t)< \mathcal{E}(0)$ for any $t > 0$. Moreover,
\begin{align*}
\mathcal{E}'(t)=- \frac{\varepsilon_0 \, \xi_e^2}{2} \int_{\R} |\widehat{\bf E}_0({\bf k})|^2 \; \frac{\sin 2 \, \xi_e({\bf k}) t}{\xi_e({\bf k})}  \; d\bf k.
\end{align*}
Next, we play with the initial field replacing $\widehat{\bf E}_0({\bf k})$ by $\widehat{\bf E}_0^\delta({\bf k}) := \widehat{\bf E}_0({\bf k} / \delta)$ where $\delta$ is devoted to be small : this corresponds to concentrating the Fourier transform of the initial data near ${\bf k} = 0$. Denoting $\mathcal{E}^\delta(t)$ the electromagnetic energy of the corresponding solution $({\bf E}^\delta, {\bf H}^\delta)$, we thus have
\begin{align*}
\mathcal{E}_\delta'(t)=- \frac{\varepsilon_0 \, \xi_e^2}{2} \int_{\R} |\widehat{\bf E}_0({\bf k}/\delta)|^2 \; \frac{\sin 2 \, \xi_e({\bf k}) t}{\xi_e({\bf k})}  \; \md{\bf k} 
\end{align*}
that is to say (using the change of variable ${\bf k} = \delta \boldsymbol{\xi}$), 
\begin{equation} \label{Eprime}
\mathcal{E}_\delta'(t) = - \frac{\varepsilon_0 \, \xi_e^2 \, \delta^3}{2} \; t \; \int_{\R} |\widehat{\bf E}_0(\boldsymbol{\xi})|^2 \; \Phi \big(\xi_e(\delta \boldsymbol{\xi}) t\big)  \; \md \boldsymbol{\xi} \quad \mbox{where } \Phi(x) := \frac{\sin2x}{x}.
\end{equation}
Writing $\Phi \big(\xi_e(\delta \boldsymbol{\xi})  t\big) = \Phi \big(\xi_e t) + \left[ \, \Phi \big(\xi_e(\delta \boldsymbol{\xi})  t\big) - \Phi \big(\xi_e t) \, \right]$, this can be rewritten as
\begin{equation} \label{Eprime2}
\mathcal{E}_\delta'(t)= - \frac{\varepsilon_0 \, \xi_e^2 \, \delta^3}{2} \left( \frac{\sin 2 \, \xi_e t}{\xi_e} \right) \; \|{\bf E}_0\|_{L^2}^2 \;   + \frac{\varepsilon_0 \, \xi_e^2 \, \delta^3}{2} \; t \; R_\delta(t)
\end{equation}
where we have set
$ \dsp
R_\delta(t) : = \int_{\R} |\widehat{\bf E}_0(\boldsymbol{\xi})|^2 \; \Big[\Phi(\xi_e \, t) - \Phi \big(\xi_e(\delta \boldsymbol{\xi}) t\big)\Big] \; \md \boldsymbol{\xi}.
$\\[12pt]
Let $C$ be the Lipschitz constant of $\Phi$ in $\R$. If $E_0 \in H^1(\R^3)^3$, since  $\big|\xi_e(\delta \boldsymbol{\xi}) - \xi_e\big| \leq \delta^ 2 \, |\boldsymbol{\xi}|^2 / 2 \xi_e$,  we have
\begin{equation} \label{estRdelta}
|R_\delta(t)| \leq Ct \; \int_{\R} |\widehat{\bf E}_0(\boldsymbol{\xi})|^2 \; \big[ \xi_e(\delta \boldsymbol{\xi}) - \xi_e \big] \; \md \boldsymbol{\xi} \leq Ct \; \frac{\delta^2}{2 \xi_e} \int_{\R} |\boldsymbol{\xi}|^2|\widehat{\bf E}_0(\boldsymbol{\xi})|^2  \; \md \boldsymbol{\xi} \equiv Ct \; \frac{\delta^2}{2 \xi_e} |{\bf E}_0|_{H^1}^2.
\end{equation}
Now, we prove that $\mathcal{E}_\delta'(t)$ can be non negative. Indeed, we first deduce from (\ref{Eprime2}) that
$$
\mathcal{E}_\delta'(T_n)= \frac{\varepsilon_0 \, \xi_e^2 \, \delta^3}{2} \; \Big( \frac{\|{\bf E}_0\|_{L^2}^2}{\xi_e}+ T_n \;R_\delta(T_n) \Big), \quad 
T_n = \frac{(2n+1)\pi}{4 \xi_e}, n \in \N^*, \quad (\sin 2 \, \xi_e T_n = -1).$$
Thus, thanks to (\ref{estRdelta}), 
$ \dsp
\mathcal{E}_\delta'(T_n) \geq \frac{\varepsilon_0 \, \xi_e \, \delta^3}{2} \; \Big( \|{\bf E}_0\|_{L^2}^2 - C \; \delta^2 T_n^2 \; |{\bf E}_0|_{H^1}^2 \Big).
$
Thus, for any $N \in \N^*$ 
$$
\mbox{as soon as  } C \, \delta^2 T_N^2\,|{\bf E}_0|_{H^1}^2 < \|{\bf E}_0\|_{L^2}^2, \quad \mathcal{E}_\delta'(T_n) > 0, \quad n = 1, \cdots N.
$$
\section{Maxwell's equations in general passive media} \label{Generalpassive}
\subsection{A representation of electric permittivity and magnetic permeability in passive media}  \label{Representationformula}
The representation of lossless passive local media as generalized Lorentz media (cf. theorem \ref{GenLorentz}) is, as we shall see, representative of general passive (even lossy) media. This is a consequence of a following well-known representation theorem for Herglotz functions, known as the {\bf Nevanlinna's representation theorem}. In this section, we assume the familiarity of the reader with basics of measure theory on $\mathbb{R}$ \cite{Mat-99}.
\begin{lema}\label{thm.Nevanlinna} [ Nevanlinna's theorem]
A necessary and sufficient condition for $f$ to be a Herglotz function is given by the following representation:
\begin{equation}\label{eq.defhergl}
f(\omega)=\alpha \, \omega +\beta + \displaystyle \int_{\R} \left(  \frac{1}{\xi-\omega}- \frac{\xi}{1+\xi^2}\right)\md \nu( \xi), \  \mbox{ for } \omega \in \C^{+},
\end{equation}
where $\alpha\in\R^{+}$, $\beta\in \R$ and $\nu$ is a positive regular Borel measure for which  
\begin{equation} \label{finiteness}
 \dsp \int_{\R} \
 \md \nu(\xi)/(1+\xi^2) < + \infty.
 \end{equation}
Moreover, $\alpha, \beta$ and $\nu$ are related to $f$ via the following formulas
\begin{equation}  \label{alphabeta} 
 \alpha= \lim_{y \rightarrow +\infty}\displaystyle \frac{f(i \, y)}{i \, y}, \quad \beta={\cal R}e \, f(i), 
\end{equation}
and the measure $\nu$ is given by
\begin{equation}  \label{measure} 
\left\{ \begin{array}{ll}
(i) & \dsp \forall \; a \in \R, \quad \nu(\{a\})=\lim_{\eta\to 0^{+}}\eta \; {\cal I}mf(a+  i \eta),\\[10pt] 
(ii) & \dsp \forall \; a \leq b,  \quad \frac{\nu\big([a,b]\big)+\nu\big((a,b)\big)}{2} =\lim_{\eta\rightarrow 0^{+}} \frac{1}{\pi} \int_{a}^{b}  {\cal I}m \, f(x+ i \, \eta) \, dx .
\end{array} \right.
\end{equation}
\end{lema}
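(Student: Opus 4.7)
The plan is to treat sufficiency and necessity separately, together with a final computation for the inversion formulas \eqref{alphabeta} and \eqref{measure}.

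\textbf{Sufficiency.} Assume $f$ is defined by \eqref{eq.defhergl} with $\alpha\geq 0$, $\beta\in\R$ and a positive Borel measure $\nu$ satisfying \eqref{finiteness}. I would first check that the integrand is integrable: a Taylor expansion at infinity yields
\[
\frac{1}{\xi-\omega}-\frac{\xi}{1+\xi^{2}}=\frac{1+\omega\xi}{(\xi-\omega)(1+\xi^{2})},
\]
which is $O(1/(1+\xi^{2}))$ uniformly on compact subsets of $\C^{+}$; coupled with \eqref{finiteness}, this makes $f(\omega)$ well defined and, by Morera's theorem plus dominated convergence, analytic in $\C^{+}$. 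To check the Herglotz property, I take $\omega=x+iy$ with $y>0$ and compute
\[
\operatorname{Im}\left(\frac{1}{\xi-\omega}-\frac{\xi}{1+\xi^{2}}\right)=\frac{y}{(\xi-x)^{2}+y^{2}}\geq 0,
\]
so $\operatorname{Im}f(\omega)=\alpha y+\int_{\R}\frac{y}{(\xi-x)^{2}+y^{2}}\,\md\nu(\xi)\geq 0$.

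\textbf{Necessity.} The key tool is the Poisson representation for nonnegative harmonic functions on the upper half-plane. Since $u(\omega):=\operatorname{Im}f(\omega)$ is harmonic and nonnegative on $\C^{+}$, a classical theorem (which can be proved either by conformally transporting $f$ to the disk via the Cayley transform and invoking the Herglotz representation on $\mathbb{D}$, then transporting back, or directly by boundary measure arguments) yields $\alpha\geq 0$ and a positive Borel measure $\nu$ with $\int \md\nu(\xi)/(1+\xi^{2})<\infty$ such that
\[
u(x+iy)=\alpha y+\int_{\R}\frac{y}{(\xi-x)^{2}+y^{2}}\,\md\nu(\xi).
\]
Since the right-hand side coincides with the imaginary part of the analytic function $\omega\mapsto\alpha\omega+\int_{\R}\big(\frac{1}{\xi-\omega}-\frac{\xi}{1+\xi^{2}}\big)\,\md\nu(\xi)$ on $\C^{+}$ (both terms are chosen so their imaginary parts reduce to the Poisson kernel plus $\alpha y$), the function $f$ and this candidate differ by a real constant $\beta\in\R$, proving \eqref{eq.defhergl}.

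\textbf{Recovery formulas.} Once \eqref{eq.defhergl} is established, \eqref{alphabeta} and \eqref{measure} follow by direct computation. Evaluating at $\omega=i$ gives $\frac{1}{\xi-i}-\frac{\xi}{1+\xi^{2}}=\frac{i}{1+\xi^{2}}$, which is purely imaginary; hence $\operatorname{Re}f(i)=\beta$. For $\alpha$, I plug $\omega=iy$ into \eqref{eq.defhergl} and divide by $iy$; dominated convergence (the integrand is dominated by a multiple of $1/(1+\xi^{2})$) forces the integral term to vanish as $y\to+\infty$, yielding $\alpha=\lim_{y\to\infty}f(iy)/(iy)$. For \eqref{measure}, I use the Poisson-kernel representation of $\operatorname{Im}f$ proved above: $\eta\,\operatorname{Im}f(a+i\eta)=\alpha\eta^{2}+\int_{\R}\frac{\eta^{2}}{(\xi-a)^{2}+\eta^{2}}\md\nu(\xi)$, and as $\eta\to 0^{+}$ only the atom at $\xi=a$ contributes, giving (i). Formula (ii) is the Stieltjes inversion: $\tfrac{1}{\pi}\operatorname{Im}f(x+i\eta)$ is the convolution of $\nu$ with the Poisson kernel, which converges weakly to $\nu$ as $\eta\to 0^{+}$, the symmetric value $\tfrac{1}{2}(\nu([a,b])+\nu((a,b)))$ accounting for possible atoms at the endpoints.

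\textbf{Main obstacle.} The serious work is the Poisson representation of nonnegative harmonic functions on $\C^{+}$, in particular identifying the linear term $\alpha y$ (which encodes the behavior of $u$ at infinity) and establishing the finiteness condition \eqref{finiteness}. I would either take this theorem as a black box from \cite{Akhiezer} (appropriate for a self-contained appendix) or sketch the argument via the Cayley transform $\omega\mapsto(\omega-i)/(\omega+i)$, which reduces the problem to the well-known Herglotz representation of positive harmonic functions on the unit disk in terms of a finite positive measure on the unit circle; tracking how the point $-1$ on the circle (the image of $\infty$) contributes yields precisely the linear term $\alpha y$.
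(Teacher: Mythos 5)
Your proposal is correct and follows essentially the same route as the paper: reduce to the unit disk via the Cayley/M\"obius transform, invoke the Herglotz representation there, transport back, and identify the linear term $\alpha\,\omega$ with the mass sitting at the boundary point that maps to $\infty$; the formulas (\ref{alphabeta})--(\ref{measure}) are then recovered by the same direct computations (evaluation at $\omega=i$, dominated convergence along $\omega=iy$, and Stieltjes inversion via Fubini). The only substantive difference is that you defer the central representation theorem to a citation or a sketch, whereas the paper's appendix actually proves the disk version from scratch (Poisson's formula on $|z|<R$, a weak-$*$ compactness argument via Banach--Alaoglu, and Riesz--Markov) and then carries out the change of variables $\xi=-\operatorname{cotan}(\theta/2)$ with the pushforward measure explicitly, so for a self-contained proof that step would still need to be written out.
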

\begin{rem} \label{remNevanlinna}
The reader will easily check that the integrand in the right hand side of (\ref{eq.defhergl}) is in $O(\xi^{-2})$ for large $\xi$ so that (\ref{finiteness}) ensures the existence of the integral. If, in addition, 
\begin{equation} \label{finitenessbis}
	\dsp \int_{\R} |\xi| \, \md \nu(\xi)/(1+\xi^2) < + \infty,
	\end{equation} 
we get $f(\omega)=\alpha \, \omega+\gamma  + \displaystyle \int_{\R} \frac{\md \nu( \xi) }{\xi-\omega}$ with $\gamma \in \R$.
\end{rem}
\begin{rem} \label{remNevanlinna2} The formula (\ref{measure}) provides the measure $\nu$ of any interval $[a,b)$, $(a,b]$, $(a,b)$ or $[a,b]$. Thus it defines completely $\nu$ as a Borel measure \cite{Mat-99}.
\end{rem}	
\begin{rem} \label{eq.hergprop} Let $\operatorname{supp}(\nu)$ be the support \cite{Mat-99} of $\nu$ in (\ref{eq.defhergl}). As the
	support of a measure is closed, $I = \R \setminus supp(\nu)$ is open. Using (\ref{eq.defhergl}), 
	the Herglotz function $f$ can be continuously extended on $I$. This extension is real-valued. Moreover,
	$f$ has an analytic extension $f_e$ on $\R \setminus \operatorname{supp}(\nu)$ by the Schwarz reflection principle:
	$f_e(z) = f(z)$ on $\C^+\cup I$ and $f_e(z) =\overline{ f(\overline{z})}$ on $\C^-$. 
	Along $I$, the zeros of $f$ are simple (lemma \ref{lemHerglotz}) and $f$ satisfies the growing property $f'(\omega) > a$ (simply differentiate   (\ref{eq.defhergl})). 
\end{rem}
\noindent The proof of lemma \ref{thm.Nevanlinna} can be found in appendix \ref{Nevanlinna}. 
An important corollary of lemma \ref{thm.Nevanlinna} is
\begin{theo}\label{thm.herglotz}
Let $\varepsilon$ and $\mu$ be the electric permittivity and magnetic permeability of a homogeneous 
passive medium. There exists two positive regular Borel measures $\nu_e$ and $\nu_m$ on $\R$, that are symmetric (i.e. $\nu_e(-B)= \nu_e(B)$ and $\nu_m(-B)= \nu_m(B)$ for any Borel set $B$) and satisfy (\ref{finiteness}), such that 
\begin{equation}\label{expepsmu}
\varepsilon(\omega)=\varepsilon_0 \; \Big( 1- \displaystyle \int_{\R}\frac{\md \nu_e( \xi)}{\omega^2-\xi^2} \Big) , \quad \mu(\omega)=\mu_0 \; \Big( 1- \displaystyle \int_{\R}\frac{\md \nu_e( \xi)}{\omega^2-\xi^2} \Big)\ \mbox{ for } \omega \in \C^{+}.
\end{equation}
\end{theo}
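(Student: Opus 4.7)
The plan is to apply Nevanlinna's representation theorem (lemma \ref{thm.Nevanlinna}) to the Herglotz functions $h_e(\omega) := \omega \, \varepsilon(\omega)$ and $h_m(\omega) := \omega \, \mu(\omega)$, and then to use the reality principle (RP) and the high-frequency principle (HF) to identify the additive constants in the representation and to derive the symmetry of the associated spectral measure. I describe the argument for $\varepsilon$; that for $\mu$ is identical. By the passivity assumption, $h_e$ is Herglotz, so lemma \ref{thm.Nevanlinna} yields $\alpha_e \geq 0$, $\beta_e \in \R$ and a positive regular Borel measure $\tilde\nu_e$ satisfying (\ref{finiteness}) such that
\begin{equation*}
h_e(\omega) = \alpha_e\,\omega + \beta_e + \int_\R \Big( \frac{1}{\xi-\omega} - \frac{\xi}{1+\xi^2}\Big)\,d\tilde\nu_e(\xi), \qquad \omega\in\C^+.
\end{equation*}

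The constant $\alpha_e$ is computed via (\ref{alphabeta}): $\alpha_e = \lim_{y\to+\infty} h_e(iy)/(iy) = \lim_{y\to+\infty}\varepsilon(iy) = \varepsilon_0$, where the last equality is (HF). The constant $\beta_e$ is handled similarly: (RP) at $\omega=i$ reads $\varepsilon(i)=\overline{\varepsilon(i)}$, so $\varepsilon(i)\in\R$ and $h_e(i)=i\,\varepsilon(i)$ is purely imaginary, whence $\beta_e = {\cal R}e\, h_e(i)=0$. For the symmetry of $\tilde\nu_e$, I use (RP) in the equivalent form $h_e(-\bar\omega) = -\overline{h_e(\omega)}$, which for $\omega=a+i\eta$ gives ${\cal I}m\,h_e(-a+i\eta) = {\cal I}m\,h_e(a+i\eta)$ for all $a\in\R$, $\eta>0$. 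Inserting this identity into the Stieltjes-type inversion formulas (\ref{measure}) and applying the change of variable $y=-x$ shows that $\tilde\nu_e(B)=\tilde\nu_e(-B)$ for every Borel set $B$.

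With these identifications, it remains to symmetrize the Nevanlinna integral. Using the change of variable $\xi\mapsto -\xi$ together with the symmetry of $\tilde\nu_e$, the integrand becomes $-1/(\xi+\omega)+\xi/(1+\xi^2)$; averaging this new expression with the original one cancels the $\xi/(1+\xi^2)$ term and yields
\begin{equation*}
\int_\R \Big(\frac{1}{\xi-\omega} - \frac{\xi}{1+\xi^2}\Big)\,d\tilde\nu_e(\xi) \; = \; \omega \int_\R \frac{d\tilde\nu_e(\xi)}{\xi^2-\omega^2},
\end{equation*}
where the right-hand integral is absolutely convergent because $|\xi^2-\omega^2|^{-1}=O(\xi^{-2})$ at infinity and $\xi^2-\omega^2$ does not vanish on $\R$ for $\omega\in\C^+$ (since $\omega^2\notin[0,+\infty)$). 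Dividing by $\omega$ and setting $\nu_e := \tilde\nu_e/\varepsilon_0$ yields the claimed representation of $\varepsilon$, and the same procedure applied to $h_m$ produces $\nu_m$ for $\mu$. The main technical delicacy is precisely this symmetrization step: each of the integrals $\int_\R d\tilde\nu_e(\xi)/(\xi-\omega)$ and $\int_\R \xi\,d\tilde\nu_e(\xi)/(1+\xi^2)$ may fail to converge separately under only (\ref{finiteness}), so the averaging must be carried out on the full Nevanlinna integrand (which is $O(\xi^{-2})$) or on truncations over $[-R,R]$ followed by a limit argument, rather than on the two pieces in isolation.
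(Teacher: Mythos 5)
Your proof is correct and follows essentially the same route as the paper: Nevanlinna's representation applied to $\omega\,\varepsilon(\omega)$, identification of $\alpha_e=\varepsilon_0$ via {\bf (HF)} and $\beta_e=0$ via {\bf (RP)} at $\omega=i$, a symmetrization that cancels the $\xi/(1+\xi^2)$ term, and the symmetry of the measure via the Stieltjes inversion formulas. The only (immaterial) difference is the order of operations --- the paper averages $\varepsilon(\omega)$ with $\overline{\varepsilon(-\overline{\omega})}$ directly and deduces the symmetry of $\nu_e$ afterwards, whereas you establish the symmetry of the measure first and then substitute $\xi\mapsto-\xi$ --- and your closing caveat about not splitting the Nevanlinna integrand into two separately divergent pieces is a sound clarification of a point the paper handles implicitly.
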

\begin{proof}
We give the proof for $\varepsilon$. It is obviously the same for $\mu$. By passivity, $f(\omega) = \omega \, \varepsilon(\omega) / \varepsilon_0 $ is a Herglotz function. Thus, using lemma \ref{thm.Nevanlinna} and the high frequency condition {\bf(HF)}, we can write 
$$
\omega \, \varepsilon(\omega) =\varepsilon_0 \; \Big(  \omega + \beta_e +\displaystyle \int_{\R} \left(  \frac{1}{\xi-z}- \frac{\xi}{1+\xi^2}\right)\md \nu_e( \xi) \Big) , \, \ \mbox{ for } \omega \in \C^{+}.
$$
By the reality principle  {\bf (RP)}, $f(i)$ is purely imaginary, and thus $\beta_e = \varepsilon_0^{-1} \, {\cal R}e \, f(i)=0$. Hence 
\begin{equation}\label{eq.epsrep}
\varepsilon(\omega)=\varepsilon_0 \; \Big(  1+\omega^{-1}\displaystyle \int_{\R} \left(  \frac{1}{\xi-\omega}- \frac{\xi}{1+\xi^2}\right)\md \nu_e( \xi) \, \ \mbox{ for } \omega \in \C^{+}.
\end{equation}
By the reality principle {\bf(RP)} again, one has
$
\dsp \varepsilon(\omega)=\frac{1}{2}\Big(\varepsilon(\omega)+\overline{\varepsilon(-\overline{\omega})}\Big).
$
Thus, using \eqref{eq.epsrep}, we compute:
\begin{eqnarray}
\varepsilon(\omega)=\varepsilon_0 \; \Big(  1+\frac{1}{2\omega}\displaystyle \int_{\R}  \Big(\frac{1}{\xi-\omega}-\frac{1}{\xi+\omega} \Big)\,\md \nu_e( \xi) \Big) = \varepsilon_0  \; \Big( 1+\int_{\R}\frac{\md \nu_e( \xi)}{\xi^2 - \omega^2} \Big).
\end{eqnarray}
The symmetry of $\nu_e$ follows from (\ref{measure}) since, by {\bf(RP)}, ${\cal I}m f(-\overline{\omega}) = {\cal I}m f(\omega)$. 
\end{proof}
\noindent One recovers generalized Lorentz materials (\ref{Lorentzlaws}) with finite sums of Dirac measures (in symmetric form) :
\begin{equation}\label{LorentzMeasures}
	\nu_e = \frac{1}{2} \; \sum_{\ell = 1}^{N_e} {\Omega_{e,\ell}^2} \, \big( \delta_{\omega_{e,\ell}} + \delta_{-\omega_{e,\ell}}\big), \quad \nu_m = \frac{1}{2} \; \sum_{\ell = 1}^{N_m} {\Omega_{m,\ell}^2} \, \big( \delta_{\omega_{m,\ell}} + \delta_{-\omega_{m,\ell}}\big).
	\end{equation}
Other similar passive materials are obtained with non-compactly supported discrete measures such as 
\begin{equation}\label{DiscreteMeasures}
	\nu_e = \frac{1}{2} \; \sum_{\ell = 1}^{+\infty} {\Omega_{e,\ell}^2} \,  \big( \delta_{\omega_{e,\ell}} + \delta_{-\omega_{e,\ell}}\big), \quad \nu_m = \frac{1}{2} \; \sum_{\ell = 1}^{+\infty} \, {\Omega_{m,\ell}^2} \big( \delta_{\omega_{m,\ell}} + \delta_{-\omega_{m,\ell}}\big).
	\end{equation}
where $\omega_{e,\ell}$ and $\omega_{m,\ell}$ are two sequences of positive real numbers satisfying 
\begin{equation}\label{Additional}
 \sum_{\ell = 1}^{+\infty} \frac {\Omega_{e,\ell}^2 }{1 + \omega_{e,\ell}^2}  < + \infty, \quad \quad 
 \sum_{\ell = 1}^{+\infty} \frac {\Omega_{m,\ell}^2}{1 + \omega_{m,\ell}^2}  < + \infty, \quad ( \Longleftrightarrow (\ref{finiteness})).
 \end{equation}
The functions $\varepsilon(\omega)$ and $\mu(\omega)$ are not rational, but  meromorphic functions 
with poles $\pm \, \omega_{e,\ell}$ and $\pm \, \omega_{m,\ell}$, $1 \leq \ell \leq + \infty$, defined by the following series,
whose convergence (outside poles) is ensured by (\ref{Additional}) :
	\begin{equation} \label{InfiniteLorentzlaws}
\varepsilon(\omega)	= \varepsilon_0 \; \Big( 1 + \sum_{\ell = 1}^{+ \infty} \frac{\Omega_{e,\ell}^2}{\omega_{e,\ell}^2 -\omega^2}\Big), \quad \mu(\omega)	= \mu_0 \; \Big( 1 + \sum_{\ell = 1}^{+ \infty} \frac{\Omega_{m,\ell}^2}{\omega_{m,\ell}^2 -\omega^2}\Big).
		\end{equation}
In the particular case where $ \dsp \omega_{e,\ell}=\omega_{m,\ell} = \frac{2\ell-1}{2}\pi$,  $\Omega_{e,\ell}^2=2a_e$  and $\Omega_{m,\ell}^2=2a_m$, we get
\begin{align*}
	\varepsilon(\omega)=\varepsilon_0(1+a_e \, \omega^{-1}\tan(\omega)), \qquad 
	\mu(\omega)=\mu_0(1+a_m \, \omega^{-1}\tan(\omega)), \qquad a_m, \; a_e>0.
	\end{align*}
Such functions appear naturally in the mathematical theory of metamaterials via high contrast homogenization \cite{bouchitte2009homogenization}, \cite{bouchitte2010homogenization}, \cite{Zhikov} (see remark \ref{rem-homogenization} for a concrete and relatively simple example).\\[12pt]
Measures with an absolutely continuous part (related to losses, see section \ref{Sec_Dissipative}) will be considered later.
\begin{rem} \label{rem-homogenization} {\bf Example of high contrast homogenization.}
Let us consider the case of a 2D transverse magnetic medium (the magnetic field is a 2D scalar function) and heterogeneous non-dispersive Maxwell's equations (see (\ref{Dielectric})). Let us study a family of problems depending on
a small parameter $\delta > 0$, given by
$$
\mu({\bf x}) = \mu_0, \quad \varepsilon({\bf x}) = \varepsilon^\delta({\bf x}).
$$
The scalar magnetic field $H^\delta$ is solution of the time harmonic model at a given frequency $\omega$:
$$
rot \Big( \frac{1}{\varepsilon^\delta} \, rot \, H^\delta \Big) - \mu_0 \, \omega^2 \, H^\delta = f, \quad \mbox{ in $D$ bounded $\subset \R^2$}
$$
completed for instance with absorbing boundary conditions (omitted here) on $\partial D$. 
The function $\varepsilon^\delta({\bf x}) : \R^2 \rightarrow \R_+^*$ is $\delta$-periodic and piecewise constant, with high contrast. More precisely,
$$
\R^2 = \bigcup_{{\bf j} \in \Z^2} \delta \; \big[ \, {\bf j} + \overline{C}_0 \, \big], \quad C = \; (0,1) \times (0,1) 
$$
where the reference cell is made of two parts $\overline{C} = \overline{C}_{int} \cup \overline{C}_{ext}, \; \overline{C}_{int} \subset C, \; C_{int} \cap C_{ext} = \emptyset,$ so that
$$
\forall \; \widehat{\bf x} \in C, \quad \forall \; {\bf j} \in \Z^2, \quad 	\varepsilon^{\delta} \big( \delta \big[ \, {\bf j} + \widehat{\bf x} \, \big] \big) = \varepsilon_{ref}^\delta(\widehat{\bf x}), \quad \varepsilon_{ref}^\delta(\widehat{\bf x}) = \varepsilon_0 \mbox{ in } {C}_{ext}, \quad \varepsilon_{ref}^\delta(\widehat{\bf x})= \delta^{-2} \; \varepsilon_0 \mbox{ in } {C}_{int} \; .
$$
Then it can be shown that $H^\delta \rightarrow H^{hom}$, weakly in $L^2(D)$, where $H^{hom}$ satisfies the homogenized model
$$
rot \Big( \frac{1}{\varepsilon_0} \, rot \, H^\delta \Big) - \mu_{eff}(\omega) \, \omega^2 \, H^\delta = f, \quad \mbox{ in $D$ }
$$
where the function $\dsp\mu_{eff}(\omega)= \mu_0 \; \Big( 1 + \omega^2 \, \sum_{n=1}^{+\infty} \frac{ \big|\langle \varphi_{n}\rangle \big|^2}{ \omega_{n}^2 - \omega^2} \Big) , \, \langle \varphi_{n}\rangle = \int_{C_{int}} \varphi_{n} \, $, with
$$
- \Delta \varphi_{n} =  \omega_{n}^2 \; \varphi_{n}, \quad \mbox{in } C_{int}, \quad \varphi_{n}|_{\partial C_{int}} = 0, \quad \int_{C_{int}} |\varphi_{n}|^2 = 1, \qquad n\geq 1.
$$
can be shown to be of the form (\ref{LorentzMeasures}).
	\begin{figure}[h]
	\centerline{
	\includegraphics[height=4cm]{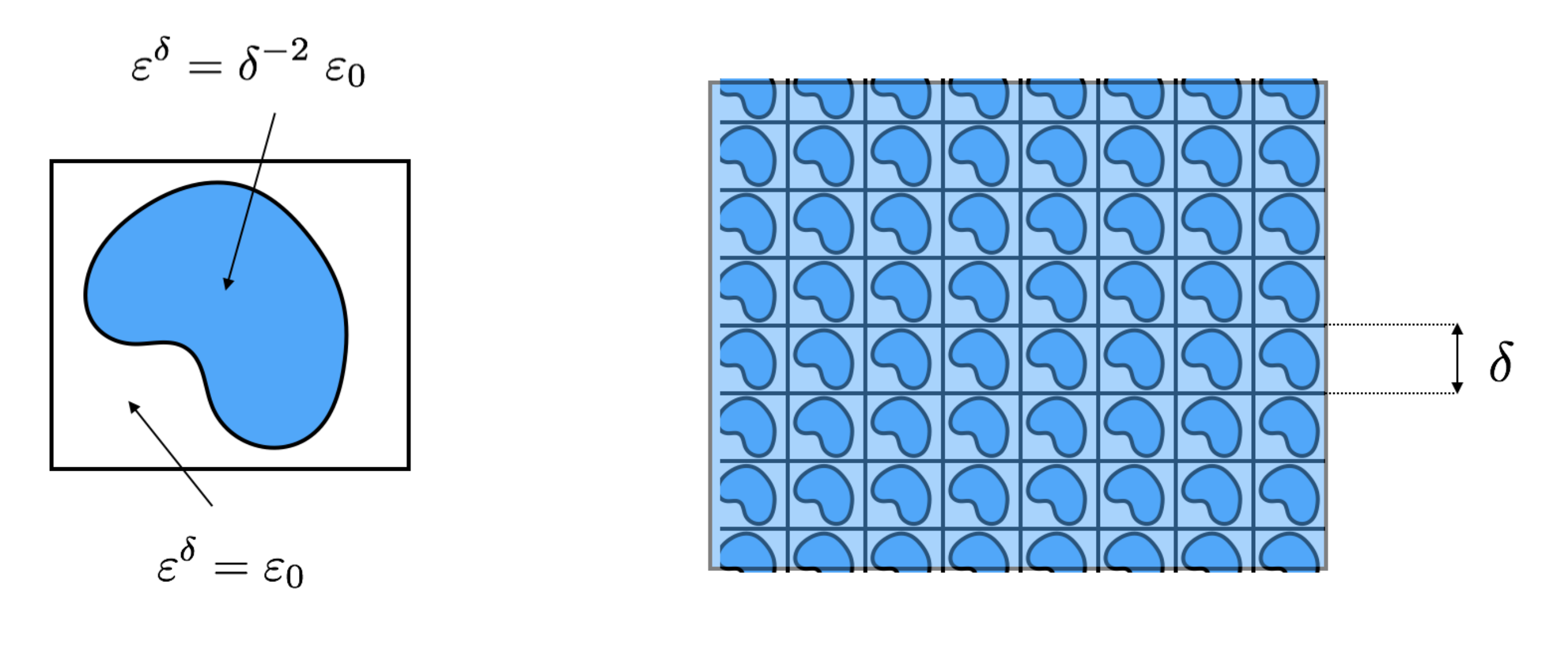}}
	\caption{A high contrast periodic medium (right). The periodicity cell (left).}
	\label{Fig5}
	\end{figure}
\end{rem}
\subsection{An augmented self-adjoint PDE model for Maxwell's equations in passive media} \label{Augmented}
We state below the generalization of theorem \ref{PDEmodel} for passive non-Lorentz materials. The idea of augmented 
models, with an auxiliary unknown depending on a (a priori) continuum of auxiliary variables was developed in \cite{Tip}, \cite{GralakTip}, \cite{Figotin}, \cite{Figotin1}, \cite{Figotin2}. The pioneering idea goes back to Lamb (\cite{Lamb}) in 1900.
\begin{theo}\label{PDEmodelgene}  A PDE-like model for dispersive Maxwell's equations with permittivity and permeability given by (\ref{eq.defhergl}) is (we consider here the Cauchy problem):
\begin{equation} \label{Lorentzsystemgene}
\hspace*{-0.3cm}\left\{	\begin{array}{lll}
\dsp \mbox{Find } \; \left\{ \begin{array}{l} {\bf E}({\bf x}, t) : \R^3 \times \R^+ \rightarrow \R^3, \quad  \quad  \quad  \quad  \quad  \; {\bf H}({\bf x}, t) : \R^3 \times \R^+ \rightarrow \R^3, \\[12pt]
 \bbP({\bf x}, t ;\xi) : \R^3  \times \R^+ \times \R \rightarrow \R^3,  \; \quad {\bf M}({\bf x}, t ;\xi) : \R^3  \times \R^+ \times \R \rightarrow \R^3, \end{array} \right. \mbox{ s. t.  } \\[24pt]
\dsp	\varepsilon_0 \, \partial_t {\bf E} + {\bf rot} \, {\bf H} + \varepsilon_0 \, \int_{\R} \partial_t \bbP (\cdot ; \xi) \, \md \nu_e(\xi) = 0, \quad ({\bf x}, t) \in \R^3 \times \R^+\\[12pt]
\dsp	\mu_0 \, \partial_t {\bf H} - {\bf rot} \, {\bf E} + \mu_0 \, \int_{\R} \partial_t {\bbM} (\cdot ; \xi) \, \md \nu_m(\xi) = 0, \quad ({\bf x}, t) \in \R^3 \times \R^+ \\[18pt]
\partial_t^2 \bbP (\cdot ; \xi) + \xi^2 \, \bbP (\cdot ; \xi) =   {\bf E},  \quad ({\bf x}, t ;\xi) \in \R^3  \times \R^+ \times \R, \\[12pt] \partial_t^2 \bbM (\cdot ; \xi) + \xi^2 \, \bbM (\cdot ; \xi) =   {\bf H}, \quad ({\bf x}, t ;\xi) \in \R^3  \times \R^+ \times \R \\[12pt]
{\bf E}({\bf x}, 0) =  {\bf E}_0({\bf x}),  \quad {\bf H}({\bf x}, 0) =  {\bf H}_0({\bf x}), \quad \bbP({\bf x}, 0 ;\xi) = 0, \quad \bbM({\bf x}, 0 ;\xi) = 0.
\end{array} \right.
	\end{equation}
		\end{theo}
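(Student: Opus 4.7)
The plan is to mimic the proof of theorem \ref{PDEmodel}, replacing the finite sums that defined $\mathbf{P}$ and $\mathbf{M}$ from Lorentz oscillators $\bbP_\ell, \bbM_\ell$ by integrals over $\xi$ against the measures $\nu_e$ and $\nu_m$ given by Nevanlinna's representation (\ref{expepsmu}). Concretely, I would define the polarization and magnetization as the continuous superpositions
\begin{equation*}
\mathbf{P}(\mathbf{x},t) := \varepsilon_0 \int_{\R} \bbP(\mathbf{x},t;\xi)\, \md\nu_e(\xi), \qquad \mathbf{M}(\mathbf{x},t) := \mu_0 \int_{\R} \bbM(\mathbf{x},t;\xi)\, \md\nu_m(\xi),
\end{equation*}
so that the first two evolution equations of (\ref{Lorentzsystemgene}) read exactly as the Maxwell system (\ref{Maxwell2}) with this $\mathbf{P}$ and $\mathbf{M}$. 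It then remains only to verify that the pair $(\mathbf{E},\mathbf{P})$ (resp.\ $(\mathbf{H},\mathbf{M})$) obeys the constitutive law $\widehat{\mathbf{D}} = \varepsilon(\omega)\widehat{\mathbf{E}}$ (resp.\ $\widehat{\mathbf{B}} = \mu(\omega)\widehat{\mathbf{H}}$) with $\varepsilon,\mu$ given by (\ref{expepsmu}).

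To do this, I would apply the Fourier--Laplace transform in time to the harmonic-oscillator equations for $\bbP(\cdot;\xi)$ and $\bbM(\cdot;\xi)$. Using the zero initial conditions $\bbP(\mathbf{x},0;\xi)=\partial_t\bbP(\mathbf{x},0;\xi)=0$ (and analogously for $\bbM$), formula (\ref{propFLT}) yields, for each fixed $\xi\in\R$ and each $\omega\in\C^+$,
\begin{equation*}
(\xi^2 - \omega^2)\,\widehat{\bbP}(\cdot,\omega;\xi) = \widehat{\mathbf{E}}(\cdot,\omega), \qquad (\xi^2 - \omega^2)\,\widehat{\bbM}(\cdot,\omega;\xi) = \widehat{\mathbf{H}}(\cdot,\omega).
\end{equation*}
Integrating these identities against $\md\nu_e(\xi)$ and $\md\nu_m(\xi)$ respectively and invoking the Nevanlinna representation (\ref{expepsmu}) in the equivalent form $\varepsilon(\omega)=\varepsilon_0\bigl(1+\int_{\R}\md\nu_e(\xi)/(\xi^2-\omega^2)\bigr)$, I obtain
\begin{equation*}
\widehat{\mathbf{P}}(\cdot,\omega) = \bigl(\varepsilon(\omega)-\varepsilon_0\bigr)\,\widehat{\mathbf{E}}(\cdot,\omega), \qquad \widehat{\mathbf{M}}(\cdot,\omega) = \bigl(\mu(\omega)-\mu_0\bigr)\,\widehat{\mathbf{H}}(\cdot,\omega),
\end{equation*}
which via (\ref{defPM}) is exactly (\ref{Dispersive}). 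Inserting back into the Maxwell system (\ref{Maxwell2}) recovers the dispersive Maxwell equations (\ref{Maxwell},\ref{Dispersive}), which is the content of the theorem.

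The main obstacle is to make the formal manipulations above rigorous, i.e.\ to choose a functional framework in which the parametrized family $\xi\mapsto\bbP(\cdot,\cdot;\xi)$ is sufficiently regular in $\xi$ for the above $\nu_e$-integrals to exist and for the exchange of Fourier--Laplace transform and integration against $\md\nu_e$ to be legitimate. For $\omega\in\C^+_\alpha$ with $\alpha>0$, the denominator satisfies $|\xi^2-\omega^2|\geq 2\,{\cal R}e\,\omega\cdot\alpha$ uniformly in $\xi\in\R$, so the finiteness condition (\ref{finiteness}) on $\nu_e,\nu_m$ guarantees absolute convergence of $\int_{\R}\md\nu_e(\xi)/(\xi^2-\omega^2)$ and allows one to apply Fubini's theorem. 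A clean way to phrase this is to view $\bbP$ as an element of $C^0\bigl(\R^+;L^2(\R^3;L^2(\R,\md\nu_e))^3\bigr)$, in which the oscillator equation becomes a well-posed evolution problem with values in a Hilbert space; the weighted integrability (\ref{finiteness}) is then exactly what makes the map $\bbP(\cdot;\cdot)\mapsto \mathbf{P}$ continuous and justifies every interchange of operations used above.
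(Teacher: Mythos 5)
Your proposal is correct and follows essentially the same route as the paper: the paper's proof simply says to repeat the argument of theorem \ref{PDEmodel} with the index $\ell$ replaced by $\xi$ and the finite sums replaced by integrals against $\md\nu_e$ and $\md\nu_m$, defining ${\bf P}=\varepsilon_0\int_{\R}\bbP(\cdot;\xi)\,\md\nu_e(\xi)$ and ${\bf M}=\mu_0\int_{\R}\bbM(\cdot;\xi)\,\md\nu_m(\xi)$, which is exactly what you do. Your additional remarks on the functional framework and on justifying the interchange of the Fourier--Laplace transform with the $\nu$-integrals (via (\ref{finiteness}) and Fubini) supply rigor the paper leaves implicit, though your pointwise lower bound $|\xi^2-\omega^2|\geq 2\,{\cal R}e\,\omega\cdot\alpha$ degenerates when ${\cal R}e\,\omega=0$ and should be replaced by, e.g., $|\xi^2-\omega^2|\geq c_\omega(1+\xi^2)^{0}$ with $c_\omega>0$ obtained from the non-vanishing of the denominator and its $O(\xi^2)$ growth.
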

		\begin{proof} The same as for theorem \ref{PDEmodel}, the index $\ell$ being replaced by the variable $\xi$ and sums over $\ell \in \{0, \cdots N_e\}$ (resp. $\{0, \cdots N_{m}\}$) by integrals over $\mathbb{R}$ with respect to $\md \nu_e(\xi)$ (resp. $\md \nu_m(\xi)$ ), in particular, the polarization ${\bf P}$ and the magnetization ${\bf M}$ (see (\ref{defPM})) are given by (also compare to (\ref{defPellMell}))\\[12pt]
\hspace*{3cm}$ \dsp 
{\bf P} = \varepsilon_0 \, \int_{\R} \bbP (\cdot ; \xi) \, \md \nu_e(\xi), \quad {\bf M} = \mu_0 \, \int_{\R} \bbM (\cdot ; \xi) \, \md \nu_m(\xi).
$		
\end{proof}
\noindent Like for (\ref{Lorentzsystem}), an energy conservation result holds for  (\ref{Lorentzsystemgene}) and implies physical passivity (cf. remark \ref{remPassivityEnergy}). 
\begin{theo} \label{thm.Energygene} Any smooth enough solution of  (\ref{Lorentzsystem}) satisfies the energy identity $\dsp \frac{d}{dt} \, {\cal E}_{tot}(t) = 0$ where
	\begin{equation} \label{energyidentitygene}
\left\{	\begin{array}{l}
\dsp {\cal E}_{tot}(t) :=  {\cal E}(t) + {\cal E}_{e}(t) + {\cal E}_{m}(t), \quad  \dsp {\cal E}(t) :=  \frac{1}{2}\int_{\R^3} \big( \varepsilon_0 \, |{\bf E}|^2  + \mu_0 \, |{\bf H}|^2 \, \big) \; d{\bf x}, \\[12pt]
\dsp  
{\cal E}_{e}(t) :=  \frac{1}{2} \, \varepsilon_0\int_{\R} \int_{\R^3} \Big( \, |\partial_t \bbP({\bf x}, t ;\xi)|^2  +  \xi^2 \, |\bbP({\bf x}, t ;\xi)|^2 \, \Big) \; d{\bf x} \; \md \nu_e(\xi),\\[12pt]
\dsp  
{\cal E}_{m}(t) :=   \frac{1}{2} \, \mu_0 \int_{\Lambda_m} \int_{\R^3} \Big( \, |\partial_t \bbM({\bf x}, t ;\xi)|^2  +  \xi^2 \, |\bbM({\bf x}, t ;\xi)|^2 \, \Big) \; d{\bf x} \; \md \nu_m(\lambda), 
\end{array} \right.
	\end{equation}
		\end{theo}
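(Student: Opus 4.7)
The plan is to mimic the proof of theorem \ref{thm.Energy} word for word, substituting the discrete sums $\sum_{\ell=0}^{N_e}\Omega_{e,\ell}^2(\cdot)$ (resp. $\sum_{\ell=0}^{N_m}\Omega_{m,\ell}^2(\cdot)$) by the integrals $\int_\R (\cdot)\,\md\nu_e(\xi)$ (resp. $\int_\R (\cdot)\,\md\nu_m(\xi)$). The starting point is independent of the constitutive law: taking the $L^2$ scalar product of the first Maxwell-type equation of (\ref{Lorentzsystemgene}) with $\mathbf{E}$, of the second with $\mathbf{H}$, adding the results and using that $\int_{\R^3}(\mathbf{rot}\,\mathbf{H})\cdot\mathbf{E}-(\mathbf{rot}\,\mathbf{E})\cdot\mathbf{H}=0$ for sufficiently decaying smooth fields, I recover the identity (\ref{idEnergy}):
$$
\frac{d}{dt}\mathcal{E}(t) + \int_{\R^3}\bigl(\partial_t\mathbf{P}\cdot\mathbf{E} + \partial_t\mathbf{M}\cdot\mathbf{H}\bigr)(\mathbf{x},t)\,d\mathbf{x}=0,
$$
with now $\mathbf{P}=\varepsilon_0\int_\R\bbP(\cdot;\xi)\,\md\nu_e(\xi)$ and $\mathbf{M}=\mu_0\int_\R\bbM(\cdot;\xi)\,\md\nu_m(\xi)$.

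The heart of the calculation is the transformation of the polarization/magnetization coupling term into a pure time derivative. Using Fubini to exchange the $\xi$-integral with the $\mathbf{x}$-integral, then substituting $\mathbf{E}=\partial_t^2\bbP(\cdot;\xi)+\xi^2\bbP(\cdot;\xi)$ from the third line of (\ref{Lorentzsystemgene}), I obtain
$$
\partial_t\bbP(\cdot;\xi)\cdot\mathbf{E}=\partial_t\bbP(\cdot;\xi)\cdot\partial_t^2\bbP(\cdot;\xi)+\xi^2\,\partial_t\bbP(\cdot;\xi)\cdot\bbP(\cdot;\xi)=\frac{1}{2}\partial_t\bigl(|\partial_t\bbP(\cdot;\xi)|^2+\xi^2|\bbP(\cdot;\xi)|^2\bigr).
$$
Integrating against $d\mathbf{x}$ and then against $d\nu_e(\xi)$, and finally pulling $\partial_t$ outside both integrals, identifies $\int_{\R^3}\partial_t\mathbf{P}\cdot\mathbf{E}\,d\mathbf{x}$ with $\frac{d}{dt}\mathcal{E}_e(t)$. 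The magnetic counterpart is identical and yields $\frac{d}{dt}\mathcal{E}_m(t)$. Adding everything gives $\frac{d}{dt}\mathcal{E}_{tot}(t)=0$.

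The only real obstacle, compared with the Lorentz case of theorem \ref{thm.Energy}, is the rigorous justification of the two exchanges of differentiation/integration (Fubini and differentiation under the $\nu_e$- and $\nu_m$-integrals), since the measures $\nu_e,\nu_m$ are merely positive Borel measures satisfying (\ref{finiteness}). This is precisely why the statement assumes "sufficiently smooth" solutions: one needs, uniformly on compact time intervals, $\nu_e$-integrability of $\xi\mapsto\|\partial_t\bbP(\cdot,t;\xi)\|_{L^2}^2+\xi^2\|\bbP(\cdot,t;\xi)\|_{L^2}^2$ (and likewise for $\bbM$), together with a dominating function for the time derivative. In practice these regularity hypotheses are built into the functional setting in which (\ref{Lorentzsystemgene}) is analysed (the Hilbert space in which ${\cal E}_{tot}$ is finite), so for solutions in that space the above manipulations are standard consequences of dominated convergence. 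Note finally that the zero initial data $\bbP(\cdot,0;\xi)=\partial_t\bbP(\cdot,0;\xi)=0$ (and likewise for $\bbM$) give $\mathcal{E}_e(0)=\mathcal{E}_m(0)=0$, so that the conservation identity yields $\mathcal{E}(t)\leq\mathcal{E}_{tot}(t)=\mathcal{E}_{tot}(0)=\mathcal{E}(0)$, recovering physical passivity exactly as in remark \ref{remPassivityEnergy}.
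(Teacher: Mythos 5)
Your proof is correct and follows exactly the route the paper takes: the paper's proof of this theorem is literally ``almost the same as for theorem \ref{thm.Energy}'', i.e.\ start from (\ref{idEnergy}), use the oscillator equation to rewrite $\partial_t\bbP\cdot{\bf E}$ as $\tfrac{1}{2}\partial_t\big(|\partial_t\bbP|^2+\xi^2|\bbP|^2\big)$, and replace the discrete sums by integrals against $\md\nu_e$ and $\md\nu_m$. Your additional remarks on justifying the interchange of $\partial_t$ with the $\nu$-integrals go slightly beyond what the paper records, but are consistent with its ``smooth enough solution'' hypothesis.
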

\noindent Behind the above energy conservation result is hidden the fact that (\ref{Lorentzsystemgene}) can be rewritten as an abstract Schr\"odinger equation involving some self-adjoint operator on an appropriate Hilbert space, see section \ref{Schrodinger}. 
\begin{proof} It is almost the same as for theorem \ref{thm.Energy}.
	\end{proof}
\noindent All these mathematical models give rise to {\bf finite propagation velocity}, bounded by
the speed of light $c_0$.
\begin{theo} \label{thm.Energygene2} Assume that the Cauchy data for (\ref{Lorentzsystem}), namely ${\bf E}_0$  and  ${\bf H}_0$
	have compact support included in the ball $B(0,R)$ of center $0$ and radius $R>0$. Then, at each time $t > 0$, the solution of (\ref{Lorentzsystem}) satisfies
\begin{equation} \label{propsupport}
\mbox{supp } {\bf E}(\cdot, t) \cup \mbox{supp } {\bf H}(\cdot, t)	\subset B(0, R + c_0 \, t) \quad (\mbox{i.e. } {\bf E}({\bf x}, t) = {\bf H}({\bf x}, t) = 0 \; \mbox{ for } \;  |{\bf x}|> R + c_0 \, t).	
\end{equation}
\end{theo}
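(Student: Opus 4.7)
The natural approach is the classical backward light cone energy method, applied to a localised version of the energy identity (\ref{energyidentitygene}). I would fix $(\mathbf{x}_0, t_0) \in \R^3 \times \R^+$ with $|\mathbf{x}_0| > R + c_0 t_0$ and, for $t \in [0, t_0]$, set $B_t := B(\mathbf{x}_0, c_0 (t_0 - t))$, which shrinks inward at speed $c_0$. The key object is the local total energy $\mathcal{E}_{tot}^{loc}(t)$, obtained from (\ref{energyidentitygene}) by replacing every $\int_{\R^3}$ by $\int_{B_t}$. My plan is to prove that $\mathcal{E}_{tot}^{loc}$ is non-increasing on $[0, t_0]$, then to exploit $B_0 \cap B(0,R) = \emptyset$ (a direct consequence of $|\mathbf{x}_0| > R + c_0 t_0$) to conclude $\mathcal{E}_{tot}^{loc}(t) \equiv 0$ on $[0, t_0]$, and finally use continuity in $t$ of the solution to deduce $\mathbf{E}(\mathbf{x}_0, t_0) = \mathbf{H}(\mathbf{x}_0, t_0) = 0$.

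For the monotonicity, I would rerun the computation of Theorem \ref{thm.Energygene} with $\R^3$ replaced by $B_t$, using Reynolds' transport formula $\frac{d}{dt} \int_{B_t} f = \int_{B_t} \partial_t f - c_0 \int_{\partial B_t} f \, dS$ to track the boundary contribution from the moving domain. Multiplying the two Maxwell equations of (\ref{Lorentzsystemgene}) by $\mathbf{E}$ and $\mathbf{H}$, the auxiliary oscillators by $\partial_t \bbP$ and $\partial_t \bbM$, integrating over $B_t$ and against $d\nu_e, d\nu_m$, and applying Green's identity $\int_{B_t} (\mathbf{H} \cdot \mathrm{rot}\, \mathbf{E} - \mathbf{E} \cdot \mathrm{rot}\, \mathbf{H}) = \int_{\partial B_t} (\mathbf{E} \times \mathbf{H}) \cdot \mathbf{n}\, dS$, all the volume contributions should cancel exactly as in the proof of energy conservation, leaving only boundary terms:
\begin{equation*}
\frac{d}{dt}\mathcal{E}_{tot}^{loc}(t) = \int_{\partial B_t}(\mathbf{E} \times \mathbf{H}) \cdot \mathbf{n}\, dS - \frac{c_0}{2} \int_{\partial B_t} \bigl(\varepsilon_0 |\mathbf{E}|^2 + \mu_0 |\mathbf{H}|^2\bigr)\, dS - c_0\, \mathcal{Q}(t),
\end{equation*}
where $\mathcal{Q}(t) \geq 0$ collects the manifestly non-negative polarization/magnetization boundary contributions $\frac{\varepsilon_0}{2} \int_{\partial B_t}\!\int_{\R}(|\partial_t \bbP|^2 + \xi^2 |\bbP|^2)\, d\nu_e(\xi)\, dS$ and its magnetic analogue.

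The decisive step is then the classical Poynting estimate: since $c_0 = (\varepsilon_0 \mu_0)^{-1/2}$, Cauchy--Schwarz combined with the arithmetic--geometric mean inequality gives pointwise $(\mathbf{E} \times \mathbf{H}) \cdot \mathbf{n} \leq |\mathbf{E}|\,|\mathbf{H}| \leq \frac{c_0}{2} \bigl( \varepsilon_0 |\mathbf{E}|^2 + \mu_0 |\mathbf{H}|^2 \bigr)$, so the first two boundary terms combine into a non-positive quantity and $\frac{d}{dt} \mathcal{E}_{tot}^{loc}(t) \leq -c_0 \mathcal{Q}(t) \leq 0$. Combined with the vanishing of $\mathbf{E}_0, \mathbf{H}_0$ on $B_0$ and the initial conditions $\bbP(\cdot, 0; \xi) = \bbM(\cdot, 0; \xi) = 0$ from (\ref{Lorentzsystemgene}) (together with the natural complements $\partial_t \bbP(\cdot, 0; \xi) = \partial_t \bbM(\cdot, 0; \xi) = 0$, as in (\ref{Lorentzsystem})), this forces $\mathcal{E}_{tot}^{loc}(0) = 0$, hence $\mathcal{E}_{tot}^{loc}(t) \equiv 0$ on $[0, t_0]$. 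Thus $\mathbf{E}$ and $\mathbf{H}$ vanish on every $B_t$ for $t < t_0$, and continuity in $t$ of the solution at $\mathbf{x}_0$ yields the required conclusion.

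The main technical hurdle I anticipate is not the geometric argument but the bookkeeping of the measure-valued auxiliary unknowns: one must justify the exchange of $\partial_t$ with the $d\nu_e, d\nu_m$ integrals and Fubini for the joint $d\mathbf{x}\, d\nu_e$ measure on $B_t \times \R$, as well as the existence of suitable boundary traces of $\bbP, \partial_t \bbP$ in the $d\nu_e$-weighted $L^2$ sense. Under the \emph{sufficiently smooth} hypothesis of the statement and the finiteness condition (\ref{finiteness}) on $\nu_e, \nu_m$, all integrands are non-negative and integrable, so Tonelli and dominated convergence should make these manipulations routine; a conceptually cleaner alternative would be to recast (\ref{Lorentzsystemgene}) as a self-adjoint semigroup and derive finite speed of propagation from the locality of the underlying generator.
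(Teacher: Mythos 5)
Your proposal is correct and is essentially the paper's own argument: the paper also uses the method of energy in a moving domain (it takes the exterior region $\Omega_{R,t}=\{|\mathbf{x}|>R+c_0 t\}$ rather than your backward cone of shrinking balls $B(\mathbf{x}_0,c_0(t_0-t))$, but this is only a cosmetic difference), computes the time derivative of the localized total energy including the moving-boundary contribution, and concludes via the same Poynting estimate based on $\varepsilon_0\mu_0 c_0^2=1$ together with the vanishing of the Cauchy data and of the auxiliary fields at $t=0$. No gap to report.
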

\begin{proof} 
Let $\mathbf{e}_e({\bf x}, t ;\xi)=\, |\partial_t \bbP({\bf x}, t ;\xi)|^2  +  \xi^2 \, |\bbP({\bf x}, t ;\xi)|^2$, 
$\mathbf{e}_m({\bf x}, t ;\xi)=  \, |\partial_t \bbM({\bf x}, t ;\xi)|^2  +  \xi^2 \, |\bbM({\bf x}, t ;\xi)|^2$.
	We use the method of energy in moving domains. Let us consider $\Omega_{R,t} = \{ {\bf x} \in \R^3 / |{\bf x}|> R + c_0 \, t \}$ (a moving domain). The idea is to prove that the positive-valued function 
\begin{eqnarray*}
\begin{array}{lll} t \; \mapsto \; {\cal E}_{R,tot}(t)& := & \dsp \frac{1}{2} \int_{\Omega_{R,t}} \big( \varepsilon_0 \, |{\bf E}({\bf x},t)|^2  + \mu_0 \, |{\bf H}({\bf x},t)|^2 \, \big) \; d{\bf x}\\[12pt]
	 & +  & \dsp \frac{1}{2} \int_{\R} \int_{\Omega_{R,t}} \varepsilon_0 \,{\bf e}_e ({\bf x}, t ;\xi)  \; d{\bf x}\; \md \nu_e(\xi) +  \frac{1}{2}\int_{\R}\int_{\Omega_{R,t}}  \varepsilon_0 \,{\bf e}_m ({\bf x}, t ;\xi) d{\bf x}  \; \md \nu_m(\xi), 
	 	\end{array}
\end{eqnarray*}
i.e. the total energy contained in $\Omega_{R,t}$ at time $t$, is a decreasing function of time. Since it vanishes at $t = 0$
(property of the Cauchy data), it is identically $0$ from which the conclusion follows. The main difference with the proofs of theorems \ref{thm.Energygene} and \ref{thm.Energy} is that boundary terms have to be handled in the integration by parts as
well as the fact that the domain $\Omega_{R,t}$ is moving. Doing so (details are left to the reader), one obtains 
$$
\begin{array}{lll} \dsp \frac{d{\cal E}_{R,tot}}{dt}(t)& = & \dsp - \frac{1}{2} \int_{\Gamma_{R,t}} \big( \varepsilon_0 \, c_0 \, |{\bf E}({\bf x},t)|^2  + \mu_0 \, c_0 \,|{\bf H}({\bf x},t)|^2 \, - {\bf n} \times {\bf E}({\bf x},t) \cdot {\bf H}({\bf x},t) \big) \; d{\bf x}\\[12pt]
	 & -  & \dsp \frac{c_0}{2} \int_{\Gamma_{R,t}} \int_{\Omega_{R,t}} \varepsilon_0 \,{\bf e}_e ({\bf x}, t ;\xi)  \; d{\bf x}\; \md \nu_e(\xi) -  \frac{c_0}{2}\int_{\R}\int_{\Gamma_{R,t}}  \mu_0 \,{\bf e}_m ({\bf x}, t ;\xi) d{\bf x}  \; \md \nu_m(\xi) \; \end{array}
	 $$
with $\Gamma_{R,t} := \partial \Omega_{R,t}$ with unit normal vector ${\bf n}$. This is easily proven to be negative since $\varepsilon_0 \mu_0 \, c_0^2 = 1$.
\end{proof}
\subsection{Reinterpretation as a Schr\"odinger equation and related spectral theory} \label{Schrodinger}

\subsubsection{A Schr\"odinger evolution equation}
In this section, for technical reasons, we use the following assumption (valid for many applications):
\begin{equation} \label{finiteness2}
	\int_{\R} \md \nu_{e}(\xi) < + \infty, \quad \int_{\R} \md \nu_{m}(\xi) < + \infty.
\end{equation} 
It holds e.g. for Drude dissipative models. 
Modulo the introduction of the additional unknowns $\widetilde\bbP := \partial_t \bbP$ and $\widetilde\bbM := \partial_t \bbM$, (\ref{Lorentzsystem}) can be rewritten as a Schr\"{o}dinger equation of the form
\begin{equation}\label{eq.schro}
\frac{\md \, {\bf U}}{\md\, t} + i\,\mathbb{A} \, {\bf U}={\bf F}, \quad {\bf U} = \big( {\bf E}, {\bf H}, \bbP, \widetilde\bbP, \bbM, \widetilde\bbM\big)
\end{equation}
where the \emph{Hamiltonian} $\bbA$ is an unbounded operator on the Hilbert space:
$$
\boldsymbol {\mathcal{H}}=  L^2(\R^3)^3 \times L^2(\R^3)^3 \times  \boldsymbol {\mathcal{  V}_{e}} \times \boldsymbol {\mathcal{H}_{e}}\times  \boldsymbol {\mathcal{V}_{m}} \times \boldsymbol { \mathcal{H}_{m} }.
$$ 
with the Hilbert spaces (equipped with their natural inner product)
$$ 
 \boldsymbol {\mathcal{H}_{s} }= L^2\big(\R, L^2(\R^3)^3 ; \md \nu_{s} \big), \
 \boldsymbol {\mathcal{V}_{s}}= L^2\big(\R, L^2(\R^3)^3;\,  \xi^2 \, \md \nu_{s} \big)  \ \mbox{ for } s=e \mbox{ or } m,
$$
so that the space  $\boldsymbol{\mathcal{H}}$-inner product is given by
\begin{equation}\label{eq.defprodscal}
\begin{array}{lll}
\dsp ({\bf U}, {\bf U}_*)_{\boldsymbol{\cal H}} =  \dsp  \int_{\R^3} \big(\varepsilon_0 \, {\bf E} \cdot {\bf \overline{E}}_*  + \mu_0 \, {\bf H} \cdot {\bf \overline{H}}_* \big) \, \md{\bf x} \hspace*{-0.2cm}  & + \,  
 \varepsilon_0 \dsp \int_{\R} \int_{\R^3} \big(\xi^2 \,  \bbP \cdot \overline{\bbP}_* +   \,  \widetilde \bbP \cdot \overline{ \widetilde \bbP}_*\big) \, \md {\bf x}\,  \md \nu_e(\xi) \\[12pt]
    &  + \mu_0 \, \dsp \int_{\R} \int_{\R^3} \big( \xi^2 \,  \bbM \cdot \overline{\bbM}_* +   \,  \widetilde \bbM \cdot \overline{ \widetilde \bbM}_*\big) \, \md {\bf x}\,  \md \nu_m(\xi)  . 
\end{array}
\end{equation}
Setting 
$\boldsymbol{\mathcal{D}_{s}}= \{\bbP \in  \boldsymbol {\mathcal{V}_{s} } \, \mid \, \xi^2 \, \bbP \in  \boldsymbol {\mathcal{H}_{s} }\}$ and 
$\boldsymbol{\mathcal{ \widetilde D}_{s}}=  \boldsymbol {\mathcal{H}_{s} } \cap \ \boldsymbol {\mathcal{V}_{s} }, s=e \mbox{ or } m,$
the domain of $\bbA$ is
$$
D(\bbA)= H(\operatorname{\bf rot},\R^3) \times H(\operatorname{\bf rot},\R^3) \times  \boldsymbol{\mathcal{D}_{e}} \times \boldsymbol{ \mathcal{ \widetilde D}_{e}}\times  \boldsymbol{\mathcal{ D}_{m} }\times  \boldsymbol{\mathcal{\widetilde D}_{m} },
$$ 
and the operator $\bbA$ is defined in block form by
\begin{equation}\label{eq.opA}
\bbA := \ i\, \begin{pmatrix}
0 &\varepsilon_0^{-1}\,\mbox{\bf rot} & 0&-\boldsymbol{{\cal S}_e} & 0 & 0\\[2pt]
- \mu_0^{-1}\,\mbox{\bf rot} & 0 &0 & 0& 0& -\boldsymbol{{\cal S}_m} \\[2pt]
0 & 0 &0  & {\bf I_e} &0 &0\\[2pt]
\boldsymbol{{\cal I}_e} & 0 &-\, \xi^2 {\bf I_e} &0 &0 &0\\[2pt]
0 & 0 &0  & 0 &0 & {\bf I_m} \\[2pt]
0 & \boldsymbol{{\cal I}_m}& 0 & 0 & -\, \xi^2{\bf I_m}  & 0
\end{pmatrix},
\end{equation}
\begin{equation}\label{eq.defopblock}
\hspace*{-3cm} \mbox{where we have set } \quad \left\{\begin{array}{lllll}
&\dsp \boldsymbol{{\cal S}_e} & \hspace*{-0.5cm} :   \boldsymbol {\mathcal{H}_{e} } \rightarrow L^2(\R^3)^3, &  (\boldsymbol{{\cal S}_e}\widetilde\bbP)({\bf x}) := \int_{\R} \widetilde\bbP({\bf x}, \xi) \, \md \nu_e(\xi),\,\\[5pt]
 &\boldsymbol{{\cal I}_e} & \hspace*{-0.5cm} : L^2(\R^3)^3  \rightarrow \boldsymbol {\mathcal{H}_{e} }, & (\boldsymbol{{\cal I}_e} {\bf E})({\bf x},\xi) := {\bf E}({\bf x}),\\[5pt]
&\dsp {\bf I_e} & \hspace*{-0.5cm}:{\boldsymbol{\cal \widetilde  D}_e} \subset \boldsymbol {\mathcal{H}_{e} } \rightarrow \boldsymbol {\mathcal{V}_{e} }, & ({\bf I_e}\widetilde\bbP)({\bf x}, \xi) := \widetilde\bbP({\bf x}, \xi),\,\\[5pt]
&  \xi^2{\bf I_e} & \hspace*{-0.5cm}: \boldsymbol{\cal {D}}_e \subset  \boldsymbol {\mathcal{V}_{e} } \rightarrow \boldsymbol {\mathcal{H}_{e} }, & (\xi^2{\bf I_e})\bbP({\bf x}, \xi) := \xi^2\bbP({\bf x}, \xi),\,
\end{array} \right.
\end{equation}
and where the operators $\boldsymbol{{\cal S}_m}$, $\boldsymbol{{\cal I}_m}$, $\dsp {\bf I}_m$ and $\xi^2{\bf I}_m$ are defined similarly, replacing  $e$ by $m$, ${\bf E}$ by ${\bf H}$ and $\bbP$ by $\bbM$.
Notice that condition (\ref{finiteness2}) is needed for the definition of the operators $\boldsymbol{{\cal I}_e}$ and $\boldsymbol{{\cal I}_m}$. Moreover, 
${\boldsymbol {\cal S}_e} \in {\cal L}(\boldsymbol{\cal H}_e, L^2(\R^3)^3)$ is the adjoint of $ \boldsymbol{{\cal I}_e} \in {\cal L}( L^2(\R^3)^3, \boldsymbol{\cal H}_e)$ and ${\boldsymbol {\cal S}_m} \in {\cal L}(\boldsymbol{\cal H}_m, L^2(\R^3)^3)$ is the adjoint of $\boldsymbol{{\cal I}_m} \in {\cal L}( L^2(\R^3)^3, \boldsymbol{\cal H}_m)$, $\xi^2 {\bf I_e}$ is the adjoint of ${\bf I_e}$ and  $\xi^2{\bf I_m}$ is the adjoint of ${\bf I_m}$. Using in particular these properties (for the symmetry of $\bbA$) and the fact that the domain of the adjoint of $\bbA$ coincides with $D(\bbA)$ (the details are left to the reader), one shows that
\begin{theo}\label{prop.autoadjoint}
The operator $\bbA: D(\bbA)\subset\mathcal{H} \longmapsto \mathcal{H}$ is self-adjoint.
\end{theo}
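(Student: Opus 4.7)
The plan is to prove self-adjointness in the classical two-step way, namely by first checking symmetry $\bbA\subset\bbA^*$ on $D(\bbA)$, and then showing the reverse inclusion $D(\bbA^*)\subset D(\bbA)$. The three algebraic adjoint identities announced in the excerpt, together with integration by parts for the curl on $\R^3$, will furnish step 1 by direct bookkeeping, while the bounded-operator character of $\boldsymbol{\mathcal{S}_s}$ and $\boldsymbol{\mathcal{I}_s}$ (which is exactly what the \emph{running hypothesis} (\ref{finiteness2}) buys us) is what will decouple step 2 into six separate componentwise regularity arguments.

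For step 1, given $U=({\bf E},{\bf H},\bbP,\widetilde\bbP,\bbM,\widetilde\bbM)$ and $V=({\bf E}_*,\ldots,\widetilde\bbM_*)$ both in $D(\bbA)$, I expand $(\bbA U,V)_{\boldsymbol{\mathcal H}}$ entry by entry according to (\ref{eq.opA}) and regroup the terms using: (i) the identity $\int_{\R^3}\mathbf{rot}\,{\bf H}\cdot\overline{{\bf E}_*}\,d{\bf x}=\int_{\R^3}{\bf H}\cdot\overline{\mathbf{rot}\,{\bf E}_*}\,d{\bf x}$ on $H(\mathbf{rot},\R^3)^2$ (no boundary in $\R^3$); (ii) the adjoint pair $\boldsymbol{\mathcal{S}_s}^*=\boldsymbol{\mathcal{I}_s}$ realized in (\ref{eq.defprodscal}) because the weights $\varepsilon_0$ (resp.\ $\mu_0$) on the $\bf E$-- and $\widetilde\bbP$--blocks (resp.\ $\bf H$-- and $\widetilde\bbM$--blocks) are identical; (iii) the pair ${\bf I}_s^*=\xi^2{\bf I}_s$ visible from $\int_\R\int_{\R^3}\xi^2({\bf I}_s\widetilde\bbP)\cdot\overline{\bbP_*}\,d{\bf x}\,d\nu_s(\xi)=\int_\R\int_{\R^3}\widetilde\bbP\cdot\overline{\xi^2{\bf I}_s\bbP_*}\,d{\bf x}\,d\nu_s(\xi)$. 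The minus signs placed in (\ref{eq.opA}) in front of $\mu_0^{-1}\mathbf{rot}$, of $\boldsymbol{\mathcal{S}_s}$ and of $\xi^2{\bf I}_s$ are precisely calibrated so that, after multiplication by the overall $i$, every paired contribution cancels its mirror, giving $(\bbA U,V)_{\boldsymbol{\mathcal H}}=(U,\bbA V)_{\boldsymbol{\mathcal H}}$.

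For step 2, let $V\in D(\bbA^*)$, meaning the linear form $U\mapsto (\bbA U,V)_{\boldsymbol{\mathcal H}}$ is continuous on $D(\bbA)$ for $\|\cdot\|_{\boldsymbol{\mathcal H}}$. I plug in test vectors $U$ with a single nonzero component: choosing $U=({\bf E},0,0,0,0,0)$ with ${\bf E}\in\mathcal D(\R^3)^3$ yields $(\bbA U,V)_{\boldsymbol{\mathcal H}}=i\,(\mathbf{rot}\,{\bf E},{\bf H}_*)_{L^2} +i\varepsilon_0(\boldsymbol{\mathcal{I}_e}{\bf E},\widetilde\bbP_*)_{\boldsymbol{\mathcal H_e}}$ up to a sign; since $\boldsymbol{\mathcal{I}_e}$ is bounded by (\ref{finiteness2}), continuity in $\|{\bf E}\|_{L^2}$ forces $\mathbf{rot}\,{\bf H}_*\in L^2(\R^3)^3$, hence ${\bf H}_*\in H(\mathbf{rot},\R^3)$. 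The analogous test on the ${\bf H}$-slot gives ${\bf E}_*\in H(\mathbf{rot},\R^3)$. Testing with $U=(0,0,0,\widetilde\bbP,0,0)$ and using identity (iii) forces $\xi^2{\bf I}_e\bbP_*\in \boldsymbol{\mathcal H_e}$, i.e.\ $\bbP_*\in\boldsymbol{\mathcal D_e}$; testing with $U=(0,0,\bbP,0,0,0)$ forces $\widetilde\bbP_*\in\boldsymbol{\widetilde{\mathcal D}_e}$. The same two tests on the magnetic slots yield $\bbM_*\in\boldsymbol{\mathcal D_m}$ and $\widetilde\bbM_*\in\boldsymbol{\widetilde{\mathcal D}_m}$. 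Altogether $V\in D(\bbA)$, concluding self-adjointness.

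The main obstacle, and the place where the argument would collapse without (\ref{finiteness2}), is step 2: the couplings $\boldsymbol{\mathcal{S}_s}$ and $\boldsymbol{\mathcal{I}_s}$ must be \emph{bounded} in order to be harmless perturbations when one isolates the unbounded part of $(\bbA U,V)_{\boldsymbol{\mathcal H}}$ that controls the $H(\mathbf{rot})$-regularity of ${\bf E}_*$, ${\bf H}_*$ and the $\boldsymbol{\mathcal D_s}/\boldsymbol{\widetilde{\mathcal D}_s}$-regularity of $\bbP_*,\widetilde\bbP_*,\bbM_*,\widetilde\bbM_*$. The estimate $\|\boldsymbol{\mathcal{S}_s}\widetilde\bbP\|_{L^2}^2\leq\nu_s(\R)\,\|\widetilde\bbP\|_{\boldsymbol{\mathcal H_s}}^2$ (Cauchy--Schwarz) is immediate under (\ref{finiteness2}), and with it the six componentwise tests performed above really do extract the regularity dictated by the diagonal blocks of $\bbA$, giving $D(\bbA^*)=D(\bbA)$ and hence self-adjointness.
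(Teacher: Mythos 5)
Your proof is correct and follows exactly the strategy the paper itself indicates (the paper only sketches it, invoking the adjoint pairs $\boldsymbol{{\cal S}_s}^*=\boldsymbol{{\cal I}_s}$, $({\bf I_s})^*=\xi^2{\bf I_s}$ for symmetry and leaving the verification $D(\bbA^*)=D(\bbA)$ to the reader); you supply precisely those details via the componentwise tests, correctly identifying that (\ref{finiteness2}) is what makes the coupling operators bounded and hence harmless in the domain identification. No gaps.
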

\noindent From the semi-group theory (or  Hille-Yosida's theorem \cite{Pazy}), it follows that $\bbA$ is the generator
of a unitary semi-group. From this we obtain the following corollary.
\begin{coro}
Given any initial data ${\bf U}_0 \in D(\bbA)$, the evolution problem (\ref{eq.schro}) admits a unique solution
$$
{\bf U}(t) \in C^1(\R^+; {\cal H}) \cap C^0(\R^+; D(\bbA)) \quad (\mbox{ often denoted  } {\bf U}(t) = e^{-i\bbA t}{\bf U}_0)
$$
which satisfies $\|{\bf U}(t)\|_{\cal H} = \|{\bf U}_0\|_{\cal H}$.
\end{coro}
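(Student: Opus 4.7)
The plan is to recognize (\ref{eq.schro}) (in the homogeneous case $\mathbf{F} = 0$, which is what the corollary addresses) as an abstract Schr\"odinger equation $\frac{d\mathbf{U}}{dt} = -i\bbA\,\mathbf{U}$ with initial data $\mathbf{U}_0 \in D(\bbA)$, and to invoke Stone's theorem (equivalently, the Hille--Yosida theorem applied to the skew-adjoint generator $-i\bbA$). Since Theorem \ref{prop.autoadjoint} ensures that $\bbA$ is self-adjoint on $\boldsymbol{\mathcal{H}}$, the operator $-i\bbA$ is skew-adjoint, i.e.\ $(-i\bbA)^* = i\bbA = -(-i\bbA)$. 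Stone's theorem then yields a strongly continuous one-parameter unitary group $\{U(t)\}_{t\in\R}$ on $\boldsymbol{\mathcal{H}}$, denoted $U(t) = e^{-i\bbA t}$, characterized by $\|U(t)\psi\|_{\boldsymbol{\mathcal{H}}} = \|\psi\|_{\boldsymbol{\mathcal{H}}}$ for all $\psi\in\boldsymbol{\mathcal{H}}$.

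Next, I would invoke the standard regularity statement of semigroup theory: for any $\mathbf{U}_0 \in D(\bbA)$, the map $t \mapsto \mathbf{U}(t) := U(t)\mathbf{U}_0$ satisfies $\mathbf{U}(t) \in D(\bbA)$ for every $t$, is continuous into $D(\bbA)$ equipped with its graph norm, is continuously differentiable into $\boldsymbol{\mathcal{H}}$, and obeys $\frac{d\mathbf{U}}{dt}(t) = -i\bbA\,\mathbf{U}(t)$. This immediately produces a solution in $C^1(\R^+;\boldsymbol{\mathcal{H}})\cap C^0(\R^+;D(\bbA))$ matching the initial condition $\mathbf{U}(0)=\mathbf{U}_0$, and the conservation of the $\boldsymbol{\mathcal{H}}$-norm $\|\mathbf{U}(t)\|_{\boldsymbol{\mathcal{H}}} = \|\mathbf{U}_0\|_{\boldsymbol{\mathcal{H}}}$ follows from the unitarity of $U(t)$.

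For uniqueness, I would give the standard energy-type argument directly, to make the corollary self-contained. If $\mathbf{U}_1$ and $\mathbf{U}_2$ are two solutions in the prescribed regularity class with the same initial data, then $\mathbf{V} := \mathbf{U}_1 - \mathbf{U}_2$ belongs to the same class and satisfies $\mathbf{V}(0) = 0$ and $\frac{d\mathbf{V}}{dt} = -i\bbA\mathbf{V}$. Differentiating the squared norm,
\begin{equation*}
\frac{d}{dt}\|\mathbf{V}(t)\|_{\boldsymbol{\mathcal{H}}}^2 = 2\,\mathrm{Re}\bigl(\tfrac{d\mathbf{V}}{dt},\mathbf{V}\bigr)_{\boldsymbol{\mathcal{H}}} = -2\,\mathrm{Re}\bigl(i\bbA\mathbf{V},\mathbf{V}\bigr)_{\boldsymbol{\mathcal{H}}} = 2\,\mathrm{Im}\,(\bbA\mathbf{V},\mathbf{V})_{\boldsymbol{\mathcal{H}}} = 0,
\end{equation*}
where the last equality uses that $(\bbA\mathbf{V},\mathbf{V})_{\boldsymbol{\mathcal{H}}} \in \R$ because $\bbA$ is self-adjoint. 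Hence $\|\mathbf{V}(t)\|_{\boldsymbol{\mathcal{H}}}$ is constant equal to $0$, i.e.\ $\mathbf{U}_1 \equiv \mathbf{U}_2$. The same calculation applied to $\mathbf{U}(t)$ itself gives an independent verification of the conservation of norm.

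There is no real obstacle in this proof: all the difficulty was absorbed into establishing the self-adjointness of $\bbA$ (Theorem \ref{prop.autoadjoint}), which relies on the delicate identification of adjoints among the block operators $\boldsymbol{\mathcal{S}}_{e,m}$, $\boldsymbol{\mathcal{I}}_{e,m}$, ${\bf I}_{e,m}$, $\xi^2{\bf I}_{e,m}$ introduced in (\ref{eq.defopblock}) under the integrability hypothesis (\ref{finiteness2}). Once self-adjointness is in hand, the existence, uniqueness and conservation statements are exactly the content of Stone's theorem.
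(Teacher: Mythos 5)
Your proposal is correct and follows essentially the same route as the paper, which likewise deduces the corollary from the self-adjointness of $\bbA$ (theorem \ref{prop.autoadjoint}) by invoking semigroup theory (Hille--Yosida / Stone) to obtain the unitary group $e^{-i\bbA t}$. Your added explicit uniqueness computation is a harmless elaboration of the standard argument the paper leaves implicit.
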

\begin{rem} By definition of the norm $\|\cdot\|_{\cal H}$, the conservation of $\|{\bf U}(t)\|_{\cal H}$ is nothing but the conservation of energy (cf. theorem \ref{thm.Energygene}).
	\end{rem}
\subsubsection{The reduced Hamiltonian}
Our goal is to compute the spectrum $\sigma(\bbA)$ of  $\bbA$.
As the medium is homogeneous, the spectral theory of the operator $\bbA$ defined in (\ref{eq.opA}) reduces, using space Fourier transform $\bbF$, see (\ref{eq.deffour}), to the one of a family of self-adjoint operators $(\bbA_{\bf{k}} )_{\bf{k}\in \R^3}$, reduced Hamiltonians, defined on functions which depend only on the variable $\xi.$ 
The knowledge of the spectra $\sigma(\bbA_{\bf{k}})$ will lead to an expression of $\sigma(\bbA)$.
\begin{equation}\label{eq.deffour}
\bbF u(\bf{k}) := \frac{1}{(2\pi)^{\frac{3}{2}}} \int_{\R^3} u( \bf{x})\, \me^{-i {\bf k } \cdot \bf{x} }\, \md \bf{x} \quad \forall u \in L^1(\R^3)\cap L^2(\R^3).
\end{equation}
For functions of both variables $\bf{x}$ and $\xi,$ we still denote by $\bbF$ the partial Fourier transform in the $\bf{x}$ variable. In particular, the partial Fourier transform of an element $ {\bf U} \in  \boldsymbol{\mathcal{H}}$ is such that
\begin{equation}\label{eq.defH1D}
\bbF  {\bf U}(\bf{k},\cdot) \in \mathcal{H} :=\C^3 \times \C^3  \times  \mathcal{V}_{e} \times  \mathcal{H}_{e} \times  \mathcal{V}_{m} \times  \mathcal{H}_{m} \quad \mbox{for a.e. } {\bf k} \in \R^3,
\end{equation}
where
$
 \mathcal{H}_{s} = L^2 (\C^3,\md \nu_{s})\mbox{ and } \mathcal{V}_{s}=L^2(\C^3,\xi^2\md \nu_{s}), \, \mbox{ for } s=e \mbox{ or } m.
$
The Hilbert space $\mathcal{H}$ is endowed with the inner product $(\cdot\,,\cdot)_{\mathcal{H}}$ defined as (\ref{eq.defprodscal}) for $(\cdot\,,\cdot)_{\boldsymbol{\mathcal{H}}}$ but without the integration in $\bf{x}$.
Applying $\bbF$ to the Schr\"{o}dinger equation (\ref{eq.schro}) leads us to introduce a family of operators $(\bbA_{\bf{k}} )_{\bf{k}\in \R^3}$ in $\mathcal{H}$ related to $\bbA$ by 
\begin{equation}\label{eq.AtoAk}
\mathcal{F}(\bbA {\bf U})(\cdot, \bf {k}) = \bbA_{\bf k}  \, \mathcal{F} {\bf U}(\cdot\,, {\bf k})  \quad \mbox{for a. e. } \bf{k}  \in \R^3.
\end{equation}
The domain of $\bbA_{\bf k}$ is $D(\bbA_{\bf {k}}) := \C^3 \times \C^{3} \times \mathcal{D}_{e} \times  \mathcal{ \widetilde D}_{e}\times  \mathcal{ D}_{m} \times \mathcal{\widetilde D}_{m}$ and
\begin{equation}\label{eq.opAk}
\quad\bbA_{\bf{k}}:= \ i \, \begin{pmatrix}
0 &\varepsilon_0^{-1}\, i \, \bf{k} \times & 0&-\mathcal{S}_{{e}} & 0 & 0\\[2pt]
- \mu_0^{-1}\,  i \, \bf{k} \times & 0 &0 & 0& 0& -\mathcal{S}_{{m}} \\[2pt]
0 & 0 &0  & \mathrm{I}_e &0 &0\\[2pt]
\mathcal{I}_e& 0 &-\xi^2 \mathrm{I}_{e} &0 &0 &0\\[2pt]
0 & 0 &0  & 0 &0 & \mathrm{I}_m \\[2pt]
0 & \mathcal{I}_m& 0 & 0 & -\xi^2 \mathrm{I}_{m} & 0 
\end{pmatrix}.
\end{equation}
The operators $\mathcal{S}_s$, $\mathcal{I}_{s}$, $\mathrm{I}_s$ and $\xi^2 \, \mathrm{I}_s$ for $s=e$ or $m$, and their corresponding domains, are defined as their bold version in (\ref{eq.defopblock}) but for functions of the variable $\xi$ only. It is easy to prove the
\begin{theo}
$\bbA_{\bf k}: D(\bbA_{\bf k})\subset\ \mathcal{H} \to \mathcal{H}$ is self-adjoint for all ${\bf k} \in \R^3.$
\end{theo}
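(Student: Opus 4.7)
The plan is to mirror the argument used for Theorem \ref{prop.autoadjoint}. The main simplification is that, after Fourier reduction, the unbounded operator ${\bf rot}$ has become the bounded multiplication $i{\bf k}\times$ on $\C^3$, so the only genuinely unbounded entries of $\bbA_{\bf k}$ are the multiplications by $\xi^2$ appearing as $\xi^2\mathrm{I}_e$ and $\xi^2\mathrm{I}_m$. Accordingly, I would split
\[
\bbA_{\bf k}=\bbA_{\bf k}^0+B_{\bf k},
\]
where $\bbA_{\bf k}^0$ retains only the four ``harmonic oscillator'' entries coupling $\bbP$ with $\widetilde\bbP$ and $\bbM$ with $\widetilde\bbM$ on the domain $D(\bbA_{\bf k})$, while $B_{\bf k}$ collects the remaining blocks: $\pm i\varepsilon_0^{-1}{\bf k}\times$, $\mp i\mu_0^{-1}{\bf k}\times$, and the four operators $\pm\mathcal{S}_s,\pm\mathcal{I}_s$. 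The boundedness of $B_{\bf k}$ on $\mathcal{H}$ is immediate: $\|\mathcal{I}_e{\bf E}\|_{\mathcal{H}_e}^2=|{\bf E}|^2\int_\R \md\nu_e(\xi)<\infty$ by assumption (\ref{finiteness2}), $\mathcal{S}_e$ is bounded as its adjoint, and the cross-product blocks are trivially bounded on $\C^3$.

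The next step is the symmetry of $\bbA_{\bf k}$, which is driven by three elementary adjoint relations with respect to the inner product (\ref{eq.defprodscal}): (a) $(i{\bf k}\times)^\ast=i{\bf k}\times$ on $\C^3$ by antisymmetry of the cross product, the factors $\varepsilon_0^{-1},\mu_0^{-1}$ in (\ref{eq.opAk}) exactly compensating the $\varepsilon_0,\mu_0$ weights in the inner product so that the ${\bf E}$--${\bf H}$ coupling becomes Hermitian; (b) $\mathcal{S}_s=\mathcal{I}_s^\ast$ by Fubini for $s=e,m$; (c) $(\xi^2\mathrm{I}_s)^\ast=\mathrm{I}_s$ with respect to the weighted inner products of $\mathcal{V}_s$ (weight $\xi^2$) and $\mathcal{H}_s$ (weight $1$) against $\md\nu_s$, which holds because the $\xi^2$-weight in $\mathcal{V}_s$ exactly compensates its absence in $\mathcal{H}_s$. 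The outer factor $i$ in (\ref{eq.opAk}) then converts the resulting skew patterns into Hermitian ones.

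To conclude, it remains to establish genuine self-adjointness of $\bbA_{\bf k}^0$ on $D(\bbA_{\bf k})$; once this is done, Kato--Rellich applied to the bounded symmetric perturbation $B_{\bf k}$ yields the self-adjointness of $\bbA_{\bf k}$ on the same domain. Since $\bbA_{\bf k}^0$ is block-diagonal, with trivial action on $\C^3\times\C^3$ and two independent blocks on $\mathcal{V}_s\times\mathcal{H}_s$ for $s=e,m$, it suffices to treat one such block, which I would realise as a direct integral in $\xi$ of fiber $6\times 6$ Hermitian matrices acting on $\C^3\times\C^3$ endowed with the weighted inner product of weights $(\xi^2,1)$, and then invoke the standard self-adjointness theorem for multiplication-type operators on weighted $L^2$-spaces. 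The main obstacle is precisely this last identification: one must verify that the natural maximal domain produced by the direct-integral construction coincides exactly with $\mathcal{D}_s\times\widetilde{\mathcal{D}}_s$, i.e., is characterised by the two conditions $\xi^2\bbP\in\mathcal{H}_s$ and $\widetilde\bbP\in\mathcal{V}_s$, which requires a careful bookkeeping of the two weights $\md\nu_s$ and $\xi^2\,\md\nu_s$.
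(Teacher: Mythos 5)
Your proposal is correct, but it follows a different route from the one the paper has in mind. The paper offers no written proof of this statement (``It is easy to prove the\dots''), and for the analogous full-space result (theorem \ref{prop.autoadjoint}) its strategy is to establish symmetry from the adjoint pairs $\boldsymbol{{\cal S}_s}=\boldsymbol{{\cal I}_s}^*$ and $(\xi^2{\bf I_s})^*={\bf I_s}$ and then to identify the domain of the adjoint of the \emph{full coupled operator} with $D(\bbA)$ by a direct computation. You instead exploit the one structural gain of the Fourier reduction --- the curl has become the bounded Hermitian multiplication $i\,{\bf k}\times$ on $\C^3$ --- to write $\bbA_{\bf k}$ as a block-diagonal ``oscillator'' operator plus a bounded symmetric perturbation (boundedness of $\mathcal{I}_s$, hence of $\mathcal{S}_s=\mathcal{I}_s^*$, being exactly where (\ref{finiteness2}) enters), and then invoke Kato--Rellich. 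What this buys is that the only nontrivial domain identification is for the decoupled $2\times 2$ blocks $i\left(\begin{smallmatrix}0&\mathrm{I}_s\\-\xi^2\mathrm{I}_s&0\end{smallmatrix}\right)$ on $\mathcal{V}_s\times\mathcal{H}_s$, where the maximal domain is visibly $\mathcal{D}_s\times\mathcal{\widetilde D}_s$ and the adjoint computation reduces to density of compactly-in-$\xi$ supported (away from $0$) functions in the weighted spaces; the bounded perturbation then leaves the domain untouched. Note that this shortcut is specific to the reduced Hamiltonian and would not apply to $\bbA$ itself, where ${\bf rot}$ is unbounded. Two points you flag deserve the care you announce but are not gaps: the possible atom of $\nu_s$ at $\xi=0$ (where the $\xi^2$-weight degenerates, so the fiber of $\mathcal{V}_s$ there is trivial and the block acts as zero) and the fact that your adjoint relation (c) actually belongs to the unperturbed part $\bbA_{\bf k}^0$ rather than to $B_{\bf k}$; both are consistent with your decomposition. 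An even shorter finish, in the spirit of the paper's later resolvent computation, would be to verify surjectivity of $\bbA_{\bf k}^0\pm i$ explicitly on each block using $\xi^2+1\geq 1$, which avoids the adjoint-domain bookkeeping altogether.
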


\subsubsection{Resolvent of the reduced Hamiltonian}
In this paragraph, we compute the resolvent  $R_{\bf k}(\omega):=(\bbA_{\bf {k}}-\omega )^{-1}, \omega \notin  \R$ in order to obtain the spectrum of $\bbA_{\bf{k}}$.  Given ${\bf G}=({\bf e} , \, {\bf h}, \, {\bf p}, {\bf \widetilde{p}}, {\bf m}, {\bf \widetilde{m}})^{\top}\in\mathcal{H}$,  and setting ${\bf U}=({\bf E}, {\bf H}, {\bf \bbP}, \widetilde{ \bbP}, \bbM, \widetilde{ \bbM})^{\top},$ we have 
\begin{equation}\label{eq.linearres}
{\bf U} = R_{\bf k}(\omega)\,{\bf G} \quad \Longleftrightarrow \quad (\bbA_{\bf{k}}-\omega ) \,{\bf U}={\bf G}.
\end{equation}
Solving the linear system \eqref{eq.linearres} is straightforward (details are left to the reader) and leads to an explicit expression of $R_{\bf k}(\omega)$
(proposition \ref{prop.res}). Denoting ${\bf u} \cdot {\bf v}$ the inner product in $\C^3$, we set
\begin{equation}
\forall \; {\bf k} \neq 0 \mbox{ in } \R^3, \; \widehat{\bf {k}}={\bf k}/|\bf{k}|, \quad \forall \; {\bf u} \in \C^3, \; {\bf u}_{ \parallel}({\bf k}) :=  (\widehat{\bf {k}} \cdot {\bf u}) \,  \widehat{\bf {k}} \mbox{  and  } {\bf u}_{ \perp}({\bf k}) = {\bf u} - {\bf u}_{ \parallel}({\bf k}) .
\end{equation}
Then, from ${\bf G}$, we first define the vector fields of the variable $\xi$ (note that these depend linearly on ${\bf G}$)
$$
{\bf \widetilde{G}_p}= \frac{\omega \, {\bf \widetilde{ p }} -i \, \xi^2 {\bf p} }{\xi^2-\omega^2}, \,  \ {\bf \widetilde{G}_m}= \frac{\omega \, {\bf \widetilde{ m }} -i \, \xi^2 {\bf m} }{\xi^2-\omega^2} \ \mbox{ and } \ {\bf G_h}=-\mu_0 \,  \frac{i \, \mathcal{S}_{m}{\bf \widetilde{G}_m} + {\bf h}}{\omega \mu(\omega)} \, ,
$$
and introduce the operators, where $\mathcal{F}(\omega)=\omega^2 \varepsilon(\omega) \, \mu(\omega)$ has been defined in \eqref{defF}:
\begin{equation*}
\begin{array}{llll}
\bbS_{\omega,{\bf k}} {\bf G}:=& \dsp \hspace*{-0.3cm} -  \varepsilon_0 \Big[ \, \frac{i \, \mathcal{S}_{e} \bf{\widetilde{G}}_{p ,\parallel} ({\bf k}) +e_{\parallel}({\bf k}) }{\omega \varepsilon(\omega)} +  \omega \mu(\omega) \, \frac{i \, \mathcal{S}_{e} \bf{\widetilde{G}}_{p ,\perp} ({\bf k}) +e_{\perp}({\bf k}) }{\mathcal{F}(\omega)-|{\bf k}|^2} \, \Big]  -  \mu_0 \, {\bf k} \times  \frac{i \, \mathcal{S}_{m} \bf{\widetilde{G}}_{m} ({\bf k}) +{\bf h} }{\mathcal{F}(\omega)-|{\bf k}|^2},  \\[12pt]
\bbV_{\omega,{\bf k}}{\bf E}:=& \dsp \hspace*{-0.3cm} \Big( \, {\bf E}, \, \frac{{\bf k} \times {\bf E}}{\omega \mu(\omega)}, \, \frac{{\bf E}}{\xi^2-\omega^2}, -\frac{ i\, \omega {\bf E} }{\xi^2-\omega^2}, \, \frac{1}{{\xi^2-\omega^2}} \frac{{\bf k} \times {\bf E}}{\omega \mu(\omega)}, \,  - \frac{i \omega  }{\xi^2-\omega^2} \frac{{\bf k} \times {\bf E}}{\omega \mu(\omega)}\,  \Big)^t,\\ [12pt]
\bbT_{\omega,{\bf k}}{\bf G}:=& \dsp \hspace*{-0.3cm}  \Big( \, 0, \, {\bf G_h}, \, \frac{\omega {\bf p}  +i \, {\bf \widetilde{ p}}}{\xi^2-\omega^2}  , \, {\bf \widetilde{G}_p} , \, \frac{{\bf G_h}}{\xi^2-\omega^2} + \frac{\omega {\bf m } +i \, {\bf \widetilde{m} }}{\xi^2-\omega^2},  - \frac{i \omega \, {\bf G}_{h} }{\xi^2-\omega^2}+ {\bf  \widetilde{G}_m} \, \Big)^t.
\end{array}
\end{equation*}
The operator $\bbS_{\omega,{\bf k}}$ is continuous from $\mathcal{H}$ into $\C^3$, $\bbV_{\omega,{\bf k}}$ is continuous from $\C^3$ to $\mathcal{H}$ and  $\bbT_{\omega,{\bf k}}$ is continuous from $ \mathcal{H}$ to $\mathcal{H}$.
We point out that $\omega \varepsilon(\cdot)$ and $\omega \mu(\cdot)$, as non-zero Herglotz functions, can not vanish in the upper-half plane $\C^+$ (see lemma \ref{lemHerglotz}). Also, $\mathcal{F}(\omega)=|{\bf k}|^2$ has no solution $\omega$ in $\C^+$  by the same argument as in lemma \ref{lem_LL_ND}. Thus, the operators $\bbS_{\omega,{\bf k}}$, $\bbV_{\omega,{\bf k}}$ and $\bbT_{\omega,{\bf k}}$ are well-defined. The main result of this section is 
\begin{prop}\label{prop.res}
Let ${\bf k} \in \R^{3}\setminus \{0\}$. The resolvent of the self-adjoint operator $\bbA_{k}$ can be expressed as
\begin{equation} \label{resolvante}
R_{{\bf k}}(\omega) = \bbT_{\omega,{\bf k}} + \bbV_{\omega,{\bf k}}\,\bbS_{\omega,{\bf k}}, \quad\forall \; \omega \in \C\setminus \R.
\end{equation}
\end{prop}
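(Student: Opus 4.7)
The strategy is to solve explicitly the resolvent equation $(\bbA_{\bf k}-\omega){\bf U}={\bf G}$, component by component, by eliminating first the auxiliary fields $(\bbP,\widetilde\bbP,\bbM,\widetilde\bbM)$ in favour of the electromagnetic unknowns $({\bf E},{\bf H})$, then solving a closed $6\times 6$ linear system for $({\bf E},{\bf H})$, and finally back-substituting. Write out the six scalar-vector equations obtained from (\ref{eq.opAk}). The third and fourth lines form a $2\times 2$ algebraic system in $\xi$ for $(\bbP,\widetilde\bbP)$ with right-hand sides $({\bf p},\widetilde{\bf p})$ and source term ${\bf E}$; solving yields, for each $\xi$,
\[
\bbP = \frac{{\bf E}+\omega{\bf p}+i\widetilde{\bf p}}{\xi^2-\omega^2},\qquad \widetilde\bbP = \widetilde{\bf G}_p - \frac{i\omega\,{\bf E}}{\xi^2-\omega^2},
\]
and an analogous pair of formulas for $(\bbM,\widetilde\bbM)$ with source ${\bf H}$. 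Note that $\omega\in\C\setminus\R$ ensures $\xi^2-\omega^2\neq 0$, so these expressions are well-defined in the appropriate weighted $L^2$ spaces.

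Next, substitute $\widetilde\bbP$ and $\widetilde\bbM$ into the first two lines (the "Maxwell" block). The key observation is that
\[
\mathcal{S}_e\widetilde\bbP = \mathcal{S}_e\widetilde{\bf G}_p - i\omega\,{\bf E}\int_\R\frac{\md\nu_e(\xi)}{\xi^2-\omega^2} = \mathcal{S}_e\widetilde{\bf G}_p - i\omega\Big(\frac{\varepsilon(\omega)}{\varepsilon_0}-1\Big){\bf E},
\]
by the representation formula (\ref{expepsmu}), and similarly for the magnetic block using $\mu(\omega)$. The $\omega\,{\bf E}$ and $\omega\,{\bf H}$ terms then combine with the $\omega {\bf E}$ and $\omega {\bf H}$ produced by the diagonal $-\omega$ part to give the compact system
\[
\omega\varepsilon(\omega){\bf E}+{\bf k}\times{\bf H} = -\varepsilon_0\bigl(i\mathcal{S}_e\widetilde{\bf G}_p+{\bf e}\bigr),\qquad
\omega\mu(\omega){\bf H}-{\bf k}\times{\bf E} = -\mu_0\bigl(i\mathcal{S}_m\widetilde{\bf G}_m+{\bf h}\bigr).
\]
This is precisely the time-harmonic Maxwell system with effective source terms, so the definition ${\bf G}_h$ in the statement appears naturally as ${\bf H} = ({\bf k}\times{\bf E})/(\omega\mu(\omega)) + {\bf G}_h$.

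The heart of the computation is to invert this $6\times 6$ system for $({\bf E},{\bf H})$. Eliminating ${\bf H}$ and using ${\bf k}\times({\bf k}\times{\bf E}) = -|{\bf k}|^2{\bf E}_\perp$ (with the decomposition ${\bf E}={\bf E}_\parallel+{\bf E}_\perp$ introduced in the statement) decouples the parallel and perpendicular components. The parallel equation carries only the scalar factor $\omega\varepsilon(\omega)$, whereas the perpendicular equation carries the factor $(\mathcal{F}(\omega)-|{\bf k}|^2)/(\omega\mu(\omega))$. Inverting separately produces exactly the three contributions that make up $\bbS_{\omega,{\bf k}}{\bf G}$, i.e.\ ${\bf E} = \bbS_{\omega,{\bf k}}{\bf G}$ (checking that the signs match, after the cross-product is brought inside). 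Back-substituting this ${\bf E}$ into the formulas for $({\bf H},\bbP,\widetilde\bbP,\bbM,\widetilde\bbM)$ obtained earlier and grouping the data-only part with the $\bbV_{\omega,{\bf k}}{\bf E}$ part yields ${\bf U}=\bbT_{\omega,{\bf k}}{\bf G}+\bbV_{\omega,{\bf k}}{\bf E}$, which is (\ref{resolvante}).

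The main delicate point, apart from careful bookkeeping of signs, is to justify that the three denominators appearing in (\ref{resolvante}), namely $\omega\varepsilon(\omega)$, $\omega\mu(\omega)$ and $\mathcal{F}(\omega)-|{\bf k}|^2$, are all nonzero for every $\omega\in\C\setminus\R$. For $\omega\in\C^+$ this is the content of lemma \ref{lemHerglotz}(i) applied to the non-constant Herglotz functions $\omega\varepsilon(\omega)/\varepsilon_0$ and $\omega\mu(\omega)/\mu_0$ (non-vanishing), together with the argument of lemma \ref{lem_LL_ND} (both real and imaginary parts of $\mathcal{F}(\omega)=|{\bf k}|^2$ cannot hold simultaneously for $\operatorname{Im}\omega>0$). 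The case $\omega\in\C^-$ reduces to $\C^+$ by the reality principle ${\bf (RP)}$. This guarantees that $\bbS_{\omega,{\bf k}}$, $\bbV_{\omega,{\bf k}}$ and $\bbT_{\omega,{\bf k}}$ are bounded operators with the continuity properties stated, and that the solution ${\bf U}$ obtained actually belongs to $D(\bbA_{\bf k})$ (the factors $\xi^2/(\xi^2-\omega^2)$ and $\omega/(\xi^2-\omega^2)$ remain bounded in $\xi$ since $\operatorname{Im}\omega\neq 0$). Uniqueness is automatic once the self-adjointness of $\bbA_{\bf k}$ is invoked: any $\omega\in\C\setminus\R$ lies in the resolvent set, so the explicit right-inverse constructed above is the resolvent.
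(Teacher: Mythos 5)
Your proposal is correct and follows essentially the same route as the paper, which simply states that solving the linear system $(\bbA_{\bf k}-\omega){\bf U}={\bf G}$ is straightforward and leaves the details to the reader; you carry out exactly that elimination (auxiliary fields first, then the parallel/perpendicular splitting of the reduced Maxwell system) and justify the non-vanishing of the denominators by the same Herglotz and lemma \ref{lem_LL_ND} arguments invoked in the paragraph preceding the proposition. No gap; your write-up merely supplies the computation the paper omits.
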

\subsubsection{Spectrum of the reduced Hamiltonian}
 We shall use the following characterizations of $\sigma(\bbA_{\bf {k}})$ and the point spectrum $\sigma(\bbA_{\bf {k}})$ (see \cite{Ges-00}). First, as
$
\|R_{{\bf k}}(\omega)\|=\mathrm{d}(\omega, \sigma(\bbA_{\bf {k}}))^{-1}
$
where $\mathrm{d}(\cdot , \sigma(\bbA_{\bf {k}}))$ denotes the distance function to $\sigma(\bbA_{\bf {k}})$, 
$\omega_0 \in \R$ belongs to $\sigma(\bbA_{\bf {k}})$ if and only if $\|R_k(\omega_0+i \eta)\|$ blows up  when $\eta \searrow 0$. Second,
$\omega_0$ belongs to  $\sigma_p(\bbA_{\bf {k}})$ if and only if there exists ${\bf G}\in \mathcal{H}$ such that $\lim_{\eta \searrow 0}\eta \operatorname{Im}(R_{\bf k}(\omega_0+i\eta) {\bf G}, {\bf G})_{\mathcal{H}}>0$.
\subsubsection*{Case 1: $\omega_0 \in J=\operatorname{supp}(\nu_e)\cup \operatorname{supp}(\nu_m)$}~\\[-6pt]
Assume that $\omega_0 \in \operatorname{supp}( \nu_{e})$ (a similar proof applies for $\omega_0 \in \operatorname{supp}( \nu_{m})$) and let us show  that $\omega_0 \in \sigma(\bbA_{\bf k})$. Since $\sigma(\bbA_{\bf k})$ is a closed set, it is not restrictive to assume that $\omega_0 \neq 0$, except if $\omega_0=0$ is an isolated punctual mass ( this case is considered at the end of this paragraph). 
Let $p(\xi) \in L^2(\R; d \nu_e)$ be a scalar odd function (to be fixed later). Consider the particular choice
$${\bf G}=(0, 0, p(\xi) \, \widehat{\bf {k}},0,0,0) \in \mathcal{H}.$$ 
Thanks to \eqref{resolvante} and using again the expression of the operators, one computes that
\begin{equation}\label{eq.formres}
R_{\bf k}(\omega) {\bf G}=\Big( 0 , 0, \frac{\omega \,  p(\xi)\widehat{{\bf k}}}{\xi^2 -\omega^2},- \frac{i \xi^2  p(\xi)\widehat{{\bf k}}}{\xi^2 -\omega^2},0,0\Big)^t. 
\end{equation}
In particular
$ \dsp
\Big\| \, \frac{(\omega_0 + i \eta) \,   p(\xi)\widehat{{\bf k}} }{\xi^2 -(\omega_0 + i \eta)^2} \, \Big\|_{ \mathcal{V}_e}^2 \leq \|R_{\bf k}(\omega_0+i\eta) {\bf G}\|_{\mathcal{H}}^2 
$
i. e., using the definition of the norm in ${\cal V}_e$,
\begin{equation} \label{test}
|\omega_0 + i \eta|^2 \int_{\R} \frac{\xi^2 |p(\xi)|^2 \md \nu_e(\xi) }{|\xi^2-(\omega_0 + i \eta)^2|^2}  \leq  \|R_{\bf k}(\omega_0+i\eta) {\bf G}\|_{\mathcal{H}}^2.
\end{equation}
As $\omega_0 \in \operatorname{supp}(\nu_{e})$, by definition of the support \cite{Mat-99}, for any $\delta > 0$, the Borel set $A_\delta=\{\xi \in \R \mid |\xi^2-\omega_0^2| \leq \delta^2 \}$ satisfies $\nu_e(A_\delta) > 0$. Let us choose in (\ref{test}) $p = p_\delta$ odd such that $p_\delta^2 = {\bf 1}_{A_\delta}$ (this always possible as soon as $0\notin A_\delta$ which is the case for $\delta$ small enough since $\omega_0\neq0$). For such $\delta>0$ 
\begin{equation} \label{test2}
\forall \; \eta \in (0, \eta_0), \;  \quad |\omega_0 + i \eta|^2 \int_{A_\delta} \frac{\xi^2  \, \md \nu_e(\xi) }{|\xi^2-(\omega_0 + i \eta)^2|^2}  \leq  \|R_{\bf k}(\omega_0+i\eta) {\bf G}\|_{\mathcal{H}}^2.
\end{equation}
By definition of $A_\delta$, for $\xi \in A_\delta, \xi^2 + \omega_0^2 \leq \delta^2 + 2\omega_0^2$ so that
$$
|\xi^2-(\omega_0 + i \eta)^2|^2 =   (\xi^2 - \omega_0^2)^2 + \eta^4 + 2(\xi^2 + \omega_0^2 )\eta^2 \leq 
 \delta^{4} + \eta^4 + 2(\delta^2 + 2\omega_0^2 ) \eta^2.
$$
Thus (\ref{test2}) yields
$
\dsp  \frac{|\omega_0 + i \eta|^2}{\delta^{4} + \eta^4 + 2(\delta^2 + 2\omega_0^2 ) \eta^2} \int_{A_\delta}\xi^2  \md \nu_e(\xi) \leq  \|R_{\bf k}(\omega_0+i\eta) {\bf G}\|_{\mathcal{H}}^2
$ for $\delta$ small enough.\\[12pt]
Since $ \dsp \| {\bf G}\|_{\mathcal{H}}^2=\dsp \int_{A_\delta}\xi^2  \md \nu_e(\xi)>0$, making $\delta \searrow  0$ leads to $\dsp \frac{|\omega_0 + i \eta|^2}{\eta^4 + 4\omega_0^2  \eta^2} \leq  \|R_{\bf k}(\omega_0+i\eta)\|^2.$\\[12pt]
Thus (since $\omega_0 \neq 0$),  we obtain that  $\|R_k(\omega_0+i \eta)\|$ blows up  when $\eta \searrow 0$, which concludes the proof. \\[12pt]
Finally, let us pay a particular attention to the sets
\begin{equation} \label{defPePm}
{\cal P}_e = \{ \omega \in \R \; / \; \mbox{ such that } \nu_e\big( \{\omega\}\big) >0\}, \quad {\cal P}_m = \{ \omega \in \R \; / \; \mbox{ such that } \nu_m\big( \{\omega\}\big) >0\}.
\end{equation}
Note that in the case of local media, ${\cal P}_e$ and ${\cal P}_m$ are nothing but the sets of poles of $\varepsilon(\omega)$ and $\mu(\omega)$.
Let $\omega_0\in \mathcal{P}_e$ (a similar proof works for $\mathcal{P}_m$). As $\nu_e$ is symmetric, one deduces that $\nu_e(\{-\omega_0\})\neq 0$.
If one chooses ${\bf G}=(0, 0, p \, \hat{{\bf k}},0,0,0) \in \mathcal{H}$ with $ p=\bf{1}_{\{\omega_0\}}-\bf{1}_{\{-\omega_0\}}$, one obtains:
$$
\lim_{\eta \searrow 0}\eta \operatorname{Im}(R_{\bf k}(\omega_0+i\eta) {\bf G}, {\bf G})_{\mathcal{H}}=\lim_{\eta \searrow 0}\operatorname{Im}\left(\frac{2 \eta \, (\omega_0+i \, \eta) \, \omega_0^2}{-2i  \eta \, \omega_0+\eta^2\, \omega_0^2}\right)=\omega_0^2>0,
$$
which implies  $\omega_0 \in \sigma_p(\bbA_{\bf {k}})$. When $\omega_0=0\in {\cal P}_e $, one let the reader to show that if one takes
${\bf G}=(0, 0,0,  \bf{1}(\{0\})\, \hat{{\bf k}},0,0,0) \in \mathcal{H}$, one has $\lim_{\eta \searrow 0}\eta \operatorname{Im}(R_{\bf k}(\omega_0+i\eta) {\bf G}, {\bf G})_{\mathcal{H}}=1$ and thus $0\in \sigma_p(\bbA_{\bf {k}})$.
\subsubsection*{Case 2: $\omega_0 \in  I=\R \setminus J, $}~\\[-6pt]  
First note that the presence of $\xi^2 - (\omega_0 + i \eta)^2$ in denominators of the expression of $\bbT_{\omega_0 + i \eta,{\bf k}}$ and $\bbV_{\omega_0 + i \eta,{\bf k}}$ does not matter since $\omega_0$ is outside the supports of the measures $\nu_e$ and $\nu_m$ (and also $-\omega_0\not \in J$, since $J$ is symmetric with respect to the origin). Next, using remark \ref{eq.hergprop}, we know that the Herglotz functions $\omega \varepsilon(\cdot)$ and $\omega \mu(\cdot)$ have a continuous extension from the upper-half plane to the open set $I$. These extensions are real-valued and analytic on $I$. Thus, it is readily seen from \eqref{resolvante} and the expression of $\bbT_{\omega,{\bf k}}$, $\bbV_{\omega,{\bf k}}$ and $\bbS_{\omega,{\bf k}}$ (details are left to the reader) that the resolvent $R_{\bf k}(\omega_0 +i\, \eta)$  blows up when $\eta \to 0$ if and only if $\omega_0$ is a zero of the functions $\omega \varepsilon(\omega)$, $\omega \mu(\omega)$ or $\mathcal{F}(\omega)-|{\bf k}|^2$.
According to the notation used for local media (see definition \ref{defPolesSZeroes} and corollary \ref{smoothness}), we set
\begin{equation} \label{defsets}
\mathcal{Z}_{e}=\{ \omega \in I \mid \omega \varepsilon(\omega)=0\}, \quad  \mathcal{Z}_{m}=\{ \omega \in I \mid \omega \mu(\omega)=0\}  \mbox{  and  } \Omega({\bf k}) = \{ \omega \in I \mid \mathcal{F}(\omega)=|{\bf k}|^2\}.
\end{equation}
By evenness and analyticity of $\varepsilon(\omega)$ and $\mu(\omega)$ along $I$, the sets ${Z}_{e}$, $\mathcal{Z}_{m}$ and $\Omega({\bf k})$ are symmetric with respect to the origin and made of isolated points in $I$. The main difference with the case of local media is that one cannot exclude the fact that these sets be infinite (but always countable)!
$\mathcal{Z}_{e}$ and $\mathcal{Z}_{m}$ contains only simple zeros by lemma \ref{lemHerglotz}. For ${\bf k}\neq0$, the zeros of $\mathcal{F}(\cdot)-|{\bf k}|^2$ are simple too. The argument is the same as for local media  (lemma \ref{simplicity}). Thanks to these properties, it is clear that $\|R_k(\omega_0+i \eta)\|$ remains bounded for any $\eta > 0$ when $\omega_0 \notin \mathcal{Z}_{e} \cup \mathcal{Z}_{m} \cup \Omega({\bf k})$ while one can show that for any $\omega_0 \in \mathcal{Z}_{e} \cup \mathcal{Z}_{m} \cup \Omega({\bf k})$, $\|R_k(\omega_0+i \eta)\| \geq C_0 \; \eta^{-1}, C_0>0$ (again details are left to the reader). Thus $\omega_0 \in \sigma(\bbA_{\bf k})$ and is thus an eigenvalue since it is an isolated point in $\sigma(\bbA_{\bf k})$ \cite{Kato}.\\[12pt] 
Let us summarize what we have obtained in the following 
\begin{theo}\label{prop.specAk}
Let $\bf{k}\in \R^3\setminus \{0\}$, $J = \operatorname{supp}(\nu_e)\cup \operatorname{supp}(\nu_m)$ and $I=\R \setminus J$. Let $\mathcal{Z}_e$ , $\mathcal{Z}_m$, $\mathcal{P}_e$ , $\mathcal{P}_m$ and $\Omega({\bf k})$ defined by (\ref{defsets}) and (\ref{defPePm}). Then
\begin{equation}\label{spectreAk}
\sigma(\bbA_{\bf k})=J \cup \Omega({\bf k}) \cup \mathcal{Z}_e  \cup \mathcal{Z}_m, \quad 
\sigma_p(\bbA_{\bf {k}})  \cap I =  \mathcal{Z}_{e} \cup \mathcal{Z}_{m} \cup \Omega({\bf k}), \quad \mathcal{P}_e  \cup \mathcal{P}_m \subset \sigma_p(\bbA_{\bf {k}}). 
\end{equation}
\end{theo}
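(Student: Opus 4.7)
The proof strategy is to exploit the explicit resolvent formula from Proposition \ref{prop.res} together with the two classical criteria recalled just before the theorem: $\omega_0\in\sigma(\bbA_{\bf k})$ iff $\|R_{\bf k}(\omega_0+i\eta)\|\to +\infty$ as $\eta\searrow 0$, and $\omega_0\in\sigma_p(\bbA_{\bf k})$ iff there exists ${\bf G}\in\mathcal{H}$ such that $\liminf_{\eta\searrow 0}\eta\,\operatorname{Im}\,(R_{\bf k}(\omega_0+i\eta){\bf G},{\bf G})_{\mathcal{H}}>0$. The natural split is between $\omega_0\in J$ (where $\omega\varepsilon$ and $\omega\mu$ live on the boundary of their domain of analyticity) and $\omega_0\in I$ (where these functions extend analytically and are real-valued on $I$ by remark \ref{eq.hergprop}), as is done in the text preceding the statement.

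For $\omega_0\in J\setminus\{0\}$, assume (without loss of generality) that $\omega_0\in\operatorname{supp}(\nu_e)$. The plan is to test with a probe ${\bf G}=(0,0,p(\xi)\widehat{\bf k},0,0,0)$ with $p$ odd, plug this into (\ref{resolvante}) and observe that most blocks simplify so that the relevant contribution is the component in $\mathcal{V}_e$ of the form $(\omega+i\eta)p(\xi)\widehat{\bf k}/(\xi^2-(\omega_0+i\eta)^2)$. Localizing $p$ on the Borel set $A_\delta=\{\xi:|\xi^2-\omega_0^2|\leq\delta^2\}$, which has positive $\nu_e$-mass for every $\delta>0$ by the very definition of the support, and bounding the denominator from above by $\delta^4+\eta^4+2(\delta^2+2\omega_0^2)\eta^2$, one obtains $\|R_{\bf k}(\omega_0+i\eta)\|^2\geq |\omega_0+i\eta|^2/(\eta^4+4\omega_0^2\eta^2)$ after letting $\delta\to 0$, which blows up as $\eta\searrow 0$. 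Hence $J\setminus\{0\}\subset\sigma(\bbA_{\bf k})$, and the case $\omega_0=0$ is included if it is an isolated atom of $\nu_e$; otherwise it is recovered by closedness of the spectrum. To improve this to an eigenvalue at atoms $\omega_0\in\mathcal{P}_e$, use symmetry of $\nu_e$ to test with $p=\mathbf{1}_{\{\omega_0\}}-\mathbf{1}_{\{-\omega_0\}}$: a direct computation gives $\lim_{\eta\searrow 0}\eta\operatorname{Im}(R_{\bf k}(\omega_0+i\eta){\bf G},{\bf G})_{\mathcal{H}}=\omega_0^2>0$ when $\omega_0\neq 0$, and the adaptation to $0\in\mathcal{P}_e$ is done with ${\bf G}=(0,0,0,\mathbf{1}_{\{0\}}\widehat{\bf k},0,0,0)$. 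The same argument with $e$ replaced by $m$ handles $\mathcal{P}_m$.

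For $\omega_0\in I$, the factors $\xi^2-(\omega_0+i\eta)^2$ appearing in the denominators of $\bbT_{\omega,{\bf k}}$, $\bbV_{\omega,{\bf k}}$, $\bbS_{\omega,{\bf k}}$ stay bounded away from zero on $\operatorname{supp}(\nu_e)\cup\operatorname{supp}(\nu_m)$ since $\pm\omega_0\notin J$; moreover the Herglotz functions $\omega\varepsilon(\omega)$, $\omega\mu(\omega)$ extend analytically and are real on $I$ by remark \ref{eq.hergprop}. Reading the explicit form of (\ref{resolvante}), the only possible singularities of $\omega\mapsto R_{\bf k}(\omega)$ at a point $\omega_0\in I$ come from the vanishing of the analytic scalar denominators $\omega\varepsilon(\omega)$, $\omega\mu(\omega)$, and $\mathcal{F}(\omega)-|{\bf k}|^2$; that is, from $\omega_0\in\mathcal{Z}_e\cup\mathcal{Z}_m\cup\Omega({\bf k})$. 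These zeros are isolated points of $I$: those of $\omega\varepsilon$, $\omega\mu$ are simple by lemma \ref{lemHerglotz}(iii), and those of $\mathcal{F}(\cdot)-|{\bf k}|^2$ are simple for ${\bf k}\neq 0$ by exactly the argument of lemma \ref{simplicity} (using lemma \ref{lemmetechnique} which applies on each open component of $I$). Hence at such an $\omega_0$ the resolvent norm behaves like $C_0\eta^{-1}$ (one reads this off the pole of order one in the scalar factor; test on an appropriate ${\bf G}$ chosen to excite the corresponding numerator), placing $\omega_0$ in $\sigma(\bbA_{\bf k})$, and as it is isolated in $\sigma(\bbA_{\bf k})$ it must be in $\sigma_p(\bbA_{\bf k})$ \cite{Kato}. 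Away from these points the resolvent stays bounded, so such $\omega_0$ are in the resolvent set.

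Assembling the two cases yields $\sigma(\bbA_{\bf k})=J\cup\Omega({\bf k})\cup\mathcal{Z}_e\cup\mathcal{Z}_m$, the identity $\sigma_p(\bbA_{\bf k})\cap I=\mathcal{Z}_e\cup\mathcal{Z}_m\cup\Omega({\bf k})$, and the inclusion $\mathcal{P}_e\cup\mathcal{P}_m\subset\sigma_p(\bbA_{\bf k})$. The main obstacle I anticipate is the bookkeeping in Case 1: one must carefully verify that the lower bound on $\|R_{\bf k}(\omega_0+i\eta){\bf G}\|_{\mathcal{H}}$ really captures the worst-case component of the vector-valued resolvent (not masked by cancellations coming from the $\omega\varepsilon$, $\omega\mu$ factors appearing elsewhere in $\bbT$ and $\bbV$), and to deal cleanly with the borderline case $\omega_0=0$ when it lies in $J$ but is not an isolated atom. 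Everything else reduces to careful but routine application of the spectral characterizations combined with the explicit form of (\ref{resolvante}).
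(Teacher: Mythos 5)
Your proposal is correct and follows essentially the same route as the paper: the same two-case split between $J$ and $I$, the same probe ${\bf G}=(0,0,p(\xi)\widehat{\bf k},0,0,0)$ localized on $A_\delta$ with the same denominator bound for Case 1, the same test functions $\mathbf{1}_{\{\omega_0\}}-\mathbf{1}_{\{-\omega_0\}}$ for the atoms, and the same identification of singularities with the zeros of $\omega\varepsilon$, $\omega\mu$, $\mathcal{F}-|{\bf k}|^2$ (simple, isolated, hence eigenvalues) in Case 2. The concerns you flag at the end (no hidden cancellations, the non-isolated atom at $0$) are resolved exactly as you anticipate, via the explicit formula (\ref{eq.formres}) and the closedness of the spectrum respectively.
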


\subsubsection{Spectrum of the Hamiltonian}
\begin{theo}\label{th.spec}
Let $J = \operatorname{supp}(\nu_e)\cup \operatorname{supp}(\nu_m)$, $I=\R \setminus J$ and $\mathcal{Z}_e$ , $\mathcal{Z}_m$, $\mathcal{P}_e$ and  $\mathcal{P}_m$  defined by (\ref{defsets}) and (\ref{defPePm}).  Let ${\cal F}(\omega) := \omega^2 \, \varepsilon(\omega) \, \mu(\omega) : I \rightarrow \R$. Then the spectrum $\sigma(\bbA)$ and the point spectrum $\sigma_p(\bbA)$ of the operator $\bbA$ satisfy:
\begin{equation}\label{eq.specA}
(i) \; \sigma(\bbA)=J \cup {\mathcal{F}^{-1}(\R^{+})} , \quad (ii) \ \mathcal{P}_e \cup \mathcal{P}_m  \cup  \mathcal{Z}_e  \cup \mathcal{Z}_m\subset \sigma_p(\bbA) \quad (iii) \; \sigma_p(\bbA)\cap I = \mathcal{Z}_e \cup \mathcal{Z}_m.
\end{equation}
\end{theo}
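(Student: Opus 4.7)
The plan is to exploit the fact that, by translation invariance, the Fourier transform $\bbF$ gives the unitary equivalence
\[
\bbA \;\simeq\; \int^{\oplus}_{\R^{3}} \bbA_{\bf k}\, d{\bf k}
\]
on $L^{2}(\R^{3};\mathcal{H})$, and then to invoke Theorem~\ref{prop.specAk}, which completely describes each fibre $\sigma(\bbA_{\bf k}) = J \cup \mathcal{Z}_{e} \cup \mathcal{Z}_{m} \cup \Omega({\bf k})$. The general theory of direct integrals of self-adjoint operators tells us that $\omega_{0}\in\sigma(\bbA)$ if and only if, for every $\varepsilon>0$, the set $\{{\bf k}\in\R^{3}:\,d(\omega_{0},\sigma(\bbA_{\bf k}))<\varepsilon\}$ has positive Lebesgue measure; similarly, $\omega_{0}\in\sigma_{p}(\bbA)$ if and only if the set $\{{\bf k}\in\R^{3}:\,\omega_{0}\in\sigma_{p}(\bbA_{\bf k})\}$ has positive Lebesgue measure.

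For assertion $(i)$, I would observe that $J$, $\mathcal{Z}_{e}$ and $\mathcal{Z}_{m}$ are independent of ${\bf k}$ and hence automatically contribute to $\sigma(\bbA)$ (they lie in every fibre's spectrum). For the $\Omega({\bf k})$ contribution, the map ${\bf k}\mapsto |{\bf k}|^{2}$ sends $\R^{3}$ onto $\R^{+}$, so $\bigcup_{{\bf k}\in\R^{3}}\Omega({\bf k}) = \mathcal{F}^{-1}(\R^{+})\cap I$. Moreover, for any $\omega_{0}\in I$ with $\mathcal{F}(\omega_{0})>0$, continuity of $\mathcal{F}$ at $\omega_{0}$ combined with the fact that $\{{\bf k}:\bigl||{\bf k}|^{2}-\mathcal{F}(\omega_{0})\bigr|<\delta\}$ is a spherical shell of positive measure, shows that the distance from $\omega_{0}$ to $\sigma(\bbA_{\bf k})$ is $<\varepsilon$ for a set of ${\bf k}$ of positive measure. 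Combining this with the trivial inclusions $J, \mathcal{Z}_{e}, \mathcal{Z}_{m}\subset \mathcal{F}^{-1}(\R^{+})\cup J$ gives $\sigma(\bbA)\supset J\cup \mathcal{F}^{-1}(\R^{+})$. For the converse, if $\omega_{0}\notin J$ and $\mathcal{F}(\omega_{0})<0$ (or $\omega_{0}$ not even in the domain of $\mathcal{F}$ but outside $J$), the fibre spectra $\sigma(\bbA_{\bf k})$ stay uniformly away from $\omega_{0}$ for ${\bf k}$ in a set of full measure in any neighborhood, and thus $\omega_{0}\notin\sigma(\bbA)$.

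For $(ii)$, Theorem~\ref{prop.specAk} gives that every $\omega_{0}\in\mathcal{P}_{e}\cup\mathcal{P}_{m}\cup\mathcal{Z}_{e}\cup\mathcal{Z}_{m}$ lies in $\sigma_{p}(\bbA_{\bf k})$ for \emph{every} ${\bf k}\in\R^{3}$. Using the explicit expressions already assembled in the resolvent computation, one can construct, for each such ${\bf k}$, an eigenvector ${\bf V}({\bf k};\cdot)\in\mathcal{H}$ depending measurably on ${\bf k}$. Multiplying by a compactly supported scalar cut-off $\chi({\bf k})$ to ensure square-integrability and applying $\bbF^{-1}$ produces a non-zero element of $\boldsymbol{\mathcal{H}}$ in $\ker(\bbA-\omega_{0})$. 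Assertion $(iii)$ then follows from the fact that for $\omega_{0}\in I$, Theorem~\ref{prop.specAk} gives $\omega_{0}\in\sigma_{p}(\bbA_{\bf k})$ iff $\omega_{0}\in\mathcal{Z}_{e}\cup\mathcal{Z}_{m}\cup\Omega({\bf k})$; but the set $\{{\bf k}:\omega_{0}\in\Omega({\bf k})\} = \{|{\bf k}|^{2}=\mathcal{F}(\omega_{0})\}$ is a $2$-sphere of Lebesgue measure zero in $\R^{3}$, so it cannot force $\omega_{0}\in\sigma_{p}(\bbA)$ unless already $\omega_{0}\in\mathcal{Z}_{e}\cup\mathcal{Z}_{m}$ (which kicks in for every ${\bf k}$).

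The main technical obstacle will be the rigorous deployment of the direct-integral characterizations of $\sigma(\bbA)$ and $\sigma_{p}(\bbA)$ in terms of positivity of Lebesgue measure of the associated ${\bf k}$-sets, together with the measurable selection of fibre eigenvectors in Step~$(ii)$. Once these measure-theoretic facts are in place, all remaining steps reduce to elementary continuity and geometry of level sets of $\mathcal{F}$.
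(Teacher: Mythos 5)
Your overall strategy coincides with the paper's: decompose $\bbA$ through the Fourier transform as a direct integral of the fibre operators $\bbA_{\bf k}$, apply the Reed--Simon characterizations of the spectrum and point spectrum of a decomposable operator (Theorem XIII.85 of \cite{Ree-78}), and feed in theorem \ref{prop.specAk}. The inclusion $\sigma(\bbA)\subset J\cup\mathcal{F}^{-1}(\R^{+})$, step (ii) and step (iii) are handled exactly as in the paper; note only that your measurable-selection construction of eigenvectors in (ii) is superfluous, since the characterization $\sigma_p(\bbA)=\{\omega \mid \nu(\{{\bf k} \mid \omega\in\sigma_p(\bbA_{\bf k})\})>0\}$ concludes immediately once one observes that this set of ${\bf k}$ is all of $\R^3$.

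There is, however, one genuine gap, in the proof that $\mathcal{F}^{-1}(\R^{+}_{*})\subset\sigma(\bbA)$. You argue that \emph{continuity} of $\mathcal{F}$ at $\omega_0$, together with the positive measure of the shell $\{{\bf k} : \bigl|\,|{\bf k}|^2-\mathcal{F}(\omega_0)\bigr|<\delta\}$, forces $\mathrm{d}(\omega_0,\sigma(\bbA_{\bf k}))<\varepsilon$ for a positive-measure set of ${\bf k}$. Continuity alone does not give this: for ${\bf k}$ in the shell with $|{\bf k}|^2\neq\mathcal{F}(\omega_0)$ you need a solution $\omega$ of $\mathcal{F}(\omega)=|{\bf k}|^2$ lying \emph{inside} $(\omega_0-\varepsilon,\omega_0+\varepsilon)$, i.e.\ you need the image $\mathcal{F}\big((\omega_0-\varepsilon,\omega_0+\varepsilon)\big)$ to contain a full neighbourhood of $R=\mathcal{F}(\omega_0)$. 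Were $\mathcal{F}$ locally constant near $\omega_0$, this would fail and the only ${\bf k}$ with $\omega_0\in\Omega({\bf k})$ would form a sphere of Lebesgue measure zero, giving no contribution to $\sigma(\bbA)$. The paper closes this by proving that $\mathcal{F}'(\omega_0)\neq 0$ wherever $\mathcal{F}(\omega_0)>0$ (the analogue of lemma \ref{lemmetechnique}, which rests on the Herglotz/growing property of $\omega\varepsilon(\omega)$ and $\omega\mu(\omega)$), so that $\mathcal{F}$ is a local diffeomorphism there and the shell $C_{R,\eta}=\{{\bf k} : R-\eta\leq|{\bf k}|^2\leq R+\eta\}$ is contained in $\{{\bf k} \mid (\omega_0-\varepsilon,\omega_0+\varepsilon)\cap\sigma(\bbA_{\bf k})\neq\varnothing\}$. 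You must invoke this non-degeneracy (or at least the analyticity and non-constancy of $\mathcal{F}$ on each component of $I$) explicitly to make the shell argument work.
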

\begin{proof}
The relation \eqref{eq.AtoAk} means that the self-adjoint operator $\bbA_{\oplus}=\bbF^{-1}\bbA \bbF$ is decomposable on the family of self-adjoint operators $(\bbA_k)_{k\in \R^3}$ with respect to the Lebesgue measure $\nu$ on $\R^3$ \cite{Ree-78}. As the Fourier transform $\bbF$ is unitary, $\bbA$ and $\bbA_{\oplus}$ have the same spectrum and punctual spectrum. Thus, using  theorem XIII.85 of \cite{Ree-78} which relates the spectrum of $\bbA_{\oplus}$ and the spectra of the operators $\bbA_{{\bf k}}$, one deduces that:
\begin{eqnarray}
\sigma(\bbA)&=&\{ \omega \in \R \mid \forall \varepsilon>0 \mid  \nu( \{ {\bf k} \in \R^3 \mid (\omega-\varepsilon,\omega+\varepsilon) \cap \sigma(\bbA_{\bf k})\neq \varnothing \})>0 \} ,\label{eq.specdecomp} \\[3pt]
\sigma_p(\bbA)&=&\{ \omega \in \R \mid  \nu( \{{\bf k} \in \R^3 \mid \omega\in  \sigma_p(\bbA_{\bf k})\})>0 \} \label{eq.specpuncdecomp} .
\end{eqnarray}

{\bf Step 1: $\sigma(\bbA) \subset  J \cup \mathcal{F}^{-1}(\R^+)$.  } As ${\cal F}: I \rightarrow \R$ is continuous, $\mathcal{G}=\mathcal{F}^{-1}(\R^-_*)$ is an open subset of $I$ which, by proposition \ref{prop.specAk}, does not intersect $\sigma(\bbA_{{\bf k}})$ for all ${\bf k} \neq 0$. Thus, one deduces from \eqref{eq.specdecomp} that $\mathcal{G}$ does not intersect $\sigma(\bbA)$, in other words that $\sigma(\bbA) \subset \R \setminus \mathcal{G} = J \cup \mathcal{F}^{-1}(\R^+)$. \\[12pt]
{\bf Step 2 :} $J \subset \sigma(\bbA)$ and $\mathcal{P}_e \cup \mathcal{P}_m  \cup  \mathcal{Z}_e  \cup \mathcal{Z}_m\subset \sigma_p(\bbA)$ (i. e. (\ref{eq.specA})(ii)). Indeed, by proposition \ref{prop.specAk} again,  $J \subset \sigma(\bbA_{{\bf k}})$ for any ${\bf k}$, i.e. for any $\omega \in J$ and $\varepsilon > 0$,  $ \omega \in (\omega-\varepsilon,\omega+\varepsilon) \cap \sigma(\bbA_{\bf k})$. Hence 
$
\{ {\bf k} \in \R^3 \mid (\omega-\varepsilon,\omega+\varepsilon) \cap \sigma(\bbA_{\bf k})\neq \varnothing \} = \R^3
$ and thus, by \eqref{eq.specdecomp}, $J \subset \sigma(\bbA)$. Similarly, proposition \ref{prop.specAk} says that $\mathcal{Z}_{e} \cup \mathcal{Z}_{m} \cup \mathcal{P}_e  \cup \mathcal{P}_m \subset \sigma_p(\bbA_{\bf {k}})$ for any ${\bf k} \in \R^3$. With \eqref{eq.specpuncdecomp} this implies the inclusion (\ref{eq.specA})(ii). \\[12pt]
As $\mathcal{F}^{-1}(\R^+) = \mathcal{F}^{-1}(\R^+_*) \cup \mathcal{F}^{-1}(0) \equiv \mathcal{F}^{-1}(\R^+_*) \cup \mathcal{Z}_{e} \cup \mathcal{Z}_{m}$,  if we prove that $\mathcal{F}^{-1}(\R^+_*) \subset \sigma(\bbA)$, since $\mathcal{Z}_{e} \cup \mathcal{Z}_{m} \subset \sigma(\bbA)$, we shall have proven that  $\mathcal{F}^{-1}(\R^+) \subset \sigma(\bbA)$, that is to say (\ref{eq.specA})(i) with step 2. Finally, if we prove that  $\mathcal{F}^{-1}(\R^+_*)$ does not contain any eigenvalue, we shall have proven (\ref{eq.specA})(iii). These observations lead us to the last step of our proof.\\[12pt]
{\bf Step 3 :}  $\mathcal{F}^{-1}(\R^+_*) \subset \sigma(\bbA)$ but contains no eigenvalue.
Let $\omega\in \mathcal{F}^{-1}(\R^+_*)$, then $\mathcal{F}(\omega)=R$ for $R>0$. Since $S_{R}=\{{\bf k} \in \R^3\, /  |{\bf k}|^2=R\}$ has zero Lebesgue measure, one deduces from proposition \ref{prop.specAk} and  \eqref{eq.specpuncdecomp}, that $\omega$ is not an eigenvalue . As $\mathcal{F}$ is $C^{\infty}$ on $I$ and $\mathcal{F}'(\omega)\neq 0$ (same proof as for lemma \ref{lemmetechnique}), there exists two open sets $U_{\omega}\subset I$ and $V_{R}\subset \R^+_*$ such that $\omega \in U_\omega$ and $R \in V_R$ and $\mathcal{F}:U_{\omega} \to V_{R}$ admits a $C^{\infty}$ inverse. Thus for all $\varepsilon>0$ such that $(\omega-\varepsilon, \omega+\varepsilon)\subset U_{\omega}$, $\mathcal{F}\big((\omega-\varepsilon, \omega+\varepsilon)\big)$ is an open set which contains $(R-\eta,R+\eta)\subset V_R$ for $\eta$ small enough. Let us set $C_{R,\eta}=\{{\bf k} \in \R^3, R-\eta\leq  |{\bf k}|^2\leq R+\eta\}$. For any ${\bf k} \in C_{R,\eta}$, $|{\bf k}|^2 \in \mathcal{F}\big((\omega-\varepsilon, \omega+\varepsilon)\big)$, i. e. ${\cal F}(\omega_{\bf k}) = |{\bf k}|^2$ for some 
$\omega_{\bf k} \in (\omega-\varepsilon, \omega+\varepsilon)$, that is to say, see (\ref{defsets}), that $\omega_{\bf k} \in \Omega({\bf k})$, thus $\omega_{\bf k} \in \sigma(\bbA_{\bf k})$ by proposition \ref{prop.specAk}. Thus, $(\omega -\varepsilon, \omega+\varepsilon)\cap \sigma(\bbA_{\bf k})$ contains $\omega_{\bf k}$ for any ${\bf k} \in C_{R, \eta}$ which means that $C_{R, \eta} \subset \{ {\bf k} \in \R^3 \mid (\omega-\varepsilon,\omega+\varepsilon) \cap \sigma(\bbA_{\bf k})\neq \varnothing \}$. Since $\nu(C_{R, \eta})>0$, one concludes with \eqref{eq.specdecomp} that $\omega\in \sigma(\bbA)$.
\end{proof}
\begin{rem}
The complementary of the support $I$ is an open set. Thus it can be decomposed as a countable union of disjoint open intervals: $I=\cup_{n=0}^{N}(a_n,b_n)$ (where $N$ can be finite or infinite). All these intervals are symmetric with respect to $0$. On each interval $(a_n,b_n)$, 
 $\mathcal{F}$ is a real-valued analytic function. By adapting \ref{lemmetechnique}, we prove that it is strictly monotonous wherever it is positive, and by adapting lemma \ref{estimnpmnz}, it vanishes at most at two points. One has
\begin{itemize}
\item $(a_n,b_n)\cap \sigma(\bbA)=\{ \lambda \in (a_n,b_n) \mid \mathcal{F}(\omega)\geq 0 \}$ and $(a_n,b_n)\cap \mathcal{G}=\{ \lambda \in (a_n,b_n) \mid \mathcal{F}(\omega)<0 \}$,
\item There is at most two different eigenvalues of $\mathcal{Z}_e \cup \mathcal{Z}_m$ in each interval.
\end{itemize}
Using all these properties, it is possible to sketch the different possible graphs for $\cal{F}$ in each $(a_n,b_n)$ as in section \ref{sec-dispersionanalysis} for local media. Indeed, the graphs are similar up to the difference that $\cal{F}$ can admit also a finite limit at the border $(a_n,b_n)$ when it is positive and no limit when it is negative.
\end{rem}
\begin{rem} For local Lorentz materials \eqref{Lorentzlaws}, one has (see (\ref{LorentzMeasures}))
$$
\mbox{supp }(\nu_e)=\mathcal{P}_e, \quad \mbox{supp }(\nu_m)=\mathcal{P}_m, \quad \sigma(\bbA)=\mathcal{F}^{-1}(\R^{+}) \cup \mathcal{P}_e \cup \mathcal{P}_m , \quad \sigma_p(\bbA)=\mathcal{P}_e \cup \mathcal{P}_m  \cup \mathcal{N}_e \cup \mathcal{N}_m.
$$
One can make a link with the Fourier / plane wave analysis of local media, performed in section \ref{sec-hom}. In particular, one sees that coincides with the set ${\cal S}$ defined by (\ref{spectrum}). Moreover, it is worthwhile mentioning that, in the point spectrum, $\mathcal{P}_e \cup \mathcal{P}_m$ corresponds to the static electric and magnetic modes and $\mathcal{Z}_e \cup \mathcal{Z}_m$ to the curl-free static modes, as they have been defined in section \ref{sec-WP}.
\end{rem}
\subsection{The case of lossy passive media: an example} \label{Sec_Dissipative}
As seen previously, we can find a conservative augmented formulation for any passive systems, in particular dissipative ones.
However, dissipation can be obtained only if measures $\nu_e$ or $\nu_m$ has a continuous part. This is the case for a dissipative Drude model (\ref{epsmulossyLorentz}) with $\omega_e = \omega_m=0$ and $\alpha_e,\, \alpha_m,\, \Omega_e,\, \Omega_m > 0$:
\begin{equation} \label{epsmulossyLorentz}
\dsp \varepsilon(\omega) = 	\varepsilon_0 \, \Big( 1 - \frac{\Omega_e^2}{ i \, \alpha_e \, \omega + \omega^2} \Big),
\quad 
\dsp \mu( \omega) = \mu_0 \, \Big( 1 - \frac{\Omega_m^2}{i \, \alpha_m \, \omega +\omega^2} \Big).
\end{equation}
\subsubsection{Dissipative and conservative formulations}
The dissipative formulation is obtained by introducing the polarization ${\bf P}$ and the magnetization ${\bf M}$:
\begin{equation} \label{dissipativesystem}	
\left\{ \begin{array}{ll}
	\varepsilon_0 \, \partial_t \mathbf{E}+\varepsilon_0\Omega_e^2\partial_t\mathbb{P}- {\bf rot} \, \mathbf{H}=0,& \quad
	\mu_0 \, \partial_t \mathbf{H} + \mu_0\Omega_m^2\partial_t\mathbb{M} +{\bf rot } \, \mathbf{E}=0,\\[12pt]
	\partial_{t}^2\mathbb{P}+ \alpha_e \,  \partial_t \mathbb{P}= \mathbf{E},& \quad
	\partial_{t}^2\mathbb{M}+ \alpha_m \,  \partial_t \mathbb{M}= \mathbf{H},
\end{array} \right.
\end{equation}
To this system, we naturally associate the following energy
\begin{equation}\label{defenergylocal}
\dsp {\cal E}_{loc}(t) :=  {\cal E}(t) +\frac{\varepsilon_0\Omega_{e}^2}{2} \, \int_{\R^3} |\partial_t {\mathbb{ P}}|^2 \md x + 
\frac{\mu_0\Omega_{m}^2}{2}\, \int_{\R^3} |\partial_t {\mathbb{ M}}|^2 \md x 
\end{equation}
	where $\dsp {\cal E}(t) :=  \frac{1}{2}\int_{\R^3} \big( \varepsilon_0 \, |{\bf E}|^2  + \mu_0 \, |{\bf H}|^2 \, \big) \; d{\bf x}$, the standard electromagnetic energy. One easily checks that
\begin{equation} \label{energydecay}
\frac{d}{dt}\, \mathcal{E}_{loc}(t) + \alpha_e\varepsilon_0 \Omega_e^2\,  \int_{\R^3} |\partial_t\mathbb{P}|^2d\mathbf{x} +  \alpha_m \mu_0 \Omega_m^2\,  \int_{\R^3} |\partial_t\mathbb{M}|^2d\mathbf{x} = 0,	
\end{equation}
which proves the decay in time of the energy ${\cal E}_{loc}(t)$. The conservative augmented formulation corresponds to  absolutely continuous measures (with respect to Lebesgue's measure) $\nu_e$ and $\nu_m$  defined by 
\begin{equation}\label{measuresDrude}
	\md \nu_e(\xi) = \frac{\alpha_e \, \Omega_e^2}{\pi} \; \frac{\md \xi}{\xi^2 + \alpha_e^2}, \quad \md \nu_m(\xi) = \frac{\alpha_m \, \Omega_m^2}{\pi} \; \frac{\md \xi}{\xi^2 + \alpha_m^2}.
	\end{equation}
This can be deduced from (\ref{measure}) but also from the following Nevalinna representation (left to the reader) 
\begin{equation} \label{NevanlinnaDrude}
1 - \frac{\Omega^2}{ i \, \alpha \, \omega + \omega^2 }= 1 + \frac{\alpha \, \Omega^2 }{\pi} \int\limits_{-\infty}^{+\infty}\frac{d\xi}{(\xi^2-\omega^2)(\xi^2+\alpha^2)} \; .
\end{equation}
Note that the measures $\nu_e$ and $\mu_e$ have finite masses, respectively $\Omega_e^2$ and $\Omega_m^2$, and their support is all $\R$ so that the spectrum of the corresponding Hamiltonian $\bbA$ is the whole real line (see theorem \ref{th.spec}).
\subsubsection{Numerical simulations} \label{numerics}
We have performed 2D simulations in the $(x,y)$-plane, considering the 2D Maxwell system for the electromagnetic field $(E_x, E_y,  H_z)$. We use the scaling $\varepsilon_0=\mu_0=1$ and consider $\Omega_e = \Omega_m =1$ and two sets of values for the absorption parameters: $\alpha_e = \alpha_m =0.1$ and $\alpha_e = \alpha_m =1$. The domain of computation is the square $[-0.5, \; 0.5]\times [-0.5, \; 0.5]$. On the boundary of the domain, a perfectly conducting boundary condition $\mathbf{E}\times \mathbf{n}=0$ is used. The system is initialized with $E_x(\mathbf{x},0)=E_y(\mathbf{x},0)=\mathrm{e}^{-300 (x^2+y^2)}$ and $H_z(\mathbf{x},0)=\mathrm{e}^{-200 (x^2+y^2)}$, while for the rest of the unknowns zero initial conditions are used. 
We compared results obtained separately with the two systems (\ref{dissipativesystem})  ('exact solution') and (\ref{Lorentzsystemgene}), with measures (\ref{measuresDrude}) ('approximate solution'). The equations were discretized as follows:
\begin{itemize}  
\item For the solution of (\ref{dissipativesystem}) we use the Yee scheme  (see for instance \cite{Taflove}), where dispersive terms are discretized with the help of the trapezoid rule. We take the mesh size $\Delta x=\Delta y$ and $\Delta t=\frac{\Delta x}{2}$. 
\item The discretization of (\ref{Lorentzsystemgene}) requires an additional step consisting in the approximation of the $\xi$-integrals. In order to do so, we first perform the change of variables $\xi = \alpha \, \tan \tau$ in (\ref{NevanlinnaDrude}) 
\begin{align*}
1 - \frac{\Omega^2}{ i \, \alpha \, \omega + \omega^2 } = 1 + \frac{\Omega^2}{\pi}
\int\limits_{-\frac{\pi}{2}}^{\frac{\pi}{2}}\frac{d\tau}{\alpha^2\tan^2\tau-\omega^2}=1 - 2 \frac{\alpha \, \Omega^2}{\pi} \int\limits_{-\frac{\pi}{2}}^{\frac{\pi}{2}}\frac{d\tau}{\alpha^2\tan^2\tau-\omega^2}.
\end{align*}
and discretize the above integral, using the Gauss-Legendre quadrature rule \cite[pp. 177-178]{Gauss} on the interval $\left(0,\frac{\pi}{2}\right)$, with quadrature  weights $\{w_\ell, 1 \leq \ell \leq N_q\}$ and quadrature nodes $\{\tau_\ell, 1 \leq \ell \leq N_q\}$:
\begin{align*}
1 - \frac{\Omega^2}{ i \, \alpha \, \omega + \omega^2 } \approx 1 + 2 \frac{\Omega^2}{\pi} \; \sum\limits_{\ell=1}^{N_q} \; 
\frac{w_\ell}{\alpha^2\tan^2\tau_\ell-\omega^2}.
\end{align*}
The model (\ref{Lorentzsystemgene}) is thus approximated by a local Lorentz system (\ref{Lorentzsystem}) with $N_e=N_m=N_q$ and
$$
\omega_{e, \ell} = \alpha_e \, \tan \tau_\ell, \qquad \omega_{m, \ell} = \alpha_m \, \tan \tau_\ell,\qquad  \Omega_{e,\ell}^2 = 2 w_\ell \qquad \frac{\Omega_{e}^2}{\pi}, \qquad  \Omega_{m,\ell}^2 = 2 w_\ell \;  \frac{ \Omega_{m}^2}{\pi} .
$$
\end{itemize}
We first tested the approximation of (\ref{dissipativesystem}) with the discrete augmented system. In figures
 \ref{fig:Hz1} and \ref{fig:Hz2}, we represent the evolution of $H_z$ at the origin as a
  function of time in the time window $[8,10]$, for different values of $N_q$. In both cases we observe the convergence of
   the approximate solution (red curves) towards the exact solution (blue curve) when $N_q$ increases. In the case of
    small absorption, figure \ref{fig:Hz1}, the convergence is attained quite quickly, but for large absorption, figure \ref{fig:Hz2}, fairly accurate solution is obtained only with 40 quadrature points (which is not surprising since the
	 augmented models are non-dissipative). In both cases, the approximation of the exact dissipative model with the 
	  discrete non-dissipative model does not provide an efficient numerical method. However, our exact dissipative model is already itself local, which
is not always the case. In figure \ref{fig:totalenergy}, we plot the variations of the energy $\mathcal{E}_{loc}(t)$ (\ref{defenergylocal}) as a function of time for the two cases. The curves confirm the theoretical decay of this energy, which, as expected, is much stronger decay in the case $\alpha_e = \alpha_m =1$. In figure \ref{fig:totalenergy2}, we study the variations of the energy $\mathcal{E}(t)$ for the exact solution (red curve), compared with the variations of the energy $\mathcal{E}(t)$ for the approximate solution for different values of $N_q$ (blue curve). We see that, contrary to $\mathcal{E}_{loc}(t)$, $\mathcal{E}(t)$ is not a decreasing function of time, even though it tends to $0$ when $t \rightarrow + \infty$, as it will be proven in the next section.
\begin{figure}
\includegraphics[width=\textwidth]{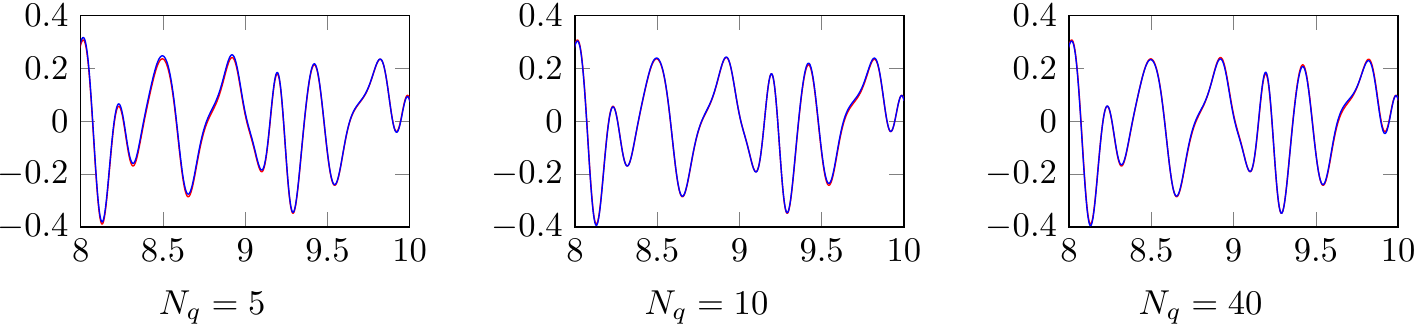}
\caption{Graphs of $t \rightarrow H_z(0,t)$ for $\alpha_e = \alpha_m =0.1$ and $N_q = 5, 10, 40$ (Left to right)}
\label{fig:Hz1}
\end{figure}
\begin{figure}
\includegraphics[width=\textwidth]{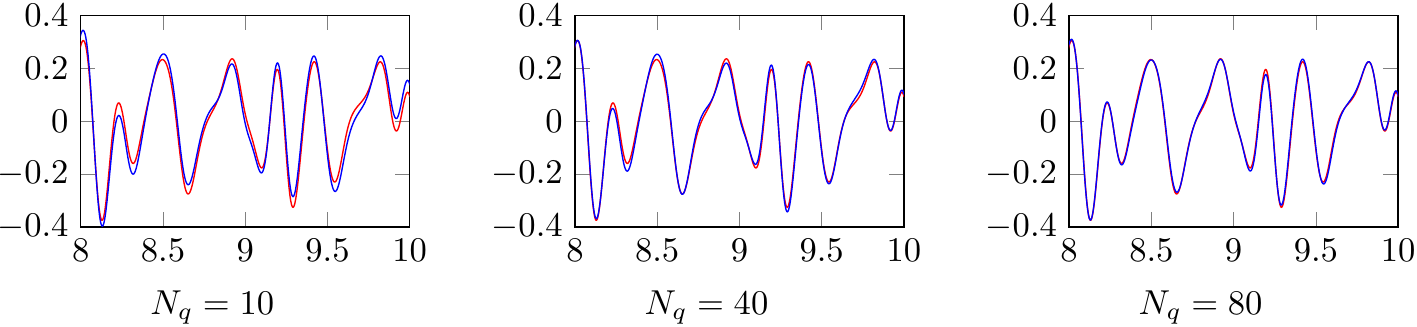}
\caption{Graphs of $t \rightarrow H_z(0,t)$ for $\alpha_e = \alpha_m =1$ and $N_q = 10, 20, 80$ (Left to right)}
\label{fig:Hz2}
\end{figure}
\begin{figure}
\centerline{
\includegraphics[height=4cm]{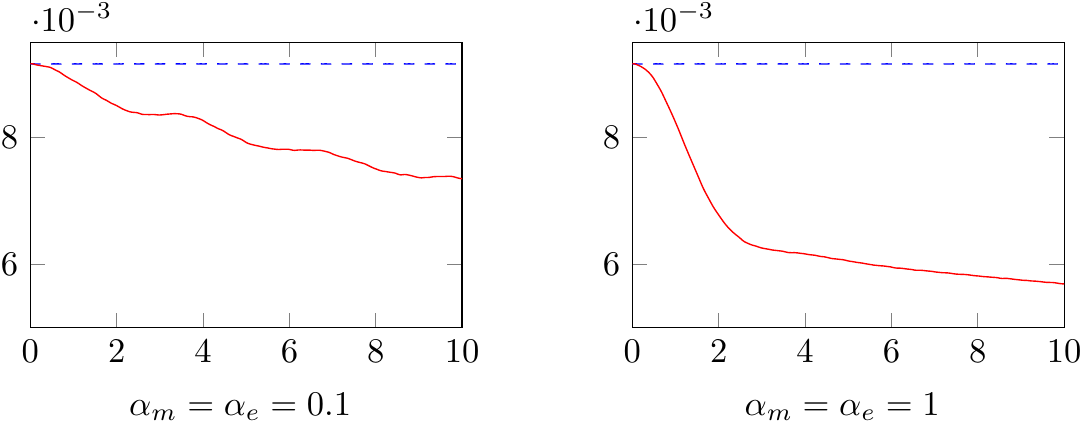}}
\caption{
Evolution of the energy $\mathcal{E}_{loc}(t)$ for $\alpha_e = \alpha_m =0.1$ (left) and $\alpha_e = \alpha_m =1$ (right).}
\label{fig:totalenergy}
\end{figure}
\vspace{-10pt}
\begin{figure}
\includegraphics[height=4cm]{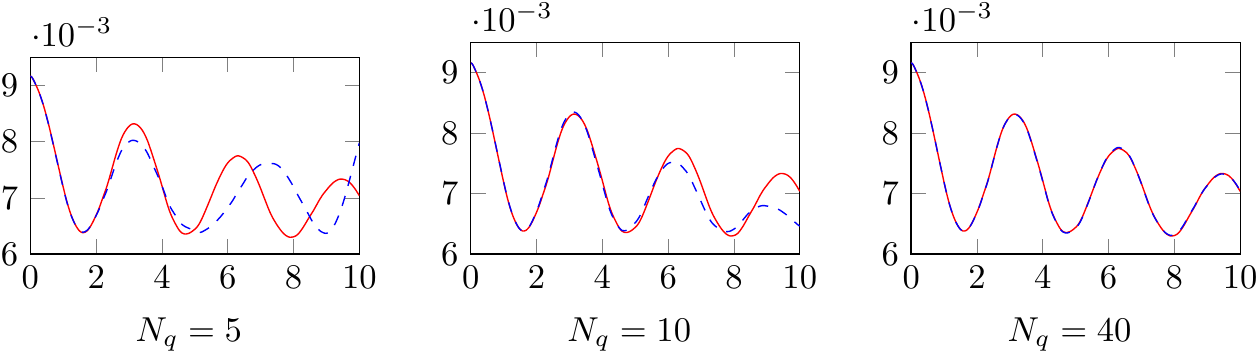}\\	\includegraphics[height=4cm]{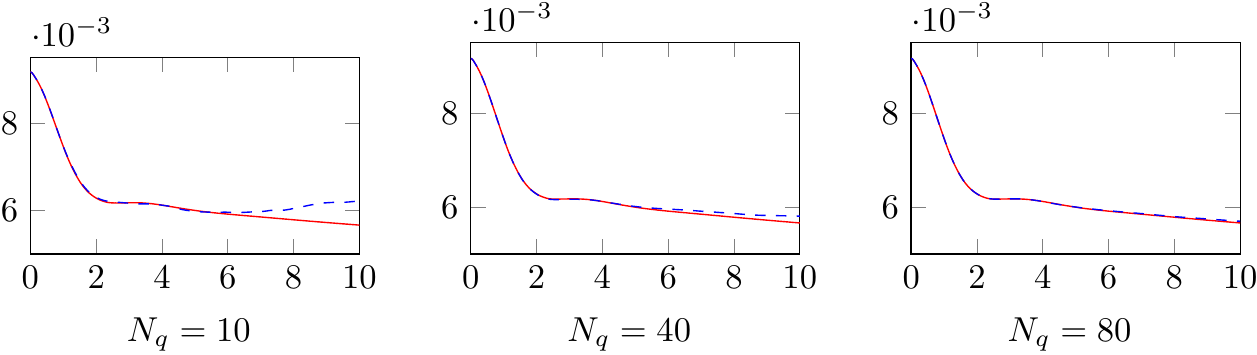}
\caption{
Evolution of the electromagnetic energy $\mathcal{E}(t)$ for $\alpha_e = \alpha_m =0.1$ (top) and $\alpha_e = \alpha_m =1$ (bottom).}
\label{fig:totalenergy2}
\end{figure}
\subsubsection{Energy analysis of the dissipativity}
\begin{lema} \label{dissipationenergy}
Let $(\mathbf{E}, \mathbf{H}, \mathbb{P}, \mathbb{M})$ solve (\ref{dissipativesystem})  with initial conditions 
\begin{align*}
(\mathbf{E}_0, \; \mathbf{H}_0)\in (H_{\operatorname{rot}}(\mathbb{R}^3))^2, \quad \mbox{satisfying } \quad  (\mathbf{rot}\mathbf{E}_0, \; \mathbf{rot}\mathbf{H}_0)\in (H_{\operatorname{rot}}(\mathbb{R}^3))^2.
\end{align*}
Then the electromagnetic energy $\mathcal{E}(t)$ satisfies
$ \dsp
\lim\limits_{t\rightarrow +\infty}\mathcal{E}(t)=0.
$
\end{lema}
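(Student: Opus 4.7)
The plan is to combine the dissipation identity (\ref{energydecay}) with a bootstrap argument on time regularity, and then conclude via a Barbalat-type lemma. First, integrating (\ref{energydecay}) on $[0,T]$ shows that $\mathcal{E}_{loc}$ is non-increasing, which in particular yields the uniform bounds $\sup_t \|\mathbf{E}\|_{L^2}, \sup_t \|\mathbf{H}\|_{L^2}, \sup_t \|\partial_t \bbP\|_{L^2}, \sup_t \|\partial_t \bbM\|_{L^2} < + \infty$; letting $T \to + \infty$ also gives
\begin{equation*}
\int_0^{+\infty} \big( \alpha_e \, \varepsilon_0 \, \Omega_e^2 \, \|\partial_t \bbP\|_{L^2}^2 + \alpha_m \, \mu_0 \, \Omega_m^2 \, \|\partial_t \bbM\|_{L^2}^2 \big) \; dt < + \infty.
\end{equation*}

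Next, I would differentiate the system (\ref{dissipativesystem}) once and twice in time. The quadruple $(\partial_t \mathbf{E}, \partial_t \mathbf{H}, \partial_t \bbP, \partial_t \bbM)$ solves the same system, with initial data obtained from (\ref{dissipativesystem}) at $t = 0$ in terms of $\mathbf{rot}\mathbf{H}_0$, $\mathbf{rot}\mathbf{E}_0$, $\mathbf{E}_0$ and $\mathbf{H}_0$ (using the zero initial conditions $\bbP(\cdot,0) = \partial_t \bbP(\cdot,0) = \bbM(\cdot,0) = \partial_t \bbM(\cdot,0) = 0$). The assumption $(\mathbf{E}_0, \mathbf{H}_0) \in H(\operatorname{rot},\R^3)^2$ is exactly what makes these initial data lie in the right energy space; likewise the assumption $(\mathbf{rot}\mathbf{E}_0,\mathbf{rot}\mathbf{H}_0) \in H(\operatorname{rot},\R^3)^2$ makes the second-derivative system finite-energy at $t=0$. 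Applying step~1 to these two differentiated systems yields
\begin{equation*}
\sup_{t \geq 0} \big( \|\partial_t^2 \bbP\|_{L^2} + \|\partial_t^2 \bbM\|_{L^2} + \|\partial_t^3 \bbP\|_{L^2} + \|\partial_t^3 \bbM\|_{L^2} \big) < + \infty,
\end{equation*}
together with the integrability on $\R^+$ of $\|\partial_t^2 \bbP\|_{L^2}^2$ and $\|\partial_t^2 \bbM\|_{L^2}^2$.

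I would then apply Barbalat's lemma twice. The function $t \mapsto \|\partial_t \bbP(\cdot,t)\|_{L^2}^2$ is non-negative, lies in $L^1(\R^+)$ by step~1, and its derivative $2 \int \partial_t \bbP \cdot \partial_t^2 \bbP \, d\mathbf{x}$ is uniformly bounded by the Cauchy--Schwarz inequality and step~2; hence $\|\partial_t \bbP(\cdot,t)\|_{L^2} \to 0$ as $t \to +\infty$. The same argument one level higher (using that $\|\partial_t^2\bbP\|_{L^2}^2 \in L^1(\R^+)$ and has derivative bounded thanks to the control of $\|\partial_t^3\bbP\|_{L^2}$) gives $\|\partial_t^2 \bbP(\cdot,t)\|_{L^2} \to 0$. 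The analogous statements hold for $\bbM$. Finally, the two ODEs of (\ref{dissipativesystem}) read pointwise $\mathbf{E} = \partial_t^2 \bbP + \alpha_e \partial_t \bbP$ and $\mathbf{H} = \partial_t^2 \bbM + \alpha_m \partial_t \bbM$, so taking $L^2$ norms and passing to the limit gives $\|\mathbf{E}(\cdot,t)\|_{L^2}, \|\mathbf{H}(\cdot,t)\|_{L^2} \to 0$, i.e.\ $\mathcal{E}(t) \to 0$.

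The main difficulty I anticipate is the careful justification of step~2: one must check that differentiating (\ref{dissipativesystem}) in time really yields a well-posed Cauchy problem of the same form for which the energy identity (\ref{energydecay}) applies, and that the initial data inherited by the first and second time-derivatives -- read off recursively from Maxwell's equations and the harmonic-oscillator ODEs at $t = 0$ -- fit into the regularity hypothesis. This bookkeeping is exactly what fixes the precise assumption $(\mathbf{rot}\mathbf{E}_0, \mathbf{rot}\mathbf{H}_0) \in H(\operatorname{rot},\R^3)^2$. Once this is in place, Barbalat's lemma and the algebraic structure of the Drude ODEs do the rest.
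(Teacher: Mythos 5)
Your proof is correct and follows essentially the same route as the paper's: both exploit the dissipation identity for the original and the time-differentiated systems to obtain $L^2$-in-time control of $\partial_t\mathbb{P}$, $\partial_t^2\mathbb{P}$ (and their $\mathbb{M}$ counterparts), with the hypothesis on $\mathbf{rot}\,\mathbf{E}_0$, $\mathbf{rot}\,\mathbf{H}_0$ serving exactly to make the differentiated Cauchy problems finite-energy, and then conclude via the relations $\mathbf{E}=\partial_t^2\mathbb{P}+\alpha_e\partial_t\mathbb{P}$ and $\mathbf{H}=\partial_t^2\mathbb{M}+\alpha_m\partial_t\mathbb{M}$. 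The only cosmetic difference is that you invoke Barbalat's lemma (integrability of $\|\partial_t^j\mathbb{P}\|_{L^2}^2$ plus a bounded time derivative) to get convergence to zero, whereas the paper writes $\|\partial_t\mathbb{P}(\cdot,T)\|_{L^2}^2$ as an integral of $(\partial_t^2\mathbb{P},\partial_t\mathbb{P})_{L^2}$, convergent by Cauchy--Schwarz, and then notes the limit must vanish by integrability; the two mechanisms rest on the same inputs.
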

\begin{proof}
Let us demonstrate that $\|\mathbf{E}(\cdot,t)\|_{L^2}\rightarrow 0$ as $t\rightarrow +\infty$. The proof for $\|\mathbf{H}(\cdot,t)\|_{L^2}\rightarrow 0$ is similar. From (\ref{energydecay}), we infer
that the energy ${\cal E}_{loc}(t)$ decays in time towards a limit ${\cal E}_{loc}^\infty \geq 0$ and that
\begin{equation} \label{estimate1}
\alpha_e\varepsilon_0 \Omega_e^2 \,  \int_{0}^{\infty}\|\partial_t\mathbb{P}(\cdot,t)\|_{L^2}^2 \; dt + \alpha_m \varepsilon_0 \Omega_m^2 \, \int_{0}^{\infty}\|\partial_t\mathbb{M}(\cdot,t)\|_{L^2}^2 \; dt \leq \frac{1}{2}\; \|\mathbf{E}_0\|_{L^2}^2 +  \frac{1}{2}\; \|\mathbf{H}_0\|_{L^2}^2< + \infty.	
\end{equation}
Applying the same reasoning to $(\partial_t \mathbf{E}, \partial_t \mathbf{H}, \partial_t \mathbb{P}, \partial_t \mathbb{M})$, 
which also solves (\ref{dissipativesystem}) (with different initial conditions), we obtain 
\begin{equation*} 
\alpha_e\varepsilon_0 \Omega_e^2 \,  \int_{0}^{\infty}\|\partial_t^2\mathbb{P}(\cdot,t)\|_{L^2} \; dt + \alpha_m \mu_0 \Omega_m^2 \, \int_{0}^{\infty}\|\partial_t^2\mathbb{M}(\cdot,t)\|_{L^2} \; dt \leq \frac{1}{2} \; \| {\bf rot} \, \mathbf{E}_0\|_{L^2}^2 +  \frac{1}{2} \; \| {\bf rot} \, \mathbf{H}_0\|_{L^2}^2,
\end{equation*}
which is finite. 
Writing
$
\|\partial_t\mathbb{P}(\cdot,T)\|_{L^2}^2 = 2 \int_0^T  \big(\partial^2_t\mathbb{P}(\cdot,t), \partial_t\mathbb{P}(\cdot,t) \big)_{L^2} dt
$ shows, thanks to (\ref{estimate1}) and the above identity that $\|\partial_t\mathbb{P}(\cdot,t)\|_{L^2}^2$ has a limit when $\rightarrow + \infty$. This limit is necessarily $0$ because of (\ref{estimate1}).
Repeating the above arguments for the second time derivatives of the fields allows us (using the additional second order space regularity of $(\mathbf{E}_0, \mathbf{H}_0)$) to show the same result for 
$\|\partial_t^2\mathbb{P}(\cdot,t)\|_{L^2}^2$.  Therefore,  we have proven that
$$
\lim\limits_{t\rightarrow+\infty}\|\partial^2_t \mathbb{P}(\cdot,t)\|_{L^2}= \lim\limits_{t\rightarrow+\infty}\|\partial_t \mathbb{P}(\cdot,t)\|_{L^2}= 0.
$$
Finally, using the first equation in the second line of (\ref{dissipativesystem}), we show that $\lim\limits_{t\rightarrow+\infty}\|\mathbf{E}(\cdot,t)\|_{L^2} = 0$. \end{proof}

{\appendix
\section{Characterization of non-dissipative local materials: the proof of theorem \ref{thm.equivpassive}}
Let $\varepsilon(\omega)$ and $\mu(\omega)$ be associated to a local non-dissipative material and ${\cal F}(\omega) :=  \omega^2\,\varepsilon(\omega)\,\mu(\omega)$, with set of poles ${\cal P}$ and set of zeros ${\cal Z}$. Saying that (\ref{relationdispersion}) has only real solutions implies that
\begin{equation}\label{propF}
\forall \omega \in \C^+\setminus {\cal P}, \quad  {\cal F}(\omega) \in	\C \setminus \R^+.
\end{equation}
Let $a < b$ de two real numbers that do not belong to ${\cal P}  \cup {\cal Z}$. Let us denote $n_P(a,b)$ (resp. $n_Z(a,b)$) the numbers of zeros of ${\cal F}(\omega)$ in the 
interval $(a,b)$, counting multiplicity. 
$$
 n_P(a,b) := \sharp \; {\cal P} \cap (a,b), \quad  n_Z(a,b) := \sharp  \; {\cal Z} \cap (a,b)
$$
\begin{lema} \label{estimnpmnz} For any $a < b$ that do not belong to $\mathcal{P} \cup \mathcal{Z}$, one has the inequality
\begin{equation} \label{inegnpnz}
|n_P(a,b) - n_Z(a,b)| \leq 2.
\end{equation}
\end{lema}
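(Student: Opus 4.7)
The idea is to exploit the hypothesis (\ref{propF}) --- that ${\cal F}$ sends $\C^+\setminus{\cal P}$ into $\C\setminus\R^+$ --- by defining a continuous argument $\phi:=\arg{\cal F}$ on $\C^+\setminus{\cal P}$ and counting the jumps of its boundary values along $(a,b)$. Since all poles of ${\cal F}$ are real (lemma \ref{ND-Lossless}), $\C^+\setminus{\cal P}=\C^+$ is simply connected, and composition with the single-valued branch of $\arg$ on $\C\setminus\R^+$ yields a continuous $\phi:\C^+\to(0,2\pi)$. For $x\in(a,b)\setminus({\cal P}\cup{\cal Z})$, the expansion ${\cal F}(x+i\eta)={\cal F}(x)+i\eta\,{\cal F}'(x)+O(\eta^2)$ gives the boundary value $\phi(x+i0^+)$: it equals $\pi$ when ${\cal F}(x)<0$, and equals $0$ or $2\pi$ when ${\cal F}(x)>0$, depending on the sign of ${\cal F}'(x)$.

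The crucial step is to rule out critical points of ${\cal F}$ in the region $\{{\cal F}>0\}$, so that $\phi(\cdot+i0^+)$ is piecewise constant in $\{0,\pi,2\pi\}$ away from ${\cal P}\cup{\cal Z}$. Suppose by contradiction that ${\cal F}(\omega_0)=K>0$ and ${\cal F}'(\omega_0)=0$ at some $\omega_0\in(a,b)\setminus({\cal P}\cup{\cal Z})$, and write ${\cal F}(\omega_0+h)-K=c\,h^k+O(h^{k+1})$ with $k\geq 2$ and $c\in\R\setminus\{0\}$. For $k\geq 3$, picking small real $s$ with $s/c>0$ and $K+s\in\R^+$ produces preimages $\omega\approx\omega_0+(s/c)^{1/k}e^{2\pi ij/k}$, and the choice $j=1$ places one of them in $\C^+$. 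For $k=2$, taking $s$ of sign opposite to $c$ with $K+s\in(0,K)$ makes $\omega_0+i\sqrt{|s|/|c|}\in\C^+$ a preimage of $K+s\in\R^+$. Either case contradicts (\ref{propF}).

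Granted the piecewise constancy, a local Taylor analysis at each zero or pole $x_0\in{\cal P}\cup{\cal Z}$ in $(a,b)$ --- together with lemma \ref{ND-Lossless}, which caps the multiplicity at $2$, and the observation that a double zero or pole of ${\cal F}$ must have positive leading coefficient in a non-dissipative medium (by the same contradiction mechanism as above) --- yields the boundary jumps: $-\pi$ at a simple zero, $+\pi$ at a simple pole, $-2\pi$ at a double zero, $+2\pi$ at a double pole. Telescoping across $(a,b)$ gives
\[
\phi(b+i0^+)-\phi(a+i0^+)=\pi\bigl(n_P(a,b)-n_Z(a,b)\bigr).
\]
Since $\phi$ takes values in $(0,2\pi)$, both boundary values lie in $[0,2\pi]$, so the left-hand side has absolute value at most $2\pi$, yielding the claimed $|n_P(a,b)-n_Z(a,b)|\leq 2$. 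The main obstacle is the critical-point exclusion of the second paragraph, which is what converts the global non-dissipativity hypothesis (\ref{propF}) into a purely local obstruction for ${\cal F}$ on the real axis.
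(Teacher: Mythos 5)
Your proof is correct, and it rests on the same pivotal consequence of non-dissipativity as the paper's proof: property (\ref{propF}) allows a single-valued continuous branch of $\arg {\cal F}$ on $\C^+$ with values in $(0,2\pi)$, so the total variation of this argument along the segment from $a$ to $b$ cannot exceed $2\pi$. But the way you convert that bound into the count $|n_P(a,b)-n_Z(a,b)|\leq 2$ is genuinely different. The paper integrates ${\cal F}'/{\cal F}$ around the rectangle $[a,b]\times[-\delta,\delta]$, invokes the argument principle, folds the two horizontal edges together using ${\cal F}(\overline{\omega})=\overline{{\cal F}(\omega)}$, and identifies the surviving term with $\mbox{Arg}\,{\cal F}(b+i\delta)-\mbox{Arg}\,{\cal F}(a+i\delta)$; the contour integral then does all the bookkeeping at zeros and poles automatically, whatever their multiplicities, and no local analysis on the real axis is required. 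You avoid contour integration altogether and compute the boundary values $\phi(x+i0^+)$ by hand, which obliges you to supply three auxiliary local facts --- no real critical points of ${\cal F}$ where ${\cal F}>0$, multiplicity at most $2$ for real zeros and poles, and positivity of the leading coefficient at double zeros and poles --- each of which you correctly extract from (\ref{propF}) by the same local-perturbation device the paper uses to prove lemma \ref{lemHerglotz}. What your route buys is transparency: it shows that the boundary argument is confined to the three values $0,\pi,2\pi$ and that each simple zero (resp. simple pole) shifts it by exactly $-\pi$ (resp. $+\pi$), and doubly so for double ones, so the bound $2$ is visibly attained; the cost is the case analysis that the argument principle would have absorbed. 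Your sign convention $\phi(b+i0^+)-\phi(a+i0^+)=\pi\bigl(n_P(a,b)-n_Z(a,b)\bigr)$ checks out on the example ${\cal F}(\omega)=\omega^2$ over $(-1,1)$, and the telescoping is legitimate because ${\cal F}$, being rational, has only finitely many zeros and poles in $[a,b]$.
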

\begin{proof}	
Thanks to the well-known argument principle \cite{henrici} and the analyticity properties of ${\cal F}(\omega)$ we have 
$$
\forall \; \delta > 0, \quad n_Z(a,b) - n_P(a,b) =  \frac{1}{2i\pi} \int_{\gamma^\delta} \frac{{\cal F}'(\omega)}{{\cal F}(\omega)} \; d \omega, \quad \gamma_\delta = \partial R_\delta, \quad R_\delta = [a,b] \times [-\delta, \delta].
$$
Since $\gamma_\delta = [a,b] \times \{-\delta, \delta\}  \cup \, \gamma^0_{\delta}$,  $ \gamma_{\delta}^0 = \{a , b\} \times [-\delta, \delta]$, oriented counterclockwise (figure \ref{contour}), we have 
$$
n_Z(a,b) - n_P(a,b) = I_\delta  + 
 \frac{1}{2i\pi} \int_a^b \frac{{\cal F}'(x + i \delta)}{{\cal F}(x + i \delta)} \; dx -   \frac{1}{2i\pi} \int_a^b \frac{{\cal F}'(x - i \delta)}{{\cal F}(x - i \delta)} \; dx
$$
where
$ \displaystyle
I_\delta := \frac{1}{2i\pi} \int_{\gamma_0^\delta} \frac{{\cal F}'(\omega)}{{\cal F}(\omega)} \; d \omega \mbox{ satisfies } |I_\delta| \leq C \; \delta
$ by continuity of ${\cal F}$ along $\gamma_\delta^0$. \\[12pt]
Thanks to the property ${\cal F}(\overline{\omega}) = \overline{{\cal F}(\omega)}$, which yields ${\cal F}'(\overline{\omega}) = \overline{{\cal F}'(\omega)}$, we obtain
$$
n_Z(a,b) - n_P(a,b) = I_\delta  + \frac{1}{2\pi}  \; 
{\cal I}m \, \int_a^b \frac{{\cal F}'(x + i \delta)}{{\cal F}(x + i \delta)} \; dx
$$
Thanks to (\ref{propF}), we can define in $\C^+ \setminus {\cal P} \cup {\cal Z}$ an analytic function $\log {\cal F}(\omega)=|{\cal F}(\omega)|+i \, \mbox{Arg} \, {\cal F}(\omega)$, where  
$\mbox{Arg}\, {\cal F}(\omega) \in (0, 2\pi)$ (the usual determination of the logarithm with branch cut along $\R_+$). Thus
$$
{\cal I}m \, \int_a^b \frac{{\cal F}'(x + i \delta)}{{\cal F}(x + i \delta)} \; dx = {\cal I}m \, \int_a^b \frac{d}{dx} \log {\cal F}(x + i \delta) \; dx = \mbox{Arg} \, {\cal F}(b+i\delta) - \mbox{Arg} \, {\cal F}(a+i\delta).
$$
Therefore, since $\mbox{Arg} \, {\cal F}(\omega) \in (0, 2\pi)$ we have
$
|n_P(a,b) - n_z(a,b)| \leq 2 + C \; \delta,
$
and one gets the announced result by making $\delta \rightarrow 0$. \end{proof}
	\begin{figure}[h]
	\centerline{
	\includegraphics[width=6cm]{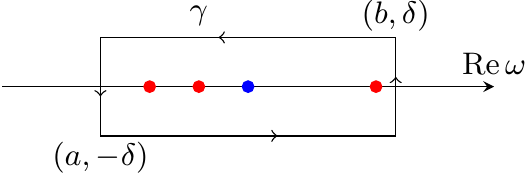}}
	\caption{The integration contour $\gamma_\delta$}
	\label{contour}
	\end{figure}
\noindent Now, let $\{p_k\}_{k=1}^{n}$ and $\{z_k\}_{k=1}^{n}$ be  the non-negative poles and zeros of the product $\varepsilon(\omega)\,\mu(\omega)$, ranked by increasing values. The non-zero poles or zeros are repeated with their multiplicity (1 or 2, see corollary 3.19). However, if $0$ is a pole of multiplicity $2m$, it is only counted $m$ times:
$$
	0\leq p_1\leq p_2\leq \ldots\leq p_n, \quad
	0\leq z_1\leq z_2\leq \ldots \leq z_n.
$$	
\begin{lema} \label{interlacing} The numbers $\{p_k\}_{k=1}^{n}$ and $\{z_k\}_{k=1}^{n}$ satisfy
	\begin{align}
	\label{eq:vicinity_zeros}
	0\leq p_1<z_1<p_3<z_3<\ldots, \qquad 0\leq p_2<z_2<p_4<z_4<\ldots.
	\end{align}
\end{lema}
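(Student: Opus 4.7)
The plan is to combine Lemma~\ref{estimnpmnz} with a structural consequence of non-dissipativity. My starting observation is that for every $c > 0$, the polynomial equation $\mathcal{F}(\omega) = c$ has fixed degree and, by non-dissipativity, only real roots; hence the number of real roots is independent of $c$, so $\mathcal{F}$ has no critical values in $(0,+\infty)$. Equivalently, any local extremum of $\mathcal{F}$ on $\R \setminus \mathcal{P}$ must be attained at a non-positive value. From this I will conclude that on each maximal interval of $\R^+ \setminus \mathcal{P}$ on which $\mathcal{F} > 0$ --- a positive sign-block, bounded by simple events of $\varepsilon\mu$ --- $\mathcal{F}$ is strictly monotone between successive values in $\{0,+\infty\}$, apart from possibly touching $0$ or $+\infty$ at interior double zeros or double poles. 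A parallel argument (or an inspection of $\arg \mathcal{F}$ using $\arg \mathcal{F} \in (0,2\pi)$ on $\C^+$) will exclude double poles and double zeros of $\varepsilon\mu$ from the interior of any negative sign-block: such a double at $\pi^*$ would give $\mathcal{F}(\pi^* + i\eta)\in\R^+$ for small $\eta > 0$, contradicting $\mathcal{F}(\C^+)\cap\R^+ = \emptyset$.

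Assume first that $0$ is neither a pole nor a zero of $\varepsilon\mu$; evenness and the \textbf{(HF)} condition then give $\varepsilon(0)\mu(0) > 0$, so the first sign-block $B_0 = (0,\pi_{j_1})$ is positive. Monotonicity of $\mathcal{F}$ on $B_0$, starting at $\mathcal{F}(0^+) = 0^+$, forces the values of $\mathcal{F}$ at successive events of $\varepsilon\mu$ inside $B_0$ to alternate strictly between $0$ and $+\infty$, beginning at $+\infty$ (so the first event is necessarily of $P$-type, simple or double). A case analysis on boundary types and alternation patterns then yields three structural facts: $B_0$ contributes exactly one more pole unit than zero unit (counting multiplicities); each subsequent positive sign-block contributes equal numbers of pole and zero units; each negative sign-block contains no interior events. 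Setting $\Delta(\omega) := n_P(0,\omega) - n_Z(0,\omega)$, I will deduce the key invariant $\Delta(\omega) \equiv 1$ throughout the interior of every negative sign-block.

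From this invariant the interlacing follows. To prove $p_{2k-1} < z_{2k-1}$ I plan to take $\omega^* = p_{2k-1} + \varepsilon$ (small $\varepsilon > 0$) and verify, case by case on whether the position of $p_{2k-1}$ is a simple boundary or an interior double and on the type of block containing $\omega^*$, that $n_Z(\omega^*) \leq 2k-2$, giving $z_{2k-1} > \omega^* > p_{2k-1}$. Symmetrically, $z_{2k-1} < p_{2k+1}$ will follow by taking $\omega^* = z_{2k-1} + \varepsilon$ and checking $n_P(\omega^*) \leq 2k$; the even-indexed interlacing follows \emph{mutatis mutandis}. The degenerate case in which $0$ is itself a pole or zero of $\varepsilon\mu$ requires only a minor modification of the initial block analysis. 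The main obstacle of the proof will be the careful case analysis of double events inside positive sign-blocks, where the strict alternation of values between $0$ and $+\infty$ at consecutive event positions must be rigorously tracked.
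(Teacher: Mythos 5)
Your route is genuinely different from the paper's. The paper never decomposes $\R^+$ into sign-blocks: it proves Lemma \ref{estimnpmnz} once and for all (the argument principle on a thin rectangle, with $\mbox{Arg}\,{\cal F}\in(0,2\pi)$ on $\C^+$, gives $|n_P(a,b)-n_Z(a,b)|\le 2$ for \emph{any} interval avoiding poles and zeros), and then gets $p_m<z_m$ by pure contradiction: if $z_m<p_m$, the \emph{symmetric} interval $(-z_m-\eta,\,z_m+\eta)$ contains, by evenness, $2m+2$ zero units of ${\cal F}$ (the $\pm z_\ell$ plus the double zero at the origin) but at most $2m-2$ pole units, so $n_Z-n_P\ge 4$ --- a few lines per case on the behaviour at $\omega=0$. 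You instead rebuild the qualitative picture of Section \ref{sec-dispersioncurves} (strict monotonicity of ${\cal F}$ where it is positive, exclusion of events from the interior of negative regions) directly from non-dissipativity rather than from passivity, and read the interlacing off a bookkeeping invariant. Both proofs rest on the same fact (\ref{propF}), namely ${\cal F}(\C^+)\cap\R^+=\emptyset$; the paper exploits it globally (bounded argument variation), you exploit it locally (expansions $A(\omega-\omega_0)^{\pm m}$ at real points, plus conservation of the number of roots of ${\cal F}=c$). Your approach buys a complete structural description of the pole/zero pattern, at the cost of the case analysis you yourself flag as the main obstacle; also note that you announce Lemma \ref{estimnpmnz} as an ingredient but your sketch never actually invokes it.

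Two slips to repair when you write the cases out. First, $\varepsilon(0)\mu(0)>0$ does not follow from evenness and ${\bf (HF)}$; it follows from non-dissipativity by exactly your own $\C^+$ argument: $0$ is a double zero of ${\cal F}$ with leading coefficient $\varepsilon(0)\mu(0)$, and if this were negative then ${\cal F}(i\eta)\sim-\varepsilon(0)\mu(0)\,\eta^2$ would lie in $\R^+_*$, contradicting (\ref{propF}). Second, the unbounded final positive block is \emph{not} balanced: since the total numbers of pole and zero units coincide and $B_0$ contributes $+1$, the last block must contribute $-1$ (in units, it ends with a zero). Your invariant $\Delta\equiv 1$ on negative blocks survives because every negative block precedes the last positive one, but the final block's internal structure must be tracked separately in the concluding step. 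In that step you will also need the sharper statement that $\Delta$ jumps to exactly $2$ (not merely $\ge 1$) just after an interior double pole; otherwise the bound $n_Z(\omega^*)\le 2k-2$ fails precisely when $p_{2k-1}=p_{2k}$ sit at a common double pole, which is an admissible configuration.
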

\begin{proof}
 Proving (\ref{eq:vicinity_zeros}) amounts to showing that $p_m < z_m < p_{m+2}$.
	We shall prove the first inequality only, by contradiction arguments. The proof of the second one is similar.
	Let us consider the following cases:
	\begin{enumerate}
		\item $\omega=0$ is a zero of ${\cal F}(\omega)$, with multiplicity $2$ according to lemma \ref{ND-Lossless}. 		
Let us now assume that $z_m<p_m$ for some $m\geq 1$. Then, for $\eta > 0$ small enough, the interval $(-\eta - z_m ,\, z_m + \eta)$ contains exactly  $2m+2$ zeros (the $\pm p_\ell, 1 \leq \ell \leq m$ plus $0$ counted two times) and $m-2$  poles of ${\cal F}$, so that $|n_P(-\eta-z_m, z_m +\eta) - n_Z (-\eta- z_m, z_m +\eta)|=4$ which  contradicts (\ref{inegnpnz}).  
	\begin{figure}[h]
	\centerline{
\includegraphics[width=10cm]{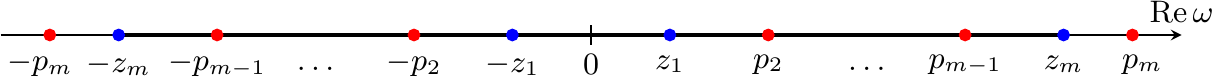}}
	\caption{The poles and zeros of ${\cal F}$}
	\label{Fig5}
	\end{figure}
	\item $\omega=0$ is neither a pole nor a zero of ${\cal F}(\omega)$. In this case $\varepsilon(\omega)\mu(\omega)$ has a pole of multiplicity 2 in $\omega=0$ so that $0=p_1<z_1$, and $p_2\neq 0$. Assume again that, for some $m\geq 1$, $z_m<p_m$. For for $\eta > 0$ small enough,the interval $(-z_m -\eta,\, z_m + \eta)$ contains exactly  $2m$ zeros and $2m-4$  poles of ${\cal F}$, since $\pm p_1$ must not be counted in the poles of  ${\cal F}(\omega)$. This means that $|n_P(-\eta-z_m, z_m +\eta) - n_Z (-\eta- z_m, z_m +\eta)|=4$ which  contradicts (\ref{inegnpnz}). 
		\item $\omega=0$ is a pole of ${\cal F}(\omega)$, with multiplicity $2$ according to . Thus, $\varepsilon\, \mu$ has a pole of multiplicity 4 in $\omega=0$ that is to say $p_1=p_2=0$. Assume again that, for some $m\geq 1$, $z_m<p_m$. For for $\eta > 0$ small enough,the interval $(-z_m -\eta,\, z_m + \eta)$ contains exactly  $2m$ zeros and $2m-4$  poles of ${\cal F}$, $0$ and $\{ \pm p_3, \cdots, \pm \, p_m \}$. One concludes as in the previous cases.
\end{enumerate}
\end{proof}
\noindent {\bf Proof of theorem \ref{thm.equivpassive}.}
Without loss of generality let us assume that the degree of the numerator of $\varepsilon(\omega)\,\mu(\omega)$ is $2(2m+1)$. Let us define
\begin{align*}
\varepsilon^*(\omega)&:=\frac{(\omega^2-z_2^2)(\omega^2-z_4^2)\ldots (\omega^2-z_{2m}^2)}{(\omega^2-p_2^2)(\omega^2-p_4^2)\ldots (\omega^2-p_{2m}^2)},\\
\mu^*(\omega)&:=\frac{(\omega^2-z_1^2)(\omega^2-z_3^2)\ldots (\omega^2-z_{2m+1}^2)}{(\omega^2-p_1^2)(\omega^2-p_3^2)\ldots (\omega^2-p_{2m+1}^2)}.
\end{align*}
which satisfies $\varepsilon^*(\omega) \mu^*(\omega) = \varepsilon(\omega) \mu(\omega)$. Since $p_{2\ell}$ is a simple pole (lemma \ref{interlacing})
$$
({\cal R}es \, \varepsilon^*, p_{2\ell}) = \frac{A_\ell \, B_\ell}{2 \, p_{2\ell}}, \quad A_\ell :=  (-1)^{m} \prod_{k=1}^m{(z_{2k}^2 - p_{2\ell}^2)}, \quad B_\ell := (-1)^{m-1} \prod_{k\neq \ell}{(p_{2k}^2 -p_{2\ell}^2)}^{-1}.
$$
The sign of $B_\ell$ is $(-1)^{\ell-1}(-1)^{m}$. According to (\ref{eq:vicinity_zeros}), $z_{2k}^2 - p_{2\ell}^2 > 0$ for $k < \ell$ and $z_{2k}^2 - p_{2\ell}^2 > 0$ for $k \geq \ell$: thus, the sign of $B_\ell$ is $(-1)^{\ell}(-1)^{m-1}$. Thus $({\cal R}es f, p_{2\ell + 1}) < 0$ and we can write
$$\varepsilon^*(\omega)=1+\sum\limits_{\ell=0}^{m}\frac{a_{\ell}}{\omega_{\ell}^2 - \omega^2}, \quad \mbox{with } a_{\ell}<0. $$
As a similar reasoning applies to $\mu$, one concludes easily.
\section{The Nevanlinna representation theorem for Herglotz functions} \label{Nevanlinna}
Here we prove lemma \ref{thm.Nevanlinna}, based on a similar representation result
for functions analytic inside the open unit disk $D$ and the use of the M\"obius transform, a conformal 
mapping which maps the unit disk $D$ onto the complex half-space $\C^+$. The proof of 
lemma \ref{thm.Nevanlinna} relies on the Poisson's formula:
\begin{lema}\label{lem.reppoisson}{(Poisson representation)}
Let the function $f$ be analytic in the unit disk $D$. Then, for any $R < 1, f$ admits the following representation:
$$
\forall \; z \in D(0,R) := \{ z \; / \; |z| < R \}, \quad f(z)=i\, {\cal I}m \, f(0)+\frac{1}{2\pi}\int_{0}^{2\pi} \frac{R\, e^{i \theta} +z}{R\, e^{i \theta} -z}\;{\cal R}e  f( R\, e^{i \theta}) \,\md \theta.
$$
\end{lema}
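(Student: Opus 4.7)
The plan is to show that the right-hand side defines an analytic function of $z$ on $D(0,R)$ whose real part coincides with $\mathrm{Re}\,f(z)$, and then to identify the (necessarily purely imaginary) difference with $f$ by evaluating at the origin. Set
\[
g(z) := \frac{1}{2\pi}\int_0^{2\pi}\frac{Re^{i\theta}+z}{Re^{i\theta}-z}\,{\cal R}e\,f(Re^{i\theta})\,\md\theta.
\]
The Schwarz kernel $k(z,\theta):=(Re^{i\theta}+z)/(Re^{i\theta}-z)$ depends analytically on $z$ for $|z|<R$ and is smooth in $\theta$ on the compact circle, so differentiation under the integral sign shows that $g$ is holomorphic on $D(0,R)$.

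The main step, which is the only calculation requiring some work, is to recover the Poisson representation ${\cal R}e\,g(z)={\cal R}e\,f(z)$ from Cauchy's integral formula. I would start from
\[
f(z)=\frac{1}{2\pi}\int_0^{2\pi}\frac{Re^{i\theta}}{Re^{i\theta}-z}\,f(Re^{i\theta})\,\md\theta,
\]
and exploit that the reflected point $z^\ast:=R^2/\bar z$ lies outside $\overline{D(0,R)}$ when $|z|<R$, so that Cauchy's theorem yields
\[
0=\frac{1}{2\pi}\int_0^{2\pi}\frac{Re^{i\theta}}{Re^{i\theta}-z^\ast}\,f(Re^{i\theta})\,\md\theta.
\]
Subtracting these identities, a short algebraic manipulation (writing $w=z/(Re^{i\theta})$ and noting $z^\ast/(Re^{i\theta})=1/\bar w$) gives
\[
\frac{Re^{i\theta}}{Re^{i\theta}-z}-\frac{Re^{i\theta}}{Re^{i\theta}-z^\ast}=\frac{1}{1-w}+\frac{\bar w}{1-\bar w}=\frac{1-|w|^2}{|1-w|^2}={\cal R}e\,k(z,\theta),
\]
which is the real Poisson kernel. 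Hence $f(z)=\frac{1}{2\pi}\int_0^{2\pi}{\cal R}e\,k(z,\theta)\,f(Re^{i\theta})\,\md\theta$, and taking real parts (the kernel being real-valued) produces ${\cal R}e\,f(z)=\frac{1}{2\pi}\int_0^{2\pi}{\cal R}e\,k(z,\theta)\,{\cal R}e\,f(Re^{i\theta})\,\md\theta={\cal R}e\,g(z)$.

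To finish, I observe that $g-f$ is holomorphic on $D(0,R)$ with identically zero real part, hence constant equal to some $i c_0$ with $c_0\in\R$ (by the Cauchy--Riemann equations, or the open mapping theorem). Evaluating at $z=0$, where the kernel $k(0,\theta)=1$, the mean value property gives $g(0)=\frac{1}{2\pi}\int_0^{2\pi}{\cal R}e\,f(Re^{i\theta})\,\md\theta={\cal R}e\,f(0)$, and consequently $ic_0=g(0)-f(0)=-i\,{\cal I}m\,f(0)$. Rearranging $f(z)=g(z)-ic_0$ yields the stated representation. The Poisson-kernel identity in the middle paragraph is the only nontrivial piece; the rest is structural bookkeeping with Cauchy's formula and the mean value property.
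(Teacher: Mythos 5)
Your argument is correct, and it takes a genuinely different route from the paper's. The paper works entirely with the Taylor expansion $f(z)=\sum a_n z^n$: it computes the Fourier coefficients of $\theta\mapsto{\cal R}e\,f(Re^{i\theta})$, recovers each $a_n$ ($n\geq 1$) and ${\cal R}e\,a_0$ by orthogonality of $\{e^{in\theta}\}$ in $L^2(0,2\pi)$, and then resums the geometric series to reassemble the Schwarz kernel $(Re^{i\theta}+z)/(Re^{i\theta}-z)$; the only analytic care needed is the interchange of series and integral, which is justified by uniform convergence on $|z|\leq R'<R$. You instead start from Cauchy's integral formula, add the vanishing integral attached to the reflected point $z^\ast=R^2/\bar z$ (which lies outside $\overline{D(0,R)}$), and the algebra with $w=z/(Re^{i\theta})$ produces the real Poisson kernel $({1-|w|^2})/{|1-w|^2}={\cal R}e\,k(z,\theta)$ directly; from there the representation follows because a holomorphic function on a disk with vanishing real part is an imaginary constant, pinned down at $z=0$ by the mean value property. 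Your computation of the kernel difference and of $g(0)$ checks out (the case $z=0$, where $z^\ast$ is undefined, is covered separately by the mean value property, as you implicitly do). What each approach buys: the paper's series method is self-contained and elementary, needing only orthogonality and geometric series, and it exhibits the Schwarz kernel as a generating function of the Fourier modes; your method avoids any series--integral interchange, isolates the Poisson kernel as the conceptually central object, and generalizes more readily (e.g.\ to Poisson representations on half-planes or to boundary data merely in $L^1$ of the circle). Either proof is acceptable here.
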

\begin{proof}
As $f$ is analytic in $D$, we can write $ \dsp f(z)=\sum_{n=0}^{\infty} a_n z^n$ for $|z| < 1$.  In particular,
$$
{\cal R}e f(R \, e^{i \theta})=\frac{1}{2} \sum_{=0}^{\infty}  \{a_n \, e^{in\theta} + \overline{a_n} \, e^{-in\theta} \} \; R^n =  {\cal R}e \, a_0 + \frac{1}{2}  \sum_{n=1}^{+\infty}  a_n \; R^n \; e^{in\theta} + \frac{1}{2} \sum_{n=1}^{+\infty} \overline{a_n} \; R^n \; e^{-in\theta}.
$$
Since the family $\{ e^{in\theta}, n \in \Z \}$ is orthogonal in $L^2(0,2\pi)$, we deduce, setting $f_R(\theta) := {\cal R}e f(R \, e^{i \theta})$ 
$$
{\cal R}e \,a_0=\frac{1}{2\pi}\int_{0}^{2 \pi} f_R(\theta) \; \md \theta \quad \mbox{ and } \quad a_n=\frac{1}{\pi} \int_{0}^{2 \pi}\, f_R(\theta)\; \frac{e^{-i n \theta}}{R^n} \; \md \theta, \, \mbox{ for } n\geq 1.
$$
In particular, since $a_0 = f(0)$,  we get:  ~(i)  $\dsp \quad a_0 = {\cal R}e \,a_0 + i \, {\cal I}m \, a_0 = \frac{1}{2\pi}\int_{0}^{2 \pi} f_R(\theta) \; \md \theta + i \, {\cal I}m \, f(0) $.\\[12pt]
On the other hand, $\dsp \sum_{n=1}^{\infty} a_n z^n = \sum_{n=1}^{\infty} \frac{1}{\pi}\int_{0}^{2 \pi} f_R(\theta)  \,  \left(\frac{z}{R} \, e^{-i  \theta}  \right)^n\md \theta 
$. Moreover, for $|z| < R$, we have
$$
\sum_{n=1}^{\infty} \left(\frac{z}{R} \, e^{-i  \theta}  \right)^n = \Big(1 - \frac{z}{R} \, e^{-i  \theta}\Big)^{-1} -1 = \frac{z}{R \, e^{i \theta} -z}.
$$
Thus, after permutation series-integral, one gets:  ~(ii)  $\dsp \quad \sum_{n=1}^{\infty} a_n z^n = \frac{1}{\pi}\int_{0}^{2 \pi} f_R(\theta)  \,\frac{z}{R \, e^{i \theta} -z} \md \theta. $\\[12pt]
The final result is obtained by summing (i) and (ii), since $\dsp 1 + 2 \; \frac{z}{R \, e^{i \theta} -z} = \frac{R \, e^{i \theta} + z}{R \, e^{i \theta} -z} $.
\end{proof}
\begin{lema}\label{lem.repinitdisk}
A  function $f$ is analytic in the open unit disk and has a positive real part if and only if it admits the following representation:
\begin{equation}\label{eq.represcercle}
\forall \; z \in D, \quad f(z)=i \beta+\int_{0}^{2\pi} \frac{ e^{i \theta} +z}{ e^{i \theta} -z}\, \md \sigma (\theta) ,
\end{equation}
\noindent with $\beta ={\cal I}m \, f(0)\in \R$ and $\sigma$ a positive finite regular Borel measure of the interval $[0,2\pi]$ with finite mass.
\end{lema}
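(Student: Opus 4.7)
The plan is to prove the equivalence by treating the two directions separately. The direct (``if'') implication reduces to an elementary computation with the Poisson kernel, while the reverse (``only if'') implication relies on the Poisson representation of lemma \ref{lem.reppoisson} combined with a weak-$*$ compactness argument for positive measures.

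For the direct direction, I would first observe that for $z \in D$,
\begin{equation*}
{\cal R}e\, \frac{e^{i\theta}+z}{e^{i\theta}-z} = \frac{1-|z|^2}{|e^{i\theta}-z|^2} \geq 0,
\end{equation*}
i.e.\ that this is (up to a factor $2\pi$) the classical Poisson kernel. Given a positive finite regular Borel measure $\sigma$ on $[0,2\pi]$, the function defined by (\ref{eq.represcercle}) is then analytic in $D$ (by differentiation under the integral sign: the kernel is analytic in $z$ and uniformly bounded for $z$ in any compact of $D$), and its real part is the integral of a non-negative function against a non-negative measure, hence non-negative. Evaluating at $z=0$ yields $f(0) = i\beta + \sigma([0,2\pi])$, which confirms $\beta = {\cal I}m\, f(0)$.

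For the converse direction, assume $f$ is analytic in $D$ with ${\cal R}e\, f \geq 0$. I would apply lemma \ref{lem.reppoisson} at each radius $R \in (0,1)$: for $|z| < R$,
\begin{equation*}
f(z) = i\, {\cal I}m\, f(0) + \int_0^{2\pi} \frac{R e^{i\theta} + z}{R e^{i\theta} - z}\, d\sigma_R(\theta), \qquad d\sigma_R(\theta) := \frac{1}{2\pi}\, {\cal R}e\, f(Re^{i\theta})\, d\theta.
\end{equation*}
Each $\sigma_R$ is a positive regular Borel measure on $[0,2\pi]$ by positivity of ${\cal R}e\, f$, with total mass $\sigma_R([0,2\pi]) = {\cal R}e\, f(0)$ (either by evaluating the identity at $z=0$, or by invoking the mean-value property of the harmonic function ${\cal R}e\, f$), which is independent of $R$. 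Thus the family $\{\sigma_R\}_{R\in (0,1)}$ is uniformly bounded in total variation.

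The crucial step is then to invoke the Banach--Alaoglu theorem in the dual of $C([0,2\pi])$: there exists a sequence $R_n \nearrow 1$ along which $\sigma_{R_n}$ converges in the weak-$*$ sense to a positive finite regular Borel measure $\sigma$. For fixed $z \in D$ and for $n$ large enough that $R_n > |z|$, the map $\theta \mapsto (R_n e^{i\theta}+z)/(R_n e^{i\theta}-z)$ is continuous on $[0,2\pi]$ and converges uniformly in $\theta$ to the limit kernel $(e^{i\theta}+z)/(e^{i\theta}-z)$ as $R_n \to 1^-$ (because $|R e^{i\theta}-z|$ is bounded below by $1-|z|>0$ for $R$ sufficiently close to $1$). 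Writing the difference between the integral at step $n$ and the prospective limit as a sum of (i) the integral of the difference of kernels against $\sigma_{R_n}$, controlled by uniform convergence of the kernels together with the uniform bound on $\sigma_{R_n}([0,2\pi])$, and (ii) the integral of the \emph{fixed} limit kernel against $\sigma_{R_n}-\sigma$, controlled by the weak-$*$ convergence, allows one to pass to the limit and obtain (\ref{eq.represcercle}). The main obstacle is precisely this passage to the limit: weak-$*$ convergence gives convergence of integrals of a fixed continuous test function only, not of a varying family, so the above splitting (a standard ``$\varepsilon/2$'' argument) is essential. As a minor remark, nothing additional needs to be verified about the regularity of $\sigma$, since any positive finite Borel measure on the compact metric space $[0,2\pi]$ is automatically regular.
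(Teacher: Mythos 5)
Your proposal is correct and follows essentially the same route as the paper: Poisson representation at radius $R$, the uniform total-variation bound $\sigma_R([0,2\pi])={\cal R}e\,f(0)$ via the mean-value property, Banach--Alaoglu to extract a weak-$*$ limit measure, and the same two-term splitting (varying kernel against $\sigma_{R_n}$ plus fixed kernel against $\sigma_{R_n}-\sigma$) to pass to the limit. The only difference is that you spell out the easy ``if'' direction via positivity of the Poisson kernel, which the paper simply declares to be clear.
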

\begin{proof}
It is a clear that functions of the form (\ref{eq.represcercle}) are analytic in $D$ and have a positive real part. Moreover, one checks easily that $\beta={\cal I}m \, f(0)$.
\\[12pt]
Reciprocally, suppose now that $f$ is analytic in the open unit disk $D$. Then by the Poisson representation of $f$ (see lemma \ref{lem.reppoisson}), for all $0<R<1$, one has
\begin{equation}\label{eq.represcercleR}
\mbox{ for }|z|<R, f(z)=i \; {\cal I}m \, f(0)+\int_{0}^{2\pi} \psi_{z,R}\; \md\sigma_R(\theta) ,  \quad \psi_{z,R}(\theta) = \frac{R\, e^{i \theta} +z}{R\, e^{i \theta} -z}.
\end{equation}
where $\sigma_R$ is the absolutely continuous measure on $[0, 2 \pi]$ (with respect to Lebesgue's measure) defined by
$$
\md\sigma_R(\theta) = \frac{1}{2\pi} \, f_R(\theta) \, \md\theta, \quad  f_R(\theta) := {\cal R}e \,  f( R\, e^{i \theta}).
$$
Note that, due to the assumption that $f$ has a positive real part inside $D$, $\sigma_R$ is a positive measure. The reader will observe that the formula (\ref{eq.represcercleR}) of very close to (\ref{eq.represcercle}): we only would like to push $R$ to 1. \\[12pt]
To do so, the first step consists in identifying the limit, if any, of the measure $\sigma_R$. To do so, we identify $\sigma_R$ to a continuous linear 
form $\sigma_R^* \in E'$ on the Banach space $C^0[0, 2\pi]$
equipped with the sup-norm $\| \cdot \|_\infty:$
\begin{equation} \label{defsigmaR} \langle \sigma_R^*, \varphi \rangle := \int_0^{2\pi} \varphi(\theta) \; \md \sigma_R (\theta) \equiv \frac{1}{2\pi} \int_0^{2\pi} \varphi(\theta) \, f_R(\theta) \; \md \theta. \end{equation}
We first observe that $\sigma_R^*$ is bounded in $E'$ since 
$$
\|\sigma_R^*\|_{E'} \leq \frac{1}{2\pi} \, \|f_R\|_{L^1(0,2\pi)} = \frac{1}{2\pi} \,\int_0^{2\pi} f_R(\theta) \; d \theta \quad \mbox{(positivity of $f_R$)}
$$ 
Modulo identification  $\C \equiv\R^2$, by analyticity of $f$, ${\cal R}f (x,y) :={\cal R}e \,  f( x+iy)$
is harmonic in $D$ and we observe that $f_R = {\cal R}f |_{\partial D(0,R)}$. Then, by the mean value theorem for harmonic functions, we have
\begin{equation} \label{boundsigma}
\frac{1}{2\pi} \int_0^{2\pi} f_R(\theta) \; d \theta = {\cal R}ef(0)  \quad \mbox{which proves that } \|\sigma_R^*\|_{E'} \leq   {\cal R}ef(0).
\end{equation}
Therefore, by Banach-Alaoglu's theorem, we can find an increasing sequence $R_n \rightarrow 1$ such  that $\sigma_{R_n}$ converges to
some $\sigma^* \in E'$ for the weak-* topology of $E$, i.e. 
$$
\forall \; \varphi \in C^0[0,2\pi], \quad \lim_{n \rightarrow + \infty} \langle \sigma_{R_n}^*, \varphi \rangle =  \langle \sigma^*, \varphi \rangle
$$
Since $\langle \sigma_R^*, \varphi^+ \rangle \geq 0, \forall \; \varphi \in E^+ := \{ \varphi^+ \in C^0[0, 2\pi] \; / \; \varphi^+ \geq 0\}$, we also have $\langle \sigma^*, \varphi^+ \rangle \geq 0, \forall \; \varphi \in E^+ $. Then, by Riesz-Markov theorem, there exist a positive regular Borel measure $\sigma$ on $[0, 2\pi]$ such that
$$
\langle \sigma^*, \varphi \rangle = \int_0^{2\pi} \varphi(\theta) \; \md \sigma(\theta).
$$
Finally, the inequality $\|\sigma^*_R\|_{E'} \leq   {\cal R}ef(0)$ proves that $\|\sigma^*\|_{E'}\leq  {\cal R}ef(0)$. For $|z| < 1$ and $n$ large enough, $z \in D(0, R_n)$. Thus by (\ref{eq.represcercleR}) and (\ref{defsigmaR}), we can write
\begin{equation} \label{formulef}
f(z)=i \; {\cal I}m \, f(0)+ \langle \sigma_{R_n}^*, \psi_{z,R_n} \rangle =
i \; {\cal I}m \, f(0)+  \langle \sigma_{R_n}^*, \psi_{z,1} \rangle + \langle \sigma_{R_n}^*, \psi_{z,R_n} - \psi_{z,1} \rangle
\end{equation}
Noticing that, thanks to (\ref{boundsigma}),
$
\big| \, \langle \sigma_{R_n}^*, \psi_{z,R_n} - \psi_{z,1} \rangle \big| \leq  {\cal R}ef(0) \, \|\psi_{z,R_n} - \psi_{z,1}\|_\infty,
$
by uniform convergence of $\psi_{z,R_n}$ to $\psi_{z,1}$, we can pass to the limit in (\ref{formulef}) to obtain the announced result.\\[12pt]
{\bf  The proof of lemma \ref{thm.Nevanlinna}.}
It is clear that all functions of the form \eqref{eq.defhergl} are Herglotz functions.
For the reciprocal statement, we introduce the M\"obius transform $M(z)$ as the function defined on $\C \setminus \{1\}$ by
\begin{equation} \label{defMobius}
	M(z) = i \, \frac{1+z}{1-z}
\end{equation}
which is a bijection from $\C \setminus \{1\}$ on to $\C \setminus \{i\}$
\begin{equation} \label{defMobiusinverse}
	\omega = M(z), \quad  z \in \C \setminus \{1\}\quad \Longleftrightarrow \quad  z=M^{-1}(\omega) :=   \frac{1- i \omega}{1+ i \omega}, \quad \omega \in \C \setminus \{i\}
\end{equation}
Moreover, the important property of $M$ for our purpose is that it realizes a bijection from the unit disk $D$ onto the complex half-space $\C^+$ (and from $\partial D \setminus \{-1\}$ on to $\R = \partial \C^+$)
$$
z \in D \quad \Longrightarrow \quad \omega = M(z) \in \C^+, \quad \mbox{and} \quad \omega \in \C^+ \quad \Longrightarrow \quad  z=M^{-1}(\omega) \in D.
$$ 
As a consequence given $f : \C^+ \rightarrow \C$, one can construct $\widetilde{f} : D \rightarrow \C$ as
\begin{equation} \label{transMobius}
	\widetilde{f}(z) := - i \, f\big( M(z)\big) \quad \Longleftrightarrow \quad f(\omega) = i \, \widetilde{f}\big(M^{-1}(\omega)\big).
\end{equation}
Indeed, $f$ is analytic in $C^+$ if and only if $\widetilde{f}$ is analytic in $D$.  Furthermore, if $f$ is Herglotz, $\widetilde{f}$ satisfies the assumptions of lemma and can thus be written in the form (\ref{eq.represcercle}). As a consequence, substituting $z = M^{-1}(\omega)$ in (\ref{eq.represcercle}), using the fact that $f(\omega) = i \, \widetilde{f}\big(M^{-1}(\omega)\big)$ and the relation $- {\cal I}m \widetilde{f}(0) = {\cal R}e f\big(M(0)\big) = {\cal R}e f(i)$, we deduce that
$$
f(\omega)= {\cal R}e f(i)  +i \int_{[0,2\pi]} \frac{ e^{i \theta} +M^{-1}(\omega)}{ e^{i \theta} -M^{-1}(\omega)}\, \md \sigma (\theta), \quad \omega \in \C^{+}.
$$
or equivalently, setting $a=\sigma(\{2\pi \})+\sigma(\{0 \})\geq 0$ and using $\dsp i \frac{ e^{i \theta} +M^{-1}(\omega)}{ e^{i \theta} -M^{-1}(\omega)} = \omega$ for $\theta = 0$ or $2\pi$, 
$$
f(\omega)= {\cal R}e f(i) + a \, \omega + i \int_{]0,2\pi[} \frac{ e^{i \theta} +M^{-1}(\omega)}{ e^{i \theta} -M^{-1}(\omega)}\, \md \sigma (\theta), \quad \omega \in \C^{+}.
$$
To obtain the final result, the idea is to use the change of variable $\xi = \psi(\theta) := - \operatorname{cotan}(\theta/2)$ in the last integral, using the fact that 
$\psi$ is a bijection from $]0, 2 \pi[$ onto $\R$. This leads to introduce the positive regular Borel measure $\rho$ on $\R$ defined as the pushward measure of $\sigma$ through the continuous bijection $\psi$, namely
$$
\rho(B)=\sigma\big(\psi^{-1}(B)\big), \quad \forall B \in {\cal B}(\R) := \{ \mbox{ Borel subsets of } \R \}.
$$
Therefore,
$ \dsp 
f(\omega)= {\cal R}e f(0) + a \, \omega + i \int_{\R} \; \frac{ e^{i \psi^{-1}(\xi)} +M^{-1}(\omega)}{ e^{i \psi^{-1}(\xi)} -M^{-1}(\omega)}\; \md \rho (\xi).
$
Moreover tedious computations give
$$
\frac{ e^{i \psi^{-1}(\xi)} +M^{-1}(\omega)}{ e^{i \psi^{-1}(\xi)} -M^{-1}(\omega)} = \Big( \frac{1 + \xi \, \omega}{\xi - \omega}\Big) \equiv  \big(1+\xi^2 \big) \; \left(\frac{1}{\xi-\omega}- \frac{\xi}{1+\xi^2}\right).
$$
Then, one obtains the expression (\ref{eq.defhergl}) by introducing the positive regular Borel measure $\nu$ defined by $\md \nu(\xi) = (1+\xi^2) \; \md \rho(\xi)$. Note that the condition (\ref{finiteness}) follows from the fact that $\rho$ has a finite mass. \\[12pt]
Obtaining the value of $\beta$ in (\ref{alphabeta}) is immediate while  the formula for $\alpha$ is a straightforward application of the Lebesgue's theorem. It remains to prove \eqref{measure}. By the Herglotz representation formula \eqref{eq.defhergl}, one has: 
$$
\eta \;{\cal I}m \, f(a+ i \eta)=\alpha \, \eta^2 +\int_{\R} \frac{\md \nu (\xi)}{\eta^{-2} (a-\xi)^2+1} \, .
$$
As $\eta \to 0$, the integrand tends to ${\bf 1}_a$, the indicator function of the set $\{a \}$.  Moreover this function is bounded by  $\psi(\xi)=1 / \big[(a-\xi)^2+1\big]$ for $0<\eta\leq 1$
which is $\nu$-integrable.
Thus, applying the Lebesgue's  theorem leads to  \eqref{measure}(i). Next we prove \eqref{measure}(ii). For simplicity, we restrict ourselves to the case where (\ref{finitenessbis}) holds, the general case being treated by an approximation process (see \cite{Don-00}).
By (\ref{eq.defhergl}) again, we have
\begin{equation}\label{eq.limImag}
\int_{a}^{b} {\cal I}m \,f(x+i \eta) \; \md x=\int_{a}^{b}\int_{\R}\frac{\eta }{(x-\xi)^2+\eta^2}\, \md \nu(\xi) \md x.
\end{equation}
As the integrand in (\ref{eq.limImag}) is positive, we can use Fubini's theorem to obtain
\begin{equation*}
\int_{a}^{b} {\cal I}m \,f(x+i \eta) \; \md x=\int_{\R} \chi_{a,b}^\eta(\xi) \, \md \nu(\xi), \quad \chi_{a,b}^\eta(\xi) : = \arctan \left({\frac{b-\xi}{\eta}}\right)- \arctan{\left(\frac{a-\xi}{\eta}\right)} .
\end{equation*}
When $\eta \rightarrow 0$, $\chi_{a,b}^\eta(\xi) \longrightarrow  \chi_{a,b}(\xi) : = 0 \; \mbox{if } \xi \in (a,b), \; \frac{\pi}{2} \; \mbox{if } \xi \in \{a,b\}, \; \pi \mbox{ otherwise.}$
On the other hand
$$
0 \leq \chi_{a,b}^\eta(\xi)\leq \pi \quad \mbox{and} \quad \eta  \mapsto \chi_{a,b}^\eta(\xi) \mbox{ is increasing if } \xi \in [a,b], \mbox{ decreasing if not},
$$
so that $0 \leq \chi_{a,b}^\eta(\xi)\leq 0$ with $ \chi_{a,b}^+(\xi) : = \pi \mbox{ if } \xi \in [a,b]$ and $ \chi_{a,b}^+(\xi) : = \chi_{a,b}^1(\xi) \mbox{ if not}$.\\[12pt] Since one easily check that $\chi_{a,b}^1(\xi) \sim |\xi|^{-1}$ when $\xi \rightarrow \pm \, \infty$, it results from (\ref{finitenessbis}) that  $\chi_{a,b}^+$ is $\nu$-integrable.
Thus, by Lebesgue's theorem, we can pass to the limit in \eqref{eq.limImag} when $\eta \to 0$ to obtain \eqref{measure}(ii).
\end{proof}

	\end{document}